%
%
%
%
%
\documentclass[twoside]{article}
\usepackage{amsthm,amsmath,amssymb}
{\newtheorem{theorem}{Theorem}[section]
   \newtheorem{proposition}[theorem]{Proposition}
   \newtheorem{lemma}[theorem]{Lemma}
   \newtheorem{corollary}{Corollary}}
{\theoremstyle{definition}
   
   \newtheorem{example}{Example}}
{\theoremstyle{remark}
   
   \newtheorem*{remark}{Remark}}
 
\setlength{\textwidth}{125mm}
\setlength{\textheight}{185mm}
\setlength{\parindent}{8mm}
\frenchspacing
\pagestyle{myheadings}
\markboth{\sc F.-V.~Kuhlmann}{\sc Local Uniformization}
%
%
%
\newcommand{\bfind}[1]{\index{#1}{\bf #1}}
\newcommand{\n}{\par\noindent}

\newcommand{\sn}{\par\smallskip\noindent}
\newcommand{\mn}{\par\medskip\noindent}
\newcommand{\bn}{\par\bigskip\noindent}
\newcommand{\pars}{\par\smallskip}
\newcommand{\parm}{\par\medskip}
\newcommand{\parb}{\par\bigskip}
\newcommand{\isom}{\simeq}

\newcommand{\ovl}[1]{\overline{#1}}
\newcommand{\ec}{\prec_{\exists}}
\newcommand{\sep}{^{\rm sep}}
\newcommand{\dec}{^d}
\newcommand{\Aut}{\mbox{\rm Aut}\,}
\newcommand{\bbox}[1]{\makebox(0,0){\rule[-2ex]{0ex}{5.4ex}#1}}

\newcommand{\rr}{\mbox{\rm rr}\,}
\newcommand{\chara}{\mbox{\rm char}\,}
\newcommand{\trdeg}{\mbox{\rm trdeg}\,}
\newcommand{\subsetuneq}{\mathrel{\raisebox{.8ex}{\footnotesize%
$\displaystyle\mathop{\subset}_{\not=}$}}}
\newcommand{\adresse}{\par\bigskip \small\rm
 Department of Mathematics and Statistics, 
 University of Saskatchewan, \par
 106 Wiggins Road, 
 Saskatoon, Saskatchewan, Canada S7N 5E6 \par
 email: fvk@math.usask.ca \ \ --- \ \ home page:
 http://math.usask.ca/$\,\tilde{ }\,$fvk/index.html}
%
%

\font\teneu=eufm10  
\font\seveneu=eufm7 
\font\fiveeu=eufm5  
\def\eu #1{{\mathchoice{{\hbox{\teneu #1}}}{{\hbox{\teneu #1}}}
{{\hbox{\seveneu #1}}}{{\hbox{\fiveeu #1}}}}}
\newcommand{\lv}{\mathbb}
\newcommand{\A}{\lv A}
\newcommand{\N}{\lv N}
\newcommand{\Q}{\lv Q}
\newcommand{\R}{\lv R}
\newcommand{\Z}{\lv Z}
\newcommand{\F}{\lv F}
\newcommand{\Fp}{\F_p}
\newcommand{\Qp}{\Q_p}
%
%
\begin{document}
\title{Valuation theoretic and model theoretic
aspects of local uniformization} 
\author{Franz-Viktor Kuhlmann}
\date{Fall 1999}
\maketitle
{\footnotesize\rm
\setlength{\parskip}{-3pt}
\tableofcontents}
%
%
%
%
\section{Introduction}
In this paper, I will take you on an excursion from Algebraic Geometry
through Valuation Theory to Model Theoretic Algebra, and back. If our
destination sounds too exotic for you, you may jump off at the Old World
(Valuation Theory) and divert yourself with problems and examples until
you catch our plane back home.

As a preparation for the foreign countries of the Old World, I recommend
to read the first sections of the paper [V]. You may also look at the
basic facts about valuations in the books [ZS], [R1], [EN], [WA2], [JA].
All other equipment will be distributed on the excursion. I do not
recommend that you try to read about the exotic countries of model
theory. An introduction will be given on the excursion. If you want to
read more, you may ask me for a copy of the chapter ``Introduction to
model theoretic algebra'' of the forthcoming book [K2], and later you
may look at books like [CK], [CH1], [HO], [SA].

Since I want our excursion (and myself) to be as relaxed as possible,
all varieties we meet will be assumed to be affine and irreducible
(wherever it matters). But certainly, we do not assume them to be
non-singular, nor do we assume that the base fields be algebraically
closed or the characteristic be 0.

%
%
\section{What does local uniformization mean?}
Here is the bitter truth of mankind: In most cases we average human
beings are too stupid to solve our problems globally. So we try to
solve them locally. And if we are clever enough (and truly interested),
we then may think of patching the local solutions together to obtain a
global solution.

What is the problem we are considering here? It is the fact that an
algebraic variety has singularities, and we want to get rid of them.
That is, we are looking for a second variety having the same function
field, and having no singularities. This would be the global solution of
our problem. As we are too stupid for it, we are first looking for a
local solution. Naively speaking, ``local'' means something like ``at a
point of the variety''. So local solution would mean that we get rid of
one singular point. We are looking for a new variety where our point
becomes non-singular. But wait, this was nonsense. Because what
is our old, singular point on the new variety? We cannot talk of the
same points of two different varieties, unless we deal with
subvarieties. But passing from varieties to subvarieties or vice versa
will in general not provide the solution we are looking for. So do we
have to forget about local solutions of our problem?

The answer is: no. Let us have a closer look at our notion of ``point''.
Assume our variety $V$ is given by polynomials $f_1,\ldots,f_n\in K[X_1,
\ldots,X_\ell]$. Naively, by a point of $V$ we then mean an $\ell$-tupel
$(a_1,\ldots,a_\ell)$ of elements in an arbitrary extension field $L$ of
$K$ such that $f_i(a_1,\ldots,a_\ell)=0$ for $1\leq i\leq n$. This means
that the kernel of the ``evaluation homomorphism'' $K[X_1,\ldots,X_\ell]
\rightarrow L$ defined by $X_i\mapsto a_i$ contains the ideal $(f_1,
\ldots,f_n)$. So it induces a homomorphism $\eta$ from the coordinate
ring $K[V]=K[X_1,\ldots,X_\ell]/(f_1,\ldots,f_n)$ into $L$ over $K$.
(The latter means that it leaves the elements of $K$ fixed.) However, if
$a'_1,\ldots,a'_\ell\in L'$ are such that $a_i\mapsto a'_i$ induces an
isomorphism from $K(a_1,\ldots,a_\ell)$ onto $K(a'_1,\ldots,a'_\ell)$,
then we would like to consider $(a_1,\ldots,a_\ell)$ and
$(a'_1,\ldots,a'_\ell)$ as the same point of $V$. That is, we are only
interested in $\eta$ up to composition $\sigma\circ\eta$ with
isomorphisms $\sigma$. This we can get by considering the kernel of
$\eta$ instead of $\eta$. This leads us to the modern approach:
to view a point as a prime ideal of the coordinate ring.

But I wouldn't have told you all this if I intended to follow this
modern approach. Instead, I want to build on the picture of
homomorphisms. So I ask you to accept temporarily the convention that a
point of $V$ is a homomorphism of $K[V]$ over $K$ (i.e., leaving $K$
elementwise invariant), modulo composition with isomorphisms. Recall
that $K[V]=K[x_1,\ldots,x_\ell]$, where $x_i$ is the image of $X_i$
under the canonical epimorphism $K[X_1,\ldots, X_\ell]\rightarrow
K[X_1,\ldots,X_\ell]/(f_1,\ldots,f_n)=K[V]$. The function field $K(V)$
of $V$ is the quotient field $K(x_1,\ldots, x_\ell)$ of $K[V]$. It is
generated by $x_1,\ldots,x_\ell$ over $K$, hence it is finitely
generated. Every finite extension of a field $K$ of transcendence degree
at least 1 is called an \bfind{algebraic function field} (over $K$), and
it is in fact the function field of a suitable variety defined over $K$.
When we talk of function fields in this paper, we will always mean
algebraic function fields.

Now recall what it means to look for another variety $V'$ having the
same function field $F:=K(V)$ as $V$ (i.e., being birationally
equivalent to $V$). It just means to look for another set of generators
$y_1,\ldots, y_k$ of $F$ over $K$. Now the points of $V'$ are the
homomorphisms of $K[y_1,\ldots,y_k]$ over $K$, modulo composition with
isomorphisms. But in general, $y_1,\ldots,y_k$ will not lie in
$K[x_1,\ldots,x_\ell]$, hence we do not see how a given homomorphism of
$K[x_1,\ldots,x_\ell]$ could determine a homomorphism of
$K[y_1,\ldots,y_k]$. But if we could extend the homomorphism of
$K[x_1,\ldots,x_\ell]$ to all of $K(x_1,\ldots,x_\ell)$, then this
extension would assign values to every element of $K[y_1,\ldots,y_k]$.
Let us give a very simple example.

\begin{example}                             \label{examp1}
Consider the coordinate ring $K[x]$ of $V=\A_K^1$. That is, $x$ is
transcendental over $K$, and the function field $K(V)$ is just the
rational function field $K(x)$ over $K$. A homomorphism of the
polynomial ring
$K[V]=K[x]$ is just given by ``evaluating'' every polynomial $g(x)$ at
$x=a$. I have seen many people who suffered in school from the fact that
one can also try to evaluate rational functions $g(x)/h(x)$.
The obstruction is that $a$ could be a zero of $h$, and what do we get
then by evaluating $1/h(x)$ at $a$? (In fact, if our homomorphism is not
an embedding, i.e., if $a$ is not transcendental over $K$, then there
will always be a polynomial $h$ over $K$ having $a$ as a root.) So we
have to accept that the evaluation will not only render elements in
$K(a)$, but also the element $\infty$, in which case we say that the
evaluated rational function has a pole at $a$. So we can extend our
homomorphism to a map $P$ on all of $K(x)$, taking into the
bargain that it may not always render finite values. But on the subring
${\cal O}_P=\{g(x)/h(x) \mid h(a)\ne 0\}$ of $K(x)$ on which $P$ is
finite, it is still a homomorphism.
\end{example}

What we have in front of our eyes in this example is one of the two
basic classical examples for the concept of a \bfind{place}. (The other
one, the $p$-adic place, comes from number theory.) Traditionally, the
application of a place $P$ is written in the form $g\mapsto gP$, where
instead of $gP$ also $g(P)$ was used in the beginning, reminding of the
fact that $P$ originated from an evaluation homomorphism. If you
translate the German ``$g$ an der Stelle $a$ auswerten'' literally, you
get ``evaluate $g$ at the place $a$'', which explains the origin of the
word ``place''.

Associated to a place $P$ is its \bfind{valuation ring} ${\cal O}_P\,$,
the maximal subring on which $P$ is finite, and a valuation $v_P^{ }$. In
our case, the value $v_P^{ }(g/h)$ is determined by computing the zero or
pole order of $g/h$ (pole orders taken to be negative integers). In
this way, we obtain values in $\Z$, which is the value group of $v_P^{ }$.
In general, given a field $L$ with place $P$ and associated valuation
$v_P^{ }$, the valuation ring ${\cal O}_P=\{b\in L\mid bP\ne\infty\}=
\{b\in L\mid v_P^{ }b\geq 0\}$ has a unique maximal ideal ${\cal M}_P=
\{b\in L\mid bP=0\}=\{b\in L\mid v_P^{ }b>0\}$. The \bfind{residue field}
is $LP:={\cal O}_P/{\cal M}_P\,$ so that $P$ restricted to ${\cal O}_P$
is just the canonical epimorphism ${\cal O}_P\rightarrow LP$. The
characteristic of $LP$ is called the \bfind{residue characteristic}
of $(L,P)$. If $P$ is the identity on $K\subseteq L$, then $K\subseteq
LP$ canonically. The valuation $v_P^{ }$ can be defined to be the
homomorphism $L^\times \rightarrow L^\times/{\cal O}_P^\times$. The
latter is an ordered abelian group, the \bfind{value group} of $(L,
v_P^{ })$. We denote it by $v_P^{ }L$ and write it additively. Note that
$bP\ne \infty\Leftrightarrow b\in {\cal O}_P \Leftrightarrow v_P^{ }b
\geq 0$, and $bP=0\Leftrightarrow b\in {\cal M}_P \Leftrightarrow
v_P^{ }b>0$.

Instead of $(L,P)$, we will often write $(L,v)$ if we talk of valued
fields in general. Then we will write $av$ and $Lv$ instead of $aP$
and $LP$. If we talk of an \bfind{extension of valued fields} and write
$(L|K,v)$ then we mean that $v$ is a valuation on $L$ and $K$ is endowed
with its restriction. If we only have to consider a single extension of
$v$ from $K$ to $L$, then we will use the symbol $v$ for both the
valuation on $K$ and that on $L$. Similarly, we use ``$(L|K,P)$''.

\pars
Observe that in Example~\ref{examp1}, $P$ is uniquely determined by the
homomorphism on $K[x]$. Indeed, we can always write $g/h$ in a form such
that $a$ is not a zero of both $g$ and $h$. If then $a$ is not a
zero of $h$, we have that $(g/h)P=g(a)/h(a)\in K(a)$. If $a$ is a zero
of $h$, we have that $(g/h)P=\infty$. Thus, the residue field of $P$ is
$K(a)$, and the value group is $\Z$. On the other hand, we have the same
non-uniqueness for places as we had for homomorphisms: also places can
be composed with isomorphisms. If $P,Q$ are places of an arbitrary field
$L$ and there is an isomorphism $\sigma: LP\rightarrow LQ$ such that
$\sigma (bP)=bQ$ for all $b\in {\cal O}_P\,$, then we call $P$ and $Q$
\bfind{equivalent places}. In fact, $P$ and $Q$ are equivalent if and
only if ${\cal O}_P={\cal O}_Q\,$. Nevertheless, it is often more
convenient to work with places than with valuation rings, and we will
just identify equivalent places wherever this causes no problems.

Two valuations $v$ and $w$ are called \bfind{equivalent valuations} if
they only differ by an isomorphism of the value groups; this holds if
and only if                                
$v$ and $w$ have the same valuation ring. As for
places, we will identify equivalent valuations wherever this causes no
problems, and we will also identify the isomorphic value groups.

\pars
At this point, we shall introduce a useful notion. Given a function
field $F|K$, we will call $P$ a \bfind{place of $F|K$} if it is a place
of $F$ whose restriction to $K$ is the identity. We say that $P$ is
\bfind{trivial} on $K$ if it induces an isomorphism on $K$. But then,
composing $P$ with the inverse of this isomorphism, we find that $P$ is
equivalent to a place of $F$ whose restriction to $K$ is the identity.
Note that a place $P$ of $F$ is trivial on $K$ if and only if $v_P^{ }$
is \bfind{trivial} on $K$, i.e., $v_P^{ }K=\{0\}$. This is also
equivalent to $K\subset {\cal O}_P\,$. A place $P$ of $F|K$ is said to
be a \bfind{rational place} if $FP=K$. The \bfind{dimension}
of $P$, denoted by $\dim P$, is the transcendence degree of $FP|K$.
Hence, $P$ is \bfind{zero-dimensional} if and only if $FP|K$ is
algebraic.

\parm
Let's get back to our problem. The first thing we learn from our
example is the following. Clearly, we would like to extend our
homomorphism of $K[V]$ to a place of $K(V)$ because then, it will induce
a map on $K[V']$. Then we have the chance to say that the point we have
to look at on the new variety (e.g., in order to see whether this one is
simple) is the point given by this map on $K[V']$. But this only makes
sense if this map is a homomorphism of $K[V']$. So we have to require:
\[y_1,\ldots,y_k\>\in\>{\cal O}_P\]
(since then, $K[y_1,\ldots,y_k]\subseteq {\cal O}_P\,$, which implies
that $P$ is a homomorphism on $K[y_1,\ldots,y_k]$).

\pars
This being granted, the next question coming to our mind is whether
to every point there corresponds exactly one place (up to equivalence),
as it was the case in Example~\ref{examp1}. To destroy this hope, I
give again a very simple example. It will also serve to introduce
several types of places and their invariants.

\begin{example}                             \label{examp2}
Consider the coordinate ring $K[x_1,x_2]$ of $V=\A_K^2$. That is, $x_1$
and $x_2$ are algebraically independent over $K$, and the function
field $K(V)=K(x_1,x_2)$ is just the rational function field in two
variables over $K$. A homomorphism of the polynomial ring
$K[V]=K[x_1,x_2]$ is given by ``evaluating'' every polynomial
$g(x_1,x_2)$ at $x_1=a_1\,$, $x_2=a_2\,$. For example, let us take
$a_1=a_2=0$ and try to extend the corresponding homomorphism of
$K[x_1,x_2]$ to $K(x_1,x_2)$. It is clear that $1/x_1$ and
$1/x_2$ go to $\infty$. But what about $x_1/x_2$ or even $x_1^m/x_2^n$?
Do they go to $0$, $\infty$ or some non-zero element in $K$? The answer
is: all that is possible, and there are infinitely many ways to extend
our homomorphism to a place of $K(x_1,x_2)$.

There is one way, however, which seems to be the most well-behaved.
It is to construct a \bfind{place of maximal rank}; we will explain
this notion later in full generality. The idea is to learn from
Example~\ref{examp1} where we replace $K$ by $K(x_2)$ and $x$ by
$x_1$, and extend the homomorphism defined on $K(x_2)[x_1]$ by
$x_1\mapsto 0$ to a unique place $Q$ of
$K(x_1,x_2)$. Its residue field is $K(x_2)$ since $x_1Q=0\in K(x_2)$,
and its value group is $\Z$. Now we do the same for $K(x_2)$, extending
the homomorphism given on $K[x_2]$ by $x_2\mapsto 0$ to a unique place
$\ovl{Q}$ of $K(x_2)$ with residue field $K$ and value group $\Z$. We
compose the two places, in the following way. Take $b\in K(x_1,x_2)$. If
$bQ=\infty$, then we set $bQ\ovl{Q}=\infty$. If $bQ\ne\infty$, then
$bQ\in K(x_2)$, and we know what $bQ\ovl{Q}=(bQ)\ovl{Q}$ is. In this
way, we obtain a place $P=Q\ovl{Q}$ on $K(x_1,x_2)$ with residue field
$K$. We observe that for every $g\in K[x_1,x_2]$, we have that
$g(x_1,x_2) Q\ovl{Q}=g(0,x_2)\ovl{Q}=g(0,0)$, so our place $P$ indeed
extends the given homomorphism of $K[x_1,x_2]$. Now what happens to our
critical fractions? Clearly, $(1/x_1)P=(1/x_1)Q\ovl{Q}=(\infty)\ovl{Q}
=\infty$, and $(1/x_2)P=(1/x_2)Q\ovl{Q}=(1/x_2)\ovl{Q}=\infty$. But what
interests us most is that for all $m>0$ and $n\geq 0$, $(x_1^m/x_2^n)P=
(x_1^m/x_2^n)Q\ovl{Q}=0\ovl{Q}=0$. We see that ``$x_1$ goes more
strongly to $0$ than every $x_2^n\,$''. We have achieved this by sending
first $x_1$ to $0$, and only afterwards $x_2$ to $0$. We have arranged
our action ``lexicographically''.

What is the associated value group? General valuation theory (cf.\
[V], \S3 and \S4, or [ZS]) tells us that for every composition
$P=Q\ovl{Q}$, the value group
$v_{\ovl{Q}} (FQ)$ of the place $\ovl{Q}$ on $FQ$ is a convex subgroup
of the value group $v_P^{ }F$, and that the value group $v_Q^{ }F$ of
$P$ is isomorphic to $v_P^{ }F/v_{\ovl{Q}}(FQ)$. If the subgroup
$v_{\ovl{Q}} (FQ)$ is a direct summand of $v_P^{ }F$ (as it is the case
in our example), then $v_P^{ }F$ is the lexicographically ordered direct
product $v_Q^{ }F\times v_{\ovl{Q}}(FQ)$. Hence in our case, $v_P^{ }
K(x_1,x_2)=\Z\times\Z$, ordered lexicographically. The \bfind{rank of
an abelian ordered group $G$} is the number of proper convex
subgroups of $G$ (or rather the order type of the chain of convex
subgroups, ordered by inclusion, if this is not finite). The \bfind{rank
of $(F,P)$} is defined to be the rank of $v_P^{ }F$. See under the name
``hauteur'' in [V]. In our case, the rank is 2. We will see in
Section~\ref{sectig} that if $P$ is a place of $F|K$, then the rank
cannot exceed the transcendence degree of $F|K$. So our place
$P=Q\ovl{Q}$ has maximal possible rank.

\pars
There are other places of maximal rank which extend our given
homomorphism, but there is also an abundance of places of smaller rank.
In our case, ``smaller rank'' can only mean rank 1, i.e., there is only
one proper convex subgroup of the value group, namely $\{0\}$. For
an ordered abelian group $G$, having rank 1 is equivalent to being
archimedean ordered and to being embeddable in the ordered additive
group of $\R$. Which subgroups of $\R$ can we get as value groups?
To determine them, we look at the \bfind{rational rank} of an ordered
abelian group $G$. It is $\rr G:= \dim_\Q \Q\otimes_{\Z} G\>$ (note that
$\Q\otimes_{\Z} G$ is the \bfind{divisible hull} of $G$). This is the
maximal number of rationally independent elements in $G$. We will see in
Section~\ref{sectig} that for every place $P$ of $F|K$ we have that
\begin{equation}                            \label{rrtr}
\rr v_P^{ } F\>\leq\> \trdeg F|K\;.
\end{equation}
Hence in our case, also the rational rank of $P$ can be at most 2.
The subgroups of $\R$ of rank 2 are well known: they are the groups
of the form $r\Z+s\Z$ where $r>0$ and $s>0$ are rationally independent
real numbers. Moreover, through multiplication by $1/r$, the group
is order isomorphic to $\Z+\frac{s}{r}\Z$. As we identify equivalent
valuations, we can assume all rational rank 2 value groups (of a rank
1 place) to be of the form $\Z+r\Z$ with $0<r\in\R\setminus \Q$. To
construct a place $P$ with this value group on $K(x_1,x_2)$, we proceed
as follows. We want that $v_P^{ }x_1=1$ and $v_P^{ }x_2=r$; then it will
follow that $v_P^{ }K(x_1,x_2)=\Z+r\Z$ (cf.\ Theorem~\ref{prelBour}
below). We observe that for such $P$, $v_P^{ }(x_1^m/x_2^n)=m-nr$, which
is $>0$ if $m/n>r$, and $<0$ if $m/n<r$. Hence, $(x_1^m/x_2^n)P=0$ if
$m/n>r$, and $(x_1^m/x_2^n)P=\infty$ if $m/n<r$. I leave it to you as an
exercise to verify that this defines a unique place $P$ of $K(x_1,x_2)|
K$ with the desired value group and extending our given homomorphism.

Observe that so far every value group was finitely generated, namely by
two elements. Now we come to the groups of rational rank 1. If such a
group is finitely generated, then it is simply isomorphic to $\Z$. How
do we get places $P$ on $K(x_1,x_2)$ with value group $\Z$? A place with
value group $\Z$ is called a \bfind{discrete place}. The idea is to
first construct a place on the subfield $K(x_1)$. We know from
Example~\ref{examp1} that every place of $K(x_1)|K$ (if it is not
trivial on $K(x_1)$) will have value group $\Z$ (cf.\
Theorem~\ref{prelBour}). Then we can try to extend this place from
$K(x_1)$ to $K(x_1,x_2)$ in such a way that the value group doesn't
change.

There are many different ways how this can be done. One possibility is
to send the fraction $x_1/x_2$ to an element $z$ which is transcendental
over $K$. You may verify that there is a unique place which does
this and extends the given homomorphism; it has value group $\Z$ and
residue field $K(z)$. If, as in this case, a place $P$ of $F|K$ has the
property that $\trdeg FP|K=\trdeg F|K -1$, then $P$ is called a
\bfind{prime divisor} and $v_P^{ }$ is called a \bfind{divisorial
valuation}. The places $Q,\ovl{Q}$ were prime divisors, one of $F$, the
other one of $FQ$.

But maybe we don't want a residue field which is transcendental over
$K$? Maybe we even insist on having $K$ as a residue field? Well, then
we can employ another approach. Having already constructed our place
$P$ on $K(x_1)$ with residue field $K$, we can consider the completion
of $(K(x_1),P)$. The \bfind{completion} of an arbitrary valued field
$(L,v)$ is the completion of $L$ with respect to the topology induced by
$v$. Both $v$ and the associated place $P$ extend canonically to this
completion, whereby value group and residue field remain unchanged. Let
us give a more concrete representation of this completion.

Let $t$ be any transcendental element over $K$. We consider the unique
place $P$ of $F|K$ with $tP=0$. The associated valuation is called the
\bfind{$t$-adic valuation}, denoted by $v_t\,$. It is the unique
valuation $v$ on $K(t)$ (up to equivalence) which is trivial on $K$ and
satisfies that $vt>0$. We want to write down the completion of
$(K(t),v_t)$. We define the \bfind{field of formal Laurent series}
(I prefer \bfind{power series field}) over $K$. It is denoted
by $K((t))$ and consists of all formal sums of the form
\begin{equation}
\sum_{i=n}^{\infty} c_i t^i \;\;\;\mbox{ with $n\in\Z$ and $c_i\in
K$}\;.
\end{equation}
I suppose I don't have to tell you in which way the set $K((t))$ can be
made into a field. But I tell you how $v_t$ extends from $K(t)$ to
$K((t))$: we set
\begin{equation}
v_t\sum_{i=n}^{\infty} c_i t^i\>=\>n \;\;\;\mbox{ if $c_n\ne 0$}\;.
\end{equation}
One sees immediately that $v_t K((t))=v_tK(t)=\Z$. For
$b=\sum_{i=n}^{\infty} c_i t^i$ with $c_n\ne 0$, we have that
$bv_t=\infty$ if $m<0$, $bv_t=0$ if $m>0$, and $bv_t=c_0\in K$ if $m=0$.
So we see that $K((t))v_t=K(t)v_t=K$. General valuation theory shows
that $(K((t)),v_t)$ is indeed the completion of $(K(t),v_t)$.

It is also known that the transcendence degree of $K((t))|K(t)$ is
uncountable. If $K$ is countable, this follows directly from the fact
that $K((t))$ then has the cardinality of the continuum. But it is
quite easy to show that the transcendence degree is at least one, and
already this suffices for our purposes here. The idea is to take any
$y\in K((t))$, transcendental over $K(t)$; then $x_1 \mapsto t$,
$x_2\mapsto y$ induces an isomorphism $K(x_1,x_2)\rightarrow K(t,y)$. We
take the restriction of $v_t$ to $K(t,y)$ and pull it back to
$K(x_1,x_2)$ through the isomorphism. What we obtain on $K(x_1,x_2)$ is
a valuation $v$ which extends our valuation $v_P^{ }$ of $K(x_1)$. As is
true for $v_t$, also this extension still has value group $\Z=v_P^{ }
K(x_1)$ and residue field $K=K(x_1)P$. The desired place of $K(x_1,x_2)$
is the place associated with this valuation $v$.

We have now constructed essentially all places on $K(x_1,x_2)$ which
extend the given homomorphism of $K[x_1,x_2]$ and have a finitely
generated value group (up to certain variants, like exchanging the role
of $x_1$ and $x_2$). The somewhat shocking experience to every
``newcomer'' is that on this rather simple rational function field,
there are also places extending the given homomorphism and having
a value group which is not finitely generated. For instance, the value
group can be $\Q$. (In fact, it can be any subgroup of $\Q$.) We
postpone the construction of such a place till Section~\ref{sectbad}.
\end{example}

After we have become acquainted with places and how one obtains them
from homomorphisms of coordinate rings, it is time to formulate our
problem of local desingularization. Instead of looking for a
desingularization ``at a given point'' of our variety $V$, we will
look for a desingularization at a given place $P$ of the function
field $F|K$ (we forget about the variety from which $F$ originates).
Suppose we have any $V$ such that $K(V)=F$, that is, we have generators
$x_1,\ldots,x_\ell$ of $F|K$ and the coordinate ring $K[x_1,\ldots,
x_\ell]$ of $V$. If we talk about the \bfind{center of $P$ on $V$}, we
always tacitly assume that $x_1,\ldots,x_\ell\in {\cal O}_P\,$, so that
the restriction of $P$ is a homomorphism on $K[x_1,\ldots,x_\ell]$.
With this provision, the center of $P$ on $V$ is the point
$(x_1P,\ldots, x_\ell P)$ (or, if we so want, the induced homomorphism).
We also say that $P$ is \bfind{centered on $V$ at $(x_1P,\ldots, x_\ell
P)$}. If $V$ is a variety defined over $K$ with function field $F$, then
we call $V$ a \bfind{model of $F|K$}. Our problem now reads:
\mn
{\bf (LU)} \ \ {\it Take any function field $F|K$ and a place $P$ of
$F|K$. Does there exist a model of $F|K$ on which $P$ is centered at a
simple point?}
\mn
This was answered in the positive by Oscar Zariski in [Z] for
the case of $K$ having characteristic $0$. Instead of ``local
desingularization'', he called this principle \bfind{local
uniformization}.

%
%
\section{Local uniformization and the Implicit Function Theorem}
Let's think about what we mean by ``simple point''. I don't really have
to tell you, so let me pick the most valuation theoretic definition,
which will show us our way on our excursion. It is the Jacobi criterion:
Given our variety $V$ defined by $f_1,\ldots,f_n\in K[X_1,\ldots,
X_\ell]$ and having function field $F$, then a point $a=(a_1,\ldots,
a_\ell)$ of $V$ is called \bfind{simple} (or \bfind{smooth}) if $\trdeg
F|K=\ell-r$, where $r$ is the rank of the Jacobi matrix
\[\left(\frac{\partial f_i}{\partial X_j}(a)\right)_{1\leq i\leq n
\atop 1\leq j\leq\ell}\]
But wait --- I have seen the Jacobi matrix long before I learned
anything about algebraic geometry. Now I remember: I saw it in my
first year calculus course in connection with the \bfind{Implicit
Function Theorem}.
Let's have a closer look. First, let us assume that
we don't have too many $f_i$'s. Indeed, when looking for a local
uniformization we will construct varieties $V$ defined by $\ell-\trdeg
F|K$ many polynomial relations, whence $n=\ell-\trdeg F|K$. In this
situation, if $a$ is a simple point, then $n$ is equal to $r$ and after
a suitable renumbering we can assume that for $k:=\ell-n =\trdeg F|K$,
the submatrix
\[\left(\frac{\partial f_i}{\partial X_j}(a)\right)_{1\leq i\leq n
\atop k+1\leq j\leq\ell}\]
is invertible. Then, assuming that we are working over the reals, the
Implicit Function Theorem tells us that for every $(a'_1,\dots,a'_k)$ in
a suitably small neighborhood of $(a_1,\dots,a_k)$ there is a unique
$(a'_{k+1},\ldots,a'_\ell)$ such that $(a'_1,\ldots,a'_\ell)$ is a point
of $V$. Working in the reals, the existence is certainly interesting,
but for us here, the main assertion is the uniqueness. Let's look at a
very simple example.

\begin{example}                             \label{exampy2x3}
I leave it to you to draw the graph of the function $y^2=x^3$ in $\R^2$.
It only exists for $x\geq 0$. Starting from the origin, it has two
branches, one positive, one negative. Now assume that we are sitting
on one of these branches at a point $(x,y)$, away from the origin.
If somebody starts to manipulate $x$ then we know exactly which
way we have to run (depending on whether $x$ increases or decreases).
But if we are sitting at the origin and somebody increases $x$, then
we have the freedom of choice into which of the two branches we want
to run. So we see that everywhere but at the origin, $y$ is an implicit
function of $x$ in a sufficiently small neighborhood. Indeed, with
$f(x,y)=x^3-y^2$, we have that $\frac{\partial f}{\partial x}(x,y)
=3x^2$. If $x\ne 0$, then this is non-zero, whence $r=1$ while
$\trdeg F|K =1$ and $\ell=2$, so for $x\ne 0$, $(x,y)$ is a simple
point. On the other hand, $\frac{\partial f}{\partial x}(0,0)=0$ and
$\frac{\partial f}{\partial y}(0,0)=0$, so $(0,0)$ is singular.
\end{example}

We have now seen the connection between simple points and the
Implicit Function Theorem. ``Wait!'' you will interrupt me. ``You have
used the topology of $\R$. What if we don't have such a topology at
hand? What then do you mean by `neighborhood'?'' Good question. So let's
look for a topology. My luck, that the Implicit Function Theorem is also
known in valuation theory. Indeed, we have already remarked in
connection with the notion ``completion of a valued field'' that every
valuation induces a topology. And since we have our place on $F$, we
have the topology right at hand. That is why I said that the
Jacobi criterion renders the most valuation theoretical definition of
``simple''.

But now this makes me think: haven't I seen the Jacobi matrix in
connection with an even more famous valuation theoretical theorem,
one of central importance in valuation theory? Indeed: it appears in the
so-called ``multidimensional version'' of \bfind{Hensel's Lemma}. This
brings us to our next sightseeing attraction on our excursion.

%
%
\section{Hensel's Lemma}
Hensel's Lemma is originally a lemma proved by Kurt Wilhelm Sebastian
Hensel for the field of $p$-adic numbers $\Qp\,$. It was then extended
to all complete discrete valued fields and later to all maximal fields
(see Corollary~\ref{maxhens} below). A valued field $(L,v)$ is called
\bfind{maximal} (or \bfind{maximally complete}) if it has no proper
extensions for which value group and residue field don't change. A
complete field is not necessarily maximal, and if it is not of rank 1
(i.e., its value group is not archimedean), then it also does not
necessarily satisfy Hensel's Lemma. However, complete discrete valued
fields are maximal. In particular, $(K((t)),v_t)$ is maximal.

In modern valuation theory (and its model theory), Hensel's Lemma is
rather understood to be a property of a valued field. The nice thing is
that, in contrast to ``complete'' or ``maximal'', it is an elementary
property (I will tell you in Section~\ref{sectmta} what this means). We
call a valued field \bfind{henselian} if it satisfies Hensel's Lemma.
Here is one version of Hensel's Lemma for a valued field with
valuation ring ${\cal O}_v\,$:
\sn
{\bf (Hensel's Lemma)} \ {\it For every polynomial $f\in {\cal O}_v [X]$
the following holds: if $b\in {\cal O}_v$ satisfies
\begin{equation}                            \label{hhl}
vf(b)\>>\>0 \mbox{ \ \ and \ \ } vf'(b)\>=\>0\;,
\end{equation}
then $f$ admits a root $a\in {\cal O}_v$ such that $v(a-b)>0$.}
\sn
Here, $f'$ denotes the derivative of $f$. Note that a more classical
version of Hensel's Lemma talks only about monic polynomials.

For the multidimensional version, we introduce some notation. For
polynomials $f_1,\ldots,f_n$ in variables $X_1,\ldots, X_n\,$, we
write $f=(f_1,\ldots,f_n)$ and denote by $J_f$ the Jacobian matrix
$\left(\frac{\partial f_i}{\partial X_j}\right)_{i,j}\,$. For $a\in
L^n$, $J_f(a)=\left(\frac{\partial f_i}{\partial X_j}(a)
\right)_{i,j}\,$.

\sn
{\bf (Multidimensional Hensel's Lemma)} \ {\it Let $f= (f_1,\ldots,f_n)$
be a system of polynomials in the variables $X= (X_1,\ldots,X_n)\,$
and with coefficients in ${\cal O}_v$. Assume that there exists
$b=(b_1,\ldots,b_n)\in {\cal O}_v^{\,n}$ such that
\begin{equation}                            \label{hmhl}
vf_i(b)\>>\>0\mbox{ for }1\leq i\leq n\mbox{ \ \ and \ \ }v\det
J_f(b)\>=\>0\;.
\end{equation}
Then there exists a unique $a=(a_1,\ldots,a_n)\in {\cal O}_v^{\,n}$ such
that $f_i(a)=0\,$ and that $\,v(a_i-b_i)>0\,$ for all $i$.}
\mn
And here is the valuation theoretical Implicit Function Theorem:
\sn
{\bf (Implicit Function Theorem)} \ {\it Take $f_1,\ldots,f_n\in
L[X_1,\ldots,X_\ell]$ with $n<\ell$. Set
\begin{equation}
\tilde{J}\>:=\>\left(\begin{array}{ccc}
\frac{\partial f_1}{\partial X_{\ell-n+1}} &\ldots&
\frac{\partial f_1}{\partial X_{\ell}}\\
\vdots & & \vdots\\
\frac{\partial f_n}{\partial X_{\ell-n+1}} &\ldots&
\frac{\partial f_n}{\partial X_{\ell}}
\end{array}\right)\;\;.
\end{equation}
Assume that $f_1,\ldots,f_n$ admit a common zero $a=(a_1,\ldots,a_\ell)
\in L^\ell$ and that $\det \tilde{J}(a)\ne 0$. Then there is some
$\alpha\in vL$ such that for all $(a'_1,\ldots,a'_{\ell-n})\in
L^{\ell-n}$ with $v(a_i-a'_i)>2\alpha$, $1\leq i\leq \ell-n$, there
exists a unique $(a'_{\ell-n+1},\ldots,a'_{\ell})\in L^n$ such that
$(a'_1,\ldots, a'_{\ell})$ is a common zero of $f_1, \ldots,f_n\,$,
and $v(a_i-a'_i)>\alpha$ for $\ell-n< i\leq\ell$.}
\sn
It is (not all too well) known that Hensel's Lemma holds in $(L,v)$
if and only if the Multidimensional Hensel's Lemma holds in $(L,v)$, and
this in turn is true if and only if the Implicit Function Theorem holds
in $(L,v)$. For a proof, see [K2] or [PZ]. The latter paper is
particularly interesting since it shows the connection between the
Implicit Function Theorem in henselian fields and
the ``real'' Implicit Function Theorem in $\R$.

There are many more versions of Hensel's Lemma which all are equivalent
to the above (the classical Hensel's Lemma for monic polynomials,
Krasner's Lemma, Newton's Lemma, Hensel--Rychlik,...). See [R2] or [K2]
for a listing of them. It is indeed often very useful to have the
different versions at hand. One particularly important is given in the
following lemma:
\begin{lemma}                               \label{hensuniq}
A valued field $(L,v)$ is henselian if and only if the extension of
$v$ to the algebraic closure $\tilde{L}$ of $L$ is unique.
\end{lemma}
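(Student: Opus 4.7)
The plan is to prove the two directions separately. The forward direction (henselian $\Rightarrow$ unique extension) reduces, via a Newton-polygon argument based on Hensel's Lemma, to showing that the roots in $\tilde L$ of an irreducible $f\in L[X]$ all share the same $\bar v$-value; the backward direction extracts a unique root of the Hensel polynomial close to $b$ and uses the Galois-invariance supplied by uniqueness to force that root into $L$.

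For the forward direction, fix $a \in \tilde L$ with minimal polynomial $f \in L[X]$ and let $a_1,\ldots,a_n$ be its roots in $\tilde L$. I would first show that under any extension $\bar v$ of $v$ to $\tilde L$, all $\bar v(a_i)$ coincide. Granted this, the common value is $\frac1n v(\pm c_0/c_n)$ where $c_0, c_n$ are the constant and leading coefficients of $f$; hence $\bar v(a)$ depends only on $f$ and $v$, so any two extensions must assign $a$ the same value, proving uniqueness. To establish the common-value claim I would argue by contradiction using the Newton polygon of $f$ with respect to $\bar v$: if the $\bar v(a_i)$ were not all equal, the polygon would have a breakpoint at some $(k, v(c_k))$ with $0<k<n$, and applying a suitable form of Hensel's Lemma (the factorization version, whose equivalence with the version stated earlier is documented in [K2] or [R2]) would then split $f = f_1 f_2 \in L[X]$ with $\deg f_1 = k$, contradicting irreducibility. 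The cleanest subcase is when the minimum slope of the polygon is $0$: one reduces $f$ modulo $\mathcal M_v$ to obtain a coprime factorization $\bar f = X^{k}\bar g$ with $\bar g(0) \ne 0$ that lifts directly via the form stated in the excerpt. Handling an arbitrary minimum slope is the main obstacle and is precisely what forces me to appeal to the more robust factorization form of Hensel's Lemma rather than the root-finding form given in the paper.

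For the backward direction, let $\bar v$ be the unique extension of $v$ to $\tilde L$; then $\bar v \circ \sigma$ is another extension for every $\sigma \in \mathrm{Aut}(\tilde L / L)$, so uniqueness gives $\bar v \circ \sigma = \bar v$. Now take $f \in \mathcal O_v[X]$ and $b \in \mathcal O_v$ with $vf(b)>0$ and $vf'(b)=0$, and let $a_1,\ldots,a_n$ be the roots of $f$ in $\tilde L$. Since $f(b) = c_n \prod_i(b - a_i)$ with $c_n \in \mathcal O_v$, the hypothesis $vf(b) > 0$ forces $\bar v(b - a_i) > 0$ for at least one $i$, say $i = 1$. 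The key calculation is that this cannot hold simultaneously for two distinct indices: if $\bar v(b - a_1)$ and $\bar v(b - a_2)$ were both positive, then every summand of $f'(b) = c_n \sum_i \prod_{j\ne i}(b - a_j)$ would contain at least one of the factors $b - a_1, b - a_2$ (both when $i \notin \{1,2\}$, the other one when $i \in \{1,2\}$) and hence have positive $\bar v$-valuation, whence $vf'(b) > 0$, a contradiction. With $a_1$ characterized uniquely as the close root, Galois invariance of $\bar v$ yields $\bar v(b - \sigma(a_1)) = \bar v(b - a_1) > 0$ for every $\sigma$, so uniqueness of the close root forces $\sigma(a_1) = a_1$ and hence $a_1 \in L$; moreover $a_1 - b \in \mathcal M_{\bar v}$ combined with $b \in \mathcal O_v$ gives $a_1 \in \mathcal O_v$ with $v(a_1 - b) > 0$, as required.
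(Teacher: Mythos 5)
Your backward direction is essentially right but has two omissions, both patchable. The inferences that ``$vf(b)>0$ forces some $\bar v(b-a_i)>0$'' and that ``every summand of $f'(b)$ has positive value'' silently use that $\bar v(b-a_i)\ge 0$ for all $i$; this holds when $f$ is monic, since then each $a_i$ is integral over ${\cal O}_v$ and so lies in the integrally closed ring ${\cal O}_{\bar v}$, but can fail otherwise, so reduce to the monic case first. And ``$\sigma(a_1)=a_1$ for all $\sigma$, hence $a_1\in L$'' needs $a_1$ to be separable over $L$ --- Galois-fixedness alone only puts $a_1$ in the purely inseparable closure of $L$. Separability is free from your own uniqueness step applied with $a_1=a_2$ (so $a_1$ is a simple root of $f$), but it has to be said.

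The forward direction has a genuine gap, which you correctly name as ``the main obstacle'' but do not close. The coprime-factorization form of Hensel's Lemma lifts only a nontrivial coprime factorization of the residue polynomial $\bar f\in Lv[X]$. For a monic irreducible $f\in{\cal O}_v[X]$ this forces $\bar f$ to be a prime power; if that prime is not $X$, every root has value $0$ and you are done, but if $\bar f=X^n$, there is no coprime factorization to lift, yet the roots may still have distinct positive values --- exactly the case of a Newton polygon whose breakpoint lies strictly above height $0$. Normalizing by a substitution $X\mapsto uY$ to bring a segment down to height $0$ requires $v(u)$ equal to a slope of the polygon, a value that need not lie in $vL$; and passing to the finite extension $L(u)$ would require knowing $(L(u),v)$ is henselian, i.e.\ Corollary~\ref{algexth}, which the paper derives \emph{from} Lemma~\ref{hensuniq}. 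So the plan is circular as written. A standard non-circular route that still uses exactly your tool: if $v_1\neq v_2$ were two extensions of $v$ to a finite $M|L$, they are incomparable, so one finds $c$ in the integral closure of ${\cal O}_v$ in $M$ with $v_1(c)>0$ and $v_2(c-1)>0$; its minimal polynomial $g$ is monic in ${\cal O}_v[X]$, and since every conjugate of $c$ has nonnegative value under any extension of $v_1$ or $v_2$, one gets $v(g(0))>0$ and $v(g(1))>0$, so $\bar g$ is divisible by the coprime pair $X$, $X-1$, and the coprime-factorization form of Hensel's Lemma then contradicts the irreducibility of $g$.
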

Since any valuation of any field can always be extended to any extension
field (cf.\ [V], \S5), the following is an easy consequence of this
lemma:
{\it $(L,v)$ is henselian if and only if $v$ admits a unique extension
to every algebraic extension field.\/} Also, we immediately obtain:
\begin{corollary}                           \label{algexth}
Every algebraic extension of a henselian field is again henselian.
\end{corollary}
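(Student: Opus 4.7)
My plan is to reduce the corollary directly to Lemma~\ref{hensuniq}, which characterizes henselianity in terms of the uniqueness of the extension of the valuation to the algebraic closure. Concretely, I take a henselian field $(L,v)$ and an algebraic extension $L'|L$, equip $L'$ with the (necessarily unique) restriction of any chosen extension of $v$ to $L'$, and then show that this valuation on $L'$ has a unique extension to the algebraic closure $\widetilde{L'}$ of $L'$.

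The first step I would carry out is the observation that $\widetilde{L'}=\tilde L$, because $L'|L$ is algebraic, so any algebraic closure of $L'$ is also an algebraic closure of $L$. This is the conceptual crux: it reduces the problem of extending a valuation from $L'$ to $\widetilde{L'}$ to the problem of extending from $L'$ to $\tilde L$.

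Next, I would argue the uniqueness as follows. Suppose $w_1$ and $w_2$ are two extensions of $v|_{L'}$ from $L'$ to $\tilde L$. Then both $w_1$ and $w_2$ restrict to $v$ on $L$, so each is an extension of $v$ from $L$ to $\tilde L$. Since $(L,v)$ is henselian, Lemma~\ref{hensuniq} says that $v$ has exactly one extension to $\tilde L$, hence $w_1=w_2$. Applying Lemma~\ref{hensuniq} in the opposite direction to $(L',v|_{L'})$ now shows that $L'$ is henselian.

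I do not expect any real obstacle here; the only thing to watch is that one first needs to know that $v$ extends to $L'$ at all (guaranteed by the general fact cited in the paragraph preceding the corollary, namely that any valuation of any field extends to any field extension), and that one must pick the valuation on $L'$ to be the restriction of an extension to $\tilde L$ — but since that extension is unique, the restriction is canonical, so no ambiguity arises.
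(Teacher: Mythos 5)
Your argument is correct and is exactly the route the paper intends: it derives the corollary from Lemma~\ref{hensuniq} via the observation that $\widetilde{L'}=\tilde L$, so that any two extensions of $v$ from $L'$ to $\widetilde{L'}$ are also extensions of $v$ from $L$ to $\tilde L$ and hence coincide. The paper simply calls this "immediate" after stating the lemma; your write-up fills in the same one-step reasoning.
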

This is hard to prove if you use Hensel's Lemma instead of the unique
extension property in the proof. On the other hand, the next lemma is
hard to prove using the unique extension property, while it is immediate
if you use Hensel's Lemma:
\begin{lemma}
Take a henselian field $(L,v)$ and a relatively algebraically closed
subfield $L'$ of $L$. Then also $(L',v)$ is henselian.
\end{lemma}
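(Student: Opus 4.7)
My plan is to prove this directly from Hensel's Lemma, since (as the paper itself points out just before the statement) the unique extension property would be awkward to use here while Hensel's Lemma makes things immediate.

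First I would let $v'$ denote the restriction of $v$ to $L'$. To verify Hensel's Lemma for $(L',v')$, I would take an arbitrary polynomial $f\in {\cal O}_{v'}[X]$ together with an element $b\in {\cal O}_{v'}$ satisfying
\[ v'f(b)\>>\>0 \quad\text{and}\quad v'f'(b)\>=\>0\;, \]
and try to produce a root of $f$ in ${\cal O}_{v'}$ that is congruent to $b$ modulo the maximal ideal.

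The key step is then to simply view $f$ and $b$ as living inside $(L,v)$: the coefficients of $f$ lie in ${\cal O}_{v'}\subseteq {\cal O}_v$, and $b\in{\cal O}_{v'}\subseteq {\cal O}_v$, and the two hypotheses above read identically when $v'$ is replaced by $v$. Since $(L,v)$ is henselian, Hensel's Lemma produces some $a\in {\cal O}_v$ with $f(a)=0$ and $v(a-b)>0$.

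It remains only to observe that this $a$ already lies in $L'$. Since $a$ is a root of a polynomial with coefficients in $L'$, it is algebraic over $L'$, so it lies in $L\cap \tilde{L'}$ where $\tilde{L'}$ denotes the algebraic closure of $L'$; by hypothesis $L'$ is relatively algebraically closed in $L$, so this intersection is just $L'$. Hence $a\in L'$, and since $a-b\in{\cal M}_v\cap L'={\cal M}_{v'}$ while $b\in {\cal O}_{v'}$, we also have $a\in {\cal O}_{v'}$. Thus $(L',v')$ satisfies Hensel's Lemma.

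There is no real obstacle here; the one conceptual point to keep in mind is precisely why the version of Hensel's Lemma stated in the paper (for arbitrary, not necessarily monic, polynomials in ${\cal O}_v[X]$) is the right one to invoke: it lets us stay inside the given valuation rings without worrying about denominators or about clearing leading coefficients, so the descent of the root from $L$ to $L'$ is completely transparent.
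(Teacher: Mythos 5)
Your proof is correct and is exactly the route the paper has in mind: the paper gives no explicit argument but remarks that the statement is "immediate if you use Hensel's Lemma," and your argument — lift the hypotheses into $(L,v)$, invoke Hensel there, then descend the root back into $L'$ via relative algebraic closedness — is that immediate argument, carried out cleanly.
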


\parm
Let us take a short break to see how Hensel's Lemma can be applied. The
following two examples will later have important applications.
\begin{example}                             \label{exampHL1}
Assume that $\chara L=p>0$. A polynomial $f(X)=X^p-X-c$ with $c\in L$ is
called an \bfind{Artin-Schreier polynomial} (over $L$). If $\vartheta$
is a root of $f$ in some extension of $L$, then $\vartheta,\vartheta+1,
\ldots,\vartheta+p-1$ are the distinct roots of $f$. Hence if $f$ is
irreducible over $L$, then $L(\vartheta)|L$ is a Galois extension of
degree $p$. It is called an \bfind{Artin-Schreier extension}.
Conversely, {\it every\/} Galois extension of degree $p$ in
characteristic $p$ is generated by a root of a suitable Artin-Schreier
polynomial, i.e., is an Artin-Schreier extension (see [K2] for a proof).

Let us prove our assertion about the roots of $f$. We note that in
characteristic $p>0$, the map $x\mapsto x^p$ is a ring homomorphism (the
\bfind{Frobenius}). Therefore, the polynomial $\wp(X):=X^p-X$ is an
\bfind{additive polynomial}. A polynomial $g$ is called additive if
$g(a+b)=g(a)+g(b)$ for all $a,b$ (for details, cf.\ [L2], VIII, \S 11).
Thus, if $i\in\Fp\,$, then $f(\vartheta+i)=\wp(\vartheta+i)-c=
\wp(\vartheta)-c+\wp(i)=0+i^p-i= i-i=0$ since $i^p=i$ for every
$i\in\Fp\,$.

Now assume that $(L,v)$ is henselian. Suppose first that $vc>0$. Take
$b=0\in {\cal O}_v\,$. Then $vf(b)=vc>0$. On the other hand, $f'(X)=
pX^{p-1}-1=-1$ since $p=0$ in characteristic $p$. Hence, $vf'(b)=v(-1)
=0$. Therefore, Hensel's Lemma shows that $f$ admits a root in $L$,
which by our above observation about the roots of $f$ means that $f$
splits completely over $L$.

Suppose next that $vc=0$. Then for $b\in {\cal O}_v$ we have that
$v(b^p-b-c)>0$ if and only if $0=(b^p-b-c)v=(bv)^p-bv-cv$. Hence,
$v(b^p-b-c)>0$ if and only if $bv$ is a root of the Artin-Schreier
polynomial $X^p-X-cv\in Lv[X]$. If $cv=0$, which is our previous case
where $vc>0$, then $0$ is a root of $X^p-X-cv=X^p-X$ and we can choose
$b=0$. But in our present case, $cv\ne 0$, and everything depends on
whether $X^p-X-cv$ has a root in $Lv$ or not. If it has a root $\eta$ in
$Lv$, then we choose $b\in {\cal O}_v$ such that $bv=\eta$. We obtain
that $(b^p-b-c)v=(bv)^p-bv-cv=\eta^p-\eta-cv=0$, hence $vf(b)>0$. Then
by Hensel's Lemma, $X^p-X-c$ has a root $a\in {\cal O}_v$ with
$v(a-b)>0$, hence $av=bv=\eta$. Conversely, if $X^p-X-c$ has a root $a$
in $L$, then one easily shows that $a\in {\cal O}_v\,$, and $0=0v=
(a^p-a-c)v =(av)^p-av-cv$ yields that $X^p-X-cv$ has a root in $Lv$.

The only remaining case is that of $vc<0$. In this case, $X^p-X-c\notin
{\cal O}_v[X]$, so Hensel's Lemma doesn't give us any immediate
information about whether $f$ has a root in $L$ or not.
\end{example}

\begin{example}
Take a field $K$ of characteristic $p>0$. In the field $(K((t)),v_t)\,$
(which is henselian, cf.\ Corollary~\ref{maxhens} below), the
Artin-Schreier polynomial
\begin{equation}
X^p\>-\>X\>-\>t
\end{equation}
has the root
\begin{equation}
a\>=\>\sum_{i=0}^{\infty} (-t)^{p^i}
\end{equation}
since
\[a^p\,-\,a\>=\>\sum_{i=0}^{\infty} (-t)^{p^{i+1}}\>-\>
\sum_{i=0}^{\infty} (-t)^{p^i}\>=\>\sum_{i=1}^{\infty} (-t)^{p^i}\>-\>
\sum_{i=0}^{\infty} (-t)^{p^i}\>=\>t\;.\]
\end{example}

Take any polynomial $f\in {\cal O}_v[X]$. By $fv$ we mean the
\bfind{reduction of the polynomial $f$ modulo $v$}, that is, the
polynomial we obtain from $f$ by replacing every coefficient $c_i$ of
$f$ by its residue $c_iv$. As the residue map is a homomorphism on
${\cal O}_v\,$, we have that $f'v=(fv)'$. Suppose there is some $b\in L$
such that $vf(b)>0$ and $vf'(b)=0$. This is equivalent to $f(b)v=0$ and
$f'(b)v\ne 0$. But $f(b)v=fv(bv)$ and $f'(b)v=(fv)'(bv)$, so the latter
is equivalent to $bv$ being a simple root of $fv$. Conversely, if $fv$
has a simple root $\zeta$, find some $b$ such that $bv=\zeta$, and you
will have that $vf(b)>0$ and $vf'(b)=0$. Hence, Hensel's Lemma is also
equivalent to the following version:
\sn
{\bf (Hensel's Lemma, Simple Root Version)}
{\it For every polynomial $f\in {\cal O}_v [X]$ the following holds: if
$fv$ has a simple root $\zeta$ in $Lv$, then $f$ admits a root $a\in
{\cal O}_v$ such that $av=\zeta$.}

\begin{example}                             \label{exampHL2}
Take a henselian valued field $(L,v)$ and a
relatively algebraically closed subfield $L'$ of $L$. Assume there is
an element $\zeta$ of the residue field $Lv$ which is algebraic over
$L'v$, and denote its minimal polynomial over $L'v$ by $h\in L'v[X]$.
Find a monic polynomial $f\in ({\cal O}_v\cap L')[X]$ such that $fv=h$.

If $\zeta$ is separable over $L'v$, then $\zeta$ is a simple root of
$h$. As $\zeta\in Lv$ and $(L,v)$ is henselian by assumption, the Simple
Root Version of Hensel's Lemma tells us then that there is some $a\in L$
such that $h(a)=0$ and $av=\zeta$. But as $a$ is algebraic over $L'$ we
have that $a\in L'$, so that $\zeta=av\in L'v$. If on the other hand
$\zeta$ is not separable over $L'v$, then it is quite possible that
$\eta\notin L'v$. But we have proved:
\begin{lemma}                               \label{HLrel1}
If $(L,v)$ is henselian and $L'$ is relatively algebraically closed
in $L$, then $L'v$ is \bfind{relatively separable-algebraically closed}
in $Lv$, i.e., every element of $Lv$ already belongs to $L'v$ if it is
separable-algebraic over $L'v$.
\end{lemma}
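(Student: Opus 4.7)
The plan is to read off the proof directly from the discussion preceding the lemma, but to record carefully why each hypothesis is used. Take $\zeta\in Lv$ which is separable-algebraic over $L'v$, and let $h\in L'v[X]$ be its minimal polynomial. The goal is to show $\zeta\in L'v$.

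First I would lift $h$ to a monic polynomial $f\in ({\cal O}_v\cap L')[X]$ with $fv=h$. This is possible because the residue map $({\cal O}_v\cap L')\rightarrow L'v$ is surjective (by definition, $L'v$ is the image of ${\cal O}_v\cap L'$ under $P$), and lifting $h$ coefficient by coefficient preserves monicity since the leading coefficient $1$ lifts to $1$. Note that $f$ has the same degree as $h$.

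Next, because $\zeta$ is separable over $L'v$, it is a \emph{simple} root of its minimal polynomial $h=fv$. Since $(L,v)$ is henselian by hypothesis and $\zeta\in Lv$, the Simple Root Version of Hensel's Lemma applied in $(L,v)$ produces an element $a\in {\cal O}_v$ with $f(a)=0$ and $av=\zeta$.

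Finally, since $f\in L'[X]$, the element $a$ is algebraic over $L'$; and since $L'$ is relatively algebraically closed in $L$, we conclude $a\in L'$. Therefore $\zeta=av\in L'v$, as desired. There is no real obstacle here; the only point one should be careful about is that the lift $f$ must be chosen with coefficients in $L'$ (not merely in $L$), so that afterwards one can exploit the relative algebraic closedness of $L'$ in $L$ to pull $a$ back into $L'$. The separability hypothesis is used in exactly one place, namely to guarantee the simplicity of the root $\zeta$ of $fv$, which is what enables Hensel's Lemma to apply.
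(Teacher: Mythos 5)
Your proof is correct and follows exactly the same route as the paper's argument: lift the minimal polynomial to a monic $f$ with coefficients in ${\cal O}_v\cap L'$, apply the Simple Root Version of Hensel's Lemma in $(L,v)$ to produce a root $a$ with $av=\zeta$, and then use relative algebraic closedness of $L'$ in $L$ to place $a\in L'$. Your additional remarks about where the coefficients of the lift must live and where separability is used are accurate and make the reasoning explicit.
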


Something similar can be shown for the value groups, provided that
$Lv=L'v$. Pick an element $\delta\in vL$ such that for some $n>0$,
$n\delta\in vL'$. Choose some $d\in L$ such that $vd=\delta$. Hence,
$vd^n=nvd\in vL'$ and we can choose some $d'\in L'$ such that
$vd'd^n=0$. Assuming that $Lv=L'v$, we can also pick some $d''\in L'$
such that $(d'd''d^n)v=1$.

An element $u$ with $uv=1$ is called a \bfind{1-unit}. We consider the
polynomial $X^n-u$. Its reduction modulo $v$ is simply the polynomial
$X^n-1$. Obviously, $1$ is a root of that polynomial, but is it a simple
root? The answer is: $1$ is a simple root of $X^n-1$ if and only if the
characteristic of $Lv$ does not divide $n$. Hence in that case, Hensel's
Lemma shows that there is a root $a\in L$ of the polynomial $X^n-u$
such that $av=1$. This proves:
\begin{lemma}                               \label{1-unit}
Take a 1-unit $u$ in the henselian field $(L,v)$ and $n\in\N$ such that
the characteristic of $Lv$ does not divide $n$. Then there is a unique
1-unit $a\in L$ such that $a^n=u$.
\end{lemma}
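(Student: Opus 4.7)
The plan is to apply the Simple Root Version of Hensel's Lemma directly to the polynomial $f(X):=X^n-u\in{\cal O}_v[X]$. Since $u$ is a 1-unit, its reduction is $fv(X)=X^n-1\in Lv[X]$, which certainly has $1$ as a root. The derivative $(fv)'(X)=nX^{n-1}$ takes the value $n$ at $X=1$, and this is nonzero in $Lv$ precisely because $\chara Lv$ does not divide $n$. So $1$ is a simple root of $fv$, and the Simple Root Version supplies an $a\in{\cal O}_v$ with $a^n=u$ and $av=1$; such $a$ is a 1-unit, giving existence.

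For uniqueness, suppose $a$ and $a'$ are 1-units of $L$ with $a^n=(a')^n=u$. Write $\zeta:=a/a'$; then $\zeta v=1$ and $\zeta^n=1$. Setting $\epsilon:=\zeta-1$ one has $v\epsilon>0$ and
\[0\;=\;\zeta^n-1\;=\;(1+\epsilon)^n-1\;=\;\epsilon\cdot h(\epsilon),\]
where $h(X):=\sum_{k=1}^{n}\binom{n}{k}X^{k-1}$ has constant term $h(0)=n$. Since $vn=0$ while $v\epsilon>0$, every non-constant term of $h(\epsilon)$ has strictly positive value, forcing $vh(\epsilon)=0$; in particular $h(\epsilon)\neq 0$. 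Hence $\epsilon=0$, that is, $a=a'$.

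The whole argument is a direct application of Hensel's Lemma together with an elementary valuation computation, so I do not anticipate any serious obstacle. The only point requiring care is that the hypothesis $\chara Lv\nmid n$ is used twice: once to make $1$ a simple root of $X^n-1$ (for existence), and once to ensure that $n$ is a unit of ${\cal O}_v$, which is what kills the factor $h(\epsilon)$ in the uniqueness step.
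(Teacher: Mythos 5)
Your proof is correct and the existence half is exactly the paper's argument: apply the Simple Root Version of Hensel's Lemma to $X^n-u$, whose reduction $X^n-1$ has $1$ as a simple root precisely because $\chara Lv\nmid n$. The paper states the lemma immediately after this observation without explicitly writing out the uniqueness half; your direct computation $\zeta^n-1=\epsilon\,h(\epsilon)$ with $vh(\epsilon)=0$ (again using that $n$ is a unit in $\mathcal O_v$) supplies exactly what is implicit there, so the two arguments are essentially the same.
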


In our present case, this provides an element $a\in L$ such that $a^n=
d'd''d^n$. We find that $(a/d)^n=d'd''\in L'$. Since $a/d\in L$ and $L'$
is relatively algebraically closed in $L$, this implies that $a/d\in
L'$. On the other hand, $v(a/d)^n=vd'd''=vd'=n\alpha$ so that
$v(a/d)=\alpha$. This proves that $\alpha\in vL'$. We have proved:
\begin{lemma}                               \label{HLrel2}
If $(L,v)$ is henselian and $L'$ is relatively algebraically closed
in $L$ and $Lv=L'v$, then the torsion subgroup of $vL/vL'$ is trivial if
$\chara Lv=0$, and it is a $p$-group if $\chara Lv=p>0$.
\end{lemma}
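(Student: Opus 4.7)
The proof is essentially already outlined in the paragraph immediately preceding the lemma; the plan is to formalize it and draw the final conclusion. The key tool is Lemma~\ref{1-unit} (extraction of $n$-th roots of 1-units when $\chara Lv \nmid n$), combined with the relative algebraic closure of $L'$ in $L$.

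First I would take an arbitrary torsion element of $vL/vL'$, i.e., an element $\alpha\in vL$ with $n\alpha\in vL'$ for some $n\in\N$ such that $\chara Lv$ does not divide $n$ (so automatically any $n>0$ in the characteristic-$0$ case, and any $n$ coprime to $p$ in the characteristic-$p$ case). I then choose $d\in L$ with $vd=\alpha$. Since $vd^n=n\alpha\in vL'$, there is $d'\in L'$ with $v(d'd^n)=0$; using the hypothesis $Lv=L'v$, I choose $d''\in L'$ so that $u:=d'd''d^n$ is a 1-unit, i.e., $uv=1$.

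The crux is then to apply Lemma~\ref{1-unit} to the 1-unit $u$: since $\chara Lv\nmid n$, there exists a 1-unit $a\in L$ with $a^n=u=d'd''d^n$. Dividing, $(a/d)^n=d'd''\in L'$, so $a/d$ is algebraic over $L'$, and hence $a/d\in L'$ because $L'$ is relatively algebraically closed in $L$. Taking values, $v(a/d)=va-vd=0-\alpha=-\alpha$ lies in $vL'$, so $\alpha\in vL'$; that is, $\alpha$ represents the trivial class in $vL/vL'$.

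This shows that every torsion element of $vL/vL'$ whose order is coprime to $\chara Lv$ is trivial. In characteristic $0$ this already kills all torsion, giving the first assertion. In characteristic $p>0$, any torsion element of order $n$ can be written (using the primary decomposition of its cyclic subgroup) as a sum of elements whose orders are either powers of $p$ or coprime to $p$; the coprime-to-$p$ part must vanish by what we just proved, so the element has order a power of $p$, and the torsion subgroup is a $p$-group. I do not anticipate a real obstacle: every step has been set up by the preceding discussion, and the only thing to be careful about is the clean case distinction on $\chara Lv$ in the final sentence.
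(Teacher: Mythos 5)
Your proposal is correct and follows exactly the approach of the paper: the paper's proof of Lemma~\ref{HLrel2} is literally the paragraph in Example~\ref{exampHL2} that precedes it, and you have simply tightened it by making explicit the restriction $\chara Lv \nmid n$ (needed for Lemma~\ref{1-unit}) and by supplying the routine primary-decomposition step at the end. You have also quietly fixed a small sign slip in the paper's display ($v(a/d)=-\alpha$ rather than $\alpha$), which is harmless since $vL'$ is a group.
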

It can be shown that the assertion is in general not true without the
assumption that $Lv=L'v$.
\end{example}

\pars
Let's return to our variety $V$ which is defined by $f_1,\ldots,
f_n\in K[X_1,\ldots, X_\ell]$ and has coordinate ring $K[x_1,\ldots,
x_\ell]$. We have seen that a point $a=(a_1,\ldots,a_\ell)$ of $V$ is
simple if and only if after a suitable renumbering, the submatrix
\begin{equation}
\tilde{J}\>=\>\left(\frac{\partial f_i}{\partial X_j}(a)\right)_{1\leq
i\leq n \atop k+1\leq j\leq\ell}
\end{equation}
of $J_f(a)$ is invertible, where $k:=\ell-n=\trdeg F|K$. That means that
$f_1,\ldots,f_n$ and $a$ satisfy the assumptions of the Implicit
Function Theorem.

Since we are interested in the question whether the center of $P$ on $V$
is simple, we have to look at $a=(x_1P,\ldots,x_\ell P)$. As $P$ is a
homomorphism on ${\cal O}_P$ and leaves the coefficients of the $f_i$
invariant, we see that
\begin{equation}
\left(\frac{\partial f_i}{\partial X_j}(x_1P,\ldots,x_\ell P)
\right)\>=\> \left(\frac{\partial f_i}{\partial X_j}(x_1,\ldots,x_\ell)
\right) P\;.
\end{equation}
We have omitted the indices since this holds for {\it every} submatrix
of $J_f\,$. Again because $P$ is a homomorphism, it commutes with taking
determinants (since this operation remains inside the ring
${\cal O}_P$). Hence,
\begin{equation}
\det\tilde{J}(x_1P,\ldots,x_\ell P)\>=\>
(\det\tilde{J}(x_1,\ldots,x_\ell))P\;.
\end{equation}
Therefore, $\det\tilde{J}(x_1P,\ldots,x_\ell P)\ne 0$ is equivalent to
$v\det\tilde{J}(x_1,\ldots,x_\ell)=0$. This condition also appears in
the Multidimensional Hensel's Lemma, but with $J_f$ in the place of
$\tilde{J}$. So we are led to the question: what is the connection?
It is obvious that we have some variables too many for the case of
the Multidimensional Hensel's Lemma. But they are exactly $\trdeg F|K$
too many, and on the other hand, at least the basic Hensel's Lemma
obviously talks about algebraic elements (we will see that this is also
true for the Multidimensional Hensel's Lemma). So why don't we just
take $x_1,\ldots,x_k$ as a transcendence basis of $F|K$ and view
$f_1,\ldots,f_n$ as polynomial relations defining the remaining
$x_{k+1},\ldots,x_\ell\,$, which are algebraic over $K(x_1,\ldots,x_k)$?
But then, we should write every $f_i$ as a polynomial $\tilde{f}_i$ in
the variables $X_{k+1},\ldots,X_\ell$ with coefficients in
$K(x_1,\ldots,x_k)$, or actually, in $K[x_1,\ldots,x_k]$. Then we have
that
\begin{eqnarray*}
f_i(x_1,\ldots,x_\ell)P & = & f_i(x_1P,\ldots,x_\ell P)\\
& = & \tilde{f}_i P(x_{k+1}P,\ldots,x_\ell
P)\>=\>\tilde{f}_i(x_{k+1},\ldots,x_\ell)P\;.
\end{eqnarray*}
With $\tilde{f}:=(\tilde{f}_1,\ldots,\tilde{f}_n)$ and
$\tilde{f}P:=(\tilde{f}_1P,\ldots,\tilde{f}_nP)$, it follows that
\begin{equation}
\det\tilde{J}(x_1P,\ldots,x_\ell P)\>=\>
\det J_{\tilde{f}P}(x_{k+1}P,\ldots,x_\ell P)\>=\>
(\det J_{\tilde{f}}(x_{k+1},\ldots,x_\ell))P\;.
\end{equation}
Hence,
$\det\tilde{J}(x_1P,\ldots,x_\ell P)\ne 0$ is equivalent to $\>v\det
J_{\tilde{f}}(x_{k+1},\ldots,x_\ell)=0\,$, which means that the
polynomials $\tilde{f}_1,\ldots,\tilde{f}_n$ and the elements $x_{k+1},
\ldots,x_\ell$ satisfy the assumption (\ref{hmhl}) of the
Multidimensional Hensel's Lemma. Indeed, we have that
$v\tilde{f}_i(x_{k+1},\ldots,x_\ell)=\infty>0$ since
$\tilde{f}_i(x_{k+1},\ldots,x_\ell)=0$. So we see:
\mn
{\it To find a model of $F|K$ on which $P$ is centered at a simple
point means to find generators $x_1,\ldots,x_\ell\in {\cal O}_P$ such
that $x_1,\ldots,x_k$ form a transcendence basis of $F|K$ and
$x_{k+1},\ldots,x_\ell$ together with the polynomials which define them
over $K[x_1,\ldots,x_k]$ satisfy the assumption of the Multidimensional
Hensel's Lemma.}
\sn
(Since $f_i$ and $\tilde{f}_i$ are the same polynomial, just written in
two different ways, we will later use ``$f_i$'' instead of
``$\tilde{f}_i$''; cf.\ the definition of relative uniformization in
Section~\ref{sectrlu}. There, we will also prefer ``$\,(\det
J_{\tilde{f}}(x_{k+1},\ldots,x_\ell))P\ne 0\,$'' over ``$\,v\det
J_{\tilde{f}}(x_{k+1},\ldots,x_\ell)=0\,$''.)

\parm
What we have derived now is still quite vague, and before we can make
more out of it, I'm sorry, you have to go to a course again.

%
%
\section{A crash course in ramification theory}       \label{sectram}
Throughout, we assume that $L|K$ is an algebraic extension, not
necessarily finite, and that $v$ is a {\it non-trivial} valuation on $K$.
If $w$ is a valuation on $L$ which extends $v$, then there is a natural
embedding of the value group $vK$ of $v$ in the value group $wL$ of
$w$. Similarly, there is a natural embedding of the residue field $Kv$
of $v$ in the residue field $Lw$ of $w$. If both embeddings are onto
(which we just express by writing $vK=wL$ and $Kv=Lw$), then the
extension $(L,w)|(K,v)$ is called {\bf immediate}. WARNING: It may
happen that $vK\isom wL$ or $Kv\isom Lw$ although the corresponding
embedding is not onto and therefore, the extension is not immediate.
For example, every finite extension of the $p$-adics $(\Q_p,v_p)$ will
again have a value group isomorphic to $\Z$, but $v_pp$ may not be
anymore the smallest positive element in this value group.

\pars
We choose an arbitrary extension of $v$ to the algebraic closure
$\tilde{K}$ of $K$. Then for every $\sigma\in\Aut(\tilde{K}|K)$, the map
\begin{equation}
\tilde{v}\sigma=\tilde{v}\circ\sigma:\;L\ni a \>\mapsto\>
\tilde{v}(\sigma a)\in \tilde{v}\tilde{K}
\end{equation}
is a valuation of $L$ which extends $v$.
\begin{theorem}                             \label{allext}
The set of all extensions of $v$ from $K$ to $L$ is
\[\{\tilde{v}\sigma\mid \sigma \mbox{ an embedding of $L$ in
$\tilde{K}$ over $K$}\}\;.\]
\end{theorem}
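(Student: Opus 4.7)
The plan is to prove both inclusions of the stated equality. The easy inclusion is essentially noted in the paragraph preceding the statement: for any $K$-embedding $\sigma$ of $L$ into $\tilde{K}$, the composition $\tilde{v}\circ\sigma$ is a valuation on $L$ (the valuation axioms transfer through the ring homomorphism $\sigma$) and restricts to $v$ on $K$ because $\sigma$ fixes $K$ pointwise.

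For the non-trivial inclusion, take an arbitrary extension $w$ of $v$ to $L$. Using the general extension theorem (cf.\ [V], \S5), lift $w$ to some valuation $w'$ of $\tilde{K}$. If I can produce $\tau\in\Aut(\tilde{K}|K)$ with $w'=\tilde{v}\circ\tau$, then $\sigma:=\tau|_L$ is a $K$-embedding $L\hookrightarrow\tilde{K}$ satisfying $w=w'|_L=\tilde{v}\circ\sigma$. So the theorem reduces to showing that any two extensions of $v$ to $\tilde{K}$ are conjugate under $\Aut(\tilde{K}|K)$; this is the main obstacle.

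I would handle conjugacy first in the finite Galois case. Let $N|K$ be a finite Galois subextension of $\tilde{K}|K$ with group $G$, and let $u_1,\ldots,u_g$ be the distinct extensions of $v$ to $N$; they are pairwise incomparable (they share the same restriction $v$ to $K$). Suppose for contradiction that $G$ does not act transitively, so two distinct $G$-orbits $\mathcal{O}_1$ and $\mathcal{O}_2$ arise. By the approximation theorem for pairwise incomparable valuations, pick $a\in N$ with $u(a)>0$ for every $u\in\mathcal{O}_1$ and $u(a)<0$ for every $u\in\mathcal{O}_2$. Then $b:=\prod_{\tau\in G}\tau(a)\in K$. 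Computing $v(b)$ through some $u_1\in\mathcal{O}_1$ gives $v(b)=\sum_{\tau\in G}(u_1\circ\tau)(a)>0$, since $u_1\circ\tau$ ranges through $\mathcal{O}_1$ with constant multiplicity as $\tau$ varies; the analogous computation through some $u_2\in\mathcal{O}_2$ yields $v(b)<0$, a contradiction.

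The passage from finite Galois extensions to $\tilde{K}$ proceeds by Zorn's lemma on the poset of pairs $(M,\tau_M)$ where $M$ is a normal separable subextension of $K\sep|K$ inside $\tilde{K}$ and $\tau_M\in\Aut(M|K)$ satisfies $w'|_M=\tilde{v}\circ\tau_M$. Chains admit the obvious upper bound by gluing. If a maximal such pair had $M\subsetneq K\sep$, one could pick a finite Galois $N$ with $M\subsetneq N\subseteq K\sep$, extend $\tau_M$ arbitrarily to $\tilde{\tau}_M\in\Aut(N|K)$, and apply the finite-Galois step to $N|M$ to find $\rho\in\Aut(N|M)$ with $w'|_N=\tilde{v}\circ\tilde{\tau}_M\circ\rho$, contradicting maximality. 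Hence the maximal $M$ equals $K\sep$; since purely inseparable extensions of $v$ are unique, the resulting $K$-automorphism extends uniquely to $\tau\in\Aut(\tilde{K}|K)$, and $\sigma:=\tau|_L$ does the job.
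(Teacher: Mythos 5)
The paper does not include a proof of this theorem; Section~\ref{sectram} is a ``crash course'' that imports standard results from ramification theory, and Theorem~\ref{allext} is one of the imported facts (the classical conjugacy theorem). So there is no in-paper argument to compare against. Your reconstruction is correct and is essentially the textbook proof: the easy inclusion is immediate, the transitive action on the extensions to a finite Galois $N|K$ is obtained via the approximation theorem for the pairwise incomparable $u_1,\ldots,u_{\rm g}$ and the norm $b=\prod_{\tau\in G}\tau(a)\in K$, Zorn's lemma lifts this to $K\sep$, and uniqueness over the purely inseparable part (the last clause of Corollary~\ref{cae}) finishes the passage to $\tilde{K}$. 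Two small points. First, in the norm computation you should explicitly observe that the extensions of $v$ to $N$ are pairwise incomparable (so that the approximation theorem applies); this follows, e.g., from the bijection between these extensions and the maximal ideals of the integral closure of ${\cal O}_v$ in $N$, since distinct maximal ideals cannot be nested. Second, in the Zorn step the phrase ``pick a finite Galois $N$ with $M\subsetneq N\subseteq K\sep$'' is not literally possible when $M$ is infinite; what you want is to pick a finite Galois $N_0|K$ inside $K\sep$ with $N_0\not\subseteq M$ and set $N:=M.N_0\,$, which is normal over $K$ with $N|M$ finite Galois, and then the rest of the argument goes through unchanged.
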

(We say that ``all extensions of $v$ from $K$ to $L$ are
\bfind{conjugate}''.)
\begin{corollary}                              \label{cae}
If $L|K$ is finite, then the number {\rm g} of distinct extensions of
$v$ from $K$ to $L$ is smaller or equal to the extension degree $[L:K]$.
More precisely, {\rm g} is smaller or equal to the degree of the maximal
separable subextension of $L|K$. In particular, if $L|K$ is purely
inseparable, then $v$ has a unique extension from $K$ to $L$.
\end{corollary}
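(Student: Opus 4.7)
The plan is to deduce everything from Theorem~\ref{allext} together with standard facts about embeddings of finite field extensions into the algebraic closure. By that theorem, every extension of $v$ from $K$ to $L$ is of the form $\tilde{v}\sigma$ for some $K$-embedding $\sigma:L\hookrightarrow\tilde{K}$. Therefore, if $E(L|K)$ denotes the set of $K$-embeddings of $L$ into $\tilde{K}$, the map $\sigma\mapsto\tilde{v}\sigma$ sends $E(L|K)$ onto the set of extensions of $v$ to $L$, and consequently
\[
\mbox{g}\;\leq\;\#E(L|K)\;.
\]
(Note that this map need not be injective, since distinct embeddings may well produce the same valuation; but for an upper bound this does no harm.)

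Next I would invoke the standard fact from field theory that $\#E(L|K)$ equals the separable degree $[L:K]_s$. If $M$ denotes the maximal separable subextension of $L|K$, then $L|M$ is purely inseparable, so every $K$-embedding of $M$ into $\tilde{K}$ extends uniquely to a $K$-embedding of $L$, whence $\#E(L|K)=\#E(M|K)=[M:K]$. Combined with the previous step this yields
\[
\mbox{g}\;\leq\;[M:K]\;,
\]
which is the sharper bound. Since $[M:K]\leq[L:K]$, the weaker bound $\mbox{g}\leq[L:K]$ follows immediately.

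Finally, for the last assertion, if $L|K$ is purely inseparable, then $M=K$, so $[M:K]=1$ and there is only one $K$-embedding of $L$ into $\tilde{K}$ at all (namely the inclusion, once $L$ is identified with its image in $\tilde{K}$). Hence $\mbox{g}=1$, i.e., $v$ extends uniquely. The only step requiring any care is the identification $\#E(L|K)=[L:K]_s$; this is a textbook result (see, e.g., the references [ZS], [JA] cited in the introduction), so I would simply quote it rather than reprove it. No genuine obstacle arises here — the corollary is essentially a reformulation of Theorem~\ref{allext} in terms of separable degrees.
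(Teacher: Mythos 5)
Your argument is correct and is precisely what the paper intends: Corollary~\ref{cae} is stated without explicit proof immediately after Theorem~\ref{allext}, the intended deduction being exactly this counting of $K$-embeddings via separable degree. Nothing more needs to be said.
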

\begin{theorem}                             \label{fie}
Assume that $n:=[L:K]$ is finite, and denote the extensions of $v$ from
$K$ to $L$ by $v_1,\ldots,v_{\rm g}\,$. Then
for every $i\in\{1,\ldots,{\rm g}\}$, the \bfind{ramification index}
${\rm e}_i=(v_iL:vK)$ and the \bfind{inertia degree} ${\rm f}_i=
[Lv_i:Kv]$ are finite, and we have the \bfind{fundamental inequality}
\begin{equation}                             \label{fundineq}
n\>\geq\>\sum_{i=1}^{\rm g} {\rm e}_i {\rm f}_i  \;\>.
\end{equation}
\end{theorem}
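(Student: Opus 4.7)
The plan is to first establish the single-extension bound $(wL:vK) \cdot [Lw:Kv] \leq [L:K]$ for an arbitrary extension $w$ of $v$ to $L$, which simultaneously forces $e_i$ and $f_i$ to be finite and gives $e_i f_i \leq n$ for each $i$; then to combine the extensions via weak approximation to produce the global inequality. The key lemma for the single-extension case is: if $a_1, \ldots, a_s \in {\cal O}_w$ have $Kv$-linearly independent residues in $Lw$ and $b_1, \ldots, b_t \in L^\times$ have $w$-values in pairwise distinct cosets of $vK$ in $wL$, then the $st$ products $a_r b_u$ are $K$-linearly independent in $L$. Assuming a nontrivial $K$-relation $\sum_{r,u} c_{ru} a_r b_u = 0$, set $y_u := \sum_r c_{ru} a_r$; for each $u$ with $y_u \neq 0$, divide by a coefficient $c_{r_0,u}$ of minimum $v$-value and reduce modulo the maximal ideal of ${\cal O}_w$, obtaining a nonzero $Kv$-combination of the residues $a_r w$. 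Hence $w(y_u) = v(c_{r_0,u}) \in vK$. Since the $w(b_u)$ lie in distinct cosets of $vK$, the nonzero $w(y_u b_u)$ are pairwise distinct, so the strong triangle inequality forces $w(\sum_u y_u b_u)$ to be finite, contradicting vanishing. Hence $st \leq n$.

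For the combination step, observe that for the finite extension $L|K$ the distinct extensions $v_1, \ldots, v_g$ are pairwise incomparable: a strict containment ${\cal O}_{v_1} \subsetneq {\cal O}_{v_2}$ would correspond to a nontrivial convex subgroup of $v_1 L$ meeting $v_1 K$ trivially and embedding into the finite quotient $v_1 L / vK$, absurd since a nontrivial convex subgroup of an ordered abelian group is infinite. Hence the Artin--Whaples approximation theorem applies. For each $i$ fix data $(a_{i,k})_{k\leq f_i}$ and $(b_{i,l})_{l\leq e_i}$ adapted to $v_i$ as in the lemma above, choose a constant $N$ sufficiently large, and use approximation to produce lifts $A_{i,k}, B_{i,l} \in L$ satisfying $A_{i,k}v_i = a_{i,k}v_i$ and $v_i(B_{i,l}) = v_i(b_{i,l})$, while $v_j(A_{i,k}), v_j(B_{i,l}) \geq N$ for every $j \neq i$. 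From a would-be nontrivial relation $\sum c_{ikl} A_{i,k} B_{i,l} = 0$, normalize so that $\min v(c_{ikl}) = 0$, select $j$ such that some $c_{jkl}$ is a unit in ${\cal O}_v$, and apply the lemma at $v_j$ to the diagonal sub-sum $S_j := \sum_{k,l} c_{jkl} A_{j,k} B_{j,l}$: its $v_j$-value is finite and at most $\max_l v_j(b_{j,l})$. The off-diagonal terms (those with $i \neq j$) all have $v_j$-value at least $2N$, so taking $N$ with $2N$ exceeding the universal bound $\max_{i,l} v_i(b_{i,l})$ forces the total $v_j$-value to be that of $S_j$, finite, contradicting vanishing. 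Hence $\{A_{i,k}B_{i,l}\}$ is $K$-linearly independent and $\sum_i e_i f_i \leq n$.

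The main obstacle is the uniformity of $N$: the lifts $A_{i,k}, B_{i,l}$ must be fixed once and for all, before any would-be relation is inspected, yet the diagonal bound must not depend on the eventual coefficients. The normalization step $\min v(c_{ikl}) = 0$ is what rescues the argument, pinning the $v_j$-value of $S_j$ to a constant depending only on the initially chosen $b_{j,l}$'s. A secondary technical point, needed to legitimate approximation, is the pairwise incomparability of the $v_i$, which is what ultimately makes distinct extensions of a valuation to a finite algebraic extension tractable.
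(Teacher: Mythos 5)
Your linear-algebra lemma and the resulting single-extension bound ${\rm e}_i {\rm f}_i \leq n$ are correct, and so is the argument that distinct extensions of $v$ to $L$ have pairwise incomparable valuation rings (via finiteness of $v_i L / vK$ and torsion-freeness of convex subgroups). The gap is in the combination step, where you conclude ``the Artin--Whaples approximation theorem applies'' from pairwise incomparability alone. Artin--Whaples is a statement about rank-1 places; the general approximation theorem for Krull valuations, in the unrestricted form you need (arbitrary targets and arbitrary $\gamma_i \in v_iL$), requires the $v_i$ to be pairwise \emph{independent}, meaning no two share a nontrivial common coarsening, and extensions of a higher-rank $v$ to a finite extension $L|K$ need not be independent even though they are incomparable. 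For instance, take $K = \Q(t)$ with $v$ the rank-2 composition of the $t$-adic valuation with the $5$-adic valuation on the residue field $\Q$, and $L = K(\sqrt{6})$. The $t$-adic valuation $w$ extends uniquely to $L$, while $5$ splits in $Lw = \Q(\sqrt{6})$, so $v$ has two extensions $v_1, v_2$ to $L$, both refining $w$. They are incomparable but share the nontrivial coarsening $w$, and there is no $x \in L$ with $v_1(x) \geq \gamma$ and $v_2(x-1) \geq \gamma$ once $\gamma$ has a positive image in $wL$: that would force $w(x)>0$ and $w(x-1)>0$, hence $w(1)>0$. So the lifts $A_{i,k}$, $B_{i,l}$ with ``$v_j$-value $\geq N$ for all $j\neq i$'' cannot in general be produced by a blanket approximation theorem; whether they exist depends on where $N$ and the $v_j(b_{j,l})$ sit relative to the common convex subgroups, and your argument never controls this.

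The standard repair is not to approximate against the composite valuations directly but to reduce to rank 1, where your argument is fine. One way: factor $v$ through a coarsening $w$ of finite rank so that the residue valuation $\bar{v}$ on $Kw$ has rank $1$; apply the fundamental inequality at the $w$-level for $L|K$ and at the $\bar{v}$-level for each residue extension $Lw_j|Kw$, and combine using the multiplicativity of ramification index and inertia degree under composition of places, inducting on the rank. Alternatively, one can invoke the correct general approximation theorem, which carries a compatibility condition on the data $(a_i, \gamma_i)$ along common coarsenings, and then verify that compatibility; but that requires essentially the same structural analysis. Either way, ``pairwise incomparable, therefore approximate arbitrarily well'' is not a valid inference beyond rank $1$, and that is the central gap in the proposal.
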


\parb
From now on, let us assume that $L|K$ is normal. Hence, the set of all
extensions of $v$ from $K$ to $L$ is given by $\{\tilde{v}\sigma
\mid\sigma\in\Aut(L|K)\}$. For simplicity, we denote the restriction of
$\tilde{v}$ to $L$ again by $v$. The valuation ring of $v$ on $L$ will
be denoted by ${\cal O}_L\,$. We define distinguished subgroups of
$G:=\Aut(L|K)$. The subgroup
\begin{equation}                            \label{decgrp}
G\dec :=G\dec (L|K,v):=\{\sigma\in G\mid\forall x\in
{\cal O}_L:\;v\sigma x\geq 0\}
\end{equation}
is called the \bfind{decomposition group} {\bf of} $(L|K,v)$. It is easy
to show that $\sigma$ sends ${\cal O}_L$ into itself if and only if
the valuations $v$ and $v\sigma$ agree on $L$. Thus,
\begin{equation}                            \label{decgrp1}
G\dec =\{\sigma\in G\mid v\sigma=v\mbox{ on }L\}\;.
\end{equation}
Further, the \bfind{inertia group} is defined to be
\begin{equation}                            \label{ingrp}
G^i:=G^i (L|K,v) := \{\sigma\in \Aut (L|K)\mid \forall x\in
{\cal O}_L: v(\sigma x - x)>0\}\;,
\end{equation}
and the \bfind{ramification group} is
\begin{equation}                            \label{ramgrp}
G^r:=G^r (L|K,v) :=\{\sigma\in \Aut(L|K)\mid
\forall x\in {\cal O}_L: v(\sigma x - x)>vx\}\;.
\end{equation}
Let S denote the maximal separable extension of $K$ in $L$ (we call
it the \bfind{separable closure of $K$ in $L$}). The fixed fields of
$G\dec$, $G^i$ and $G^r$ in S are called \bfind{decomposition
field}, \bfind{inertia field} and \bfind{ramification field} {\bf of}
$(L|K,v)$. For simplicity, let us abbreviate them by Z, T and V. (These
letters refer to the german words ``Zerlegungsk\"orper'',
``Tr\"ag\-heits\-k\"orper'' and ``Verzweigungsk\"orper''.)
\begin{remark}                            
In contrast to the classical definition used by other
authors, we take decomposition field, inertia field and ramification
field to be the fixed fields of the respective groups {\it in the
maximal separable subextension}. The reason for this will become clear
in Section~\ref{sectdef}.
\end{remark}

By our definition, V, T and Z are separable-algebraic extensions of $K$,
and S$|$V, S$|$T, S$|$Z are (not necessarily finite) Galois extensions.
Further,
\begin{equation}
1\subset G^r\subset G^i\subset G\dec\subset G\;\mbox{ and thus, }\;
{\rm S}\supset {\rm V}\supset {\rm T}\supset {\rm Z}\supset K\;.
\end{equation}
(For the inclusion $G^i\subset G\dec$ note that $vx\geq 0$ and
$v(\sigma x-x)>0$ implies that $v\sigma x\geq 0$.)

\begin{theorem}
$G^i$ and $G^r$ are normal subgroups of $G\dec$, and $G^r$ is a normal
subgroup of $G^i$. Therefore, ${\rm T}|{\rm Z}$, ${\rm V}|{\rm Z}$ and
${\rm V}|{\rm T}$ are (not necessarily finite) Galois extensions.
\end{theorem}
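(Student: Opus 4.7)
The plan is to exploit the defining property (\ref{decgrp1}) of the decomposition group---that its elements preserve the valuation $v$ on $L$---to verify normality of $G^i$ and $G^r$ in $G\dec$ by direct conjugation; the Galois claims then follow by applying the Galois correspondence to the already-established Galois extension ${\rm S}|{\rm Z}$.

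For $G^i\triangleleft G\dec$, I would take $\sigma\in G^i$, $\tau\in G\dec$, and $x\in {\cal O}_L$, set $y:=\tau^{-1}x\in {\cal O}_L$ (the latter because $\tau^{-1}\in G\dec$ preserves ${\cal O}_L$), and use the identity $(\tau\sigma\tau^{-1})(x)-x=\tau(\sigma y-y)$ together with $v\tau=v$ on $L$ to conclude $v((\tau\sigma\tau^{-1})(x)-x)=v(\sigma y-y)>0$. The same calculation, with the sharper inequality $v(\sigma y-y)>vy=vx$ in place of $>0$, handles $G^r\triangleleft G\dec$. Since $G^r\subset G^i\subset G\dec$, the normality $G^r\triangleleft G^i$ is then immediate.

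For the Galois assertions, I would identify $G\dec$ with $\mbox{Gal}({\rm S}|{\rm Z})$ via the restriction bijection $\Aut(L|K)\to\Aut({\rm S}|K)$ (bijective because $L|{\rm S}$ is purely inseparable), under which $G^i$ and $G^r$ correspond to subgroups whose fixed fields in ${\rm S}$ are ${\rm T}$ and ${\rm V}$. For every $\sigma\in G\dec$, the field $\sigma({\rm T})$ is the fixed field of $\sigma G^i\sigma^{-1}$; since $G^i\triangleleft G\dec$, this conjugate equals $G^i$, forcing $\sigma({\rm T})={\rm T}$. Hence ${\rm T}|{\rm Z}$ is normal, and as it is separable (${\rm T}\subseteq{\rm S}$), it is Galois. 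The same reasoning with $G^r$ in place of $G^i$ gives ${\rm V}|{\rm Z}$ Galois, and applied inside $\mbox{Gal}({\rm S}|{\rm T})$ to $G^r\triangleleft G^i$ gives ${\rm V}|{\rm T}$ Galois. The only potential obstacle is the bookkeeping around the restriction bijection and the infinite Galois correspondence; the genuine valuation-theoretic content reduces to the one-line identity $v\circ\tau=v$ for $\tau\in G\dec$.
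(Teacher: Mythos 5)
The paper states this theorem without proof in its ramification-theory crash course, so there is nothing to compare your argument against; here is an assessment of correctness on its own terms. Your conjugation computations for $G^i\triangleleft G\dec$ and $G^r\triangleleft G\dec$ are correct and use exactly the right facts---that $\tau$, $\tau^{-1}$ preserve ${\cal O}_L$ and satisfy $v\tau=v=v\tau^{-1}$ on $L$, which is precisely (\ref{decgrp1})---and $G^r\triangleleft G^i$ then follows since $G^i\subseteq G\dec$.

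The deduction of the Galois statements, however, has a genuine gap hidden in what you dismiss as bookkeeping. You identify the image of $G\dec$ under restriction to ${\rm S}$ with $\Aut({\rm S}|{\rm Z})$, but the definitions give only the inclusion $G\dec\subseteq\Aut({\rm S}|{\rm Z})$; equality is exactly the assertion that $G\dec$ is a \emph{closed} subgroup of $\Aut({\rm S}|K)$ in the Krull topology, since ${\rm Z}$ is by definition just the fixed field of $G\dec$. This matters: normality of ${\rm T}|{\rm Z}$ needs $\sigma({\rm T})={\rm T}$ for \emph{every} $\sigma\in\Aut({\rm S}|{\rm Z})$ (any ${\rm Z}$-embedding of ${\rm T}$ extends to such a $\sigma$), while your identity $\sigma G^i\sigma^{-1}=G^i$ is available only for $\sigma\in G\dec$; if $G\dec$ were a proper, non-closed subgroup of $\Aut({\rm S}|{\rm Z})$ the argument would break. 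To close the gap, prove closedness directly: for each $a\in{\rm S}\cap{\cal O}_L$ the set $\{\sigma\in\Aut({\rm S}|K)\mid v\sigma a\geq 0\}$ depends only on $\sigma|_{K(a)}$ and is therefore clopen, and $G\dec$ is the intersection of these sets over all such $a$ (restricting to $a\in{\rm S}$ is harmless because $v$ extends uniquely along the purely inseparable extension $L|{\rm S}$); analogous clopen intersections give closedness of $G^i$ and $G^r$. The alternative route---deducing $\Aut({\rm S}|{\rm Z})\subseteq G\dec$ from the uniqueness of the extension of $v$ from ${\rm Z}$ to $L$, part c) of Theorem~\ref{Z}---is circular here, since that theorem is stated after the present one.
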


First, we consider the decomposition field {\rm Z}. In some sense, it
represents all extensions of $v$ from $K$ to $L$.
\begin{theorem}                             \label{Z}
a) \ $v\sigma=v\tau$ on $L$ if and only if $\sigma\tau^{-1}$ is trivial
on {\rm Z}.\n
b) \ $v\sigma=v$ on {\rm Z} if and only if $\sigma$ is trivial
on {\rm Z}.\n
c) \ The extension of $v$ from {\rm Z} to $L$ is unique.\n
d) \ The extension $({\rm Z}|K,v)$ is immediate.
\end{theorem}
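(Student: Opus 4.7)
The plan is to prove the four statements in the order (a), (c), (b), (d), since (b) and (d) both rest on the uniqueness statement (c). Throughout I use that the restriction map $\Aut(L|K)\to\Aut(S|K)$ is a bijection (as $L|S$ is purely inseparable), so that $G\dec$ corresponds under restriction to $G\dec|_S$ and $Z$ is by definition the fixed field of $G\dec|_S$ in the Galois extension $S|K$.

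For (a), $v\sigma=v\tau$ on $L$ rewrites via (\ref{decgrp1}) as $\sigma\tau^{-1}\in G\dec$; the Galois correspondence for $S|K$ identifies $G\dec|_S$ with the subgroup of $\mathrm{Gal}(S|K)$ fixing $Z$, so this is the same as $\sigma\tau^{-1}$ being trivial on $Z$. For (c), the extensions of $v|_Z$ to $L$ are $\{v\tau\mid\tau\in\Aut(L|Z)\}$ by Theorem \ref{allext}; the Galois correspondence for $S|Z$ identifies $\Aut(S|Z)$ with $G\dec|_S$, and lifting via the bijection above gives $\Aut(L|Z)=G\dec$, so every such $v\tau$ equals $v$ on $L$. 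Part (b) then follows formally: if $v\sigma=v$ on $Z$, then $v\sigma$ and $v$ are both extensions of $v|_Z$ to $L$, equal by (c), so $\sigma\in G\dec$ and hence fixes $Z$.

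For (d), I would reduce to the finite case. Given $z\in Z$, pick a finite Galois extension $N|K$ with $K(z)\subseteq N\subseteq L$, and let $Z_N$ be the decomposition field of $(N|K,v)$. A direct check shows $Z\cap N=Z_N$: the inclusion $Z_N\supseteq Z\cap N$ follows from restricting $G\dec$ to $N$, and for the reverse one lifts any $\tau\in G\dec(N|K,v)$ to some $\sigma\in\Aut(L|K)$ (by normality of $L|K$), then corrects $\sigma$ by a suitable $\rho\in\Aut(L|N)$ with $v\sigma\rho=v$ on $L$ (such $\rho$ exists by the transitive action of $\Aut(L|N)$ on extensions of $v|_N$ to $L$, Theorem \ref{allext}), yielding $\sigma\rho\in G\dec$ restricting to $\tau$ on $N$. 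Hence $z\in Z_N$, so it suffices to prove $Z_N|K$ is immediate.

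In the finite Galois setting I would argue as follows: applying (c) to $N|K$ (and to its conjugates via $\mathrm{Gal}(N|K)$, replacing $v$ by $v\sigma$ and $Z_N$ by $\sigma^{-1}(Z_N)$) shows that the restriction map from the set of extensions of $v|_K$ on $N$ to the set of extensions on $Z_N$ is a bijection, whence $g_{Z_N}=g_N=[Z_N:K]$. Plugging into the fundamental inequality (\ref{fundineq}) for $Z_N|K$ yields $[Z_N:K]\geq[Z_N:K]\cdot\mathrm{e}(Z_N|K)\cdot\mathrm{f}(Z_N|K)$, forcing $\mathrm{e}(Z_N|K)=\mathrm{f}(Z_N|K)=1$, i.e.\ $vZ_N=vK$ and $Z_Nv=Kv$. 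The main technical subtlety I foresee is precisely the bijectivity of this restriction map: one needs uniqueness of lifts not just for the specific extension $v|_{Z_N}$ (which is (c) on the nose) but for all extensions of $v|_K$ to $Z_N$, requiring a careful transport via $\mathrm{Gal}(N|K)$ of the statement of (c) applied in the conjugated data $(N|K,v\sigma)$; everything else is direct Galois-theoretic bookkeeping.
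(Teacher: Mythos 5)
Your handling of (a), (b), (c) is correct. You prove (c) first (identifying $\Aut(L|{\rm Z})$ with $G\dec$ via the Galois correspondence for $S|K$ and the bijection $\Aut(L|K)\to\Aut(S|K)$, so that every extension of $v|_{\rm Z}$ to $L$ is of the form $v\rho$ with $\rho\in G\dec$ and hence equals $v$) and then deduce (b) from (c); the paper indicates the reverse order, but (b) and (c) are two phrasings of the same fact, and your ordering is clean.

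Your argument for (d), however, has a genuine gap, and it sits precisely at the step you yourself flagged as a ``technical subtlety.'' The restriction map from the set of extensions of $v|_K$ on $N$ to the set of extensions on $Z_N$ is in general \emph{not} a bijection; this is exactly the content of the WARNING the paper places immediately after Theorem~\ref{Z}. To see it: put $G=\Aut(N|K)$ and $H=G\dec(N|K,v)$; the extensions of $v|_K$ to $N$ correspond to $G/H$, and by Theorem~\ref{allext} applied to the Galois extension $N|Z_N$ (whose group is $H$), the extensions of $(v\sigma)|_{Z_N}$ to $N$ are the $v\sigma\rho$ with $\rho\in H$, with $v\sigma\rho_1=v\sigma\rho_2$ iff $\rho_1\rho_2^{-1}\in H\cap\sigma^{-1}H\sigma$. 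So the fiber of restriction over $(v\sigma)|_{Z_N}$ has cardinality $[H:H\cap\sigma^{-1}H\sigma]$, which is $>1$ whenever $\sigma$ fails to normalize $H$. Thus as soon as $H$ is not normal in $G$ one has $g_{Z_N}=|H\backslash G/H|<[G:H]=[Z_N:K]$, and your appeal to the fundamental inequality for $Z_N|K$ can no longer force ${\rm e}_1{\rm f}_1=1$. No ``careful transport via $\Gal(N|K)$'' repairs this: statement (c) for the conjugate data $(N|K,v\sigma)$ genuinely concerns $\sigma^{-1}(Z_N)$, a different subfield when $H$ is non-normal, and the desired uniqueness of lifts from $Z_N$ is simply false. (Already in the number-field case, where the fundamental \emph{equality} holds, the other extensions of $v|_K$ to $Z_N$ typically have ${\rm e}\,{\rm f}>1$; only the distinguished extension $v|_{Z_N}$ is immediate.) Any correct proof of (d) must therefore treat $v|_{Z_N}$ asymmetrically from the other extensions; the paper's intended route is the Ax trick, a direct computation with symmetric functions of conjugates that exploits that $G\dec$ fixes ${\rm Z}$ elementwise while each $\sigma\notin G\dec$ shifts the valuation, and which avoids any counting of extensions. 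Your reduction to the finite Galois case via $Z\cap N=Z_N$ (this is Theorem~\ref{downZTV}, which you essentially reprove correctly, modulo the forward/reverse attribution of the two inclusions being swapped) remains a sound preliminary step, but the finite case itself needs a different idea than the fundamental inequality.
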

WARNING: It is in general not true that $v\sigma\ne v\tau$ holds already
on {\rm Z} if it holds on $L$.\sn
a) and b) are easy consequences of the definition of $G\dec$.
c) follows from b) by Theorem~\ref{allext}. For d), there is a simple
proof using a trick which is mentioned in the paper [AX] by James Ax.

\parm
Now we turn to the inertia field T. Let ${\cal M}_L$ denote the
valuation ideal of $v$ on $L$ (the unique maximal ideal of
${\cal O}_L$). For every $\sigma\in G\dec (L|K,v)$ we have that
$\sigma {\cal O}_L={\cal O}_L$, and it follows that $\sigma {\cal M}_L
={\cal M}_L$. Hence, every such $\sigma$ induces an automorphism
$\ovl{\sigma}$ of ${\cal O}_L/{\cal M}_L=Lv$ which
satisfies $\ovl{\sigma}\,\ovl{a}= \ovl{\sigma a}$. Since
$\sigma$ fixes $K$, it follows that $\ovl{\sigma}$ fixes $Kv$.

\begin{lemma}
Since $L|K$ is normal, the same is true for $Lv|Kv$. The map
\begin{equation}                            \label{redofauto}
G\dec (L|K,v)\ni\sigma\;\mapsto\;\ovl{\sigma}\in\Aut (Lv|Kv)
\end{equation}
is a group homomorphism.
\end{lemma}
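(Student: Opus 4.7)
The statement combines two assertions: that $Lv|Kv$ is normal, and that $\sigma\mapsto\ovl{\sigma}$ is a group homomorphism. My plan is to dispatch the second part quickly from the definitions, and then devote most of the effort to the normality claim, which is the genuine content and relies on Theorem~\ref{Z}(d).

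For the homomorphism claim, every $\sigma\in G\dec$ satisfies $\sigma{\cal O}_L={\cal O}_L$ by (\ref{decgrp1}) and hence $\sigma{\cal M}_L={\cal M}_L$. Consequently $\ovl{\sigma}\colon\ovl{a}\mapsto\ovl{\sigma a}$ is a well-defined ring endomorphism of $Lv$; it fixes $Kv$ pointwise since $\sigma$ fixes $K$; and $\sigma^{-1}\in G\dec$ induces an inverse, so $\ovl{\sigma}$ is in fact a field automorphism. The composition rule is a one-line calculation: $\ovl{\sigma\tau}(\ovl{a})=\ovl{\sigma\tau a}=\ovl{\sigma(\tau a)}=\ovl{\sigma}(\ovl{\tau a})=\ovl{\sigma}\,\ovl{\tau}(\ovl{a})$.

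For normality, take $\ovl{a}\in Lv$ with lift $a\in{\cal O}_L$. By normality of $L|K$, the minimal polynomial of $a$ over $K$ factors completely in $L[X]$ as $f=\prod_{i=1}^n(X-a_i)$ with $a_1=a$. Set $I=\{i:v(a_i)\geq 0\}$ and $h(X):=\prod_{i\in I}(X-a_i)\in{\cal O}_L[X]$. Each $\sigma\in G\dec$ permutes the full set $\{a_1,\ldots,a_n\}$ of $K$-conjugates of $a$ and preserves ${\cal O}_L$, so it must permute the subfamily $\{a_i:i\in I\}$; hence the coefficients of $h$ lie in $L^{G\dec}\cap{\cal O}_L$. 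Now $L^{G\dec}|Z$ is purely inseparable: if $x\in L^{G\dec}$ then $x^{p^n}\in S$ for some $n$ (since $L|S$ is purely inseparable), and this $p^n$-th power, being both in $S$ and fixed by $G\dec$, lies in $S^{G\dec|_S}=Z$. Therefore $L^{G\dec}v|Zv$ is purely inseparable, and combined with Theorem~\ref{Z}(d), which gives $Zv=Kv$, this shows the coefficients of $\ovl{h}$ are purely inseparable over $Kv$. Raising $\ovl{h}$ to a sufficiently large power of the residue characteristic then yields a polynomial in $Kv[X]$ which still vanishes at $\ovl{a}$ and whose roots (the same as those of $\ovl{h}$) are precisely the $\ovl{a_i}$ for $i\in I$, all inside $Lv$.

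The minimal polynomial $\ovl{g}$ of $\ovl{a}$ over $Kv$ then divides that Frobenius power of $\ovl{h}$ inside $Kv[X]$, so every root of $\ovl{g}$ lies in $Lv$, which is the normality of $Lv|Kv$. The main conceptual obstacle lies in the step that brings $\ovl{h}$ into $Kv[X]$: without Theorem~\ref{Z}(d) one would be stuck with coefficients in the strictly larger field $L^{G\dec}v$, and without the Frobenius trick one could not even dispose of the purely inseparable discrepancy in positive residue characteristic.
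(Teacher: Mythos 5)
Your proof is correct. The paper does not actually supply a proof of this lemma --- it is one of many facts in the ramification-theory crash course that is stated without argument, with references to the standard texts --- so there is no in-paper proof to compare against. Your handling of the homomorphism claim coincides with the setup paragraph that precedes the lemma in the paper. The normality argument is sound and uses the right ingredients: forming the ``integral part'' $h=\prod_{i\in I}(X-a_i)$ of the splitting of the minimal polynomial, observing that $G\dec$ permutes the $a_i$ with $v(a_i)\ge 0$ so that the coefficients of $h$ land in $L^{G\dec}\cap{\cal O}_L$, identifying $L^{G\dec}|{\rm Z}$ as purely inseparable, and then invoking Theorem~\ref{Z}(d) to get $\,{\rm Z}v=Kv$ before clearing the residual inseparability with a Frobenius power. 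That last step deserves one clarifying remark that you omit: ``raising to a power of the residue characteristic'' only makes literal sense when $\chara Kv=p>0$; when $\chara Kv=0$ the purely inseparable extension $L^{G\dec}v|Kv$ is automatically trivial, so the coefficients of $\overline{h}$ already lie in $Kv$ and no Frobenius is needed. With that (trivial) case distinction added, the argument is complete.
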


\begin{theorem}                             \label{T}
a) \ The homomorphism (\ref{redofauto}) is onto and induces an
isomorphism
\begin{equation}
\Aut({\rm T}|{\rm Z})\>=\>G\dec/G^i\>\isom\>
\Aut({\rm T}v|{\rm Z}v)\;.
\end{equation}
b) \ For every finite subextension $F|{\rm Z}$ of ${\rm T}|{\rm Z}$,
\begin{equation}
[F:{\rm Z}]\>=\>[Fv:{\rm Z}v]\;.
\end{equation}
c) \ We have that $v{\rm T}=v{\rm Z}=vK$. Further, ${\rm T}v$ is the
separable closure of $Kv$ in $Lv$, and therefore,
\begin{equation}
\Aut({\rm T}v|{\rm Z}v)\>=\>\Aut(Lv|Kv)\;.
\end{equation}
\end{theorem}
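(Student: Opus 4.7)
The plan is to exploit three facts set up just above: the map $\sigma\mapsto\ovl{\sigma}$ of (\ref{redofauto}) is a homomorphism $G^{\rm dec}\to\Aut(Lv|Kv)$ whose kernel is precisely $G^i$ by the definition (\ref{ingrp}); the Galois correspondence applied to the normal extension ${\rm S}|{\rm Z}$ gives $\Aut({\rm T}|{\rm Z})\isom G^{\rm dec}/G^i$ (since $G^i$ is normal in $G^{\rm dec}$ with fixed field ${\rm T}$ inside ${\rm S}$); and by Theorem~\ref{Z}(c),(d) we have ${\rm Z}v=Kv$, $v{\rm Z}=vK$, while ${\rm Z}$ (hence ${\rm T}$, by Corollary~\ref{algexth}) is henselian via Lemma~\ref{hensuniq}. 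I would prove (c) first, use it to land the resulting injection inside $\Aut({\rm T}v|{\rm Z}v)$, then establish surjectivity, and derive (b) as a corollary.

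For part (c): ${\rm Z}v=Kv$ and $v{\rm Z}=vK$ are immediate. To identify ${\rm T}v$ with the separable-algebraic closure of $Kv$ in $Lv$, I would apply Lemma~\ref{HLrel1} with $L'={\rm T}$: since ${\rm T}$ is the fixed field of $G^i$ inside the separable closure ${\rm S}$, every proper separable-algebraic extension of ${\rm T}$ inside ${\rm S}$ is nontrivially moved by some element of $G^i$, so ${\rm T}$ is relatively separable-algebraically closed in $L$; Lemma~\ref{HLrel1} then gives ${\rm T}v$ relatively separable-algebraically closed in $Lv$. The reverse inclusion --- that every element of ${\rm T}v$ is already separable-algebraic over ${\rm Z}v$ --- follows by lifting a residue to $a\in{\cal O}_{\rm T}$ and using the henselian factorization lemma on its minimal polynomial over ${\rm Z}$. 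For $v{\rm T}=v{\rm Z}$, Lemma~\ref{HLrel2} shows the torsion of $v{\rm T}/v{\rm Z}$ is at worst a $p$-group with $p=\chara Kv$; the remaining $p$-part is excluded by arguing that any residue-characteristic value-group extension inside ${\rm T}|{\rm Z}$ would, via an Artin--Schreier construction as in Example~\ref{exampHL1}, produce a non-identity element of $G^i$, contradicting ${\rm T}$ being the $G^i$-fixed field.

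For part (a): the facts above yield an injection $\Aut({\rm T}|{\rm Z})\hookrightarrow\Aut(Lv|Kv)$; every $\sigma\in G^{\rm dec}$ fixes ${\rm Z}$ and satisfies $\sigma({\rm T})={\rm T}$ (because $G^i$ is normal in $G^{\rm dec}$), so the image lies inside $\Aut({\rm T}v|{\rm Z}v)$. Surjectivity is the substantive step. Given $\tau\in\Aut({\rm T}v|{\rm Z}v)$, exhaust ${\rm T}v|{\rm Z}v$ by finite Galois subextensions $M|{\rm Z}v$; for each, choose a primitive $\eta$, lift its separable minimal polynomial $h\in{\rm Z}v[X]$ to a monic $f\in{\cal O}_{\rm Z}[X]$, and apply the Simple Root Version of Hensel's Lemma in the henselian ${\rm T}$ to produce $\theta,\theta'\in{\rm T}$ with $\theta v=\eta$ and $\theta' v=\tau(\eta)$; the ${\rm Z}$-automorphism of ${\rm Z}(\theta)$ sending $\theta\mapsto\theta'$ reduces to $\tau|_M$. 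An inverse-limit argument over the directed family $\{M\}$ assembles these partial lifts into the required $\sigma\in\Aut({\rm T}|{\rm Z})$ with $\ovl{\sigma}=\tau$. Part (b) is then a direct consequence: any finite subextension $F|{\rm Z}$ of ${\rm T}|{\rm Z}$ is Galois, $Fv|{\rm Z}v$ is a finite separable Galois extension by (c), and (a) applied at the level of $F$ gives $[F:{\rm Z}]=|\Aut(F|{\rm Z})|=|\Aut(Fv|{\rm Z}v)|=[Fv:{\rm Z}v]$.

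The main obstacle is $v{\rm T}=v{\rm Z}$: Lemma~\ref{HLrel2} dispatches torsion coprime to the residue characteristic, but excluding the $p$-torsion forces one to see that any residue-characteristic value-group extension inside ${\rm T}|{\rm Z}$ would produce a non-identity element in $G^i$. This step morally belongs to the next layer of the theory (the ramification field ${\rm V}$ and the analysis of $G^r$), and in a clean exposition would be proved in tandem with those results.
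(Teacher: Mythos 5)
Your proposal inverts the paper's logical order (you do c$\to$a$\to$b, the paper does a$\to$b$\to$c), and while this could in principle be a legitimate alternative route, your execution has a gap that is not repairable at the level of generality the theorem requires. You repeatedly invoke the henselianity of ${\rm Z}$, ${\rm T}$, and $L$: to apply Lemmas~\ref{HLrel1} and \ref{HLrel2}, to run the Simple Root Version of Hensel's Lemma for surjectivity in (a), and to argue via ``henselian factorization'' that elements of ${\rm T}v$ are separable over ${\rm Z}v$. But Theorem~\ref{Z}(c) only says $v$ extends uniquely from ${\rm Z}$ to $L$; it does not say it extends uniquely from ${\rm Z}$ to $\tilde{\rm Z}$, which is what Lemma~\ref{hensuniq} would require. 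When $L$ is a proper normal subextension of $\tilde{K}|K$ --- say a finite Galois extension --- ${\rm Z}=K\dec\cap L$ sits strictly inside the henselization $K^h=K\dec$, and neither ${\rm Z}$, ${\rm T}$, nor $L$ need be henselian. The identification ${\rm Z}=K^h$ holds only in the absolute case $L=\tilde K$; the theorem is stated, and later used via Theorems~\ref{liftZTV} and~\ref{downZTV}, for arbitrary normal $L|K$.

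Two further points. First, even granting henselianity, the step asserting that an arbitrary $\zeta\in{\rm T}v$ is \emph{separable} over ${\rm Z}v$ cannot be obtained just by lifting $\zeta$ to $a\in{\cal O}_{\rm T}$ and factoring the minimal polynomial of $a$ over ${\rm Z}$ henselianly: a separable irreducible $f\in{\cal O}_{\rm Z}[X]$ can reduce to a power of an \emph{inseparable} irreducible polynomial (Eisenstein-type examples show this over henselian ${\rm Z}$), so the argument must actually use that $a$ lies in ${\rm T}$, i.e.\ the definition of $G^i$ --- which is exactly the content of the statement you are trying to prove. Second, you yourself flag the $p$-torsion part of $v{\rm T}=v{\rm Z}$ and propose to discharge it using ${\rm V}$ and $G^r$; since Theorem~\ref{V} is stated and proved in the paper \emph{after} Theorem~\ref{T} and its proof uses ${\rm T}v$, this would need careful untangling to avoid circularity. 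The paper's route sidesteps all of this: it establishes (a) first (surjectivity of $G\dec\to\Aut(Lv|Kv)$ is a group-theoretic fact not requiring any henselianity), derives (b) for normal $F|{\rm Z}$ directly from (a) and in general by passing to normal hulls and using multiplicativity of degrees, and then gets (c) from (b) via the fundamental inequality (since ${\rm g}=1$ over ${\rm Z}$ by Theorem~\ref{Z}(c), $[F:{\rm Z}]\geq(vF:v{\rm Z})[Fv:{\rm Z}v]$ combined with $[F:{\rm Z}]=[Fv:{\rm Z}v]$ forces $vF=v{\rm Z}$). That order of deduction is not merely a matter of taste; it is what keeps the proof free of henselian hypotheses.
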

If $F|{\rm Z}$ is normal, then b) is an easy consequence of a). From
this, the general assertion of b) follows by passing from $F$ to the
normal hull of the extension $F|{\rm Z}$ and then using the
multiplicativity of the extension degree. c) follows from b) by use of
the fundamental inequality.

\parm
We set $p:=\chara Kv$ if this is positive, and $p:=1$ if $\chara Kv=0$.
Given any extension $\Delta\subset \Delta'$ of abelian groups, the
\bfind{$p'$-divisible closure of $\Delta$ in $\Delta'$} is defined to be
the subgroup $\{\alpha\in\Delta'\mid\exists n\in\N:\;(p,n) =1\,
\wedge\,n\alpha\in\Delta\}$ of all elements in $\Delta'$
whose order modulo $\Delta$ is prime to $p$.
\begin{theorem}                             \label{V}
a) \ There is an isomorphism
\begin{equation}
\Aut ({\rm V}|{\rm T})\>=\> G^i/G^r\>\isom\>
\mbox{\rm Hom}\left(v{\rm V}/v{\rm T}\,,\,({\rm T}v)^\times\right)\;,
\end{equation}
where the character group on the right hand side is the full
character group of the abelian group $v{\rm V}/v{\rm T}$.
Since this group is abelian, ${\rm V}|{\rm T}$ is an abelian Galois
extension.\n
b) \ For every finite subextension $F|{\rm T}$ of ${\rm V}|{\rm T}$,
\begin{equation}
[F:{\rm T}]\>=\>(vF:v{\rm T})\;.
\end{equation}
c) \ ${\rm V}v={\rm T}v$, and $v{\rm V}$ is the $p'$-divisible closure
of $vK$ in $vL$.
\end{theorem}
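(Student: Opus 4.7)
For part (a), I would construct an explicit character pairing. Given $\sigma \in G^i$ and $x \in L^\times$, I first verify that $v(\sigma x / x) = 0$ and that the residue $\overline{\sigma x / x} \in (Lv)^\times$ depends only on $vx$: if $vy = vx$ then $x/y \in \mathcal{O}_L^\times$, so by definition of $G^i$ we have $v(\sigma(x/y) - (x/y)) > 0$, hence $\overline{\sigma(x/y)/(x/y)} = 1$. Setting $\chi_\sigma(vx) := \overline{\sigma x / x}$ then gives a group homomorphism $vL \to (Lv)^\times$ (multiplicativity is immediate from $\sigma$ being a ring homomorphism), and $\chi_\sigma$ is trivial on $vT$ because $\sigma$ fixes $T$ pointwise. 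Thus $\sigma \mapsto \chi_\sigma$ is a homomorphism $\Psi: G^i \to \mathrm{Hom}(vL/vT, (Lv)^\times)$, and unravelling the definition of $G^r$ shows $\ker \Psi = G^r$.

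Next I would pin down the image. A short cocycle argument on orbits shows that whenever $\sigma$ has finite order $n$ on a finite subextension, $\chi_\sigma(vx)$ is an $n$-th root of unity in $Lv$. In residue characteristic $p > 0$ any $p$-power torsion in $(Lv)^\times$ is trivial, so the values of $\chi_\sigma$ are roots of unity of order prime to $p$; these are separable over $Kv$ and so lie in $Tv$ by Theorem~\ref{T}(c). In particular $\chi_\sigma$ automatically vanishes on every element of $vL/vT$ whose order is divisible by $p$, so $\chi_\sigma$ factors through the $p'$-divisible closure $M$ of $vT$ in $vL$.

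For surjectivity together with part (c), I would use Hensel-theoretic Kummer theory over the henselian field $T$. Given $\alpha \in M$ with $n\alpha \in vT$ and $(n,p)=1$, pick $x \in L$ with $vx = \alpha$; then $x^n \in T \cdot \mathcal{O}_L^\times$, and Lemma~\ref{1-unit} lets me absorb the unit into an $n$-th power, producing $y$ with $vy = \alpha$ and $y^n \in T$. A direct check using Lemma~\ref{1-unit} shows that any $\sigma \in G^r$ fixes such a $y$ (since $\overline{\sigma y / y}$ is simultaneously a $1$-unit and an $n$-th root of unity with $(n,p)=1$, hence $=1$), so $y \in V$. This proves $M \subseteq vV$ and simultaneously $T(y)v = Tv$ (as $[T(y):T]\leq n \leq (vT(y):vT)$ forces equality and immediate residue extension), giving $Vv = Tv$. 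The reverse inclusion $vV \subseteq M$ follows because any element of $vV \setminus M$ would yield via $\Psi$ a nontrivial character detecting a $\sigma \in G^i \setminus G^r$ that violates the definition of $V$. Kummer duality over $T$, applied one prime $\ell \neq p$ at a time, then pairs each character in $\mathrm{Hom}(vV/vT, (Tv)^\times)$ with a unique coset in $G^i/G^r$, completing (a) and (c).

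For part (b), let $F|T$ be a finite subextension of $V|T$. Uniqueness of the extension of $v$ above $T$ (Theorem~\ref{Z}(c), since $Z \subseteq T$) together with the fundamental inequality (Theorem~\ref{fie}) and $Fv = Tv$ from (c) yield $[F:T] \geq (vF:vT)$. The reverse inequality comes from the construction above: $F$ is generated over $T$ by finitely many elements $y_i$ with $y_i^{n_i} \in T$ and $(n_i,p)=1$, and a tower argument in which each step $T' \subseteq T'(y_i)$ satisfies $[T'(y_i):T'] \leq n_i = (vT'(y_i):vT')$ telescopes to $[F:T] \leq (vF:vT)$. The main technical obstacle throughout is the careful separation of the $p$-part from the $p'$-part, both in the value group (where $p$-divisibility modulo $vT$ kills the character) and in the residue field (where $p$-power roots of unity collapse); once Hensel's Lemma in the form of Lemma~\ref{1-unit} is available this is routine bookkeeping combined with the fundamental inequality.
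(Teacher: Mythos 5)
Your proposal is genuinely different from what the paper does: the paper never proves part (a) at all — it is quoted as a standard fact of ramification theory — and the paper's actual content is the short derivation of (b) and (c) from (a) and the subsequent Theorem~\ref{S}. You, by contrast, attempt to construct the isomorphism in (a) from scratch via the residue pairing $\chi_\sigma(vx) = \overline{\sigma x/x}$ and a Kummer-theoretic description of ${\rm V}|{\rm T}$. The pairing construction, the identification $\ker\Psi = G^r$, and the Kummer argument showing $M \subseteq v{\rm V}$ and $T(y)v = {\rm T}v$ for elements $y$ with $y^n\in {\rm T}$, $(n,p)=1$, are all sound in spirit and this would be a legitimate alternative development. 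However, there are several real gaps.

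First, ``$\chi_\sigma$ automatically vanishes on every element of $vL/v{\rm T}$ whose order is divisible by $p$'' is wrong as stated: if $\alpha + v{\rm T}$ has order $p m$ with $(m,p)=1$ and $m>1$, the value $\chi_\sigma(\alpha)$ can perfectly well be a primitive $m$-th root of unity. What is true is that $\chi_\sigma$ kills elements of \emph{pure $p$-power} order. Even granting that, the conclusion ``$\chi_\sigma$ factors through the $p'$-divisible closure $M$'' does not follow: $M/v{\rm T}$ is a subgroup of $vL/v{\rm T}$, not a quotient, so ``factoring through'' must mean that $\chi_\sigma$ is determined by its restriction to $M$, and for that you need to know that $vL/M$ (equivalently $vL/v{\rm V}$) is a $p$-group. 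That is precisely the content of Theorem~\ref{S}, which you never invoke — so you are implicitly relying on what you set out to replace, and without citing it the argument has a circularity: $M = v{\rm V}$ is part of (c), yet you use it to constrain the image of $\Psi$ when proving (a).

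Second, the inclusion $v{\rm V}\subseteq M$ is not actually proved. The sentence ``any element of $v{\rm V}\setminus M$ would yield via $\Psi$ a nontrivial character detecting a $\sigma\in G^i\setminus G^r$ that violates the definition of ${\rm V}$'' is not an argument: $\Psi$ sends automorphisms to characters, not the other way around, and an element of $G^i\setminus G^r$ acting nontrivially on ${\rm V}$ is exactly what one expects — it does not contradict the definition of ${\rm V}$ as the fixed field of $G^r$. The correct route (which the paper takes, compressed into one line) is via (b): for a finite subextension $F|{\rm T}$, equality $[F:{\rm T}]=(vF:v{\rm T})$ combined with $\Aut(F|{\rm T})\isom\mathrm{Hom}(vF/v{\rm T},({\rm T}v)^\times)$ forces $vF/v{\rm T}$ to have order prime to $p$, since characters with values in $({\rm T}v)^\times$ cannot detect $p$-torsion; taking the union over all finite $F$ gives $v{\rm V}\subseteq M$.

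Third, your proof of (b) and of ${\rm V}v = {\rm T}v$ assumes that every finite subextension $F|{\rm T}$ is generated by Kummer elements $y_i$ with $y_i^{n_i}\in {\rm T}$, $(n_i,p)=1$. This is a nontrivial claim — it is essentially equivalent to (a) plus a perfect duality — and you do not establish it. The paper's derivation sidesteps it: (b) follows from (a) because a finite abelian group is isomorphic to its own full character group, and then ${\rm V}v={\rm T}v$ falls out of (b) and the fundamental inequality applied to an \emph{arbitrary} finite $F|{\rm T}$, $[F:{\rm T}]\geq (vF:v{\rm T})[Fv:{\rm T}v]$, which immediately forces $[Fv:{\rm T}v]=1$. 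This works for every finite subextension, not just the Kummer-generated ones, and avoids the generation issue entirely. If you want to keep your Kummer-theoretic route, you must either supply the argument that ${\rm V}$ is generated over ${\rm T}$ by such $y$'s, or switch to the paper's duality argument for (b) and (c).
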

b) follows from a) since for a finite extension $F|{\rm T}$, the group
$vF/v{\rm T}$ is finite and thus there exists an isomorphism of
$vF/v{\rm T}$ onto its full character group. The equality ${\rm V}v=
{\rm T}v$ follows from b) by the fundamental inequality. The second
assertion of c) follows from the next theorem and the fact that the
order of all elements in $({\rm T}v)^\times$ and thus also of all
elements in $\mbox{\rm Hom} \left(v{\rm V}/v{\rm T}\,,\,
({\rm T}v)^\times \right)$ is prime to $p$.

\begin{theorem}                             \label{S}
The ramification group $G^r$ is a $p$-group and therefore,
${\rm S}|{\rm V}$ is a $p$-extension. Further, $vL/v{\rm V}$ is a
$p$-group, and the residue field extension $Lv|{\rm V}v$ is
purely inseparable. If $\chara Kv=0$, then ${\rm V}={\rm S}=L$.
\end{theorem}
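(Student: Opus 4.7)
The plan is to unpack the theorem into its four distinct claims and handle them separately: $G^r$ is a $p$-group, ${\rm S}|{\rm V}$ is a $p$-extension, $vL/v{\rm V}$ is a $p$-group, $Lv|{\rm V}v$ is purely inseparable, and the $\chara Kv=0$ collapse. The assertion about ${\rm S}|{\rm V}$ is a formal consequence of the first, since ${\rm V}$ is by definition the fixed field of $G^r$ inside ${\rm S}$ and ${\rm S}|{\rm V}$ is Galois; hence $\Aut({\rm S}|{\rm V})=G^r$, and a $p$-group as Galois group is exactly what is meant by a $p$-extension. The two remaining structural statements I would read off from Theorems~T and~V, and only then confront the genuinely difficult core, namely that $G^r$ itself is a $p$-group.

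For $Lv|{\rm V}v$, Theorem~V(c) gives ${\rm V}v={\rm T}v$, and Theorem~T(c) identifies ${\rm T}v$ with the separable closure of $Kv$ inside $Lv$. A separable closure inside an ambient field is automatically separably closed there (separability composes), so no element of $Lv\setminus {\rm V}v$ can be separable-algebraic over ${\rm V}v$, which is precisely the statement that $Lv|{\rm V}v$ is purely inseparable. For $vL/v{\rm V}$, Theorem~V(c) says $v{\rm V}$ is the $p'$-divisible closure of $vK$ in $vL$. This closure operation is idempotent: if $n\alpha\in v{\rm V}$ with $(n,p)=1$, then some $n'$ with $(n',p)=1$ sends $n\alpha$ into $vK$, whence $(nn')\alpha\in vK$ with $(nn',p)=1$, so $\alpha\in v{\rm V}$. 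Thus $v{\rm V}$ is $p'$-divisibly closed in $vL$, so $vL/v{\rm V}$ has no non-trivial elements of order prime to $p$; that is, it is a $p$-group.

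The hard part is showing $G^r$ is itself a $p$-group. My first move would be to reduce to the finite case: writing $L|K$ as a union of finite Galois subextensions $F_i|K$, one has $G^r(L|K,v)=\varprojlim G^r(F_i|K,v)$, and a projective limit of finite $p$-groups is a pro-$p$ group. So it suffices to show $|G^r|$ is a power of $p$ when $L|K$ is finite Galois. The approach I would follow is the higher-ramification-filtration argument, adapted from the classical Dedekind case to this general setting. For each positive $\alpha\in vL$ set $G^r_\alpha:=\{\sigma\in G\mid v(\sigma x-x)\geq\alpha+vx\text{ for all }x\in\mathcal{O}_L\setminus\{0\}\}$; these form a descending chain of subgroups with $G^r=\bigcup_{\alpha>0}G^r_\alpha$. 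The key construction is that for each $\alpha$, sending $\sigma\mapsto\overline{(\sigma x-x)/x}$ (for an appropriate $x$, with the overline denoting the residue in the graded piece $\mathcal{M}_L^{\geq\alpha}/\mathcal{M}_L^{>\alpha}$ of the valuation filtration) is a well-defined group homomorphism $G^r_\alpha/G^r_{>\alpha}\to (Lv,+)$. The reason is that the cocycle identity $\sigma\tau x-x=\sigma(\tau x-x)+(\sigma x-x)$, combined with the fact that $\sigma\in G^i$ acts trivially on residues, linearises the multiplicative rule $\sigma\tau x/x=(\sigma(\tau x)/\tau x)(\tau x/x)$ modulo higher-order terms. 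Since $(Lv,+)$ is $p$-torsion when $\chara Lv=p$, every graded piece $G^r_\alpha/G^r_{>\alpha}$ is an elementary abelian $p$-group, so $G^r$ is a $p$-group. The only step I expect to demand real care is pinning down these graded pieces in the absence of a uniformizer (the valuation need not be discrete); with a uniformizer, the argument of Serre's \emph{Corps Locaux} applies verbatim, and in general one must work with the full $vL$-indexed filtration.

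Finally the characteristic-zero collapse: when $\chara Kv=0$ our convention sets $p=1$, so "$G^r$ is a $p$-group" forces $G^r=\{1\}$, giving ${\rm V}={\rm S}$. Moreover $\chara Kv=0$ forces $\chara K=0$: the composition $\Z\to\mathcal{O}_K\to Kv$ has target of characteristic zero, hence is injective on $\Z$, hence $\Z\hookrightarrow K$. With $\chara K=0$, every algebraic extension of $K$ is separable, so ${\rm S}=L$, and therefore ${\rm V}={\rm S}=L$ as claimed.
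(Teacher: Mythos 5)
First, a point of orientation: the paper does not prove Theorem~\ref{S}. It appears in the crash-course Section~\ref{sectram} as background from standard ramification theory, stated without proof, so there is no argument in the text to compare yours against; I am reviewing your proposal on its own merits.

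Your formal reductions are fine: $\mathrm{S}|\mathrm{V}$ being a $p$-extension follows from $\Aut(\mathrm{S}|\mathrm{V})=G^r$, the characteristic-zero collapse is correct, and $Lv|\mathrm{V}v$ purely inseparable follows as you say from $\mathrm{V}v=\mathrm{T}v=(Lv|Kv)^{\rm sep}$. But for $vL/v\mathrm{V}$ a $p$-group you must not cite Theorem~\ref{V}(c): the paper explicitly \emph{derives} the second assertion of V(c) from Theorem~\ref{S}, so invoking it here is circular. The clean route is to derive this part from the first: once $G^r$ is pro-$p$, every finite subextension $F|\mathrm{V}$ of $\mathrm{S}|\mathrm{V}$ has $p$-power degree, so by the fundamental inequality (Theorem~\ref{fie}) the index $(vF:v\mathrm{V})$ is a power of $p$; the same holds for the purely inseparable layer $L|\mathrm{S}$, and passing to the union gives $vL/v\mathrm{V}$ a $p$-group (and, incidentally, also gives $Lv|\mathrm{V}v$ purely inseparable without appeal to V(c)).

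The genuine gap is in the core claim. Your filtration argument needs $G^r=\bigcup_{\alpha>0}G^r_\alpha$, which for a single $\sigma$ amounts to $\inf_{x\in\mathcal{O}_L\setminus\{0\}}\bigl(v(\sigma x-x)-vx\bigr)>0$. That infimum can be $0$ when the value group is dense and $\mathcal{O}_L$ is not finitely generated over $\mathcal{O}_K$ — exactly the defect situations this paper is devoted to — so the filtration need not exhaust $G^r$, and the well-definedness and injectivity of the graded map also become delicate without a uniformizer. This is more than ``a step demanding real care''; as written, the argument proves only that $\bigcup_\alpha G^r_\alpha$ is pro-$p$. What actually works uniformly (no discreteness, completeness, or uniformizer) is an averaging argument. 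Suppose $\sigma\in G^r$ has prime order $q\neq p$. The defining inequality $v(\sigma x-x)>vx$ extends from $\mathcal{O}_L$ to all of $L^\times$ (for $vx<0$ apply it to $1/x\in\mathcal{O}_L$ and use $v\sigma x=vx$, which holds since $G^r\subseteq G^d$). Pick $x\in L$ with $\sigma x\neq x$ and set $\gamma:=v(\sigma x-x)>vx$. Since $q\neq p=\chara Kv$, $q$ is a unit in $\mathcal{O}_L$, and we may form
\[
x'\;:=\;x-\frac{1}{q}\sum_{i=0}^{q-1}\sigma^i x\;=\;-\frac{1}{q}\sum_{i=1}^{q-1}(\sigma^i x-x)\;.
\]
Writing $\sigma^i x-x=\sum_{j=0}^{i-1}\sigma^j(\sigma x-x)$, each summand has value $\gamma$, so $vx'\geq\gamma$. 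On the other hand the averaged term is $\sigma$-invariant, so $\sigma x'-x'=\sigma x-x\neq 0$; in particular $x'\neq 0$, and the extended $G^r$-inequality applied to $x'$ gives $\gamma=v(\sigma x'-x')>vx'$. This contradicts $vx'\geq\gamma$. Hence $G^r$ contains no element of order coprime to $p$, i.e., it is pro-$p$, and the rest of your proof goes through.
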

We note:
\begin{lemma}                              \label{RemASe}
Every $p$-extension is a tower of Galois extensions of degree $p$. In
characteristic $p$, all of them are Artin--Schreier--extensions, as we
have mentioned in Example~\ref{exampHL1}.
\end{lemma}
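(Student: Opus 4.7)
The plan is to split the lemma into two independent assertions. The first --- that every $p$-extension decomposes as a tower of Galois extensions of degree $p$ --- is a Galois-theoretic consequence of the structure of $p$-groups; the second --- that in characteristic $p$ each such step is Artin--Schreier --- is simply the quote from Example~\ref{exampHL1}, so only the first assertion requires real work.

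For a finite $p$-extension $L|K$ with Galois group $G$ of order $p^n$, the key ingredient is the classical fact that every non-trivial finite $p$-group has non-trivial center, which follows at once from the class equation. Picking a subgroup $G_1$ of order $p$ inside $Z(G)$ (automatically normal in $G$) and iterating the same construction inside the successive quotients $G/G_i$, I produce a chain
\[
1\>=\>G_0\>\triangleleft\>G_1\>\triangleleft\>\cdots\>\triangleleft\>G_n\>=\>G
\]
in which each $G_i$ is normal in $G$ (being the preimage of a normal subgroup of $G/G_{i-1}$) and $|G_{i+1}/G_i|=p$. The Galois correspondence then converts this chain into a tower of intermediate fields $K=L_n\subset L_{n-1}\subset\cdots\subset L_0=L$ in which each step $L_{i-1}|L_i$ is Galois of degree $p$, which is exactly what was claimed.

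For an infinite $p$-extension I reduce to the finite case by a standard direct-limit argument: every $a\in L$ lies in some finite Galois subextension $L'|K$, whose Galois group is a finite quotient of the pro-$p$ group $\Aut(L|K)$ and hence itself a finite $p$-group. Applying the finite statement to $L'|K$ and taking the directed union of the resulting towers over all such $L'$ exhausts $L$ by degree-$p$ Galois steps. To finish, each such degree-$p$ step in characteristic $p$ is Artin--Schreier by Example~\ref{exampHL1}, which was precisely the second assertion.

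The main (and really rather minor) obstacle is the group-theoretic lemma on normal chains in finite $p$-groups; once that is in hand, the rest is Galois correspondence and bookkeeping in the limit. No valuation-theoretic input is needed, which is presumably why the statement is recorded as a stand-alone remark after Theorem~\ref{S} rather than proved alongside it.
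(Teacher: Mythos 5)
Your argument is correct in substance, and since the paper records this lemma without proof (treating it as standard Galois theory plus the Artin--Schreier characterization from Example~\ref{exampHL1}), there is no in-paper argument to compare against. The finite case is handled cleanly: using the nontriviality of the center of a finite $p$-group you build a chain $1=G_0\triangleleft G_1\triangleleft\cdots\triangleleft G_n=G$ in which each $G_i$ is normal in all of $G$ and $|G_{i+1}/G_i|=p$. This is in fact a little stronger than what the tower requires --- $L^{G_{i-1}}|L^{G_i}$ is Galois as soon as $G_{i-1}\triangleleft G_i$, so any composition series would do --- but the stronger chain comes for free in a $p$-group and costs nothing. The appeal to Example~\ref{exampHL1} for the degree-$p$ step being Artin--Schreier is exactly what the lemma intends.

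The one place that needs repair is the passage to infinite $p$-extensions, which is relevant here since Theorem~\ref{S} produces an infinite $p$-extension ${\rm S}|{\rm V}$. ``Taking the directed union of the resulting towers'' over all finite Galois subextensions $L'|K$ is not a well-defined operation: the towers built for two incomparable $L'$ and $L''$ need not admit a common refinement, so their union is not itself a chain. The correct mechanism is a transfinite recursion rather than a direct limit. Having constructed an intermediate field $K_\beta\subsetneq L$, note that $\Aut(L|K_\beta)$ is again pro-$p$ (a closed subgroup of the pro-$p$ group $\Aut(L|K)$); apply your finite-case argument to any nontrivial finite quotient to find an open normal subgroup of index $p$, and let $K_{\beta+1}$ be its fixed field in $L$; at limit ordinals take unions. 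This produces the required (possibly transfinite) tower of degree-$p$ Galois steps, and with that adjustment the proof closes.
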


\pars
From Theorem~\ref{S} it follows that there is a canonical isomorphism
\begin{equation}                            \label{cg}
\mbox{\rm Hom}\left(v{\rm V}/v{\rm T}\,,\,({\rm T}v)^\times\right)
\>\isom\>\mbox{\rm Hom}\left(vL/vK\,,\,(Lv)^\times\right)\;.
\end{equation}
\parm
We summarize our main results in the following table:\n
\setlength{\unitlength}{0.0015\textwidth}
\begin{picture}(650,640)(10,0)              \label{ramtable}
\put(70,50){\bbox{$\Aut (L|K)$}}
\put(70,150){\bbox{$G\dec (L|K,v)$}}
\put(70,250){\bbox{$G^i (L|K,v)$}}
\put(70,350){\bbox{$G^r (L|K,v)$}}
\put(70,450){\bbox{$1$}}
\put(70,550){\bbox{$$}}
\put(70,600){\bbox{\bf Galois group}}
\put(200,50){\bbox{$K$}}
\put(200,150){\bbox{Z}}
\put(200,250){\bbox{T}}
\put(200,350){\bbox{V}}
\put(200,450){\bbox{S}}
\put(200,550){\bbox{$L$}}
\put(200,600){\bbox{\bf {field}}}
\put(460,50){\bbox{$vK$}}
\put(460,150){\bbox{$vK$}}
\put(460,250){\bbox{$vK$}}
\put(460,350){\bbox{$\;\;(vL|vK)^{\,p'}$}}
\put(460,550){\bbox{$vL$}}
\put(460,600){\bbox{\bf {value group}}}
\put(590,50){\bbox{$Kv$}}
\put(590,150){\bbox{$Kv$}}
\put(590,250){\bbox{$\;\;(Lv|Kv)\sep$}}
\put(590,350){\bbox{$\;\;(Lv|Kv)\sep$}}
\put(590,550){\bbox{$Lv$}}
\put(590,600){\bbox{\bf {residue field}}}
\put(70,130){\line(0,-1){60}}    
\put(70,230){\line(0,-1){60}}    
\put(70,330){\line(0,-1){60}}    
\put(70,430){\line(0,-1){60}}    
\put(200,130){\line(0,-1){60}}    
\put(200,230){\line(0,-1){60}}    
\put(200,330){\line(0,-1){60}}    
\put(200,430){\line(0,-1){60}}    
\put(200,530){\line(0,-1){60}}    
\put(458,130){\line(0,-1){60}}    
\put(462,130){\line(0,-1){60}}    
\put(458,230){\line(0,-1){60}}    
\put(462,230){\line(0,-1){60}}    
\put(460,330){\line(0,-1){60}}    
\put(460,530){\line(0,-1){160}}    
\put(588,130){\line(0,-1){60}}    
\put(592,130){\line(0,-1){60}}    
\put(590,230){\line(0,-1){60}}    
\put(588,330){\line(0,-1){60}}    
\put(592,330){\line(0,-1){60}}    
\put(590,530){\line(0,-1){160}}    
\put(75,200){\makebox(0,0)[l]{\footnotesize\rm $\Aut (Lv|Kv)$}}%
\put(75,300){\makebox(0,0)[l]{\footnotesize\rm Char}}%
\put(205,100){\makebox(0,0)[l]{\footnotesize\rm immediate}}%
\put(205,200){\makebox(0,0)[l]{\footnotesize\rm Galois}}%
\put(205,307){\makebox(0,0)[l]{\footnotesize\rm abelian Galois}}%
\put(205,293){\makebox(0,0)[l]{\footnotesize\rm $p'$-extension}}%
\put(205,407){\makebox(0,0)[l]{\footnotesize\rm Galois}}%
\put(205,393){\makebox(0,0)[l]{\footnotesize\rm $p$-extension}}%
\put(205,507){\makebox(0,0)[l]{\footnotesize\rm purely}}%
\put(205,493){\makebox(0,0)[l]{\footnotesize\rm inseparable}}%
\put(465,457){\makebox(0,0)[l]{\footnotesize\rm division}}%
\put(465,443){\makebox(0,0)[l]{\footnotesize\rm by $p$}}%
\put(465,307){\makebox(0,0)[l]{\footnotesize\rm division}}%
\put(465,293){\makebox(0,0)[l]{\footnotesize\rm prime to $p$}}%
\put(595,200){\makebox(0,0)[l]{\footnotesize\rm Galois}}%
\put(595,457){\makebox(0,0)[l]{\footnotesize\rm purely}}%
\put(595,443){\makebox(0,0)[l]{\footnotesize\rm inseparable}}%
\put(330,157){\makebox(0,0)[c]{\footnotesize\rm decomposition}}%
\put(330,143){\makebox(0,0)[c]{\footnotesize\rm field}}%
\put(330,257){\makebox(0,0)[c]{\footnotesize\rm inertia}}%
\put(330,243){\makebox(0,0)[c]{\footnotesize\rm field}}%
\put(330,357){\makebox(0,0)[c]{\footnotesize\rm ramification}}%
\put(330,343){\makebox(0,0)[c]{\footnotesize\rm field}}%
\put(330,464){\makebox(0,0)[c]{\footnotesize\rm maximal}}%
\put(330,450){\makebox(0,0)[c]{\footnotesize\rm separable}}%
\put(330,438){\makebox(0,0)[c]{\footnotesize\rm subextension}}%
\end{picture}
\n
where $(vL|vK)^{\,p'}$ denotes the $p'$-divisible closure of
$vK$ in $vL$, $(Lv|Kv)\sep$ denotes the separable closure of $Kv$ in
$Lv$, and Char denotes the character group (\ref{cg}).

\pars
We state two more useful theorems from ramification theory. If we have
two subfields $K,L$ of a field $M$ (in our case, we will have the
situation that $L\subset\tilde{K}$) then $K.L$ will denote the smallest
subfield of $M$ which contains both $K$ and $L$; it is called the
\bfind{field compositum of $K$ and $L$}.
\begin{theorem}                             \label{liftZTV}
If $K\subseteq K'\subseteq L$, then the decomposition field of the
normal extension $(L|K',v)$ is {\rm Z}$.K'$, its inertia field is
{\rm T}$.K'$, and its ramification field is {\rm V}$.K'\,$.
\end{theorem}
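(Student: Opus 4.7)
The central observation is purely formal: the defining conditions (\ref{decgrp}), (\ref{ingrp}), (\ref{ramgrp}) make no reference to the base field but only to the valuation $v$ on $L$, and every $K'$-automorphism of $L$ is a fortiori a $K$-automorphism. Thus
\[ G\dec (L|K',v)\>=\>G\dec (L|K,v)\cap\Aut (L|K')\,,\]
and the two analogous identities hold for $G^i$ and $G^r$. All of these groups are closed in the Krull topology on $\Aut (L|K)$, so the Galois correspondence is available.

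Next I would pass to the separable Galois correspondence by restricting to ${\rm S}$. Since $L|{\rm S}$ is purely inseparable, restriction gives an isomorphism $\Aut (L|K)\isom\Aut ({\rm S}|K)$. Set $K'_0:={\rm S}\cap K'$; then $K'|K'_0$ is purely inseparable, and the uniqueness of $p^n$-th roots in $L$ forces every $K'_0$-automorphism of ${\rm S}$ to extend uniquely to an automorphism of $L$ that fixes all of $K'$. Hence under the restriction isomorphism, $\Aut (L|K')$ corresponds to $\Aut ({\rm S}|K'_0)$. Further, ${\rm S}.K'|K'$ is separable while $L|{\rm S}.K'$ is purely inseparable, so ${\rm S}.K'$ is the maximal separable subextension of $L|K'$ --- the field in which the new decomposition, inertia and ramification fields must be read off.

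By definition ${\rm Z}$, ${\rm T}$, ${\rm V}$ are the fixed fields in ${\rm S}$ of $G\dec (L|K,v)$, $G^i(L|K,v)$, $G^r(L|K,v)$, and $K'_0$ is the fixed field in ${\rm S}$ of $\Aut ({\rm S}|K'_0)$. Using the standard fact that the fixed field of an intersection of closed subgroups equals the compositum of their individual fixed fields, the fixed fields in ${\rm S}$ of $G\dec (L|K',v)$, $G^i(L|K',v)$, $G^r(L|K',v)$ come out to be ${\rm Z}.K'_0$, ${\rm T}.K'_0$, ${\rm V}.K'_0$ respectively. To lift from ${\rm S}$ to ${\rm S}.K'$, I note that each such $\sigma$ fixes $K'$ pointwise, so its fixed field in ${\rm S}.K'$ equals (its fixed field in ${\rm S}$)$.K'$; using $K'_0\subseteq K'$ this yields ${\rm Z}.K'$, ${\rm T}.K'$, ${\rm V}.K'$, as required. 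The one delicate point along the way is the inseparability bookkeeping around the extension $K'|K'_0$, but it collapses harmlessly at this final step because $K'$ absorbs $K'_0$.
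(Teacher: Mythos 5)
Your proof is correct and complete. The paper states Theorem~\ref{liftZTV} without proof (as a standard fact from ramification theory, to be found in references such as [ZS] or [EN]), so there is no argument in the text to compare against; but your derivation is exactly the natural one. The key observation — that the defining conditions (\ref{decgrp}), (\ref{ingrp}), (\ref{ramgrp}) depend only on the valuation and on $\mathcal{O}_L$, not on the base field, so that $G^{\ast}(L|K',v)=G^{\ast}(L|K,v)\cap\Aut(L|K')$ for $\ast\in\{d,i,r\}$ — combined with the ``fixed field of an intersection of closed subgroups is the compositum of fixed fields'' lemma settles the separable part, and your bookkeeping for passing from $S$ and $K'_0=S\cap K'$ up to $S.K'$ (using that $K'|K'_0$ is purely inseparable, hence linearly disjoint from $S|K'_0$, so that fixed fields extend by $F\mapsto F.K'$) is accurate. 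One tiny point worth flagging for polish: the closedness of $G\dec$, $G^i$, $G^r$ in the Krull topology deserves a word — each is an intersection over $x\in\mathcal{O}_L$ of conditions that depend only on $\sigma$ restricted to the (finite) normal closure of $K(x)$ in $L$, hence each is an intersection of clopen sets and therefore closed — but this is routine and your appeal to it is justified.
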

\begin{theorem}                             \label{downZTV}
If $E|K$ is a normal subextension of $L|K$, then the decomposition field
of $(E|K,v)$ is {\rm Z}$\cap E$, its inertia field is {\rm T}$\cap E$,
and its ramification field is {\rm V}$\cap E$.
\end{theorem}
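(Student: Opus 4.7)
\emph{Proof plan.} The key tool is the restriction homomorphism $\rho\colon \Aut(L|K) \to \Aut(E|K)$, which is well defined because $E|K$ is normal and surjective because $L|K$ is normal. I will prove the three assertions simultaneously by establishing that $\rho$ restricts to surjections
\[
\rho\bigl(G\dec(L|K,v)\bigr)=G\dec(E|K,v),\ \ \rho\bigl(G^i(L|K,v)\bigr)=G^i(E|K,v),\ \ \rho\bigl(G^r(L|K,v)\bigr)=G^r(E|K,v).
\]
Once these are in hand, the theorem is immediate from Galois correspondence inside the separable closure ${\rm S}$ of $K$ in $L$: the fixed field in ${\rm S}\cap E$ of $\rho(G\dec(L|K,v))$ coincides simultaneously with ${\rm Z}\cap E$ and with the decomposition field of $(E|K,v)$, and analogously for ${\rm T}\cap E$ and ${\rm V}\cap E$.

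In each case the inclusion ``$\subseteq$'' is immediate from the defining inequalities (\ref{decgrp}), (\ref{ingrp}), (\ref{ramgrp}). For the reverse inclusion the strategy is uniform: lift $\tau$ first to the next larger group, then correct by a suitably chosen element of $\Aut(L|E)$. For $G\dec$: given $\tau\in G\dec(E|K,v)$, extend $\tau$ to some $\sigma_0\in\Aut(L|K)$; then $v$ and $v\sigma_0$ are two extensions of $v|_E$ to the normal extension $L|E$, so by Theorem~\ref{allext} there exists $\mu\in\Aut(L|E)$ with $v\sigma_0=v\mu$ on $L$, and $\sigma:=\sigma_0\mu^{-1}$ satisfies $\rho(\sigma)=\tau$ and $v\sigma=v$.

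For $G^i$, lift $\tau\in G^i(E|K,v)\subseteq G\dec(E|K,v)$ first to some $\sigma_0\in G\dec(L|K,v)$ by the previous step. The induced residue automorphism $\overline{\sigma_0}$ lies in $\Aut(Lv|Ev)$ since it restricts to $\overline{\tau}=\mathrm{id}$ on $Ev$. By Theorem~\ref{T}(a) applied to the normal extension $L|E$, the reduction map $G\dec(L|E,v)\to\Aut(Lv|Ev)$ is surjective, so choose $\mu\in G\dec(L|E,v)$ with $\overline{\mu}=\overline{\sigma_0}^{-1}$; then $\sigma:=\sigma_0\mu$ still lies in $G\dec(L|K,v)$ (as $G\dec(L|E,v)\subseteq G\dec(L|K,v)$), has trivial reduction, and projects to $\tau$, so it belongs to $G^i(L|K,v)$. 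The case of $G^r$ is analogous, with Theorem~\ref{V}(a) replacing Theorem~\ref{T}(a): lift $\tau\in G^r(E|K,v)$ to $\sigma_1\in G^i(L|K,v)$ by the inertia step; the associated character $\chi_{\sigma_1}\in\mathrm{Hom}(vL/vK,(Lv)^\times)$ vanishes on $vE/vK$ because $\sigma_1|_E=\tau$ acts trivially on all residues $\overline{\tau x/x}$ with $x\in E^\times$, and hence descends to a character on $vL/vE$; by Theorem~\ref{V}(a) applied to $L|E$, together with the identification (\ref{cg}), there exists $\mu\in G^i(L|E,v)$ whose character is the inverse of this descended one, and then $\sigma:=\sigma_1\mu\in G^i(L|K,v)$ has trivial character and projects to $\tau$, so $\sigma\in G^r(L|K,v)$.

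The main obstacle is the character bookkeeping in the ramification step: one has to pass through the canonical isomorphism (\ref{cg}) identifying $\mathrm{Hom}(v{\rm V}/v{\rm T},({\rm T}v)^\times)$ with $\mathrm{Hom}(vL/vK,(Lv)^\times)$, and carefully verify that a prescribed character on $vL/vE$ can indeed be realised by an element of $G^i(L|E,v)$ rather than only by an element of the larger group $G^i(L|K,v)$. Once that is taken care of, the surjectivity statements for $G\dec$ and $G^i$ are essentially formal consequences of the unique extension property (Theorem~\ref{allext}) and the reduction isomorphism $G\dec/G^i\isom\Aut(Lv|Kv)$ (Theorem~\ref{T}(a)).
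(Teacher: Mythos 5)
The paper states Theorem~\ref{downZTV} without proof (it is a standard fact from ramification theory, left to references such as [ZS]), so there is no in-text argument to measure yours against. Your proof is correct and is the textbook one: work with the restriction $\rho\colon\Aut(L|K)\to\Aut(E|K)$, which is well defined because $E|K$ is normal and surjective because $L|K$ is normal; show $\rho(G\dec(L|K,v))=G\dec(E|K,v)$, $\rho(G^i(L|K,v))=G^i(E|K,v)$, $\rho(G^r(L|K,v))=G^r(E|K,v)$; and then translate to fields via the Galois correspondence in ${\rm S}$ and ${\rm S}\cap E$ (using that ${\rm S}\cap E$ is the maximal separable subextension of $E|K$ and ${\rm Z},{\rm T},{\rm V}\subseteq{\rm S}$). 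The ``$\subseteq$'' inclusions do follow immediately from the defining conditions since ${\cal O}_E={\cal O}_L\cap E$, and your lift-and-correct arguments for the reverse inclusions are valid: for $G\dec$ you invoke the conjugacy of extensions (Theorem~\ref{allext} for $L|E$); for $G^i$ the surjectivity of the residue map $G\dec(L|E,v)\to\Aut(Lv|Ev)$ (Theorem~\ref{T}(a) for $L|E$); and for $G^r$ the surjectivity of the character map $G^i(L|E,v)\to\mathrm{Hom}(vL/vE,(Lv)^\times)$ (Theorem~\ref{V}(a) together with (\ref{cg}) for $L|E$). Your self-diagnosis is accurate: the ramification step is the only one needing care, and the three facts you flag there all check out --- $\chi_{\sigma_1}$ vanishes on $vE/vK$ because $\tau\in G^r(E|K,v)$ forces $\ovl{\tau x/x}=1$ for $x\in E^\times$; the assignment $\sigma\mapsto\chi_\sigma$ is a homomorphism since $\ovl{(\sigma\mu)x/x}=\ovl{\sigma(\mu x)/\mu x}\cdot\ovl{\mu x/x}$ and $v(\mu x)=vx$; and the correcting element can indeed be chosen in the subgroup $G^i(L|E,v)\subseteq G^i(L|K,v)$ precisely because Theorem~\ref{V}(a) for $L|E$ realizes the full character group of $vL/vE$.
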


\parm
If we take for $L|K$ the normal extension $\tilde{K}|K$, then we speak
of \bfind{absolute rami\-fication theory}. The fixed fields $K\dec$,
$K^i$ and $K^r$ of $G\dec(\tilde{K}|K,v)$, $G^i(\tilde{K}|K,v)$ and
$G^r(\tilde{K}|K,v)$ in the separable-algebraic closure $K\sep$ of $K$
are called \bfind{absolute decomposition field}, \bfind{absolute inertia
field} and \bfind{absolute ramification field} {\bf of} $(K,v)$ (with
respect to the given extension of $v$ from $K$ to its algebraic closure
$\tilde{K}$). If $\chara Kv=0$, then by Theorem~\ref{S},
$K^r=K\sep=\tilde{K}$.
\begin{lemma}                               \label{Kimaxur}
Fix an extension of $v$ from $K$ to $\tilde{K}$. Then the absolute
inertia field of $(K,v)$ is the unique maximal extension of $(K,v)$
within the absolute ramification field having the same value group as
$K$.
\end{lemma}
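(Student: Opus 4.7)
The plan is to verify the two defining properties for $K^i$ and then establish maximality by bouncing the problem to a relative subextension via Theorem~\ref{liftZTV}.

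First I would check that $K^i$ is itself a candidate. Since $G^r \subseteq G^i$, the fixed fields satisfy $K^i \subseteq K^r$. By Theorem~\ref{T}(c) applied to the absolute case $\tilde{K}|K$, we have $v{\rm T} = v{\rm Z} = vK$, so $vK^i = vK$. Thus $K^i$ is an extension of $K$ within $K^r$ with the same value group as $K$.

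For the maximality/uniqueness half, take any algebraic extension $F$ with $K \subseteq F \subseteq K^r$ and $vF = vK$; the goal is $F \subseteq K^i$. The key move is to apply Theorem~\ref{liftZTV} to the normal extension $\tilde{K}|K$ with intermediate field $K' = F$: the inertia field of $(\tilde{K}|F, v)$ equals $K^i \cdot F$ and its ramification field equals $K^r \cdot F$, which is just $K^r$ because $F \subseteq K^r$. Now Theorem~\ref{T}(c) applied to this relative situation (base field $F$) gives $v(K^i F) = vF = vK$.

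Finally I would compare the intermediate field $K^i F$ with $K^i$ itself, using the original absolute picture. By Theorem~\ref{V}(b) applied to $(\tilde{K}|K, v)$, every finite subextension $E | K^i$ of $K^r | K^i$ satisfies $[E : K^i] = (vE : vK^i) = (vE : vK)$. Since $v(K^i F) = vK$, every finite subextension of $K^i F | K^i$ has degree $1$, hence $K^i F = K^i$, so $F \subseteq K^i$.

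The only genuine obstacle is choosing the right tool at the right moment: one has to recognize that the relative ramification theory of $(\tilde{K}|F, v)$ is governed by the absolute objects of $(K,v)$ via Theorem~\ref{liftZTV}, after which the value-group equalities supplied by Theorems \ref{T}(c) and \ref{V}(b) close the argument with no computation at all.
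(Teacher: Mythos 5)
Your proof is correct and follows essentially the same route as the paper's: you identify the relative inertia field of $(\tilde{K}|F,v)$ with $K^i.F$ via Theorem~\ref{liftZTV}, use Theorem~\ref{T}(c) to get $v(K^i.F)=vK$, and then invoke Theorem~\ref{V}(b) to force every finite subextension of $K^i.F|K^i$ to be trivial. That is precisely the argument in the paper (which writes $L$ for your $F$ and phrases the final step as a contradiction).
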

\begin{proof}
Let $(L|K,v)$ be any extension within the absolute ramification field
s.t.\ $vL=vK$. Then $vL^i=vL=vK=vK^i$. By Theorem~\ref{liftZTV},
$L^i=L.K^i$. Further, $L\subseteq K^r$ yields that $L.K^i\subseteq K^r$.
If the subextension $L^i|K^i$ of $K^r|K^i$ were proper, it contained
a proper finite subextension $L_1|K^i$, and by part b) of
Theorem~\ref{V} we had that $vK^i\subsetuneq vL_1\subseteq vL^i$. As
this contradicts the fact that $vL^i=vK^i$, we find that $L^i=K^i$,
that is, $L\subseteq K^i$.
\end{proof}

From part c) of Theorem~\ref{Z} we infer that the extension of $v$ from
$K\dec$ to $\tilde{K}$ is unique. On the other hand, if $L$ is any
extension field of $K$ within $K\dec$, then by Theorem~\ref{liftZTV},
$K\dec=L\dec$. Thus, if $L\ne K\dec$, then it follows from part b) of
Theorem~\ref{Z} that there are at least two distinct extensions of $v$
from $L$ to $K\dec$ and thus also to $\tilde{K}=\tilde{L}$. This proves
that the absolute decomposition field $K\dec$ is a minimal algebraic
extension of $K$ admitting a unique extension of $v$ to its algebraic
closure. So it is the minimal algebraic extension of $K$ which is
henselian (cf.\ Lemma~\ref{hensuniq}). We call it the
\bfind{henselization of $(K,v)$ in $(\tilde{K},v)$}. Instead of $K\dec$,
we also write $K^h$. A valued field is henselian if and only if it is
equal to its henselization. Henselizations have the following universal
property:
\begin{theorem}                             \label{hensuniqemb}
Let $(K,v)$ be an arbitrary valued field and $(L,v)$ any henselian
extension field of $(K,v)$. Then there is a unique embedding of
$(K^h,v)$ in $(L,v)$ over $K$.
\end{theorem}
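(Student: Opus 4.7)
The plan is to split the theorem into existence and uniqueness, both resting on the characterization of $K^h=K\dec$ as the minimal henselian algebraic extension of $K$ (obtained in the paragraph preceding the theorem) and on the uniqueness of the extension of $v$ from any henselian field (Lemma~\ref{hensuniq}).

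For existence, I first pass to the relative algebraic closure $L_0$ of $K$ in $L$. The lemma preceding Example~\ref{exampHL1} (\emph{relatively algebraically closed subfields of henselian fields are henselian}) ensures that $(L_0,v)$ is itself henselian, so $L_0$ is a henselian algebraic extension of $K$. I then fix an algebraic closure $\tilde L$ of $L$ and let $\bar K\subseteq\tilde L$ be the algebraic closure of $K$ inside $\tilde L$. The unique extension of $v$ from $L$ to $\tilde L$ given by Lemma~\ref{hensuniq} restricts to some extension $\bar v$ of $v|_K$ on $\bar K$; with respect to $\bar v$, the absolute decomposition field $\bar K\dec$ is the minimal henselian algebraic extension of $K$ inside $\bar K$, so $\bar K\dec\subseteq L_0\subseteq L$. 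Now $(\tilde K,w_0)$ (the algebraic closure used to define $K^h$, with its fixed extension $w_0$ of $v|_K$) and $(\bar K,\bar v)$ are both algebraic closures of $(K,v|_K)$, so Theorem~\ref{allext} lets me choose a $K$-isomorphism $\phi\colon(\tilde K,w_0)\to(\bar K,\bar v)$ of valued fields; such $\phi$ carries absolute decomposition field to absolute decomposition field, so $\phi(K^h)=\bar K\dec\subseteq L$, and $\phi|_{K^h}$ is the desired embedding.

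For uniqueness, suppose $\iota_1,\iota_2\colon(K^h,w_0)\hookrightarrow(L,v)$ are two embeddings over $K$, and extend each to $\tilde\iota_j\colon\tilde K\hookrightarrow\tilde L$ (possible since $\tilde L$ is algebraically closed). Because $L$ is henselian, $v$ extends uniquely from $L$ to $\tilde L$, so each pullback $v\circ\tilde\iota_j$ is an extension of $v|_K$ to $\tilde K$ that restricts to $w_0$ on $K^h$. Since $K^h$ is itself henselian, Lemma~\ref{hensuniq} forces $v\circ\tilde\iota_1=v\circ\tilde\iota_2=w_0$ on all of $\tilde K$. The composite $\tau:=\tilde\iota_2^{-1}\circ\tilde\iota_1\in\Aut(\tilde K|K)$ therefore satisfies $w_0\circ\tau=w_0$, so $\tau\in G\dec(\tilde K|K,w_0)$; but the fixed field of $G\dec(\tilde K|K,w_0)$ in $K\sep$ is exactly $K\dec=K^h$, which forces $\tau$ to fix $K^h$ pointwise. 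Hence $\iota_1=\tilde\iota_2\circ\tau|_{K^h}=\iota_2$.

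The main obstacle is the bookkeeping between the algebraic closure $\tilde K$ used to define $K^h$ and the algebraic closure $\bar K$ of $K$ sitting inside $\tilde L$, since a priori they carry different extensions of $v|_K$. Theorem~\ref{allext} bridges this gap for existence; after that, the existence part reduces to the minimality of $K\dec$ among henselian algebraic extensions, while the uniqueness part reduces to the unique-extension property of henselian fields applied twice, first to $L$ and then to $K^h$.
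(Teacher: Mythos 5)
The paper states Theorem~\ref{hensuniqemb} without proof, so there is no internal argument to compare against; your proof is correct and follows exactly the route that the development of Section~\ref{sectram} sets up: identify $K^h$ with the absolute decomposition field $K\dec$, use Theorem~\ref{allext} and Lemma~\ref{hensuniq} to manage the two algebraic closures and their valuations, and use the fact that $K\dec$ is the fixed field of $G\dec$ in $K\sep$. Both halves are sound, including the reduction to the henselian field $L_0$ via the lemma on relatively algebraically closed subfields, the observation that a valued $K$-isomorphism $(\tilde K,w_0)\to(\bar K,\bar v)$ carries decomposition field to decomposition field, and the identification of $\tau=\tilde\iota_2^{-1}\circ\tilde\iota_1$ as an element of $G\dec(\tilde K|K,w_0)$.

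One step is worth spelling out, since the paper does not quite prove the form of minimality you invoke: you need that \emph{every} henselian algebraic extension of $(K,v|_K)$ inside $\bar K$ contains $\bar K\dec$, whereas the paragraph before the theorem only shows that no proper subfield of $K\dec$ is henselian and then asserts that $K\dec$ is \emph{the} minimal such extension. The strong form follows at once from Theorem~\ref{liftZTV}: if $K\subseteq M\subseteq\bar K$ with $(M,\bar v)$ henselian, then $M=M\dec=M.\bar K\dec\supseteq\bar K\dec$. With that remark added, the proof is complete.
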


From the definition of the henselization as a decomposition field,
together with part d) of Theorem~\ref{Z}, we obtain another very
important property of the henselization:
\begin{theorem}                             \label{immhens}
The henselization $(K^h,v)$ is an immediate extension of $(K,v)$.
\end{theorem}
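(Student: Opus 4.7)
The plan is simply to quote the right piece of the ramification-theory dictionary. By the definition given just above the statement, $K^h$ is the absolute decomposition field $K^{\rm dec}$, that is, the fixed field of $G^{\rm dec}(\tilde K|K,v)$ inside $K^{\rm sep}$. This is exactly the decomposition field Z attached to the normal extension $L|K$ in the special case $L=\tilde K$ (equivalently $L=K^{\rm sep}$, since Z is by definition formed inside the maximal separable subextension).

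Once this identification is made, the theorem is a one-line consequence of part d) of Theorem~\ref{Z}, which asserts precisely that $({\rm Z}|K,v)$ is immediate. Applied to our situation, it says $vK^h=vK$ and $K^h v=Kv$, which is the claim.

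So there is nothing to prove beyond pointing out the reduction: write $K^h=K^{\rm dec}=$ Z for the normal extension $\tilde K|K$ with respect to the fixed extension of $v$, and invoke Theorem~\ref{Z}d). The only genuine content lies upstream, in the proof of that part of Theorem~\ref{Z} (attributed to the Ax trick), which the reader is expected to take on faith; from here there is no obstacle.
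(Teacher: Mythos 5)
Your proposal is correct and is in fact identical to the paper's own argument: immediately before the theorem the paper writes that the result follows from the definition of the henselization as the absolute decomposition field together with part d) of Theorem~\ref{Z}. There is nothing further to add.
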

\begin{corollary}                           \label{maxhens}
Every maximal valued field is henselian. In particular, $(K((t)),v_t)$
is henselian.
\end{corollary}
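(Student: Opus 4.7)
The plan is to read off the first assertion directly from Theorem~\ref{immhens} together with the definition of maximality. Concretely, suppose $(K,v)$ is maximal and fix an extension of $v$ to $\tilde{K}$, so that the henselization $(K^h,v)$ is defined as the absolute decomposition field. By Theorem~\ref{immhens}, the extension $(K^h|K,v)$ is immediate, i.e.\ $vK^h=vK$ and $K^hv=Kv$. But a maximal valued field admits, by definition, no proper extension whose value group and residue field are unchanged, so the inclusion $K\subseteq K^h$ must be an equality. Since a valued field is henselian if and only if it coincides with its henselization (as noted in the text following Theorem~\ref{hensuniqemb}), this forces $(K,v)$ to be henselian.

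For the ``in particular'' clause I would appeal to the maximality of $(K((t)),v_t)$, which was already recorded at the beginning of the Hensel's Lemma section: complete discrete valued fields are maximal, and $(K((t)),v_t)$ is complete with $v_t K((t))=\Z$ and residue field $K$ (as computed in Example~\ref{examp2}). Applying the first part of the corollary to this maximal field then yields that $(K((t)),v_t)$ is henselian.

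There is essentially no obstacle: the argument is a one-line deduction from the machinery developed in the ramification-theory crash course. The real content sits in Theorem~\ref{immhens}, which in turn rests on part d) of Theorem~\ref{Z}; once those are in hand, the corollary is formal. Worth flagging is only the conceptual point that the two clauses of the corollary together form the standard bridge between the abstract notion ``no proper immediate extensions'' and the concrete manipulative tool ``Hensel's Lemma holds,'' which is exactly why power series fields can be treated by henselian techniques throughout the rest of the paper.
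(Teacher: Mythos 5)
Your proof is correct and follows exactly the route the paper intends: Corollary~\ref{maxhens} is stated immediately after Theorem~\ref{immhens} precisely because maximality (no proper immediate extensions) combined with the immediacy of the henselization forces $K^h=K$, and the ``in particular'' clause follows from the earlier remark that complete discrete valued fields, in particular $(K((t)),v_t)$, are maximal.
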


Finally, we employ Theorem~\ref{liftZTV} to obtain:
\begin{theorem}                             \label{hensfcomp}
If $K'|K$ is an algebraic extension, then the henselization of
$K'$ is $K'.K^h\,$.
\end{theorem}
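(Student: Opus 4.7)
The plan is to deduce the result directly from Theorem~\ref{liftZTV} applied to the normal extension $\tilde K|K$ with $K'$ as the intermediate field, using that $\tilde K$ is also the algebraic closure of $K'$ (since $K'|K$ is algebraic) and that the chosen extension of $v$ from $K$ to $\tilde K$ restricts to an extension of $v$ from $K'$ to $\tilde K = \widetilde{K'}$, with respect to which the henselizations are computed.

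More precisely, by the paragraph preceding Theorem~\ref{hensuniqemb}, the henselization $K^h$ is by definition the absolute decomposition field $K\dec$, i.e., the fixed field in $K\sep$ of $G\dec(\tilde K|K,v)$, and analogously $(K')^h$ is the fixed field in $(K')\sep$ of $G\dec(\tilde K|K',v)$. Theorem~\ref{liftZTV}, applied with $L=\tilde K$ and intermediate field $K'$, tells us that the decomposition field of the normal extension $(\tilde K|K',v)$ equals $K\dec.K' = K^h.K'$. Comparing with the definition of $(K')^h$ yields $(K')^h = K^h.K'$, which is the desired equality.

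The only small point that needs to be checked is that this application of Theorem~\ref{liftZTV} is legitimate, i.e., that $K^h.K'$ does lie in the separable closure $(K')\sep$ of $K'$ (the ambient field in which the new decomposition field is taken). This follows because $K^h \subseteq K\sep$ and every element of $K\sep$ is separable over $K$, hence also over the larger field $K'$; thus $K\sep \subseteq (K')\sep$, and consequently $K^h.K' \subseteq (K')\sep$. With this observation, the argument is complete, and no further work is needed.
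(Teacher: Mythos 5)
Your proof is correct and follows exactly the route the paper indicates: the paper states Theorem~\ref{hensfcomp} immediately after the phrase ``Finally, we employ Theorem~\ref{liftZTV} to obtain:'', and your argument is a careful spelling-out of that one-line hint, including the sensible check that $K^h.K' \subseteq (K')\sep$ so the identification of the decomposition field of $(\tilde K|K',v)$ with the henselization $(K')^h$ is legitimate.
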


%
%
\section{A valuation theoretical interpretation of local uniformization}
We return to where we stopped before entering the crash course in
ramification theory. The first question is: what does it mean that
$x_{k+1},\ldots,x_\ell$ together with the polynomials which define them
over $K[x_1,\ldots,x_k]$ satisfy the assumption of the Multidimensional
Hensel's Lemma? First of all, general valuation theory tells us
that a rational function field $K(x_1,\ldots,x_k)$ is much too small to
be henselian (unless the valuation is trivial). But we could pass to
the henselization of $(K(x_1,\ldots,x_k),P)$. So does it mean that
$x_{k+1},\ldots,x_\ell$ lie in this henselization? If we look closely,
there is something fishy in the way we have satisfied the assumption
of the Multidimensional Hensel's Lemma. Instead of talking about a
so-called ``approximative root'' $b=(b_1,\ldots,b_n)$ which lies in the
henselian field we wish to work in, we have talked already about the
actual root, and we do not know where it lies. Let us modify our
Example~\ref{exampy2x3} a bit to see that it does not always lie in the
henselization of $(K(x_1,\ldots,x_k),v)$.
\begin{example}
Let us consider the function field $\Q(x,y)$ where $y^2=x^3$. Take the
place given by $xP=2$, $yP=2\sqrt{2}$. The minimal polynomial of $y$
over $\Q(x)$ is $f(Y)=Y^2-x^3$. As $f(y)=0$, we have that $v_P^{ }f(y)=
\infty>0$. As $f'(Y)=2Y$, we have that $v_P^{ }f'(y)=v_P^{ }2y=0\>$
(since $2yP=4\sqrt{2}\ne 0$). Hence, $f$ and $y$ satisfy the assumption
(\ref{hhl}) of Hensel's Lemma. But $y$ does not lie in the henselization
of $(\Q(x),P)$. Indeed, $P$ on $\Q(x)$ is just the place coming from the
evaluation homomorphism given by $x\mapsto 2$; hence, $\Q(x)^hP=\Q(x)P=
\Q$. But $\Q(x,y)P\ne\Q$ since $yP =2\sqrt{2}\notin\Q$.
\end{example}

So we see that extensions of the residue field can play a role. We could
try to suppress them by requiring that $K$ be algebraically closed.
This works for those $P$ for which $FP|K$ is algebraic, but if this is
not the case, then we have no chance to avoid them. At least, we can
show that they are the only reason why $x_{k+1},\ldots,x_\ell$ may
not lie in the henselization of $(K(x_1,\ldots,x_k),P)$.
\begin{theorem}
If $x_{k+1},\ldots,x_\ell$ together with the polynomials $f_i$ which
define them over $K[x_1,\ldots,x_k]$ satisfy the assumption (\ref{hmhl})
of the Multidimensional Hensel's Lemma, then $x_{k+1},
\ldots, x_\ell$ lie in the absolute inertia field of $(K(x_1,\ldots,
x_k),P)$, and the extension $FP|K(x_1P,\ldots,x_kP)$ is
separable-algebraic. If in addition $P$ is a rational place, then
$x_{k+1}, \ldots, x_\ell$ lie in the henselization of
$(K(x_1,\ldots,x_k),P)$.
\end{theorem}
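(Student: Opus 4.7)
The plan is to work inside the rational function subfield $K_0:=K(x_1,\ldots,x_k)\subseteq F$, over which $F=K_0(x_{k+1},\ldots,x_\ell)$ is algebraic. Fix once and for all an extension of $v:=v_P^{}$ from $K_0$ to $\tilde{K_0}$; by Lemma~\ref{hensuniq}, $(\tilde{K_0},v)$ is henselian, so the Multidimensional Hensel's Lemma is available inside it. All ramification-theoretic objects below ($K_0^h$, $K_0^i$, $G\dec$, $G^i$) are the absolute ones for $K_0$ with respect to this chosen extension. As a preliminary step I would deduce that $F|K_0$ is separable-algebraic, since this is what is needed before one can even place $x_{k+1},\ldots,x_\ell$ inside $K_0\sep$. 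In the module $\Omega^1_{F/K_0}$ of K\"ahler differentials, the defining identities $f_i(x_{k+1},\ldots,x_\ell)=0$ yield
\[\sum_{j=k+1}^{\ell}\frac{\partial f_i}{\partial X_j}(x_{k+1},\ldots,x_\ell)\,dx_j\;=\;0\;,\]
and the hypothesis $v\det J_{\tilde f}(x_{k+1},\ldots,x_\ell)=0$ makes the square matrix $J_{\tilde f}(x_{k+1},\ldots,x_\ell)$ invertible over $F$; hence every $dx_j$ vanishes, forcing $\Omega^1_{F/K_0}=0$ and thus the separability of the algebraic extension $F|K_0$.

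The core of the argument is then a direct application of the uniqueness clause of the Multidimensional Hensel's Lemma. Take any $\sigma\in G^i(\tilde{K_0}|K_0,v)$: since $\sigma$ fixes every coefficient of every $f_i$ and preserves $v$, the tuple $(\sigma x_{k+1},\ldots,\sigma x_\ell)$ lies in $\mathcal O_v^{\,n}$ and is again a common zero of $f_1,\ldots,f_n$, while the very definition of $G^i$ gives $v(\sigma x_j-x_j)>0$ for each $j>k$. Apply the Multidimensional Hensel's Lemma in $(\tilde{K_0},v)$ with approximate root $b:=(x_{k+1},\ldots,x_\ell)$: we have $vf_i(b)=\infty>0$ and $v\det J_{\tilde f}(b)=0$, so there is a \emph{unique} tuple in $\mathcal O_v^{\,n}$ that annihilates all of the $f_i$ and is congruent to $b$ modulo $\mathcal M_v$. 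Both $b$ and $\sigma b$ satisfy these conditions, so $\sigma x_j=x_j$ for all $j$. Combined with the separability already established, this places $x_{k+1},\ldots,x_\ell$ in the fixed field of $G^i$ inside $K_0\sep$, which is by definition $K_0^i$. The residue-field assertion then drops out of Theorem~\ref{T}(c): $F\subseteq K_0^i$ implies $FP=Fv\subseteq K_0^i v=(K_0v)\sep$, whence $FP|K(x_1P,\ldots,x_kP)$ is separable-algebraic.

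The rational case is handled by exactly the same Hensel-uniqueness mechanism, but applied to the larger group $G\dec$ in place of $G^i$. If $P$ is rational then $Fv=K$ coincides with $K_0v=K$, so for any $\sigma\in G\dec$ the induced residue automorphism $\ovl{\sigma}$ of $\tilde{K_0}v$ fixes $K_0v=K$; since $x_jv\in Fv=K$, we deduce $(\sigma x_j)v=\ovl{\sigma}(x_jv)=x_jv$, i.e.\ $v(\sigma x_j-x_j)>0$, and Hensel uniqueness again forces $\sigma x_j=x_j$, placing each $x_j$ in the fixed field $K_0^h$ of $G\dec$ inside $K_0\sep$. The only genuinely delicate point in the whole argument is the separability claim; everything else is a mechanical combination of the definitions of $G^i$ and $G\dec$ with the uniqueness half of the Multidimensional Hensel's Lemma, available for free inside the algebraically closed $\tilde{K_0}$. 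The role of the rationality hypothesis is precisely to promote the conclusion from $K_0^i$ to $K_0^h$: without it, residue automorphisms in $G\dec\setminus G^i$ need not fix $x_jv\in Fv$, and the Hensel uniqueness step cannot be run on the whole decomposition group.
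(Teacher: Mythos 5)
Your proof is correct but follows a genuinely different path from the paper's. The paper's argument first establishes that the residues $x_{k+1}P,\ldots,x_\ell P$ are separable-algebraic over the base residue field by invoking Lang's Jacobian criterion for simple points ([L2], Ch.~X, \S7, Prop.~8), then picks approximate roots $b_i$ inside the absolute inertia field $L$ with $b_iP=x_{k+i}P$, applies the \emph{existence} clause of the Multidimensional Hensel's Lemma in $L$ to produce exact roots $b_i'\in L$, and finally uses the \emph{uniqueness} clause in $\tilde L$ to identify $b_i'=x_{k+i}$, thereby landing the $x_{k+i}$ in $L$. You bypass the lift-and-compare scheme entirely: you run only the uniqueness clause, taking the tuple $(x_{k+1},\ldots,x_\ell)$ as its own approximate root, and show that it must be fixed pointwise by every $\sigma\in G^i$ (respectively $\sigma\in G\dec$ in the rational case); Galois descent then places the $x_j$ in the corresponding fixed field. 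To make that descent land inside the separable closure you need separability of $F|K_0$ up front, which you derive from the K\"ahler-differential form of the very same Jacobian-nonsingularity hypothesis, whereas the paper obtains this separability only after the fact, as a corollary of $F\subseteq L$. Your route has two structural advantages: it constructs no auxiliary elements and uses only half of Hensel's Lemma, and it treats the inertial and henselian conclusions by one uniform template, the only change being the substitution of $G\dec$ for $G^i$ together with the observation that $\ovl{\sigma}$ fixing $x_jP\in FP=K=K_0P$ supplies the required inequality $v(\sigma x_j-x_j)>0$. The trade is that you must front-load the differential computation, but since that is precisely the algebraic content of the smoothness hypothesis, nothing extraneous is being imported.
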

\begin{proof}
Denote by $(L,P)$ the absolute inertia field of $(K(x_1,\ldots,x_k),P)$.
First,
\begin{equation}
\det J_{\tilde{f}P}(x_{k+1}P,\ldots,x_\ell P)\>=\>\det J_{\tilde{f}}
(x_{k+1},\ldots,x_\ell)P\>\ne\> 0
\end{equation}
and the fact that the $f_iP$ are polynomials over $K(x_1P,\ldots,x_k P)$
imply that $x_{k+1} P,\ldots,x_\ell P$ are separable algebraic over
$K(x_1P,\ldots,x_k P)$ (cf.\ [L2], Chapter X, \S7, Proposition 8).
On the other hand, $LP$ is the separable-algebraic closure of $K(x_1,
\ldots,x_k)P$. Therefore, there are elements $b_1,\ldots,b_n$ in $L$
such that $b_iP=x_{k+i}P$. Since $(L,P)$ is henselian, the
Multidimensional Hensel's Lemma now shows the existence of a common root
$(b'_1,\ldots,b'_n)\in L^n$ of the $f_i$ such that $b'_iP=b_iP=x_{k+i}
P$. But the uniqueness assertion of the Multidimensional Hensel's Lemma
also holds in the algebraic closure $\tilde{L}$ of $L$ (which is also
henselian). So we find that $(b'_1,\ldots,b'_n)=(x_{k+1},\ldots,
x_\ell)$. Hence, $x_{k+1},\ldots,x_\ell$ are elements of $L$.

\pars
If we have in addition that $P$ is a rational place, then $x_{k+1}P,
\ldots, x_\ell P\in K$. In this case, we can choose $b_1,\ldots,b_n$
and $b'_1,\ldots,b'_n$ already in the henselization of
$(K(x_1,\ldots,x_k),P)$, which implies that also
$x_{k+1},\ldots,x_\ell$ lie in this henselization.
\end{proof}

Since the absolute inertia field is a separable-algebraic extension
and every rational function field is separable, we obtain:
\begin{corollary}
If the place $P$ of $F|K$ admits local uniformization, then $F|K$ is
separable.
\end{corollary}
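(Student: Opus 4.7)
The plan is to use the preceding theorem essentially verbatim. By the definition of local uniformization adopted in this paper, if $P$ admits local uniformization on $F|K$, then there is a model with generators $x_1,\ldots,x_\ell\in{\cal O}_P$ such that $x_1,\ldots,x_k$ (with $k=\trdeg F|K$) form a transcendence basis of $F|K$, and $x_{k+1},\ldots,x_\ell$ together with their defining polynomials $\tilde{f}_i\in K[x_1,\ldots,x_k][X_{k+1},\ldots,X_\ell]$ satisfy the assumption (\ref{hmhl}) of the Multidimensional Hensel's Lemma.

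The previous theorem then tells us that $x_{k+1},\ldots,x_\ell$ lie in the absolute inertia field $L$ of the valued rational function field $(K(x_1,\ldots,x_k),P)$. Since $x_1,\ldots,x_k$ lie in $K(x_1,\ldots,x_k)\subseteq L$ automatically, we conclude
\[ F\>=\>K(x_1,\ldots,x_\ell)\>\subseteq\>L\;. \]
By the construction of the absolute inertia field in Section~\ref{sectram}, $L|K(x_1,\ldots,x_k)$ is a separable-algebraic extension, so the intermediate extension $F|K(x_1,\ldots,x_k)$ is itself separable-algebraic.

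Now we combine this with the fact that the rational function field $K(x_1,\ldots,x_k)$ is a purely transcendental, hence separable, extension of $K$. By Mac~Lane's criterion, an algebraic function field $F|K$ is separable precisely when it admits a \emph{separating} transcendence basis, i.e.\ a transcendence basis over which $F$ is separable-algebraic. The elements $x_1,\ldots,x_k$ furnish such a basis, so $F|K$ is separable.

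The main point---and the only nontrivial content---is the appeal to the previous theorem, which embeds $F$ into the absolute inertia field; once that is granted, the conclusion is really a formal consequence of the two facts highlighted in the statement of the corollary (absolute inertia fields are separable-algebraic, and rational function fields are separable), combined with the characterization of separable function field extensions via a separating transcendence basis. There is no substantial obstacle beyond correctly matching the hypotheses of the Multidimensional Hensel's Lemma to the definition of local uniformization, which has already been done in the preceding discussion.
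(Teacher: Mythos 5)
Your proof is correct and follows exactly the paper's own reasoning: the preceding theorem places $F$ inside the absolute inertia field of $(K(x_1,\ldots,x_k),P)$, which is separable-algebraic over $K(x_1,\ldots,x_k)$, and $x_1,\ldots,x_k$ is therefore a separating transcendence basis of $F|K$. You merely spell out the details that the paper compresses into one sentence.
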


We see that we are slowly entering the \bfind{structure theory of
valued function fields}, that is, the algebraic theory of function
fields $F|K$ equipped with a valuation (which may or may not be trivial
on $K$). Later, we will see some main results from this theory
(Theorems~\ref{ai} and~\ref{stt3}).

Given a place $P$ of $F$, not necessarily trivial on $K$, we will
say that $(F|K,P)$ is \bfind{inertially generated} if there is a
transcendence basis $T$ of $F|K$ such that $(F,P)$ lies in the absolute
inertia field of $(K(T),P)$. Similarly, $(F|K,P)$ is \bfind{henselian
generated} if there is a transcendence basis $T$ of $F|K$ such that
$(F,P)$ lies in henselization of $(K(T),P)$. Now we see a valuation
theoretical interpretation of local uniformization:
\begin{theorem}                             \label{MT5}
If the place $P$ of $F|K$ admits local uniformization, then
$(F|K,P)$ is inertially generated. If in addition $FP=K$, then
$(F|K,P)$ is henselian generated.
\end{theorem}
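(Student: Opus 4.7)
The plan is to read off the conclusion directly from the preceding theorem of this section, once the hypothesis of local uniformization has been put into valuation-theoretic form. That translation was already carried out informally just before the ramification crash course: to say that $P$ is centered at a simple point on some model of $F|K$ is equivalent to saying that there exist generators $x_1,\ldots,x_\ell \in \mathcal{O}_P$ of $F$ over $K$ such that, with $k=\trdeg F|K$, the elements $x_1,\ldots,x_k$ form a transcendence basis of $F|K$ and the polynomials $f_1,\ldots,f_n \in K[x_1,\ldots,x_k][X_{k+1},\ldots,X_\ell]$ defining $x_{k+1},\ldots,x_\ell$ satisfy the assumption (\ref{hmhl}) of the Multidimensional Hensel's Lemma at the point $(x_{k+1},\ldots,x_\ell)$.

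Given such generators, the preceding theorem of this section applies verbatim and yields that $x_{k+1},\ldots,x_\ell$ lie in the absolute inertia field of $(K(x_1,\ldots,x_k),P)$. Since trivially $x_1,\ldots,x_k$ already lie in $K(x_1,\ldots,x_k)$, the full field $F=K(x_1,\ldots,x_\ell)$ is contained in this absolute inertia field. Taking $T:=\{x_1,\ldots,x_k\}$ therefore witnesses that $(F|K,P)$ is inertially generated. If in addition $FP=K$, then $P$ is a rational place of $F|K$, so the strengthened second conclusion of the preceding theorem places $x_{k+1},\ldots,x_\ell$ in the henselization of $(K(x_1,\ldots,x_k),P)$; with the same choice of $T$ this shows that $(F|K,P)$ is henselian generated.

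The statement is thus essentially a packaging corollary, and the proof itself presents no real obstacle. The substantive work lies upstream: first in the identification (via the Jacobi criterion and the valuation-theoretic Implicit Function Theorem) of simplicity of the center of $P$ with the Multidimensional Hensel condition on the defining polynomials, and then in the preceding theorem that transfers this Hensel condition to membership in the absolute inertia field, respectively the henselization when $P$ is rational. Once those have been assembled, the present theorem requires only a transcription of definitions.
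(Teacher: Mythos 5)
Your proposal is correct and matches the paper's intent exactly: the paper leaves Theorem~\ref{MT5} without an explicit proof precisely because it is, as you say, a transcription of definitions once the translation of ``$P$ centered at a smooth point'' into the Multidimensional Hensel condition is in hand, after which the unnumbered theorem just above (with its rational-place addendum for the henselization clause) delivers both conclusions.
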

So if local uniformization holds in arbitrary characteristic for every
$F|K$ with perfect $K$, then for every place $P$ of $F|K$, the valued
function field $(F|K,P)$ is inertially generated. In the context of
valuation theory, at least to me, this is a quite surprising assertion.
Here is our first open problem:
\mn
{\bf Open Problem 1:} \ Is the converse also true, i.e., if
$(F|K,P)$ is inertially generated, does it then admit
local uniformization?
\mn
I will discuss this question in Section~\ref{sectrlu}. A partial
answer to this question is given in the papers [K5] and [K6].
What we see is that in order to get local uniformization, one has to
avoid ramification. Indeed, ramification is the valuation theoretical
symptom of branching, the violation of the Implicit Function
Theorem at a point of the variety. Let us look again at our simple
Example~\ref{exampy2x3}:
\begin{example}
Consider the function field $\R(x,y)$ where $y^2=x^3$. Take the
place given by $xP=0=yP$. As $P$ on $K(x)$ originates from the
evaluation homomorphism given by $x\mapsto 0$, we have that
$v_P^{ } K(x)=\Z$, with $v_P^{ }x=1$ the smallest positive element in
the value group. Now compute $v_P^{ }y$. We have that $y^2=x^3$, whence
$2vy=vy^2=vx^3=3$. It follows that $vy=3/2\notin\Z$, that is, the
extension $(K(x,y)|K(x),P)$ is \bfind{ramified}, or in other words,
$(K(x,y),P)$ does not lie in the absolute inertia field of $(K(x),P)$.
We see that we have ramification at the singular point $(0,0)$. As an
exercise, you may check that $(K(x,y),Q)$ lies in the absolute inertia
field of $(K(x),Q)$ whenever $xQ\ne 0$.
\end{example}

%
%
\section{Inertial generation and Abhyankar places}    \label{sectig}
We may now ask ourselves: How could we show that for a given
place $P$ of $F|K$, the valued function field $(F|K,P)$ is
inertially generated?
\begin{example}                             \label{exampig}
Let us start with the most simple case,
where $\trdeg F|K=1$. Assuming that $P$ is not trivial
on $F$ (if it is trivial, then local uniformization is trivial if
$F|K$ is separable), we pick some $z\in F$ such that $zP=0$. As we have
seen in Example~\ref{examp1}, $v_P^{ }K(z)=\Z$. Since $z\notin K$ and
$\trdeg F|K=1$, we know that $F|K(z)$ is algebraic; since $F|K$ is
finitely generated, it follows that $F|K$ is finite. From
Theorem~\ref{fie} we infer that the ramification index $(v_P^{ }F:
v_P^{ }K(z))$ is finite. Therefore, $v_P^{ }F$ is again isomorphic to
$\Z$ and we can pick some $x\in F$ such that $x\in {\cal O}_P$ and
$v_P^{ }F=\Z v_P^{ }x$.

We have achieved that $v_P^{ }F=v_P^{ }K(x)$. If
$\chara FP=\chara K$ is 0, then we know from
Lemma~\ref{Kimaxur} that the absolute inertia field $K(x)^i$ is the
unique maximal extension still having the same value group as $K(x)$. In
this case, we find that $F$ must lie in this absolute inertia field, and
we have proved that $(F|K,P)$ is inertially generated. But we are lost,
it seems, if the characteristic is $p>0$, since in this case, the
absolute inertia field is not necessarily the maximal algebraic
extension of $K(x)$ having the same value group. To solve this case, we
yet have to learn some additional tools.
\end{example}

In this example, the fact that $v_P^{ }F$ was finitely generated played a
crucial role. As we have shown, this is always the case if $\trdeg F|K
=1$. But in general, we can't expect this to hold. We will give
counterexamples in Section~\ref{sectbad}. But prior to the negative, we
want to start with the positive, i.e., criteria for the value group to
be finitely generated.

The following theorem has turned out
in the last years to be amazingly universal in many different
applications of valuation theory. It plays an important role in
algebraic geometry as well as in the model theory of valued fields, in
real algebraic geometry, or in the structure theory of exponential Hardy
fields (= nonarchimedean ordered fields which encode the asymptotic
behaviour of real-valued functions including $\exp$ and $\log$, cf.\
[KK]). For more details and the easy proof of the theorem, see
[V], Theorem 5.5, or [B], Chapter VI, \S10.3, Theorem~1, or [K2].
\begin{theorem}                                \label{prelBour}
Let $(L|K,P)$ be an extension of valued fields. Take $x_i,y_j
\in L$, $i\in I$, $j\in J$, such that the values $v_P^{ }x_i\,$, $i\in I$,
are rationally independent over $v_P^{ }K$, and the residues $y_jP$, $i\in
J$, are algebraically independent over $KP$. Then the elements
$x_i,y_j$, $i\in I$, $j\in J$, are algebraically independent over $K$,
the value of each polynomial in $K[x_i,y_j\mid i\in I,j\in J]$ is
equal to the least of the values of its monomials, and
%
%
\begin{eqnarray}
v_P^{ }K(x_i,y_j\mid i\in I,j\in J) & = & v_P^{ }K\oplus\bigoplus_{i\in I}
\Z v_P^{ }x_i\\
K(x_i,y_j\mid i\in I,j\in J)P & = & KP\,(y_jP\mid j\in J)\;.
\end{eqnarray}
Moreover, the valuation $v_P^{ }$ on $K(x_i,y_j\mid i\in
I,j\in J)$ is uniquely determined by its restriction to $K$, the values
$v_P^{ }x_i$ and the residues $y_jP$.
\end{theorem}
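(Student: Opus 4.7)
The plan is to prove everything simultaneously from a single structural formula: for every non-zero polynomial $f = \sum_{\alpha,\beta} c_{\alpha\beta}\prod_i X_i^{\alpha_i}\prod_j Y_j^{\beta_j} \in K[X_i,Y_j]$,
\[
v_P^{ } f(x,y) \;=\; \min_{(\alpha,\beta):\, c_{\alpha\beta}\neq 0}\Bigl(v_P^{ } c_{\alpha\beta} + \sum_i \alpha_i v_P^{ } x_i\Bigr).
\]
Once this formula is proved, all four assertions of the theorem fall out almost for free: algebraic independence of the $x_i,y_j$ over $K$ follows because the right-hand side is finite for any non-zero $f$; the value-group formula follows by applying the formula to numerator and denominator of an arbitrary quotient in $K(x_i,y_j\mid\ldots)$; and the uniqueness of $v_P^{ }$ is immediate, since the formula determines $v_P^{ }$ on $K[x_i,y_j]$, and hence on its quotient field, from just the data $(v_P^{ }|_K, v_P^{ } x_i, y_j P)$.

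To establish the formula I would first group $f(x,y)$ by $x$-multidegree, writing $f(x,y) = \sum_\alpha p_\alpha(y)\prod_i x_i^{\alpha_i}$ with $p_\alpha \in K[Y_j]$. The rational independence of the $v_P^{ } x_i$ modulo $v_P^{ } K$ is exactly what is needed to separate different $\alpha$'s: provided every $v_P^{ } p_\alpha(y)$ lies in $v_P^{ } K$, the values $v_P^{ } p_\alpha(y) + \sum_i \alpha_i v_P^{ } x_i$ for distinct $\alpha$'s lie in distinct cosets of $v_P^{ } K$ in $v_P^{ } L$, so the strict triangle inequality rules out any cancellation and forces $v_P^{ } f = \min_\alpha\bigl(v_P^{ } p_\alpha(y) + \sum \alpha_i v_P^{ } x_i\bigr)$.

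The core lemma, and in my view the only real content of the proof, is that for $p(Y) = \sum_\beta c_\beta\prod Y_j^{\beta_j}\in K[Y_j]$ one has $v_P^{ } p(y) = \min_\beta v_P^{ } c_\beta$. Set $\gamma := \min_\beta v_P^{ } c_\beta$ and pick a coefficient $c_{\beta^\star}\in K^\times$ realizing it. Then $c_{\beta^\star}^{-1} p(y)$ has all coefficients in ${\cal O}_P$, with at least one coefficient equal to $1$, so its $P$-reduction is a non-zero polynomial in the $y_j P$; by the hypothesis that the $y_j P$ are algebraically independent over $KP$, this reduction is a non-zero element of $LP$, whence $v_P^{ } (c_{\beta^\star}^{-1} p(y)) = 0$ and $v_P^{ } p(y) = \gamma \in v_P^{ } K$. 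This supplies the missing input to the previous paragraph, completing the proof of the formula. This is the step where algebraic independence of residues is essential, and it is where I expect the main (modest) obstacle; everything else is bookkeeping.

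For the residue field, if $f,g \in K[x_i,y_j]$ and $v_P^{ }(f/g) = 0$, then the common value $v_P^{ } f = v_P^{ } g$ forces the multi-indices $\alpha^\star$ at which the minima in $f$ and $g$ are attained to coincide, again by rational independence. After dividing numerator and denominator by $c\prod x_i^{\alpha_i^\star}$ for a suitable $c\in K^\times$, one reduces to a quotient of polynomials in the $y_j$ whose leading-in-value term has residue in $KP[y_j P]$, so $(f/g)P \in KP(y_j P\mid j\in J)$; the reverse inclusion is trivial. The uniqueness assertion requires no further argument, as already noted.
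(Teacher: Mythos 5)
The paper itself supplies no proof of Theorem~\ref{prelBour}, referring the reader instead to [V], Bourbaki [B], and [K2]; your argument is correct and is essentially the standard one found in those sources. The decomposition by $x$-multidegree, the core lemma that $v_P^{}\,p(y)=\min_\beta v_P^{}\,c_\beta$ via dividing by a coefficient of minimal value and invoking the algebraic independence of the $y_jP$, and the separation of $x$-multidegrees by rational independence of the $v_P^{}x_i$ modulo $v_P^{}K$ are exactly the right three ingredients, and the remaining assertions on value group, residue field, algebraic independence and uniqueness do indeed follow mechanically from the monomial-minimum formula as you indicate.
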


For the proof of the following corollary, see [V] or [K2].
\begin{corollary}                              \label{fingentb}
Let $(L|K,P)$ be an extension of valued fields of finite transcendence
degree. Then
\begin{equation}                            \label{wtdgeq}
\trdeg L|K \>\geq\> \trdeg LP|KP \,+\, \rr (v_P^{ }L/v_P^{ }K)\;.
\end{equation}
If in addition $L|K$ is a function field, and if equality holds in
(\ref{wtdgeq}), then the extensions $v_P^{ }L| v_P^{ }K$ and $LP|KP$
are finitely generated. In particular, if $P$ is trivial on $K$, then
$v_P^{ }L$ is a product of finitely many (namely, $\rr v_P^{ }L$)
copies of $\Z$, and $LP$ is again a function field over $K$.
\end{corollary}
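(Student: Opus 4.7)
The inequality will follow immediately from Theorem~\ref{prelBour}. My plan is to choose a maximal set $\{x_i\}_{i\in I}\subseteq L$ whose values $v_Px_i$ are rationally independent over $v_PK$, so that $|I|=\rr(v_PL/v_PK)$, together with a maximal set $\{y_j\}_{j\in J}\subseteq L$ whose residues $y_jP$ are algebraically independent over $KP$, so that $|J|=\trdeg LP|KP$. By Theorem~\ref{prelBour} the $|I|+|J|$ elements $x_i,y_j$ are algebraically independent over $K$, which forces $\trdeg L|K\geq |I|+|J|$; this is exactly (\ref{wtdgeq}).

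For the rest of the corollary, I would assume $L|K$ is finitely generated and equality holds in (\ref{wtdgeq}). Then the $x_i$ and $y_j$ together form a transcendence basis of $L|K$. Setting $K_0:=K(x_i,y_j\mid i\in I,j\in J)$, Theorem~\ref{prelBour} further supplies
\[ v_PK_0 \>=\> v_PK \oplus \bigoplus_{i\in I}\Z v_Px_i \quad\text{and}\quad K_0P\>=\>KP(y_jP\mid j\in J), \]
so $v_PK_0/v_PK$ is free abelian of finite rank $|I|$ and $K_0P|KP$ is finitely generated.

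The main lever is now the finiteness of $L|K_0$. Since $L|K$ is finitely generated and $L|K_0$ is algebraic, $L|K_0$ is in fact finite. Applying the fundamental inequality (Theorem~\ref{fie}) to this finite extension, both the ramification index $(v_PL:v_PK_0)$ and the inertia degree $[LP:K_0P]$ are finite. Consequently $v_PL$ is generated over $v_PK$ by the $v_Px_i$ together with representatives of the finitely many cosets of $v_PK_0$ in $v_PL$, hence the extension $v_PL|v_PK$ is finitely generated; similarly, adjoining to $K_0P$ finitely many generators of $LP|K_0P$ exhibits $LP|KP$ as finitely generated.

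For the ``in particular'' clause, if $P$ is trivial on $K$ then $v_PK=0$, so $v_PL$ is a finitely generated torsion-free abelian group of rational rank $|I|$, hence isomorphic to $\Z^{|I|}$; and $LP$ is a finitely generated extension of $K$ having the $y_jP$ as a transcendence basis, so $LP|K$ is again a function field. The only nontrivial step in this plan is the invocation of Theorem~\ref{fie}, which requires verifying that $L|K_0$ is finite rather than merely algebraic; this is where the hypothesis that $L|K$ be finitely generated enters essentially.
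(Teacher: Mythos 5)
Your proof is correct, and it follows the route one would expect: the inequality (\ref{wtdgeq}) drops straight out of the algebraic independence statement in Theorem~\ref{prelBour}, and the finite generation assertions, under the equality hypothesis, come from noting that $L|K_0$ is a finite extension and then invoking the finiteness of ramification index and inertia degree from Theorem~\ref{fie}. The paper itself does not spell out a proof but refers to [V] and [K2]; your argument is the standard one those sources would give.

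One small remark to make your write-up airtight: Theorem~\ref{fie} is stated under the hypothesis that the valuation on the base field of the finite extension (here $K_0$) is non-trivial. If $v_P$ restricted to $K_0$ is trivial, this only happens when $I=\emptyset$, i.e., $v_PK_0=v_PK=\{0\}$; but then $v_PL/v_PK_0$ is a torsion subgroup of the torsion-free group $v_PL$, hence trivial, so $v_PL=\{0\}$ and $LP\cong L$ and the conclusions are immediate. Similarly, the ``in particular'' clause of the corollary, as stated in the paper, implicitly assumes $P$ is non-trivial on $L$ so that $\trdeg LP|K<\trdeg L|K$ and the word ``function field'' is taken in the broad sense that allows transcendence degree $0$; you inherit this mild looseness from the statement itself, which is fine.
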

%

\pars
If $P$ is a place of $F|K$, then (\ref{wtdgeq}) reads as follows:
\begin{equation}                            \label{Abhie}
\trdeg F|K \>\geq\> \trdeg FP|K \,+\, \rr v_P^{ }F\;.
\end{equation}
The famous \bfind{Abhyankar inequality} is a generalization of this
inequality to the case of noetherian local rings (see [V]). We call
$P$ an \bfind{Abhyankar place} if equality holds in (\ref{Abhie}).

\pars
The rank of an ordered abelian group is always smaller or equal to its
rational rank. This is seen as follows. If $G_1$ is a subgroup of
$G$, then its divisible hull $\Q \otimes G_1$ lies in the
convex hull of $G_1$ in $\Q\otimes G$. Hence if $G_1$ is
a proper convex subgroup of $G$, then $\Q\otimes G_1$ is a
proper convex subgroup of $\Q\otimes G$ and thus, $\dim_\Q
\Q\otimes G_1<\dim_\Q \Q\otimes G$. It follows that if
$\{0\}=G_0\subsetuneq G_1\subsetuneq\ldots \subsetuneq
G_n= G$ is a chain of convex subgroups of $G$, then
$\rr G=\dim_\Q \Q\otimes G\geq n$. In view of (\ref{Abhie}),
this proves that the rank of a place $P$ of a function field $F|K$
cannot exceed $\trdeg F|K$ and thus is finite. We say that $P$ is of
\bfind{maximal rank} if the rank is equal to $\trdeg F|K$.

If $\trdeg F|K=1$, then every place $P$ of $F|K$ is an Abhyankar place.
It is of maximal rank if and only if it is non-trivial. Indeed, if
$v_P^{ }F$ is not trivial, then $\rr v_P^{ }F\geq 1$, and it follows from
(\ref{Abhie}) that $\trdeg F|K=1= \rr v_P^{ }F$. Then also the rank is 1
since a group of rational rank 1 is a non-trivial subgroup of $\Q$. If
on the other hand $v_P^{ }F$ is trivial, then $P$ is an isomorphism on $F$
so that $\trdeg F|K=1=\trdeg FP|K$.

\parb
Using Corollary~\ref{fingentb}, we can now generalize our construction
given in Example~\ref{exampig}. Let $P$ be an arbitrary place of $F|K$.
We set $\rho=\rr v_P^{ }F$ and $\tau=\trdeg FP|K$. We take
elements $x_1,\ldots,x_{\rho}\in F$ such that $v_P^{ }x_1,\ldots,v_P^{ }
x_{\rho}$ are rationally independent elements in $v_P^{ } F$. Further,
we take elements $y_1,\ldots, y_{\tau}\in F$ such that $y_1P,\ldots,
y_{\tau} P$ are algebraically independent over $K$. Then by
Theorem~\ref{prelBour}, $x_1,\ldots,x_{\rho},y_1,\ldots,y_{\tau}$ are
algebraically independent over $K$. The restriction of $P$ to
$K(x_1,\ldots,x_{\rho},y_1,\ldots,y_{\tau})$ is an Abhyankar place.
We fix this situation for later use. We call
\begin{equation}                            \label{Asff}
\left.\begin{array}{l}
F_0\>:=\>K(x_1,\ldots,x_{\rho},y_1,\ldots,y_{\tau})\mbox{ with}\\
\rho=\rr v_P^{ }F\mbox{\ \ and\ \ }\tau=\trdeg FP|K,\\
v_P^{ }x_1,\ldots,v_P^{ } x_{\rho}\mbox{ rationally independent in
$v_P^{ } F$, and}\\
y_1P,\ldots,y_{\tau} P\mbox{ algebraically independent over $K$}
\end{array}\right\}
\end{equation}
an \bfind{Abhyankar subfunction field} of $(F|K,P)$.

Now let us assume in addition that $P$ is an Abhyankar place of $F|K$.
That is, $\rho+\tau=\trdeg F|K$. It follows that $x_1,\ldots,x_{\rho},
y_1,\ldots, y_{\tau}$ is a transcendence basis of $F|K$. We refine our
choice of these elements as follows. From Corollary~\ref{fingentb} we
know that $v_P^{ }F$ is product of $\rho$ copies of $\Z$. So we can choose
$x_1,\ldots,x_{\rho}\in {\cal O}_P$ in such a way that $v_P^{ }F=\Z
v_P^{ }x_1 \oplus\ldots\oplus\Z v_P^{ }x_{\rho}$, which implies that
$v_P^{ }F=v_P^{ }K(x_1, \ldots,x_{\rho})$. From Corollary~\ref{fingentb}
we also know that $FP|K$ is finitely generated. We shall also assume
that $FP|K$ is separable. Then it follows that there is a separating
transcendence basis for $FP|K$. We choose $y_1,\ldots,y_{\tau}\in
{\cal O}_P$ in such a way that $y_1P,\ldots,y_{\tau}P$ is such a
separating transcendence basis. Now we can choose some $a\in FP$ such
that $FP=K(y_1P, \ldots, y_\tau P,a)$. We take a monic polynomial $f$
with coefficients in the valuation ring of $(F_0,P)$ such that its
reduction $fv_P^{ }$ is the minimal polynomial of $a$ over $F_0P=
K(y_1P,\ldots, y_\tau P)$. Since $a\in FP$ is separable-algebraic over
$K(y_1P, \ldots, y_\tau P)$, by Hensel's Lemma (Simple Root Version)
there exists a root $\eta$ of $f$ in the henselization of $(F,P)$ such
that $\eta P=a$. Take $\sigma\in \Aut(\tilde{F}_0|F_0)$ such that
$v(\sigma x-x)>0$ for all $x$ in the valuation ring of $P$ on
$\tilde{F}$. Then in particular, $v(\sigma\eta-\eta)>0$. But if
$\sigma\eta\ne\eta$, then it follows from $\deg (f)=\deg (fv_P^{ })$
that $(\sigma\eta)P\ne\eta P$, i.e., $v(\sigma\eta-\eta)=0$. Hence,
$\sigma\eta=\eta$, which shows that $\eta$ lies in the absolute inertia
field of $F_0\,$.

Now the field $F_0(\eta)$ has the same value group and residue field as
$F$, and it is contained in the henselization of $F\,$. Hence by
Theorem~\ref{immhens},
\begin{equation}                            \label{extens}
(F^h|F_0(\eta)^h,P)
\end{equation}
is an immediate algebraic extension. As $\eta$ lies in the absolute
inertia field of $F_0$ and this field is henselian, we have that
$F_0(\eta)^h$ is a subfield of this absolute inertia field. If we
could show that $F^h=F_0(\eta)^h$, then $F$ itself would lie in this
absolute inertia field, which would prove that $(F|K,P)$ is inertially
generated. If the residue characteristic $\chara FP=\chara K$ is 0, then
again Lemma~\ref{Kimaxur} tells us that the absolute inertia field of
$(F_0,P)$ is the unique maximal extension having the same value group as
$F_0\,$; so $F^h$ must be a subfield of it. Hence in characteristic 0 we
have now shown that $(F|K,P)$ is inertially generated. But what happens
in positive characteristic? Can the extension (\ref{extens}) be
non-trivial? To answer this question, we have to take a closer look at
the main problem of valuation theory in positive characteristic.

%
%
\section{The defect}                        \label{sectdef}

Assume that $(K,v)$ is henselian and $(L|K,v)$ is a finite extension of
degree $n$. Then we have to deal only with a single ramification index e
and a single inertia degree f. Hence, the fundamental inequality now
reads as
\begin{equation}
n\>\geq\> {\rm e}\,{\rm f}\;.
\end{equation}
If $L$ is contained in $K^i$ then by Theorem~\ref{T}, $n={\rm f}$.
If $K=K^i$ and $L$ is contained in $K^r$, then by Theorem~\ref{V},
$n={\rm e}$. Putting these observations together (using
Theorems~\ref{liftZTV} and~\ref{downZTV} and the fact that
extension degree, ramification index and inertia degree are
multiplicative), one finds:
\begin{lemma}
If $(K,v)$ is henselian and $L|K$ is a finite subextension of $K^r|K$,
then it satisfies the \bfind{fundamental equality}
\begin{equation}
n={\rm e}\,{\rm f}\;.
\end{equation}
\end{lemma}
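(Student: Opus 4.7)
The plan is to split the extension via $M := L \cap K^i$ and reduce to Theorems~\ref{T} and~\ref{V}. Since $(K,v)$ is henselian, the absolute decomposition field $K^d$ equals $K$, so by the table on page~\pageref{ramtable} both $K^i|K$ and $K^r|K$ are Galois. Setting $N := L.K^i$, the Galois property of $K^i|K$ forces linear disjointness of $L$ and $K^i$ over $M$, whence $[N:K^i] = [L:M]$; combined with the tower $K \subseteq M \subseteq L$ this gives $[L:K] = [L:M]\,[M:K]$, and the two factors will be handled separately.

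For the inertia degree, $M|K$ is a finite subextension of $K^i|K$, so Theorem~\ref{T}(b) gives $[M:K] = [Mv:Kv]$, while $vM = vK$ by Theorem~\ref{T}(c). I then want to show $Lv = Mv$. Given $\alpha \in Lv$, part (c) of Theorems~\ref{T} and~\ref{V} yield $Lv \subseteq K^r v = K^i v = (Kv)^{\rm sep}$, so $\alpha$ is separable-algebraic over $Kv$. Lifting its minimal polynomial to a monic $f \in {\cal O}_K[X]$ and applying the Simple Root Version of Hensel's Lemma in the henselian field $L$ produces $a \in L$ with $f(a) = 0$ and $av = \alpha$. A quick computation with the fundamental inequality, combining $[K(a):K] \leq \deg f$ with $[K(a)v:Kv] \geq \deg f$, forces equality throughout, so $vK(a) = vK$; since $K(a) \subseteq L \subseteq K^r$, Lemma~\ref{Kimaxur} places $K(a) \subseteq K^i$, hence $a \in L \cap K^i = M$ and $\alpha \in Mv$. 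Therefore ${\rm f}(L|K) = [Mv:Kv] = [M:K]$.

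For the ramification index, Theorem~\ref{V}(b) applied to the finite subextension $N|K^i$ of $K^r|K^i$ yields $[N:K^i] = (vN:vK^i)$. To identify $vN$ with $vL$, I apply Theorem~\ref{liftZTV} to the normal extension $(\tilde K|L,v)$: its inertia field is $L.K^i = N$, and Theorem~\ref{T}(c) taken with base field $L$ gives $vN = vL$. Since $vK^i = vK$, this yields $[L:M] = (vL:vK) = {\rm e}(L|K)$. Multiplying, $[L:K] = [L:M]\,[M:K] = {\rm e}(L|K)\,{\rm f}(L|K)$, the desired fundamental equality. The only non-bookkeeping step is the Hensel argument together with the identification $K(a) \subseteq K^i$ via Lemma~\ref{Kimaxur}; the remainder is multiplicativity of degree, ramification index, and inertia degree in towers, plus the Galois-theoretic linear disjointness that converts $[N:K^i]$ into $[L:M]$.
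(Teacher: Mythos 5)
Your argument is correct and follows the same route the paper sketches: split via $M = L\cap K^i$ (equivalently $N = L.K^i$), obtain $[M:K]=[Mv:Kv]$ from Theorem~\ref{T}, obtain $[L:M]=(vL:vK)$ from Theorem~\ref{V} together with Theorem~\ref{liftZTV} and linear disjointness, and multiply. The one detour is the Hensel-plus-Lemma~\ref{Kimaxur} argument for $Lv = Mv$, which can be omitted: once $[L:M]=(vL:vM)$ is in hand, the fundamental inequality applied to $L|M$ already forces $[Lv:Mv]=1$.
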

Hence, an inequality can only result from some part of the extension
which lies beyond the absolute ramification field. So Theorem~\ref{S}
shows that the missing factor can only be a power of $p$. In this way,
one proves the important \bfind{Lemma of Ostrowski}:
\begin{theorem}                             \label{LofO}
Set $p:=\chara Kv$ if this is positive, and $p:=1$ if $\chara Kv=0$.
If $(K,v)$ is henselian and $L|K$ is of degree $n$, then
\begin{equation}
n\>=\>{\rm d}\,{\rm e}\,{\rm f}\;,
\end{equation}
where {\rm d} is a power of $p$. In particular, if $\chara Kv=0$,
then we always have the fundamental equality $n={\rm e}\,{\rm f}$.
\end{theorem}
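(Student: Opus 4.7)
The core is the Galois case. Once the lemma is proven for all finite Galois extensions of $K$, I pass to general $L|K$ as follows. Set $L_0:=L\cap K\sep$, a henselian field by Corollary~\ref{algexth} applied to $L_0|K$. The extension $L|L_0$ is purely inseparable: each $a\in L$ has a $p$-power in $L_0$, whence $vL/vL_0$ is $p$-torsion and $Lv/L_0v$ is purely inseparable, so $e(L/L_0)$, $f(L/L_0)$ and $[L:L_0]$ are all powers of $p$, and the Ostrowski equation for $L|L_0$ is automatic. Let $N\subseteq K\sep$ be the Galois closure of $L_0|K$; then $N|K$ and $N|L_0$ are both finite Galois. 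Writing $[N:K]=d_1\,e(N/K)\,f(N/K)$ and $[N:L_0]=d_2\,e(N/L_0)\,f(N/L_0)$ with $d_i$ powers of $p$ from the Galois case, multiplicativity of degree, ramification index, and residue degree in the tower $K\subseteq L_0\subseteq N$ gives $[L_0:K]=(d_1/d_2)\,e(L_0/K)\,f(L_0/K)$; since the left side is a positive integer and $d_1/d_2$ is a rational power of $p$, this ratio is a nonnegative integer power of $p$. Combining with the $L|L_0$ step completes the general case.

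\textbf{The Galois case.} So assume $L|K$ is finite Galois inside $K\sep$. By Theorem~\ref{downZTV}, the decomposition, inertia and ramification fields of $(L|K,v)$ are, respectively, $K^h\cap L=K$ (using $K^h=K$), $T:=K^i\cap L$ and $V:=K^r\cap L$, yielding the tower $K\subseteq T\subseteq V\subseteq L$. Theorem~\ref{T}(b,c) gives $[T:K]=[Tv:Kv]$ with $Tv$ the separable closure of $Kv$ in $Lv$; Theorem~\ref{V}(b,c) together with $vT=vK$ gives $[V:T]=(vV:vK)$ with $vV$ the $p'$-divisible closure of $vK$ in $vL$; and Theorem~\ref{S} asserts that $[L:V]$, $(vL:vV)$ and $[Lv:Vv]$ are all powers of $p$.

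\textbf{Extracting the defect.} Set $e_{p'}:=(vV:vK)$, $e_p:=(vL:vV)$, $f_{\rm sep}:=[Tv:Kv]$, $f_{\rm ins}:=[Lv:Vv]$, so that $e=e_{p'}\,e_p$ and $f=f_{\rm sep}\,f_{\rm ins}$ with $e_p$ and $f_{\rm ins}$ powers of $p$ by the previous paragraph. Multiplicativity in the ramification tower gives
\[ [L:K]=[L:V]\cdot[V:T]\cdot[T:K]=[L:V]\cdot e_{p'}\cdot f_{\rm sep}, \]
whence $[L:K]/(ef)=[L:V]/(e_p\,f_{\rm ins})$. The right side is a ratio of powers of $p$ and the left side is a positive integer by the fundamental inequality~(\ref{fundineq}) (in the henselian case $g=1$); therefore the common value is a nonnegative integer power of $p$, which is the desired defect $d$. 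In characteristic zero residue, $p=1$ forces $d=1$, and the fundamental equality $n=ef$ follows.

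\textbf{Main obstacle.} The delicate point is the precise matching of each floor of the ramification tower to the correct $p'$- respectively separable part of $e$ and $f$, requiring Theorems~\ref{T}(c), \ref{V}(c) and~\ref{S} in concert; once this alignment is made, the final arithmetic step ``a positive integer equal to a ratio of powers of $p$ is a nonnegative power of $p$'' is routine, and the reduction from general $L|K$ to the Galois case is clean provided one remembers that $L_0=L\cap K\sep$ is itself henselian.
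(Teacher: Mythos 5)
Your proof is correct and follows essentially the same route the paper sketches: decompose along the ramification tower $K\subseteq T\subseteq V\subseteq L$ via Theorems~\ref{T}, \ref{V}, \ref{S} together with multiplicativity, and observe that the residual factor sitting beyond $K^r$ is a power of $p$; the paper packages this as a preliminary lemma ($L\subseteq K^r\Rightarrow n={\rm e}\,{\rm f}$) plus an appeal to Theorem~\ref{S}, while you instead pass to the Galois closure and do the floor-by-floor bookkeeping explicitly. One wording nit: the fundamental inequality~(\ref{fundineq}) only gives $n/({\rm e}\,{\rm f})\geq 1$, not integrality outright; your conclusion nevertheless stands because once this ratio is known to equal a rational power of $p$ and to be $\geq 1$, it is forced to be a nonnegative integer power of $p$.
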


The integer d$\>\geq 1$ is called the \bfind{defect} of the extension
$(L|K,v)$. This can also be taken as a definition for the defect if
$(K,v)$ is not henselian, but the extension of $v$ from $K$ to $L$ is
unique. We note:
\begin{corollary}                           \label{hidef}
If $(K,v)$ is henselian and $(L|K,v)$ is a finite immediate extension,
then the defect of $(L|K,v)$ is equal to $[L:K]$.
\end{corollary}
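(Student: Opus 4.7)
The plan is to apply the Lemma of Ostrowski (Theorem~\ref{LofO}) directly, using the hypothesis that the extension is immediate to collapse two of the three factors.

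First I would unpack the definition of ``immediate'': by hypothesis $vL = vK$ and $Lv = Kv$. The first equality gives that the ramification index $\mathrm{e} = (vL:vK) = 1$, and the second gives that the inertia degree $\mathrm{f} = [Lv:Kv] = 1$.

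Next, since $(K,v)$ is henselian and $L|K$ is finite, Theorem~\ref{LofO} applies and yields
\begin{equation*}
[L:K] \>=\> \mathrm{d}\,\mathrm{e}\,\mathrm{f}\;.
\end{equation*}
Substituting $\mathrm{e} = \mathrm{f} = 1$ gives $[L:K] = \mathrm{d}$, which is exactly the claim.

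There is no real obstacle here --- the work was done in proving the Lemma of Ostrowski, and the corollary is just the extreme case where all of the extension degree is absorbed into the defect. The only thing worth double-checking is that the defect is well-defined under the stated hypothesis: since $(K,v)$ is henselian, $v$ has a unique extension from $K$ to $L$ (by Lemma~\ref{hensuniq}), so there is a single ramification index and a single inertia degree, and hence a single, unambiguous $\mathrm{d}$.
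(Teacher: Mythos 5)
Your proof is correct and is exactly the argument the paper intends: the corollary is stated immediately after the Lemma of Ostrowski and the definition of the defect, and follows by specializing $\mathrm{e}=\mathrm{f}=1$ for an immediate extension. The extra remark about well-definedness of $\mathrm{d}$ via Lemma~\ref{hensuniq} is a sensible sanity check, though it is already built into the paper's definition of the defect for henselian base fields.
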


A henselian field is called \bfind{defectless} if every finite
extension has trivial defect d$\>=1$. In rigid analysis, this is also
called \bfind{stable}. A not necessarily henselian field is
called defectless if for every finite extension of it, equality holds in
the fundamental inequality (\ref{fundineq}) (if the field is henselian,
this coincides with our first definition). A proof of the next
theorem can be found in [K2] and, partially, also in [E].
\begin{theorem}                             \label{dlfhens}
A valued field is a defectless field if and only if its henselization
is.
\end{theorem}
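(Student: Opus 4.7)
The plan is to deduce both directions of the equivalence from the following \emph{degree sum formula}: for every finite extension $L|K$ with extensions $v_1,\ldots,v_{{\rm g}}$ of $v$,
\[
[L:K] \;=\; \sum_{i=1}^{{\rm g}} {\rm d}_i\, {\rm e}_i\, {\rm f}_i\;,
\]
where ${\rm d}_i,{\rm e}_i,{\rm f}_i$ are the defect, ramification index and inertia degree of $(L|K,v_i)$. To establish this, fix once and for all an extension of $v$ to $\tilde{K}$ and let $K^h\subset\tilde{K}$ be the resulting henselization. For each $K$-embedding $\sigma:L\to\tilde{K}$, Theorem~\ref{hensfcomp} identifies the henselization of $\sigma(L)$ (for the restricted valuation) with the compositum $\sigma(L).K^h$, and Ostrowski's Lemma (Theorem~\ref{LofO}), applied over the henselian field $K^h$, gives $[\sigma(L).K^h:K^h]={\rm d}_i\,{\rm e}_i\,{\rm f}_i$, where $i$ is the unique index with $\tilde{v}\sigma=v_i$. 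Grouping the $[L:K]$ embeddings by the equivalence $\sigma\sim\tau\Leftrightarrow \tilde{v}\sigma=\tilde{v}\tau$ (Theorem~\ref{allext}) and summing over equivalence classes yields the formula, first for separable $L|K$ (via the primitive element theorem), and then in general after splitting off the purely inseparable part, which carries a unique extension of $v$ by Corollary~\ref{cae}.

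For the forward direction, assume $(K^h,v)$ is defectless. Then every ${\rm d}_i$ appearing in the formula equals $1$, so $[L:K]=\sum {\rm e}_i{\rm f}_i$, i.e., equality holds in (\ref{fundineq}) for every finite $L|K$, hence $(K,v)$ is defectless. For the reverse direction, assume $(K,v)$ is defectless and let $M|K^h$ be a finite extension; I must show ${\rm d}(M|K^h)=1$. Pick a primitive element $\alpha$ for the maximal separable subextension of $M|K^h$, let $h\in K^h[X]$ be its minimal polynomial, and choose a finite subextension $K'|K$ of $K^h|K$ containing all coefficients of $h$ together with (finitely many) elements generating the inseparable part of $M$ over $K^h(\alpha)$. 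Set $L:=K'(\alpha)\subset\tilde{K}$; then $L.K^h=M$, and by Theorem~\ref{hensfcomp}, $M$ is the henselization of $L$ with respect to $w:=\tilde{v}|_L$. Writing $w=v_1$, we get $[M:K^h]={\rm d}_1{\rm e}_1{\rm f}_1$ by Ostrowski, where ${\rm d}_1={\rm d}(M|K^h)$ by definition, and where ${\rm e}_1,{\rm f}_1$ coincide with the ramification and inertia of $M|K^h$ because both $K^h|K$ and $M|L$ are immediate (Theorem~\ref{immhens}). Since $(K,v)$ is defectless, $[L:K]=\sum {\rm e}_i{\rm f}_i$, and combining this with the degree sum formula yields $\sum({\rm d}_i-1){\rm e}_i{\rm f}_i=0$. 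Since each ${\rm d}_i\geq 1$ by Theorem~\ref{LofO}, all ${\rm d}_i$ are $1$; in particular ${\rm d}_1={\rm d}(M|K^h)=1$.

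The main obstacle is the degree sum formula itself: one must verify that the map sending a $K$-embedding $\sigma:L\to\tilde{K}$ to the valuation $\tilde{v}\sigma$ has fibres of the right size, namely $[\sigma(L).K^h:K^h]$. In the separable case this is the classical decomposition $L\otimes_K K^h\cong\prod_i L_i^{h_i}$, where each factor is the henselization of $L$ at $v_i$; the inseparable case is handled by peeling off the purely inseparable part (which has a unique extension of $v$ whose defect absorbs the inseparability). Once the formula is in hand, the theorem follows from the purely combinatorial observation that the simultaneous validity of $[L:K]=\sum {\rm d}_i{\rm e}_i{\rm f}_i$ and $[L:K]=\sum {\rm e}_i{\rm f}_i$ (with ${\rm d}_i\geq 1$) forces every ${\rm d}_i$ to equal $1$.
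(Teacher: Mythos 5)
Your overall strategy is correct and self-contained: the identity
$[L:K]=\sum_{i=1}^{{\rm g}}{\rm d}_i\,{\rm e}_i\,{\rm f}_i$
(equivalently $[L:K]=\sum_i[L^{h_i}:K^h]$, followed by Ostrowski's Lemma applied over $K^h$ to each summand) is exactly the right tool, and both implications of the theorem do follow from it by the combinatorial observation you state at the end. The paper itself only refers to [K2] and [E] for a proof, so there is nothing in the text to compare against; your route is a standard one.

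There is, however, a genuine gap in your construction of $L$ in the reverse direction. You set $L:=K'(\alpha)$ with $K'$ a finite subextension of $K^h|K$ that is supposed to ``contain\dots\ elements generating the inseparable part of $M$ over $K^h(\alpha)$.'' Those generators live in $M$, not in $K^h$, so they cannot be placed inside a subfield $K'$ of $K^h$. As written, $L.K^h=K'(\alpha).K^h=K^h(\alpha)$ is only the maximal separable subextension of $M|K^h$, and the claimed equality $L.K^h=M$ fails whenever $M|K^h$ is inseparable; your argument would then only show that the defect of $K^h(\alpha)|K^h$ is trivial, not that of $M|K^h$. The fix is easy and in fact shorter than what you wrote: pick any finite generating set $\theta_1,\ldots,\theta_n$ of $M$ over $K^h$ and set $L:=K(\theta_1,\ldots,\theta_n)$. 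Each $\theta_j$ is algebraic over $K^h$ and hence over $K$, so $L|K$ is finite; clearly $L.K^h=M$; and Theorem~\ref{hensfcomp} then yields $L^h=L.K^h=M$ directly, with no need for the primitive element theorem or an auxiliary $K'$. The remainder of your argument (immediacy of $K^h|K$ and of $M|L$ to identify ${\rm e}_1,{\rm f}_1$ with the ramification and inertia of $M|K^h$, and $\sum_i({\rm d}_i-1){\rm e}_i{\rm f}_i=0$ forcing ${\rm d}_1=1$) then goes through unchanged.
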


We also note the following fact, which is easy to prove:
\begin{lemma}                               \label{dlfinext}
Every finite extension field of a defectless field is again a
defectless field.
\end{lemma}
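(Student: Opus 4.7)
The plan is to reduce to the henselian case via Theorem~\ref{dlfhens} and then argue by multiplicativity of degree, ramification index, and inertia degree.

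Let $(K,v)$ be defectless and let $(L|K,v)$ be a finite extension. By Theorem~\ref{dlfhens} it suffices to show that the henselization $(L^h,v)$ is defectless. Since $L|K$ is algebraic, Theorem~\ref{hensfcomp} gives $L^h = L.K^h$, which is a finite extension of $K^h$. Again by Theorem~\ref{dlfhens}, $(K^h,v)$ is defectless and in particular henselian, so $v$ extends uniquely to every algebraic extension of $K^h$, and $[L^h:K^h] = \mathrm{e}(L^h|K^h)\,\mathrm{f}(L^h|K^h)$.

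Now take any finite extension $M|L^h$. Then $M|K^h$ is also a finite extension, and since $K^h$ is henselian and defectless,
\[
[M:K^h]\>=\>\mathrm{e}(M|K^h)\,\mathrm{f}(M|K^h)\;.
\]
Both ramification index and inertia degree are multiplicative in towers, and since $L^h$ and $M$ are henselian there is a unique extension of $v$ at each stage. Dividing the identity above by the corresponding identity for $L^h|K^h$ yields
\[
[M:L^h]\>=\>\frac{[M:K^h]}{[L^h:K^h]}\>=\>
\frac{\mathrm{e}(M|K^h)\,\mathrm{f}(M|K^h)}{\mathrm{e}(L^h|K^h)\,\mathrm{f}(L^h|K^h)}\>=\>
\mathrm{e}(M|L^h)\,\mathrm{f}(M|L^h)\;,
\]
so $M|L^h$ has trivial defect. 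As $M$ was arbitrary, $(L^h,v)$ is defectless, and applying Theorem~\ref{dlfhens} once more we conclude that $(L,v)$ is defectless.

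There is no real obstacle: the only thing to be careful about is that the reduction to henselizations legitimately collapses the sum $\sum_i \mathrm{e}_i\mathrm{f}_i$ in the fundamental inequality to a single product (because $K^h$, $L^h$, and any algebraic extension thereof admit a unique extension of $v$ by Lemma~\ref{hensuniq}), after which the statement is a one-line consequence of multiplicativity in towers together with the defectlessness of $K^h$.
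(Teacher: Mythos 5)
Your proof is correct. Since the paper states Lemma~\ref{dlfinext} without giving a proof (``which is easy to prove''), there is no author argument to compare against, but your route---passing to henselizations via Theorems~\ref{dlfhens} and~\ref{hensfcomp}, then cancelling using multiplicativity of degree, ramification index, and inertia degree in the tower $K^h \subseteq L^h \subseteq M$---is the natural one, and you correctly flag that the unique extension of $v$ at each henselian stage is what makes the collapse of the fundamental inequality to a single product legitimate.
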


The following are examples of defectless fields:
\sn
{\bf (DF1)} \ All valued fields with residue characteristic 0. This is a
direct consequence of the Lemma of Ostrowski.
\sn
{\bf (DF2)} \ Every discretely valued field of characteristic 0. An easy
argument shows that every finite extension with non-trivial defect of a
discretely valued field must be inseparable. In particular, the field
$(\Q_p,v_p)$ of $p$-adic numbers with its $p$-adic valuation, and all of
its subfields, are defectless fields.
\sn
{\bf (DF3)} \ All maximal fields (and hence also all power series
fields, see Section~\ref{sectmax}) are defectless fields. For the proof,
see [R1] or [K2].

\parb
We have seen that the extensions beyond the absolute ramification
field are responsible for non-trivial defects. To get this picture,
we have chosen a modified approach to ramification theory (cf.\ our
remark in Section~\ref{sectram}). We have shifted the purely inseparable
extensions to the top (cf.\ our table). In fact, that is where the
purely inseparable extensions belong, because from the ramification
theoretical point of view, they can be nasty, and in this respect, they
have much in common with the extension S$|$V.

\parm
The defect, appearing only for positive residue characteristic, is
essentially the cause of the problems that we have in algebraic geometry
as well as in the model theory of valued fields, in positive
characteristic. Therefore, it is very important that you get a feeling
for what the defect is. Let us look at three main examples. The
first one is the most basic and was probably already known to most of
the early valuation theorists. But it seems reasonable to attribute it
to F.~K.~Schmidt.
\begin{example}
We consider the power series field $K((t))$ with its $t$-adic valuation
$v=v_t$. We have already remarked in Example~\ref{examp2} that
$K((t))|K(t)$ is transcendental. So we can choose an element $s\in
K((t))$ which is transcendental over $K(t)$. Since $vK((t))=\Z=vK(t)$
and $K((t))v=K=K(t)v$, the extension $(K((t))|K(t),v)$ is immediate.
The same must be true for the subextension $(K(t,s)|K(t),v)$ and thus
also for $(K(t,s)|K(t,s^p),v)$. The latter extension is purely
inseparable of degree $p$ (since $s,t$ are algebraically independent
over $K\,$, the extension $K(s)|K(s^p)$ is linearly disjoint from
$K(t,s^p)|K(s^p)\,$). Hence, Corollary~\ref{cae} shows that there is
only one extension of the valuation $v$ from $K(t,s^p)$ to $K(t,s)$.
Consequently, its defect is $p$.
\end{example}

To give an example of a henselian field which is not defectless, we
build on the foregoing example.
\begin{example}                            \label{exampdef2}
By Theorem~\ref{hensuniqemb}, there is a henselization $(K(t,s),v)^h$
of the field $(K(t,s),v)$ in the henselian field $K((t))$ and a
henselization $(K(t,s^p),v)^h$ of the field $(K(t,s^p),v)$ in
$(K(t,s),v)^h$. We find that the extension $K(t,s)^h|K(t,s^p)^h$
is again purely inseparable of
degree $p$. Indeed, $K(t,s)| K(t, s^p)$ is linearly disjoint from the
separable extension $K(t,s^p)^h|K(t,s^p)$, and by virtue of
Corollary~\ref{hensfcomp}, $K(t,s)^h= K(t,s).K(t,s^p)^h$. Also for this
extension, we have that ${\rm e}={\rm f}=1$ and again, the defect is
$p$. Note that by what we have said earlier, an extension of degree $p$
with non-trivial defect over a discretely valued field like
$(K(t,s^p),v)^h$ can only be purely inseparable.
\end{example}

\pars
Now we will give an example of a finite {\it separable} extension
with non-trivial defect. It seems to be the generic example for
our purposes since its importance is also known in algebraic
geometry.
\begin{example}                             \label{exampASd1}
Take an arbitrary field $K$ of characteristic $p>0$, and $t$ to be
transcendental over $K$. On $K(t)$ we take the $t$-adic valuation
$v=v_t\,$. We set $L:=K(t^{1/p^i}\mid i\in\N)$.
This is a purely inseparable extension of $K(t)$; if $K$ is perfect,
then it is the perfect hull of $K(t)$. By Corollary~\ref{cae}, $v$
has a unique extension to $L$. We set $L_k:=K(t^{1/p^k})$ for every
$k\in\N$; so $L=\bigcup_{k\in\N} L_k\,$. We observe that
$1/p^k=vt^{1/p^k}\in vL_k\,$, so $(vL_k:vK(t))\geq p^k$. Now the
fundamental inequality shows that $(vL_k:vK(t))=p^k$ and that
$L_kv=K(t)v=K$. The former shows that $vL_k=\frac{1}{p^k}\Z$. We obtain
that $vL=\bigcup_{k\in\N}^{} vL_k=\frac{1}{p^\infty}\Z$ and that
$Lv=\bigcup_{k\in\N}^{} L_kv=K$.
We consider the extension $L(\vartheta)|L$ generated by a root
$\vartheta$ of the Artin--Schreier--polynomial $X^p-X-\frac{1}{t}$.
We set
\begin{equation}
\vartheta_k\>:=\>\sum_{i=1}^{k} t^{-1/p^i}
\end{equation}
and compute
\begin{eqnarray*}
\vartheta_k^p-\vartheta_k-\frac{1}{t} & = & \sum_{i=1}^{k}
t^{-1/p^{i-1}}\,-\,\sum_{i=1}^{k} t^{-1/p^i}\,-\,t^{-1}\\
& = & \sum_{i=0}^{k-1} t^{-1/p^{i}}\,-\,\sum_{i=1}^{k}
t^{-1/p^i}\,-\,t^{-1} \>=\>-t^{-1/p^k}\;.
\end{eqnarray*}
Therefore,
\begin{eqnarray*}
(\vartheta-\vartheta_k)^p\,-\,(\vartheta-\vartheta_k) & = &
\vartheta^p-\vartheta-\frac{1}{t}\,-\,\left(\vartheta_k^p-\vartheta_k-
\frac{1}{t}\right)\\
 & = & 0+t^{-1/p^k}\>=\>t^{-1/p^k}\;.
\end{eqnarray*}
If we have the equation $b^p-b=c$ and know that $vc<0$, then
we can conclude that $vb<0$ since otherwise, $v(b^p-b)\geq 0>vc$, a
contradiction. But since $vb<0$, we have that $vb^p=pvb<vb$, which
implies that $vc=v(b^p-b)=pvb$. Consequently, $vb=\frac{vc}{p}$. In our
case, we obtain that
\begin{equation}
v(\vartheta-\vartheta_k)\>=\>\frac{vt^{-1/p^k}}{p}\>=\>
-\frac{1}{p^{k+1}}\;.
\end{equation}
We see that $1/p^{k+1}\in vL_k(\vartheta)$, so that $(vL_k(\vartheta):
vL_k)\geq p$. Since $[L_k(\vartheta):L_k]\leq p$, the fundamental
inequality shows that $[L_k(\vartheta):L_k]=p$, $vL_k(\vartheta)=
1/p^{k+1}\Z$, $L_k(\vartheta)v=L_kv=K$ and that the extension of the
valuation $v$ from $L_k$ to $L_k(\vartheta)$ is unique. As
$L(\vartheta)= \bigcup_{k\in\N}^{}L_k(\vartheta)$, we obtain that
$vL(\vartheta)= \bigcup_{k\in\N} 1/p^{k+1}\Z=\frac{1}{p^\infty}\Z=vL$
and that $L(\vartheta)v=\bigcup_{k\in\N} L_kv=K=Lv$. We have thus proved
that the extension $(L(\vartheta)|L,v)$ is immediate. Since $\vartheta$
has degree $p$ over every $L_k\,$, it must also have degree $p$ over
their union $L$. Further, as the extension of $v$ from $L_k$ to
$L_k(\vartheta)$ is unique for every $k$, also the extension of $v$ from
$L$ to $L(\vartheta)$ is unique. So we have $n=p$, ${\rm e}= {\rm f}=
{\rm g}=1$, and we find that the defect of $(L(\vartheta)|L,v)$ is $p$.

To obtain a defect extension of a henselian field, we show that $L$ can
be replaced by its henselization. By Theorem~\ref{hensfcomp},
$L^h(\vartheta)=L^h.L(\vartheta)=
L(\vartheta)^h$. By Theorem~\ref{immhens}, $vL(\vartheta)^h=vL
(\vartheta)= vL=vL^h$ and $L(\vartheta)^hv=L(\vartheta)v=Lv=L^hv$.
Hence, also the extension $(L^h(\vartheta)|L^h,v)$ is immediate. We only
have to show that it is of degree $p$. This follows from the general
valuation theoretical fact that if an extension $L'|L$ admits a unique
extension of the valuation $v$ from $L$ to $L'$, then $L'|L$ is linearly
disjoint from $L^h|L$. But we can also give a direct proof. Again by
Theorem~\ref{hensfcomp}, $L_k^h(\vartheta)=L_k(\vartheta)^h$, and by
Theorem~\ref{immhens}, $vL_k(\vartheta)^h=vL_k (\vartheta)$ and $vL_k^h
=vL_k\,$. Therefore, $(vL_k^h(\vartheta):vL_k^h)=(vL_k(\vartheta):
vL_k)=p$, showing that $[L_k^h(\vartheta):L_k^h]=p$ for every $k$.
Again by Theorem~\ref{hensfcomp}, $L^h=L.L_1^h=(\bigcup_{k\in\N} L_k).
L_1^h=\bigcup_{k\in\N} L_k.L_1^h=\bigcup_{k\in\N} L_k^h$. By the same
argument as before, it follows that $[L^h(\vartheta):L^h]=p$.

Hence, we have found an immediate Artin--Schreier extension of
degree $p$ and defect $p$ of a henselian field which is only of
transcendence degree 1 over $K$.
\end{example}

A valued field $(K,v)$ is called \bfind{algebraically maximal} if it
admits no proper immediate algebraic extension, and it is called
\bfind{separable-algebraically maximal} if it admits no proper immediate
separable-algebraic extension. Since the henselization is an immediate
separable-algebraic extension by Theorem~\ref{immhens}, every
separable-algebraically maximal field is henselian. The converse is not
true, since the field $(L^h,v)$ of our foregoing example is henselian
but not separable-algebraically maximal. Corollary~\ref{hidef} shows
that every henselian defectless field is algebraically maximal.
The converse is not true, as was shown by Fran\c{c}oise Delon [DEL]
(cf.\ also [K2]).

\begin{example}                             \label{exampASd2}
In the foregoing example, we may replace $K(t)$ by $K((t))$, taking
$L$ to be the field $K((t)) (t^{1/p^k}\mid k\in\N)$. It is not hard
to show (by splitting up the power series in a suitable way) that
$K((t^{1/p^k}))=K((t))[t^{1/p^k}]$, which is algebraic over $K((t))$.
Hence, $L=\bigcup_{k\in\N} K((t^{1/p^k}))$, a union of an ascending
chain of power series fields. By Lemma~\ref{unihens} below, $(L,v)$
is henselian, and $(L(\vartheta)|L,v)$ gives an instant example of an
immediate extension of a henselian field. But this $L$ is ``very
large'': it is of infinite transcendence degree over $K$. On the other
hand, this version of our example shows that an infinite algebraic
extension of a maximal field (or a union over an ascending chain of
maximal fields) is in general not even defectless (and hence also not
maximal). The example can also be transformed into the $p$-adic
situation, showing that there are infinite extensions of
$(\Q_p,v_p)$ which are not defectless fields (cf.\ [K2]).
\end{example}

%
%
\section{Maximal immediate extensions}      \label{sectmax}
Based on our examples, we can observe another obstruction in positive
characteristic. In many applications of valuation theory, one is
interested in the embedding of a given valued field in a power series
field which, if possible, should have the same value group and residue
field. (We give an example relevant for algebraic geometry in the next
section.) Then this power series field would be an immediate extension
of our field, and since every power series field is maximal, it would be
a \bfind{maximal immediate extension} of our field. So we see that we
are led to the problem of determining maximal immediate extensions, in
particular, whether maximal immediate extensions of a given valued field
are unique up to valuation preserving isomorphism. It was shown by
Wolfgang Krull [KR] that maximal immediate extensions exist for every
valued field. The proof uses Zorn's Lemma in combination with an upper
bound for the cardinality of valued fields with prescribed value group
and residue field. Krull's deduction of this upper bound is hard to
read; later, Kenneth A.\ H.\ Gravett [GRA] gave a nice and simple
proof.

The uniqueness problem for maximal immediate extensions was considered
by Irving Kaplansky in his important paper [KA1]. He showed that if the
so-called \bfind{hypothesis A} holds, then the field has a unique
maximal immediate extension (up to valuation preserving isomorphism).
For a Galois theoretic interpretation of hypothesis A and more
information about the uniqueness problem, see [KPR]. Let us mention a
problem which was only partially solved in [KPR] and in [WA1]:
\mn
{\bf Open Problem 2:} \ If a valued field does not satisfy
Kaplansky's hypothesis A, does it then admit two non-isomorphic
maximal immediate extensions?
\mn

We can give a quick example of a valued field with two non-isomorphic
maximal immediate extensions.
\begin{example}                             \label{exampmie}
In the setting of Example~\ref{exampASd2},
suppose that $K$ is not \bfind{Artin--Schrei\-er closed},
that is, there is an element $c\in K$ such that $X^p-X-c$ is
irreducible over $K$. Take $\vartheta_c$ to be a root of
$X^p-X-(\frac{1}{t}+c)$; note that $v(\frac{1}{t}+c)=v\frac{1}{t}<0$
since $vc=0>v\frac{1}{t}$. Then in exactly the same way as for
$\vartheta$, one shows that the extension $(L(\vartheta_c)|L,v)$
is immediate of degree $p$ and defect $p$. So we have two distinct
immediate extensions of $L$. We take $(M_1,v)$ to be a maximal immediate
extension of $L(\vartheta)$, and $(M_2,v)$ to be a maximal immediate
extension of $L(\vartheta_c)$. Then $(M_1,v)$ and $(M_2,v)$ are also
maximal immediate extensions of $(L,v)$. If they were isomorphic over
$L$, then $M_1$ would also contain a root of $X^p-X-(\frac{1}{t}+c)$;
w.l.o.g., we can assume that it is the one called $\vartheta_c\,$.
Now we compute:
\[(\vartheta_c-\vartheta)^p - (\vartheta_c-\vartheta)\>=\>
\vartheta_c^p-\vartheta_c - (\vartheta^p-\vartheta)\>=\>
\frac{1}{t}+c -\frac{1}{t}\>=\> c\;.\]
Since $vc=0$, we also have that $v(\vartheta_c-\vartheta)=0$ (you may
prove this along the lines of an argument given earlier). Applying
the residue map to $\vartheta_c-\vartheta$, we thus obtain a root
of $X^p-X-c$. But by our assumption, this root is not contained in
$K=Lv$. Consequently, $M_1v\ne Lv$, contradicting the fact that
$(M_1,v)$ was an immediate extension of $(L,v)$. This proves that
$(M_1,v)$ and $(M_2,v)$ cannot be isomorphic over $(L,v)$.

We have used that $K$ is not Artin--Schreier closed. And in fact,
one of the consequences of hypothesis A for a valued field $(L,v)$ is
that its residue field be Artin--Schreier closed (see
Section~\ref{secttw}).
\end{example}

We will need a generalization of the field of formal Laurent series,
called \bfind{(generalized) power series field}. Take any field
$K$ and any ordered abelian group $G$. We take $K((G))$ to be the set
of all maps $\mu$ from $G$ to $K$ with well-ordered \bfind{support}
$\{g\in G\mid \mu(g)\ne 0\}$. You can visualize the elements of
$K((G))$ as formal power series $\sum_{g\in G}^{} c_g t^g$ for
which the support $\{g\in G\mid c_g\ne 0\}$ is well-ordered. Using
this condition one shows that $K((G))$ is a field in a similar way as it
is done for $K((t))$. Also, one uses it to introduce the valuation:
\begin{equation}
v\sum_{g\in G}^{} c_g t^g\>=\>\min\{g\in G\mid c_g\ne 0\}
\end{equation}
(the minimum exists because the support is well-ordered). This
valuation is often called the \bfind{canonical valuation of $K((G))$},
and sometimes called the \bfind{minimum support valuation}.
With this valuation, $K((G))$ is a maximal field.

\pars
The fields $L$ constructed in Examples~\ref{exampASd1}
and~\ref{exampASd2} are subfields of $K((\Q))$ in a canonical way. It is
interesting to note that the element $\vartheta$ is an element of
$K((\Q))$:
\begin{equation}                            \label{expan}
\vartheta\>=\>\sum_{i\in\N} t^{-1/p^i}\>=\>
t^{-1/p}+t^{-1/p^2}+\ldots+t^{-1/p^i}+\ldots\;.
\end{equation}
Indeed,
\begin{eqnarray*}
\vartheta^p-\vartheta-\frac{1}{t} & = & \sum_{i\in\N} t^{-1/p^{i-1}}
\,-\,\sum_{i\in\N} t^{-1/p^i}\,-\,t^{-1}\\
& = & \sum_{i=0}^{\infty} t^{-1/p^i}
\,-\,\sum_{i=1}^{\infty}t^{-1/p^i}\,-\,t^{-1}\>=\>0\;.
\end{eqnarray*}
Note that the values $vt^{-1/p^n}=-1/p^n$ converge from below to 0.
Therefore, $\vartheta$ does not even lie in the completion of $L$. In
fact, there cannot be a root of $X^p-X-1/t$ in the completion; if $a$
would be such a root, then there would be some $b\in L$ such that
$v(a-b)>0$. We would have that
\begin{equation}                            \label{ASsurg}
(a-b)^p-(a-b)\>=\>a^p-a -(b^p-b)\>=\>\frac{1}{t}-(b^p-b)\;.
\end{equation}
Because of $v(a-b)>0$, the left hand side and consequently also
the right hand side has value $>0$. But as we have seen in
Example~\ref{exampHL1}, the polynomial $X^p-X-c$ splits over every
henselian field containing $c$ if $vc>0$. Hence, in the cases where $L$
is henselian, there exists a root $a'\in L$ of $X^p-X-1/t+b^p-b$.
It follows that $(a'+b)^p-(a'+b)-1/t=1/t-(b^p-b)+b^p-b-1/t=0$.
As $a'+b\in L$, this would imply that $X^p-X-1/t$ splits over $L$,
a contradiction.

\parb
Let us illustrate the influence of the defect by considering an object
which is well-known in algebraic geometry.

%
%
\section{A quick look at Puiseux series fields}
Recall that $K((\Q))$ is the field of all formal sums $\sum_{q\in\Q} c_q
t^q$ with $c_q\in K$ and well-ordered support. The subset
\begin{equation}                            \label{PSF}
{\rm P}(K)\>:=\>\left\{\sum_{i=n}^{\infty} c_i t^{i/k}\mid c_i\in K,\,
n\in\Z,\, k\in\N\right\}\>=\>\bigcup_{k\in\N}^{} K((t^{1/k}))\>\subset\>
K((\Q))
\end{equation}
is itself a field, called the \bfind{Puiseux series field over $K$}.
Here, the valuation $v$ on $K((t^{1/k}))$ is again the minimum support
valuation, in particular, we have that $vt^{1/k}=1/k$. In this way, the
valuation $v$ on every $K((t^{1/k}))$ is an extension of the $t$-adic
valuation $v_t$ of $K((t))$ and of the valuation of every subfield
$K((t^{1/m}))$ where $m$ divides $k$.
%
%

${\rm P}(K)$ can also be written as a union of an ascending chain of
power series fields in the following way. We take $p_i$ to be the $i$-th
prime number and set $m_k:=\prod_{i=1}^{k}p_i^k$. Then $m_k|m_{k+1}$ and
thus $K((t^{1/m_k}))\subset K((t^{1/m_{k+1}}))$ for every $k\in\N$,
and every natural number will divide $m_k$ for large enough $k$.
Therefore,
\begin{equation}
{\rm P}(K)\>=\>\bigcup_{k\in\N}^{} K((t^{1/m_k}))\;.
\end{equation}

If one does not want to work in the power series field $K((\Q))$, then
one simply has to choose a compatible system of $k$-th roots $t^{1/k}$
of $t$ (that is, for $k=\ell m$ we must have $(t^{1/k})^\ell=t^{1/m}$;
this is automatic for the elements $t^{1/k}\in K((\Q))$ by definition of
the multiplication in this field). Then (\ref{PSF}) can serve as a
definition for the Puiseux series field over $K$.

\begin{lemma}                             \label{Psf}
The Puiseux series field P$(K)$ is an algebraic extension of $K((t))$,
and it is henselian with respect to its canonical valuation $v$. Its
residue field is $K$ and its value group is $\Q$.
\end{lemma}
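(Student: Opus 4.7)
The plan is to prove the four assertions (algebraicity, value group, residue field, henselianity) independently by exploiting the description $\mathrm{P}(K)=\bigcup_{k\in\N} K((t^{1/k}))$ as an ascending union. Once each property is verified for the individual power series fields $K((t^{1/k}))$, all four claims about $\mathrm{P}(K)$ will be obtained by passing to the union.

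First I would establish the key structural identity $K((t^{1/k}))=K((t))[t^{1/k}]$, which is the single nontrivial ingredient (and the one the text alluded to as ``splitting up the power series''). Given $\mu=\sum_{i} c_i t^{i/k}\in K((t^{1/k}))$, I would group terms by the residue $j\in\{0,\ldots,k-1\}$ of $i$ modulo $k$, writing
\[
\mu\>=\>\sum_{j=0}^{k-1}\, t^{j/k}\,\bigl(\sum_{m} c_{km+j}\, t^{m}\bigr)\;,
\]
where each inner series is a subseries of $\mu$ and therefore has well-ordered support bounded below, hence lies in $K((t))$. This yields $K((t^{1/k}))=\bigoplus_{j=0}^{k-1} t^{j/k} K((t))$, so $K((t^{1/k}))$ is algebraic over $K((t))$ of degree $k$ (with $t^{1/k}$ a root of the irreducible polynomial $X^k-t$). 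Since $\mathrm{P}(K)=\bigcup_k K((t^{1/k}))$ is the union of such algebraic extensions, $\mathrm{P}(K)|K((t))$ is algebraic.

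Next I would read off the value group and residue field. On each $K((t^{1/k}))$ the canonical valuation is the minimum-support valuation with $vt^{1/k}=1/k$, so $vK((t^{1/k}))=\frac{1}{k}\Z$ and $K((t^{1/k}))v=K$. Taking the union over $k$ (or just over $k=m_!$) yields $v\mathrm{P}(K)=\bigcup_k \frac{1}{k}\Z=\Q$ and $\mathrm{P}(K)v=K$, as required.

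Finally, for henselianity I would invoke (DF3): every power series field is maximal, hence every $(K((t^{1/k})),v)$ is maximal, and therefore henselian by Corollary~\ref{maxhens}. Using the representation $\mathrm{P}(K)=\bigcup_{k\in\N} K((t^{1/m_k}))$ as an ascending chain of henselian fields with the same induced valuation, Lemma~\ref{unihens} (which will be stated below, and which asserts exactly that unions of ascending chains of henselian valued fields are henselian) gives that $\mathrm{P}(K)$ is henselian. The only real obstacle in the whole argument is the decomposition identity $K((t^{1/k}))=K((t))[t^{1/k}]$; everything else is a routine passage to the union.
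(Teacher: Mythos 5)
Your proof is correct, and while the algebraicity, value group, and residue field computations match the paper's, your route to henselianity is genuinely different. The paper deduces it as a consequence of the algebraicity it has just proven: $K((t))$ is maximal (hence henselian by Corollary~\ref{maxhens}), and Corollary~\ref{algexth} then transfers henselianity up the algebraic extension $\mathrm{P}(K)|K((t))$. You instead apply Corollary~\ref{maxhens} to each $K((t^{1/m_k}))$ individually and then use Lemma~\ref{unihens} (the ascending-chain lemma for henselian fields) to pass to the union. Both are valid. The paper's argument is marginally more economical, reusing the algebraicity result it needs anyway and avoiding the explicit reorganization of the union into a nested chain; your argument has the advantage of not needing the algebraicity claim at all for henselianity, and it showcases Lemma~\ref{unihens} as an instance of the general model-theoretic principle about universal-existential sentences. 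Also worth noting: you spell out the grouping-by-residue decomposition $K((t^{1/k}))=\bigoplus_{j=0}^{k-1} t^{j/k} K((t))$ in full, whereas the paper only gestures at it (``similarly as in Example~\ref{exampASd2}''), so on that point your write-up is actually more self-contained than the original.
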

\begin{proof}
For every $k\in \N$, the element $t^{1/k}$ is algebraic over $K((t))$.
Similarly as in Example~\ref{exampASd2}, we have that $K((t^{1/k}))=
K((t)) [t^{1/k}]$, which is algebraic over $K((t))$. Consequently, also
the union P$(K)$ of the $K((t^{1/k}))$ is algebraic over $K((t))$. By
Corollary~\ref{maxhens}, $K((t))$ is henselian w.r.t.\ its canonical
valuation $v_t\,$. As the canonical valuation $v$ of P$(K)$ is an
extension of $v_t\,$, Corollary~\ref{algexth} yields that
$(\mbox{P}(K),v)$ is henselian.

The value group of every $(K((t^{1/k})),v)$ is $\Z vt^{1/k}=\Z
\frac{vt}{k}=\frac{1}{k}\Z$, so the union over all $K((t^{1/k}))$ has
value group $\bigcup_{k\in\N}\frac{1}{k}\Z=\Q$. The residue field
of every $(K((t^{1/k})),v)$ is $K$, hence also the residue field of
their union is $K$.
\end{proof}

\begin{theorem}
Let $K$ be a field of characteristic $0$. Then $({\rm P}(K),v)$ is a
defectless field. Further, P$(K)$ is the algebraic closure of
$K((t))$ if and only if $K$ is algebraically closed.
\end{theorem}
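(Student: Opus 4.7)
The plan is to treat the two assertions separately; both follow quickly from the machinery already assembled in the previous sections.

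For the defectless assertion, I will simply invoke example (DF1). By Lemma~\ref{Psf} the residue field of $(\mathrm{P}(K),v)$ is $K$, so the residue characteristic is $\chara K=0$. Thus $(\mathrm{P}(K),v)$ is a valued field with residue characteristic zero, and the Lemma of Ostrowski (Theorem~\ref{LofO}) immediately tells us that every finite extension has trivial defect. There is no real work here.

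For the ``if'' direction of the algebraic-closure statement, I would take an arbitrary finite extension $L|\mathrm{P}(K)$ and show $L=\mathrm{P}(K)$. By Lemma~\ref{Psf}, $\mathrm{P}(K)$ is henselian, and by the first part it is defectless; hence the fundamental equality $[L:\mathrm{P}(K)]=\mathrm{e}\mathrm{f}$ holds. The key observation is that $\mathbb{Q}=v\mathrm{P}(K)$ is divisible and $vL$ is torsion-free, so the finite index $\mathrm{e}=(vL:\mathbb{Q})$ must equal $1$ (any $\gamma\in vL$ with $\mathrm{e}\gamma\in\mathbb{Q}$ lies in $\mathbb{Q}$ by divisibility, and the resulting torsion must vanish). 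Meanwhile $Lv|K$ is a finite extension by the fundamental inequality (Theorem~\ref{fie}), so if $K$ is algebraically closed then $\mathrm{f}=[Lv:K]=1$. Hence $[L:\mathrm{P}(K)]=1$, so $\mathrm{P}(K)$ is algebraically closed; since $\mathrm{P}(K)|K((t))$ is algebraic (Lemma~\ref{Psf}), it equals the algebraic closure of $K((t))$.

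For the ``only if'' direction, I would assume $\mathrm{P}(K)$ is algebraically closed and take a non-constant $f\in K[X]$ with root $\alpha\in\mathrm{P}(K)$. Since $K\subseteq\mathrm{P}(K)$ inherits the trivial valuation ($vK=\{0\}$, $Kv=K$), the fundamental inequality applied to the finite extension $K(\alpha)|K$ forces $(vK(\alpha):vK)$ to be finite; but $vK(\alpha)\subseteq v\mathrm{P}(K)=\mathbb{Q}$ is torsion-free, so in fact $vK(\alpha)=\{0\}$ and the valuation on $K(\alpha)$ is trivial. Setting $c:=\alpha v\in\mathrm{P}(K)v=K$, the element $\alpha-c$ lies in $K(\alpha)$ and satisfies $v(\alpha-c)>0$ unless $\alpha=c$; triviality of $v$ on $K(\alpha)$ therefore forces $\alpha=c\in K$. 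So every non-constant polynomial over $K$ has a root in $K$, i.e.~$K$ is algebraically closed. There is no genuine obstacle here; the only conceptual point is to exploit the divisibility of $\mathbb{Q}$ (for the ``if'' part) and its torsion-freeness (for the ``only if'' part), both of which serve to eliminate any ramification above $\mathrm{P}(K)$ or $K$.
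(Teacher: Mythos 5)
Your proof is correct, and it takes a genuinely different route from the paper's in the ``only if'' direction. The paper invokes the structural fact (stated just before the theorem) that for any valued field $(L,v)$, the algebraic closure $\tilde L$ has $v\tilde L$ equal to the divisible hull of $vL$ and $\tilde L v$ equal to the algebraic closure of $Lv$; from $\widetilde{K((t))}=\mathrm{P}(K)$ it then reads off $\tilde K=\mathrm{P}(K)v=K$ in one line. You instead work elementarily: starting from a root $\alpha$ of a polynomial over $K$, you show the valuation must be trivial on $K(\alpha)$ and then force $\alpha$ into $K$ via the residue map. Your argument is more self-contained (it needs no global statement about valued algebraic closures), while the paper's is shorter once that fact is in hand. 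For the ``if'' direction, the two proofs are essentially the same argument in different dress: the paper observes that $(\widetilde{K((t))}|\mathrm{P}(K),v)$ is immediate (by the same structural fact) and that henselian-plus-defectless forces it to be trivial; you take an arbitrary finite extension $L|\mathrm{P}(K)$ and compute $\mathrm{e}=1$ from divisibility of $\mathbb{Q}$ and $\mathrm{f}=1$ from algebraic closure of $K$, then apply the fundamental equality. Both hinge on the same ingredients (henselian, defectless, divisible value group, algebraically closed residue field). One small technical caveat in your ``only if'' step: as stated in the paper, Theorem~\ref{fie} assumes $v$ is non-trivial on the base field, so strictly speaking it doesn't cover the extension $K(\alpha)|K$ with $v$ trivial on $K$; the conclusion ($vK(\alpha)=\{0\}$) is of course still correct and easy to verify directly by looking at the minimal polynomial of any $\beta\in K(\alpha)^\times$ over $K$, but the citation is slightly off.
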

\begin{proof}
The residue field of $({\rm P}(K),v)$ is $K$, hence if $\chara K=0$,
then $({\rm P}(K),v)$ is a defectless field by {\bf (DF1)} in
Section~\ref{sectdef}.

For the second assertion, we use the following valuation
theoretical fact (try to prove it, it is not hard):
\sn
{\it Let $(L,v)$ be a valued field and choose any extension of $v$ to
the algebraic closure $\tilde{L}$. Then $v\tilde{L}$ is the divisible
hull of $vL$, and $\tilde{L}v$ is the algebraic closure of $Lv$.}
\sn
Hence, $v\widetilde{K((t))}=\Q=v{\rm P}(K)$ and $\widetilde{K((t))}P
=\tilde{K}$. Thus if $\widetilde{K((t))}={\rm P}(K)$, then
$\tilde{K}={\rm P}(K)=K$, which shows that $K$ must be algebraically
closed. For the converse, note that by the foregoing lemma, ${\rm P}
(K) \subseteq \widetilde{K((t))}$. Assume that $\tilde{K}=K$. Then the
extension $(\widetilde{K((t))}|{\rm P}(K),v)$ is immediate. But since
$({\rm P}(K), v)$ is henselian (by the foregoing lemma) and defectless,
every finite subextension must be trivial by Theorem~\ref{LofO}. This
proves that $\widetilde{K((t))}= {\rm P}(K)$, i.e., ${\rm P}(K)$ is
algebraically closed.
\end{proof}

The assertion of this theorem does not hold if $K$ has positive
characteristic:

\begin{example}                             \label{exampASd3}
In Example~\ref{exampASd1}, we can replace $L_k$ by $K((t^{1/k}))$
for every $k\in\N\>$ (as opposed to $K((t^{1/p^k}))$, which we used
in Example~\ref{exampASd2}). Still, everything works the same,
producing the henselian Puiseux series field $L={\rm P}(K)$ with an
immediate Artin--Schreier extension $(L(\vartheta)|L,v)$ of degree $p$
and defect $p$.

By construction, ${\rm P}(K)$ is a subfield of $K((\Q))$. Hence, the
arguments at the end of the last section show that there is no root of
$X^p-X-1/t$ in the completion of ${\rm P}(K)$. The arguments of
Example~\ref{exampmie} show that ${\rm P}(K)$ has non-isomorphic
maximal immediate extensions if $K$ is not Artin--Schreier closed.
\end{example}

Our example proves:
\begin{theorem}
Let $K$ be a field of characteristic $p>0$. Then $({\rm P}(K),v)$ is
not defectless. In particular, ${\rm P}(K)$ is not algebraically
closed, even if $K$ is algebraically closed. Not even the completion
of ${\rm P}(K)$ is algebraically closed.
\end{theorem}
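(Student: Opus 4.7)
The plan is essentially to package the work already done in Example~\ref{exampASd3} together with the discussion preceding it.

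First, I would invoke Example~\ref{exampASd3} directly: for any field $K$ of characteristic $p>0$, it produces an immediate Artin--Schreier extension $({\rm P}(K)(\vartheta)\,|\,{\rm P}(K),v)$ of degree $p$, generated by a root of $X^p-X-1/t$. By Corollary~\ref{hidef} applied to the henselian field $({\rm P}(K),v)$ (Lemma~\ref{Psf}), this extension has defect $p$. Hence ${\rm P}(K)$ is not a defectless field, and in particular $X^p-X-1/t$ is a polynomial over ${\rm P}(K)$ with no root in ${\rm P}(K)$. Since the construction of $\vartheta$ in Example~\ref{exampASd3} uses only that $t$ is a uniformizer and that we are in characteristic $p$, this conclusion is independent of whether or not $K$ is algebraically closed, so ${\rm P}(K)$ fails to be algebraically closed even in that case.

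For the last assertion, I would argue by contradiction that $X^p-X-1/t$ still has no root in the completion $\widehat{{\rm P}(K)}$. Suppose $a\in\widehat{{\rm P}(K)}$ with $a^p-a=1/t$. Pick $b\in {\rm P}(K)$ with $v(a-b)>0$; this is possible by the definition of the completion. Then
\[
(a-b)^p-(a-b)\>=\>(a^p-a)-(b^p-b)\>=\>\frac{1}{t}-(b^p-b),
\]
and the left-hand side has positive value, so the element $c:=1/t-(b^p-b)\in {\rm P}(K)$ satisfies $vc>0$. But $({\rm P}(K),v)$ is henselian by Lemma~\ref{Psf}, so by the first case of Example~\ref{exampHL1} the Artin--Schreier polynomial $X^p-X-c$ splits completely over ${\rm P}(K)$; pick a root $a'\in {\rm P}(K)$. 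Then $a'+b\in{\rm P}(K)$ satisfies $(a'+b)^p-(a'+b)=c+(b^p-b)=1/t$, contradicting the fact that $X^p-X-1/t$ has no root in ${\rm P}(K)$.

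There is no real obstacle here, since the heavy lifting was done in Example~\ref{exampASd3} (producing the immediate Artin--Schreier defect extension) and in Example~\ref{exampHL1} (henselian splitting of $X^p-X-c$ when $vc>0$). The only point that needs attention is making explicit that the Puiseux construction is characteristic-free within characteristic $p$, so that the non-existence of a root of $X^p-X-1/t$ in ${\rm P}(K)$ holds without any assumption on $K$ beyond $\chara K=p$; once this is noted, the contradiction in the completion follows just by transporting the problem back into ${\rm P}(K)$ itself via henselianness.
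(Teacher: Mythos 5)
Your proposal is correct and takes essentially the same route as the paper: the theorem follows immediately from Example~\ref{exampASd3} (which constructs the immediate Artin--Schreier defect extension of $\mathrm{P}(K)$ via Example~\ref{exampASd1}) together with the completion argument at the end of Section~\ref{sectmax}, which you have reproduced faithfully with $L$ replaced by $\mathrm{P}(K)$. The only cosmetic quibble is that the nonexistence of a root of $X^p-X-1/t$ in $\mathrm{P}(K)$ follows from the extension having degree $p>1$ (established in the example), not ``in particular'' from the failure of defectlessness; but since you cite the degree-$p$ claim just before, nothing is actually missing.
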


There is always a henselian defectless field extending $K((t))$ and
having residue field $K$ and divisible value group, even if $K$ has
positive characteristic. We just have to take the power series field
$K((\Q))$. But in contrast to the Puiseux series field, this field is
``very large'': it has uncountable transcendence degree over $K((t))$.
Nevertheless, having serious problems with the Puiseux series field in
positive characteristic, we tend to replace it by $K((\Q))$. But
this seems problematic since it might not be the unique maximal
immediate extension of the Puiseux series field. However, if
$K$ is perfect and does not admit a finite extension whose degree
is divisible by $p$ (and in particular if $K$ is algebraically closed),
then Kaplansky's uniqueness result shows that the maximal immediate
extension is unique. On the other hand, our example shows that the
assumption ``$K$ is perfect'' alone is not sufficient, since there are
perfect fields which are not Artin--Schreier closed.

%
%
\section{The tame and the wild valuation theory}         \label{secttw}
Before we carry on, let us describe some advanced ramification theory
based on the material of Sections~\ref{sectram} and~\ref{sectdef}.
Throughout, we let $(K,v)$ be a henselian non-trivially valued field. We
set $p=\chara Kv$ if this is positive, and $p=1$ otherwise. If $(L|K,v)$
is an algebraic extension, then we call $(L|K,v)$ a \bfind{tame
extension} if for every finite subextension $L'|K$ of $L|K$,
\sn
1) \ $(vL':vK)$ is not divisible by the residue characteristic
$\chara Kv$,\n
2) \ $L'v|Kv$ is separable,\n
3) \ $[L':K]=(vL':vK)[L'v:Kv]$, i.e., $(L'|K,v)$ has trivial defect.
\sn
From the ramification theoretical facts presented in
Section~\ref{sectram}, one derives:
\begin{theorem}
If $(K,v)$ is henselian, then its absolute ramification field $(K^r,v)$
is the unique maximal tame extension of $(K,v)$, and its absolute
inertia field $(K^i,v)$ is the unique maximal tame extension of $(K,v)$
having the same value group as $K$.
\end{theorem}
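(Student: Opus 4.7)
The plan is to split both assertions into four claims: (a) $K^r|K$ is tame; (b) every tame extension $(L|K,v)$ is contained in $K^r$; (c) $K^i|K$ is tame with $vK^i=vK$; (d) every tame extension with $vL=vK$ is contained in $K^i$. Then (a)+(b) deliver the first assertion and (c)+(d) the second; (d) follows from (b) combined with Lemma~\ref{Kimaxur}, and uniqueness in both assertions is automatic once the containments are in hand. Since tameness is a condition on finite subextensions, I reduce throughout to finite $L|K$.

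For (a), take a finite subextension $L'|K$ of $K^r|K$ and set $L'_i := L'\cap K^i$. Since $(K,v)$ is henselian the decomposition field coincides with $K$, so Theorem~\ref{T} applied to the finite subextension $L'_i|K$ of the inertia extension $K^i|K$ yields $vL'_i=vK$, separability of $L'_iv|Kv$, and $[L'_i:K]=[L'_iv:Kv]$. Applying Theorem~\ref{liftZTV} to base-change to $L'_i$ and then Theorem~\ref{V} to $L'|L'_i$ (a finite subextension of the ramification extension $K^r|K^i$) yields $L'v=L'_iv$, $[L':L'_i]=(vL':vL'_i)$, and by Theorem~\ref{V}(c) this ramification index is coprime to $p$. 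Multiplying, $[L':K]=(vL':vK)[L'v:Kv]$ (trivial defect), with $(vL':vK)$ coprime to $p$ and $L'v$ separable over $Kv$, so $K^r|K$ is tame. For (c), specialize the same argument to $L'=L'_i$, a finite subextension of $K^i|K$; tameness follows and $vK^i=vK$ by Theorem~\ref{T} directly.

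For (b), let $L|K$ be a finite tame extension. A case analysis on a hypothetical degree-$p$ purely inseparable generator $\alpha$ with $\alpha^p=a\in K$ shows that tameness precludes a purely inseparable part (either $(vL:vK)=p$, or $Lv|Kv$ inseparable of degree $p$, or defect $p$, depending on whether $va\in pvK$ and whether the induced residue relation is solvable). So $L\subseteq K\sep$; let $M\subseteq K\sep$ be its Galois closure. Each conjugate $\sigma L$ is valuation-isomorphic to $L$ over $K$ (uniqueness of the extension of $v$ in the henselian case), hence tame; and the compositum of finitely many tame subextensions of a henselian field is tame (induct on the number of factors, using multiplicativity of $e$, $f$ and $d$ in towers together with Theorem~\ref{dlfhens} and Lemma~\ref{dlfinext}). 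Thus $M|K$ is Galois and tame. Let $V_M$ be its ramification field; by Theorem~\ref{downZTV}, $V_M=M\cap K^r$, so $M\subseteq K^r$ iff $M=V_M$. Theorem~\ref{S} gives $M|V_M$ of $p$-power degree, $vM/vV_M$ a $p$-group, and $Mv|V_Mv$ purely inseparable. Tameness of $M|K$ then forces $vM=vV_M$ (since $(vM:vK)$ is coprime to $p$ and $vV_M\subseteq vM$), $Mv=V_Mv$ (since $Mv|Kv$ is separable while $Mv|V_Mv$ is purely inseparable with $V_Mv\subseteq Mv$), and trivial defect of $M|V_M$ (by multiplicativity: $d(M|K)=d(M|V_M)\cdot d(V_M|K)=1\cdot 1$, using tameness of both $M|K$ and the subextension $V_M|K$ of $K^r|K$ obtained in (a)). These three force $[M:V_M]=1$, so $M=V_M\subseteq K^r$ and $L\subseteq K^r$.

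The main obstacle is the claim in (b) that the compositum of finitely many tame subextensions of a henselian field is tame. The concrete step is the two-factor case $L_1L_2|K$: one needs that $L_1L_2|L_1$ remains tame (base change of the tame $L_2|K$ to the larger henselian $L_1$), which requires that ramification index and inertia degree of the base change are bounded by those of $L_2|K$, preserving coprime-to-$p$ and separability, and that defect is preserved under base change, which uses Theorem~\ref{dlfhens} and Lemma~\ref{dlfinext}. Combined with tameness of $L_1|K$ and the multiplicativity of $e$, $f$, $d$ in the tower $K\subseteq L_1\subseteq L_1L_2$, this yields tameness of $L_1L_2|K$; this is the one place in the proof where careful bookkeeping with the ramification theory of Section~\ref{sectram} is required.
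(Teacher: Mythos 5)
Your plan of splitting into (a)--(d) is sound, and (c) and (d) are handled correctly. But there are two problems, one minor and one serious.

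In (a), the phrase ``Theorem~\ref{V} to $L'|L'_i$ (a finite subextension of the ramification extension $K^r|K^i$)'' is incorrect: $L'_i=L'\cap K^i$ is a finite extension of $K$, whereas $K^i|K$ is in general infinite, so $L'|L'_i$ is not a subextension of $K^r|K^i$ and Theorem~\ref{V}(b) does not directly apply to it. The conclusion is still true, but the cleanest route to (a) is different: the paper's unnamed lemma in Section~\ref{sectdef} already gives $n={\rm e}\,{\rm f}$ for every finite subextension of $K^r|K$ over henselian $K$, and Theorem~\ref{V}(c) shows $vK^r$ is the $p'$-divisible closure of $vK$ while Theorem~\ref{T}(c) shows $K^rv=K^iv=(Kv)\sep$; hence $(vL':vK)$ is prime to $p$ and $L'v|Kv$ is separable, which is exactly tameness.

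The serious gap is in (b), in the claim that a compositum of finitely many tame subextensions is tame, together with the appeal to Theorem~\ref{dlfhens} and Lemma~\ref{dlfinext} to control the defect under base change. Those two statements are about \emph{defectless fields}, and a henselian field need not be defectless (the field $L^h$ of Example~\ref{exampdef2} is henselian but not defectless); so Lemma~\ref{dlfinext} cannot be invoked to conclude that $d(L_1L_2|L_1)=1$ from $d(L_2|K)=1$. Moreover, ``compositum of tame extensions is tame'' is essentially equivalent in strength to (b) itself: once one knows tame implies contained in $K^r$, the compositum statement is immediate (the compositum lies in $K^r$ and one applies (a)), but you are trying to use it \emph{before} establishing (b), which is circular. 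The bookkeeping with $e,f,d$ in towers you describe does not close this gap.

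A direct proof of (b) avoids the Galois closure and the compositum claim. Given a finite tame $L|K$, set $E:=L\cap K^r$. By (a), $E|K$ is tame, and by multiplicativity of ${\rm e},{\rm f},{\rm d}$ in towers the extension $L|E$ is also tame. By Theorem~\ref{liftZTV}, $E^r=K^r.E=K^r$, so $L\cap E^r=E$, i.e.\ $L|E$ is purely wild. Since $L|E$ is separable (tame extensions are separable, by your degree-$p$ purely inseparable analysis) and $E^r|E$ is Galois, $L$ and $E^r$ are linearly disjoint over $E$, so $[L:E]=[L.E^r:E^r]$, which is a power of $p$ because $E\sep|E^r$ is a $p$-extension (Theorem~\ref{S}). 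Tameness of $L|E$ then forces $(vL:vE)=1$ (prime to $p$ and dividing a $p$-power), so $[L:E]=[Lv:Ev]$ with $Lv|Ev$ separable. If this degree were $>1$, pick $\alpha\in Lv\setminus Ev$ and a monic $g\in{\cal O}_E[X]$ with $gv$ the (separable, irreducible) minimal polynomial of $\alpha$ over $Ev$; then $g$ is irreducible over the henselian $E$, and by Hensel's Lemma (Simple Root Version) it has a \emph{unique} root $a$ with $av=\alpha$, which exists both in $L$ (as $\alpha\in Lv$ and $L$ is henselian) and in $E^i$ (as $\alpha\in(Ev)\sep=E^iv$). Hence $a\in L\cap E^i\subseteq L\cap E^r=E$, giving $\alpha=av\in Ev$, a contradiction. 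So $L=E\subseteq K^r$, which is (b). The paper itself gives no proof, only the remark that the statement follows from Section~\ref{sectram}, so there is no ``official'' route to compare against; but the circularity in your (b) has to be repaired along some such lines.
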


An extension $(L|K,v)$ is called \bfind{purely wild} if $L|K$ is
linearly disjoint from $K^r|K$. An ordered group $G$ is called
$p$-divisible if for every $\alpha\in G$ and $n\in\N$ there is
$\beta\in G$ such that $p^n\beta=\alpha$. The \bfind{$p$-divisible hull}
of $G$, denoted by $\frac{1}{p^\infty} G$, is the smallest subgroup
of the divisible hull $\Q\otimes G$ which contains $G$ and is
$p$-divisible; it can be written as $\{\alpha/p^n\mid \alpha\in G\,,\,
n\in\N\}$. The following was proved by Matthias Pank (cf.\ [KPR]):
\begin{theorem}                             \label{rfmte}
If $(K,v)$ is henselian, then there exists a field complement $W$ to
$K^r$ over $K$, that is, $W.K^r= \tilde{K}$ and $W\cap K^r=K$. The
degree of every finite subextension of $W|K$ is a power of $p$. Further,
$vW$ is the $p$-divisible hull $\frac{1}{p^\infty} vK$ of $vK$, and $Wv$
is the perfect hull of $Kv$.
\end{theorem}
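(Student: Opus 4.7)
The strategy is to translate the problem into Galois theory and split an appropriate exact sequence of profinite groups. Set $G := \Aut(\tilde{K}|K)$; every such automorphism is determined by its restriction to $K\sep$ since purely inseparable elements are fixed automatically, so $G = \mathrm{Gal}(K\sep|K)$, and the purely inseparable closure $K_{\mathrm{ins}}$ of $K$ lies in $\tilde{K}^{H}$ for every subgroup $H \leq G$. By Theorem~\ref{S}, $G^r$ is a closed pro-$p$ normal subgroup of $G$, with quotient $G/G^r \cong \mathrm{Gal}(K^r|K)$. The task becomes: find a closed subgroup $H \leq G$ with $H \cap G^r = 1$ and $HG^r = G$, and then put $W := \tilde{K}^H = (K\sep)^H \cdot K_{\mathrm{ins}}$.

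The main obstacle is the existence of the complement $H$: splitting the short exact sequence $1 \to G^r \to G \to G/G^r \to 1$ of profinite groups whose kernel is pro-$p$. I would handle this by a Zorn-plus-pro-$p$-lifting argument. Take a closed subgroup $H \leq G$ maximal subject to $H \cap G^r = 1$, and show $HG^r = G$ automatically. If instead some $\sigma \in G$ lay outside $HG^r$, one would correct $\sigma$ by a pro-$p$ element $\tau \in G^r$ so that $\langle H, \sigma\tau \rangle$ is closed, still meets $G^r$ trivially, and strictly enlarges $H$ --- the existence of such a correction is where the pro-$p$ nature of $G^r$ enters, via the vanishing of the relevant obstruction in non-abelian pro-$p$ cohomology. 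At each finite Galois level of $K\sep|K$ this is the classical Schur--Zassenhaus splitting for a finite $p$-group kernel, and one passes to the limit by a standard compactness argument.

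The algebraic properties of $W$ then follow from the Galois dictionary together with the linear disjointness of $K\sep$ and $K_{\mathrm{ins}}$ over $K$: the identity $\tilde{K}^G = K_{\mathrm{ins}}$ combined with $HG^r = G$ yields $W \cap K^r \subseteq K_{\mathrm{ins}} \cap K^r = K$, while $H \cap G^r = 1$ gives $W \cdot K^r = \tilde{K}$. For a finite subextension $L|K$ of $W|K$, decompose $L = L_s \cdot L_i$ where $L_s := L \cap (K\sep)^H$ is separable over $K$ and $L_i := L \cap K_{\mathrm{ins}}$ is purely inseparable; then $[L_i:K]$ is tautologically a $p$-power, and for $L_s$ the stabilizer $H' \supseteq H$ satisfies $H' = (H' \cap G^r) \cdot H$ thanks to $G = G^r \cdot H$, so $[L_s:K] = [G:H'] = [G^r:H' \cap G^r]$ is a power of $p$ as well.

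For $vW$, the inclusion $vW \subseteq \tfrac{1}{p^\infty} vK$ follows because each $w \in W$ lies in a finite subextension of $p$-power degree, forcing its ramification index to be a $p$-power by Theorem~\ref{LofO}. Conversely, for $a \in K$ and $n \geq 0$ the purely inseparable element $a^{1/p^n}$ lies in $K_{\mathrm{ins}} \subseteq W$ with value $va/p^n$, so $\tfrac{1}{p^\infty} vK \subseteq vW$. For $Wv$, I would apply Theorem~\ref{liftZTV} to $(\tilde{K}|W,v)$: the ramification field of $\tilde{K}|W$ equals $K^r \cdot W = \tilde{K}$ itself, so the residue-field parts of Theorems~\ref{T} and~\ref{V} give $\tilde{K}v = (Wv)\sep$ inside $\overline{Kv}$, forcing $Wv$ to be perfect, whence $Wv \supseteq (Kv)^{\mathrm{perf}}$. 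For the reverse inclusion, any finite subextension $L|K$ of $W|K$ satisfies $L \cap K^i \subseteq W \cap K^r = K$, so Hensel's Lemma (in the spirit of Lemma~\ref{HLrel1}) forces the maximal separable subextension of $Lv|Kv$ to be trivial; taking the union over such $L$ yields $Wv \subseteq (Kv)^{\mathrm{perf}}$.
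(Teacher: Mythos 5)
The paper does not prove this theorem; it cites [KPR], where the existence of the complement is the heart of the matter. Your proposal has a genuine gap at exactly that point. You reduce the problem to splitting the profinite exact sequence $1 \to G^r \to G \to G/G^r \to 1$ and claim that at each finite Galois level this is ``the classical Schur--Zassenhaus splitting for a finite $p$-group kernel.'' But Schur--Zassenhaus requires the kernel to be a Hall subgroup, i.e.\ $\gcd(|N|,|G/N|)=1$, and here the quotient $G/G^r \cong \mathrm{Gal}(K^r|K)$ is not a $p'$-group in general: it surjects onto $\mathrm{Gal}(K^i|K)\cong \mathrm{Gal}((Kv)\sep|Kv)$, the absolute Galois group of the residue field, which has $p$-torsion whenever $Kv$ has Galois extensions of $p$-power degree (e.g.\ Artin--Schreier extensions). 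So the coprimality hypothesis fails at every level where this happens, and no ``classical'' splitting is available. Nor does the general profinite claim rescue you: a closed normal pro-$p$ subgroup of a profinite group need not have a complement — take $G=\Z_p$ and $N=p\Z_p$, where the only closed subgroup meeting $N$ trivially is $\{1\}$, yet $\{1\}\cdot N\neq G$. This also shows that your ``maximal $H$ with $H\cap G^r=1$ automatically satisfies $HG^r=G$'' argument is unsound as stated: maximality does not force surjectivity onto the quotient. The existence of a complement here depends on structure specific to the ramification filtration (the proof in [KPR] uses Galois cohomology together with the detailed shape of $G^i/G^r$ and $G^r$), not on generic profinite group theory, and that input is exactly what your argument is missing.

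There is also a secondary gap: your proof that $\tfrac{1}{p^\infty}vK \subseteq vW$ adjoins elements $a^{1/p^n}\in K_{\mathrm{ins}}$. This works only in equal characteristic $p$; in mixed characteristic ($\chara K = 0$, $\chara Kv = p>0$) we have $K_{\mathrm{ins}}=K$ and $\tilde{K}=K\sep$, so $W$ must acquire $p$-divisible values through \emph{separable} extensions, and that requires a different argument. The remaining verifications (that $[L:K]$ a $p$-power forces $vL\subseteq\tfrac{1}{p^\infty}vK$ via the Lemma of Ostrowski, that $W\cap K^i=K$ forces $Wv|Kv$ purely inseparable via the inertia field of the normal hull) are essentially correct in spirit, though the residue-field step needs the passage to normal hulls made explicit before Theorem~\ref{downZTV} can be invoked.
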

So $(W,v)$ is a maximal purely wild extension of $(K,v)$. It was shown
by Pank and is shown in [KPR] via Galois theory that $W$ is unique up to
isomorphism over $K$ if $Kv$ does not admit finite separable extensions
whose degree are divisible by $p$. On the other hand, if $vK$ is
$p$-divisible and $Kv$ is perfect, then $(W|K,v)$ is an immediate
extension, and since every subextension of $K^r|K$ has trivial defect,
it follows that the field complements $W$ of $K^r$ over $K$ are
precisely the maximal immediate algebraic extensions of $(K,v)$.

It was shown by George Whaples [WH2] and by Francoise Delon [D] that
Kaplansky's original hypothesis A consists of the following three
conditions:
\sn
1) \ $Kv$ does not admit finite separable extensions whose degree are
divisible by $p$,\n
2) \ $vK$ is $p$-divisible,\n
3) \ $Kv$ is perfect.
\sn
So if $(K,v)$ satisfies Kaplansky's hypothesis A, then it follows from
what we said above that the maximal immediate algebraic extensions of
$(K,v)$ are unique up to isomorphism over $K$. But this is the kernel of
the uniqueness problem for the maximal immediate extensions: using
Theorem~2 of [KA1], one can easily show that the maximal immediate
extensions are unique as soon as the maximal immediate algebraic
extensions are.

\pars
Since all finite tame extensions have trivial defect, the defect
is located in the purely wild extensions $(W|K,v)$. So we are interested
in their structure. Here is one amazing result, due to Florian Pop
(for the proof, see [K2], and for the notion of ``additive
polynomial'', see Example~\ref{exampHL1}):
\begin{theorem}
Let $(L|K,v)$ be a minimal purely wild extension, i.e., there is no
subextension $L'|K$ of $L|K$ such that $L\ne L'\ne K$. Then there is an
additive polynomial ${\cal A}\in K[X]$ and some $c\in K$ such that
$L|K$ is generated by a root of ${\cal A}(X)+c$.
\end{theorem}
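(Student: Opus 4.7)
Since additive polynomials in the sense of Example~\ref{exampHL1} are of degree $\leq 1$ in characteristic zero, and since $\chara Kv = 0$ forces $K^r = \tilde K$ by Theorem~\ref{S} and hence rules out nontrivial purely wild extensions, the theorem genuinely lives in equal characteristic $\chara K = p > 0$, which I assume throughout. The hypotheses that $L|K$ has no proper intermediate subextension and is purely wild (hence of $p$-power degree) force $[L:K] = p$, and the proof splits according to whether $L|K$ is purely inseparable or separable. The purely inseparable case is immediate: $L = K(\alpha)$ with $a := \alpha^p \in K$, and the Frobenius $\mathcal{A}(X) := X^p$ is additive, with $\alpha$ a root of $\mathcal{A}(X) - a$.

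In the separable case I pick any $\alpha \in L \setminus K$, which generates $L$ over $K$ by minimality; let $N$ be the Galois closure of $L|K$ in $\tilde K$ and $G := \Aut(N|K)$. Form the finite $\Fp$-subspace
\[
V := \mbox{span}_{\Fp}\{\sigma(\alpha) - \alpha : \sigma \in G\} \subset N.
\]
A short cocycle-style calculation shows $V$ is $G$-stable (for $\sigma,\sigma'\in G$, $\sigma'(\sigma\alpha-\alpha) = (\sigma'\sigma\alpha-\alpha) - (\sigma'\alpha-\alpha)\in V$), so the elementary symmetric functions of $V$ are $G$-invariant, and hence the coefficients of $\mathcal{A}(X) := \prod_{v \in V}(X - v)$ lie in $N^G = K$; that is, $\mathcal{A} \in K[X]$. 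Invoking the classical characterisation that, in characteristic $p$, a polynomial whose root set in $\tilde K$ is a finite $\Fp$-subspace is automatically additive, $\mathcal{A}$ is an additive polynomial. By additivity and the fact that $\sigma(\alpha) - \alpha \in V = \ker\mathcal{A}$ for every $\sigma \in G$,
\[
\mathcal{A}(\sigma(\alpha)) = \mathcal{A}(\alpha) + \mathcal{A}(\sigma(\alpha) - \alpha) = \mathcal{A}(\alpha),
\]
so $\mathcal{A}(\alpha)$ is $G$-invariant and thus lies in $K$. Setting $c := -\mathcal{A}(\alpha) \in K$, $\alpha$ is a root of $\mathcal{A}(X) + c$ and $L = K(\alpha)$.

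The main obstacle is the standard but non-trivial equivalence, in characteristic $p$, between additive polynomials and polynomials whose root sets are finite $\Fp$-subspaces of $\tilde K$; once that is in hand, the rest is Galois-theoretic bookkeeping. I note in passing that the separable-case argument works verbatim for every finite separable extension in characteristic $p$: the hypotheses of minimality and pure wildness enter only to force $[L:K] = p$ and to bifurcate the proof into the purely inseparable and separable cases.
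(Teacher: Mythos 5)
The proof you give does establish the theorem as literally printed, but it contains two flaws worth flagging. (The paper gives no proof of its own for this statement, referring to [K2], so a line-by-line comparison is not possible.)

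The claim that minimality and pure wildness force $[L:K]=p$ is unjustified and is in general false. Absence of intermediate fields only says that $\Aut(N|L)$ is a \emph{maximal} subgroup of $\Aut(N|K)$, and a maximal subgroup of $p$-power index can have index $p^k$ with $k>1$. In the henselian picture, $\Aut(K\sep|K)$ is an extension of $\Aut(K^r|K)$ by the pro-$p$ group $\Aut(K\sep|K^r)$; passing to a suitable finite quotient, a minimal separable purely wild $L|K$ corresponds to an irreducible $\Fp[\Aut(K^r|K)]$-module, and such modules can have $\Fp$-dimension greater than one. The remark printed immediately after the theorem, that the degree of ${\cal A}$ ``in general may be larger than $p$'', points in exactly this direction. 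Your separable-case argument happens never to invoke $[L:K]=p$, and in the purely inseparable case minimality alone (without pure wildness) does give degree $p$, so the error does not propagate; but the claim should be removed.

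More seriously, your separable argument proves the conclusion for \emph{every} finite separable extension in characteristic $p$ and never touches the valuation, pure wildness, or minimality --- as you yourself observe. That the hypotheses enter only through the false degree claim and through the separable/inseparable dichotomy is a strong signal that you have proved a weaker, purely Galois-theoretic, fact than the one intended. Your ${\cal A}$ has degree $|V|=p^{\dim_{\Fp}V}$, which can be as large as $p^{[L:K]-1}$, far exceeding $[L:K]$. The content of Pop's theorem is very likely concentrated in a sharper degree control --- that one can take $\deg{\cal A}=[L:K]$, so that ${\cal A}(X)+c$ is an irreducible generator of the right degree --- and arranging $\ker{\cal A}$ this tightly is where pure wildness, minimality, and the ramification filtration do actual work. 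You should consult [K2] for the precise formulation before concluding that the theorem is this elementary.
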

The degree of ${\cal A}$ is a power of $p$ (since it is additive), and
in general it may be larger than $p$.

\pars
Now we shall quickly develop the theory of tame fields. The henselian
field $(K,v)$ is said to be a \bfind{tame field} if all of its algebraic
extensions are tame extensions. By Theorem~\ref{rfmte}, this holds if
and only if $K^r$ is algebraically closed. Similarly, $(K,v)$ is said to
be a \bfind{separably tame field} if all of its separable-algebraic
extensions are tame extensions. This holds if and only if $K^r$ is
separable-algebraically closed.

By Theorem~\ref{liftZTV}, $\tilde{K}=K^r.W$ is the absolute ramification
field of $W$. If $W'|K$ is a proper subextension, then
$\tilde{K}\ne K^r.W'$. This proves:
\begin{lemma}                               \label{Wtf}
Every maximal purely wild extension $W$ is a tame field. No proper
subextension of $W|K$ is a tame field. The maximal separable
subextension is a separably tame field.
\end{lemma}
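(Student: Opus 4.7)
The plan is to combine Theorem~\ref{liftZTV} (which computes absolute ramification fields under change of base) with the defining properties of $W$ supplied by Theorem~\ref{rfmte}, namely $W.K^r=\tilde K$ and $W\cap K^r=K$. Both $W$ and $W\sep$ are henselian by Corollary~\ref{algexth}, so their absolute ramification fields are defined.

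For the first assertion, Theorem~\ref{liftZTV} applied to the normal extension $\tilde K|K$ with intermediate field $W$ gives $W^r=K^r.W=\tilde K$, so $W^r$ is algebraically closed and $(W,v)$ is a tame field by the characterization stated just before the lemma. For the third assertion, the same application with $W\sep$ in place of $W$ gives $(W\sep)^r=K^r.W\sep$; I would then verify $K^r.W\sep=K\sep$ by taking $\beta\in K\sep\subseteq\tilde K=K^r.W$, placing it in a finite compositum $K^r_0.W_0$ with $K^r_0\subseteq K^r$ and $W_0\subseteq W$ finite over $K$, and using the fact that $K^r_0.W_0$ is a purely inseparable extension of $K^r_0.W_0\sep$ to conclude that the separable element $\beta$ already lies in $K^r_0.W_0\sep\subseteq K^r.W\sep$. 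Hence $(W\sep)^r=K\sep$ is separable-algebraically closed, and $W\sep$ is separably tame.

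The main obstacle is the second assertion. I would argue by contradiction: suppose a proper subextension $W'$ of $W|K$ is a tame field, so $(W')^r=\widetilde{W'}=\tilde K$; Theorem~\ref{liftZTV} then forces $K^r.W'=\tilde K$. Pick $\alpha\in W\setminus W'$. Since $\alpha\in K^r.W'=(W')^r$, the extension $W'(\alpha)|W'$ is a finite subextension of the ramification field, hence tame. Since also $W'(\alpha)\subseteq W$, the crux is to show that $(W|W',v)$ is still purely wild, because then $W'(\alpha)|W'$ would be simultaneously tame and purely wild, forcing $W'(\alpha)=W'$ and contradicting $\alpha\notin W'$. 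To prove pure wildness of $W|W'$ I would work on maximal separable subextensions (the purely inseparable contribution being routine): write $G=\Aut(K\sep|K)$, $G^r=\Aut(K\sep|K^r)$, $H=\Aut(K\sep|W\sep)$, and $H'=\Aut(K\sep|(W')\sep)\supseteq H$. The hypotheses $W\cap K^r=K$ and $W.K^r=\tilde K$ translate into $G=G^r\cdot H$ and $G^r\cap H=1$, so $G=G^r\rtimes H$. Any $h'\in H'$ factors as $h'=gh$ with $g\in G^r$ and $h\in H\subseteq H'$, whence $g=h'h^{-1}\in G^r\cap H'$ and therefore $H'=\langle H,G^r\cap H'\rangle$. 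The Galois correspondence now yields $W\sep\cap K^r.(W')\sep=(W')\sep$, which is the required pure wildness.
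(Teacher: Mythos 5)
Your handling of the first and third assertions is correct; the finite-approximation argument that $K^r.W\sep=K\sep$ nicely makes explicit a step the paper leaves implicit.

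For the second assertion your route is more circuitous than the paper's and has a loose end. The paper's argument is one line: $K^r|K$ is Galois and $W\cap K^r=K$, so $W|K$ is linearly disjoint from $K^r|K$, hence $W'\mapsto K^r.W'$ is injective on subextensions of $W|K$; a proper subextension $W'$ thus has $(W')^r=K^r.W'\ne\tilde K$, so $W'$ is not a tame field. Your contradiction argument instead hinges on showing $(W|W',v)$ is purely wild. The semidirect-product computation correctly gives $W\sep\cap K^r.(W')\sep=(W')\sep$, but that is a statement about separable closures over $K$, whereas pure wildness of $(W|W',v)$ means $W\cap K^r.W'=W'$. Bridging the two is precisely the ``purely inseparable contribution'' you dismiss as routine. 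It does go through --- any $x\in W\cap K^r.W'$ is separable over $W'$ (since $K^r.W'|W'$ is separable), some power $x^{p^n}$ lies in $W\sep$, and being separable over $K^r.(W')\sep$ inside the purely inseparable extension $K^r.W'|K^r.(W')\sep$ it must lie in $K^r.(W')\sep$, hence in $(W')\sep\subseteq W'$, whence $x\in W'$ by its separability over $W'$ --- but that argument needs to be written. And once you are writing it you may as well notice that linear disjointness of $W|K$ from $K^r|K$ passes directly to linear disjointness of $W|W'$ from $W'.K^r|W'$ over $W'$, giving pure wildness of $W|W'$ in one step; the injectivity of $W'\mapsto K^r.W'$ is then immediate, without the contradiction scaffolding.
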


By Theorem~\ref{S}, $K\sep|K^r$ is a $p$-extension. Hence if $\chara Kv
=0$, then this extension is trivial. Since then also $\chara K=0$, it
follows that $K^r=K\sep=\tilde{K}$. Therefore,
\begin{lemma}                               \label{tamerc0}
Every henselian field of residue characteristic 0 is a tame field.
\end{lemma}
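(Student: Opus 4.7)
The plan is to use the characterization, recorded immediately before the lemma, that a henselian field $(K,v)$ is a tame field precisely when its absolute ramification field $K^r$ coincides with the algebraic closure $\tilde{K}$. So it suffices to show $K^r = \tilde{K}$ under the hypothesis $\chara Kv = 0$.

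First I would recall the convention set up in Section~\ref{sectram}: $p = \chara Kv$ if this is positive and $p = 1$ otherwise. Under our hypothesis, $p = 1$. Now Theorem~\ref{S} tells us that $K\sep|K^r$ is a $p$-extension; with $p=1$ this forces $K\sep = K^r$.

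Next I would observe that $\chara Kv = 0$ implies $\chara K = 0$: the prime subfield of $K$ injects into $Kv$ via the residue map, so if $Kv$ contains $\Q$ then so does $K$. In characteristic zero every algebraic extension is separable, so $\tilde{K} = K\sep$. Combining with the previous step, $K^r = \tilde{K}$, which by the characterization above makes $(K,v)$ a tame field.

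The only point requiring a bit of care is the convention $p = 1$ when $\chara Kv = 0$, which renders the ``$p$-extension'' produced by Theorem~\ref{S} trivial; beyond that no real obstacle arises. An equivalent route, avoiding any reference to $K^r$, would be to verify the three defining conditions of tameness directly for an arbitrary finite extension $(L'|K,v)$: condition (1) is vacuous because the relevant characteristic is $0$; condition (2) holds since $Kv$ has characteristic $0$ and hence every algebraic extension of $Kv$ is separable; and condition (3), the equality $[L':K] = \mathrm{e}\,\mathrm{f}$, is exactly the Lemma of Ostrowski (Theorem~\ref{LofO}) in the residue-characteristic-zero case.
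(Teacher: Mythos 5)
Your proof is correct and follows essentially the same route as the paper: Theorem~\ref{S} gives that $K\sep|K^r$ is a $p$-extension, which is trivial when $\chara Kv=0$, and since this forces $\chara K=0$ one gets $\tilde K = K\sep = K^r$, which is the characterization of a tame field. The alternative route you sketch via the three defining conditions and the Lemma of Ostrowski is also valid, but the paper uses the $K^r=\tilde K$ characterization.
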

Suppose that $K_1|K$ is an algebraic extension. Then
$K^r\subseteq K_1^r$ by Theorem~\ref{liftZTV}. Hence if $K^r$ is
algebraically closed, then so is $K_1^r\,$, and if $K^r$ is
separable-algebraically closed, then so is $K_1^r\,$. This proves:
\begin{lemma}
Every algebraic extension of a tame field is again a tame field.
Every algebraic extension of a separably tame field is again a
separably tame field.
\end{lemma}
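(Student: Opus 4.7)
The plan is to handle both assertions simultaneously by identifying the absolute ramification field of an algebraic extension with a compositum. Let $(K,v)$ be henselian and $K_1|K$ an algebraic extension; choose a fixed extension of $v$ to $\tilde{K}=\tilde{K}_1$. First, Corollary~\ref{algexth} gives that $(K_1,v)$ is henselian, so the definitions of ``tame field'' and ``separably tame field'' apply to it. The key step is to apply Theorem~\ref{liftZTV} to the normal extension $\tilde{K}|K$ with intermediate field $K_1$: the ramification field of $(\tilde{K}|K_1,v)$ is $K^r\cdot K_1$. Since this ramification field, by definition, is the fixed field of $G^r(\tilde{K}|K_1,v)$ inside the maximal separable subextension $K_1\sep$ of $\tilde{K}|K_1$, it coincides with the absolute ramification field $K_1^r$. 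Hence
\[
K_1^r \>=\> K^r\cdot K_1\;.
\]

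For the tame case, the assumption is $K^r=\tilde{K}$. Then $K_1^r=\tilde{K}\cdot K_1=\tilde{K}$ is algebraically closed, so $(K_1,v)$ is a tame field by the characterization stated right after the definition. For the separably tame case, the assumption is $K^r=K\sep$, and we must show $K\sep\cdot K_1=K_1\sep$. The inclusion $K\sep\cdot K_1\subseteq K_1\sep$ holds because $K\sep|K$ separable implies $K\sep|K_1\cap K\sep$ separable, and thus $K\sep\cdot K_1|K_1$ is separable. For the reverse inclusion, observe that every $\alpha\in K_1$ has some power $\alpha^{p^n}\in K\sep$ (since the maximal separable subextension of $K_1|K$ sits inside $K\sep$ and $K_1$ is purely inseparable over it), so $K\sep\cdot K_1\,|\,K\sep$ is purely inseparable. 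Now invoke the standard fact that a purely inseparable extension of a separably closed field is separably closed: if $\beta$ were separable over $K\sep\cdot K_1$, it would be algebraic over $K\sep$, hence purely inseparable over $K\sep$ (because $K\sep$ is separably closed), and therefore purely inseparable over $K\sep\cdot K_1$, forcing $\beta\in K\sep\cdot K_1$. Thus $K\sep\cdot K_1$ is separably closed, so $K_1^r=K_1\sep$ and $(K_1,v)$ is separably tame.

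The only subtle point is the computation $K_1^r=K^r\cdot K_1$: one needs to verify that the ``ramification field of the normal extension $\tilde{K}|K_1$'' produced by Theorem~\ref{liftZTV} is literally the absolute ramification field of $(K_1,v)$ as defined via $K_1\sep$, not the fixed field taken in some larger field. This is a matter of unravelling the definitions, using that $\tilde{K}=\tilde{K}_1$ and that $K_1\sep$ is the maximal separable subextension of $\tilde{K}|K_1$. Beyond that, everything reduces to a one-line compositum argument plus the purely-inseparable-extension-of-a-separably-closed-field lemma, which I would prove inline in the paragraph above since it is so short.
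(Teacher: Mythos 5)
Your proof is correct and follows the same route as the paper: invoke Theorem~\ref{liftZTV} to relate $K_1^r$ to $K^r.K_1$, then observe that $K_1^r$ inherits algebraic (respectively separable-algebraic) closedness from $K^r$. The paper compresses this to the single observation that $K^r\subseteq K_1^r$ and that an algebraic extension of an (separably-)algebraically closed field is again such; you instead pin down $K_1^r=K^r.K_1$ exactly and spell out the purely-inseparable step, which is the same content with more detail.
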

If $K^r=\tilde{K}$, then every finite extension of $(K,v)$ is  a tame
extension and thus has trivial defect, which shows that $(K,v)$ is a
defectless field. If $K^r=K\sep$, then every finite separable extension
has trivial defect. So we note:
\begin{lemma}                               \label{tamehdp}
Every tame field is henselian defectless and perfect. Every separably
tame field is henselian and all of its finite separable extensions have
trivial defect.
\end{lemma}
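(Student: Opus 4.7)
The plan is to read each conclusion off the characterization of tame fields via the absolute ramification field. By definition a tame field is henselian, so it only remains to check defectlessness and perfection. Defectlessness is immediate from the theorem characterizing $K^r$ as the unique maximal tame extension: tameness of $(K,v)$ amounts to $K^r = \tilde{K}$, and then every finite algebraic $L'|K$ is a subextension of $K^r|K$, hence itself a tame extension; condition~(3) of the definition of tame extension yields $[L':K] = (vL':vK)[L'v:Kv]$, i.e.\ trivial defect.

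For perfection I would argue by contradiction. There is nothing to check if $\chara K = 0$, so assume $\chara K = p > 0$, which forces $\chara Kv = p$. Suppose $K$ is not perfect, pick $a \in K\setminus K^p$, and set $L := K(a^{1/p})$, a purely inseparable extension of degree $p$; by Corollary~\ref{cae} the valuation extends uniquely to $L$, so the defect ${\rm d}$ of $(L|K,v)$ is defined. Since $(vL)^p\subseteq vK$, the quotient $vL/vK$ is killed by $p$, so ${\rm e}=(vL:vK)\in\{1,p\}$; since $(Lv)^p\subseteq Kv$, the residue field extension $Lv|Kv$ is purely inseparable, so ${\rm f}=[Lv:Kv]\in\{1,p\}$. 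But tameness applied to $L|K$ forces $p\nmid {\rm e}$ by condition~(1) and forces $Lv|Kv$ to be separable by condition~(2); together these give ${\rm e}={\rm f}=1$. The Lemma of Ostrowski then gives ${\rm d}=p$, contradicting condition~(3).

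For the separably tame case, the same characterization combined with the inclusion $K^r\subseteq K\sep$ (visible from the definitions in Section~\ref{sectram}) yields $K^r = K\sep$; hence every finite separable extension $L'|K$ lies in $K^r$, is a tame extension, and has trivial defect by condition~(3). Henselianness is built into the definition.

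The only delicate step is the perfectness argument: one must check that the Ostrowski decomposition $p = {\rm d}\,{\rm e}\,{\rm f}$ for a purely inseparable degree-$p$ extension is forced into the defect slot once tameness simultaneously rules out ${\rm e}=p$ (condition~1) and ${\rm f}=p$ (condition~2, since purely inseparable meets separable only in degree one). Everything else is a direct read-off from the previously established structural facts.
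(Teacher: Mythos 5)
Your proof is correct, and the defectlessness and separably-tame parts run exactly as the paper intends: tameness means $K^r=\tilde{K}$ (resp.\ $K^r=K\sep$), so every finite (resp.\ finite separable) extension lies in $K^r$, is tame, and condition~(3) gives trivial defect. The one place you take a genuinely different route is the perfection of a tame field. The paper states it without comment; the intended argument is a one-liner that exploits the paper's convention (see the remark in Section~\ref{sectram}) of forming the ramification field \emph{inside the separable closure}: $K^r\subseteq K\sep\subseteq\tilde{K}$, and tameness gives $K^r=\tilde{K}$, so $K\sep=\tilde{K}$ and $K$ is perfect. You instead test a hypothetical purely inseparable degree-$p$ extension $L=K(a^{1/p})$ against the three conditions defining a tame extension, observing that $p\cdot vL\subseteq vK$ and $Lv|Kv$ purely inseparable force $\mathrm{e},\mathrm{f}\in\{1,p\}$, that conditions (1) and (2) then pin $\mathrm{e}=\mathrm{f}=1$, and that this contradicts condition (3). (You invoke the Lemma of Ostrowski to name the defect $p$, but you could stop one step earlier: (3) says $p=\mathrm{e}\,\mathrm{f}=1$, already absurd.) Your version is a bit longer but makes visible exactly which clauses of tameness exclude purely inseparable degree-$p$ extensions, whereas the paper's version gets perfection essentially for free from where $K^r$ is defined to live. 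Both are sound. One cosmetic note: you write $(vL)^p\subseteq vK$ where the additive notation of the paper would be $p\cdot vL\subseteq vK$.
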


We give a characterization of tame and separably tame
fields (for the proof, see [K2]):
\begin{lemma}                    \label{tame}
A valued field $(K,v)$ is tame if and only if it is algebraically
maximal, $vK$ is $p$-divisible and $Kv$ is perfect. If $\chara K= \chara
Kv$ then $(K,v)$ is tame if and only if it is algebraically maximal and
perfect.

A non-trivially valued field $(K,v)$ is separably tame if and only if it
is separable-algebraically maximal, $vK$ is $p$-divisible and $Kv$ is
perfect.
\end{lemma}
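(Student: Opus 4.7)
The plan is to use Theorem~\ref{rfmte} as the main tool, noting that $(K,v)$ is tame iff $K^r=\tilde{K}$, and separably tame iff $K^r=K\sep$. Both tameness and algebraic maximality entail henselianity (the latter via Theorem~\ref{immhens}, since the henselization is an immediate algebraic extension), so the complement $W$ supplied by Theorem~\ref{rfmte} is available throughout. It satisfies $W\cap K^r=K$, $W\cdot K^r=\tilde{K}$, $vW=\frac{1}{p^\infty}vK$, and $Wv=(Kv)^{1/p^\infty}$. Thus tameness corresponds to $W=K$, and separable tameness to $W|K$ being purely inseparable.

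For the tame equivalence, the $(\Leftarrow)$ direction is straightforward: if $vK$ is $p$-divisible and $Kv$ is perfect, then $vW=vK$ and $Wv=Kv$, so $(W|K,v)$ is an immediate algebraic extension, and algebraic maximality forces $W=K$; hence $\tilde{K}=W\cdot K^r=K^r$. Conversely, tameness gives $W\subseteq\tilde{K}=K^r$ together with $W\cap K^r=K$, so $W=K$, and $vK$, $Kv$ coincide with their respective $p$-divisible and perfect hulls. Algebraic maximality in the reverse direction follows from Lemma~\ref{tamehdp} together with Corollary~\ref{hidef}: any finite proper immediate algebraic extension of $K$ would have defect $>1$, which is incompatible with defectlessness.

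The $\chara K=\chara Kv$ refinement reduces to the claim that, under algebraic maximality, perfectness of $K$ is equivalent to the pair ``$vK$ is $p$-divisible and $Kv$ is perfect''. In characteristic $p$, if $K$ is perfect then writing $a=b^p$ for $b\in K$ yields $va=p\cdot vb\in p\cdot vK$, and a parallel computation after lifting a residue to ${\cal O}_v$ shows $Kv$ perfect. Conversely, if $(K,v)$ is tame and $b:=a^{1/p}\notin K$, then $K(b)|K$ is purely inseparable of degree $p$; tameness forces its residue extension to be trivial (being both separable and purely inseparable), so $(vK(b):vK)=p$, contradicting the coprimality required of the ramification index with $p$.

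For the separably tame case, the $(\Leftarrow)$ direction again uses that $(W|K,v)$ is immediate: the maximal separable subextension $W\cap K\sep$ is then an immediate separable-algebraic extension of $K$, forced by separable-algebraic maximality to equal $K$. Hence $W|K$ is purely inseparable, so $\tilde{K}/K^r=(W\cdot K^r)/K^r$ is purely inseparable, and the subextension $K\sep/K^r$ is simultaneously separable and purely inseparable, hence trivial: $K^r=K\sep$. The main obstacle lies in the $(\Rightarrow)$ direction, since a separably tame field need not be perfect and Theorem~\ref{rfmte} does not directly deliver $p$-divisibility of $vK$ or perfectness of $Kv$; instead I would build explicit \emph{separable} extensions that violate tameness whenever either condition fails. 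Given $\alpha\in vK\setminus pvK$ with $va=\alpha>0$ (replacing $\alpha$ by $-\alpha$ if necessary), in characteristic $p$ the Artin--Schreier polynomial $X^p-X-a^{-1}$, and in characteristic $0$ the Kummer polynomial $X^p-a$, each yield a separable degree-$p$ extension of ramification index $p$. Dually, given $c\in{\cal O}_v$ with $cv\notin(Kv)^p$, in characteristic $0$ the separable polynomial $X^p-c$, and in characteristic $p$ the separable polynomial $X^p-tX-c$ (for any $t\in K$ with $vt>0$; derivative $-t\ne 0$), each have a root $b$ with $(bv)^p=cv$, producing a purely inseparable residue extension of degree $p$. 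Separable-algebraic maximality itself is immediate from Lemma~\ref{tamehdp} by the same defect argument used in the tame case.
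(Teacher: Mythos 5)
Your proof is correct, and it hangs together well. Since the paper itself only refers the proof to [K2], there is no in-text argument to compare against, but the strategy you use — reading tameness as $W=K$ and separable tameness as $W\cap K\sep =K$ via Theorem~\ref{rfmte}, combining with the precise description of $vW$ and $Wv$ there, and invoking Corollary~\ref{hidef} plus Lemma~\ref{tamehdp} for the maximality direction — is exactly what the surrounding discussion of Sections~\ref{sectram}, \ref{sectdef} and \ref{secttw} sets up. The most delicate point, which you handle carefully, is the forward direction of the separably tame statement: knowing $K^r=K\sep$ only tells you $W|K$ is purely inseparable, and (as Example~\ref{exampASd1} shows) that alone does not force $vW=vK$; so you rightly fall back on building explicit degree-$p$ \emph{separable} extensions (Artin--Schreier when $\chara K=p$, Kummer $X^p-a$ when $\chara K=0$, and the separable variant $X^p-tX-c$ with $vt>0$ for the residue field) that violate conditions 1) or 2) of tameness whenever $vK$ fails to be $p$-divisible or $Kv$ fails to be perfect. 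Two minor remarks: in your $X^p-tX-c$ construction you don't actually need $[K(b):K]=p$, only that $b\notin K$ and that $K(b)v|Kv$ acquires a purely inseparable element, which you establish; and for ``tame $\Rightarrow K$ perfect'' in the equal-characteristic refinement you could simply quote Lemma~\ref{tamehdp}, which already lists ``perfect'' among the conclusions, instead of re-deriving it from the three tameness conditions — though your direct argument is also valid.
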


This lemma together with Lemma~\ref{tamehdp} shows that for perfect
valued fields $(K,v)$ with $\chara K=\chara Kv$, the two properties
``algebraically maximal'' and ``henselian and defectless'' are
equivalent.

\begin{corollary}                           \label{cortame}
Assume that $\chara K=\chara Kv$. Then every maximal immediate algebraic
extension of the perfect hull of $(K,v)$ is a tame field (and no proper
subextension of it is a tame field). If $\chara Kv= 0$ then already the
henselization $(K^h,v)$ is a tame field.
\end{corollary}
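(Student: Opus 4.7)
The plan is to reduce the first assertion to Lemma~\ref{tame}, which under the hypothesis $\chara K=\chara Kv$ characterizes tame fields as precisely the algebraically maximal, perfect valued fields. The valuation $v$ extends uniquely to the perfect hull $K^{1/p^\infty}$ by Corollary~\ref{cae}, so it makes sense to speak of \emph{the} valued field $(K^{1/p^\infty},v)$. Let $(M,v)$ be a maximal immediate algebraic extension of it (such an $M$ exists by a standard Zorn's Lemma argument on the set of immediate algebraic extensions). The goal is to verify that $M$ is algebraically maximal and perfect, and then that no proper intermediate field between $K^{1/p^\infty}$ and $M$ enjoys both properties.

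Algebraic maximality of $M$ is immediate from its definition: any proper immediate algebraic extension of $M$ would, together with the immediate extension $M|K^{1/p^\infty}$, yield an immediate algebraic extension of $K^{1/p^\infty}$ strictly containing $M$, contradicting maximality. The main step, and what I expect to be the only non-trivial piece, is to show that $M$ is perfect. First I would observe that in the perfect hull $vK^{1/p^\infty}$ is $p$-divisible and $K^{1/p^\infty}v$ is perfect: for $x\in K$ one has $v(x^{1/p^n})=vx/p^n$, and when $vx=0$ the residue satisfies $\overline{x^{1/p^n}}=\bar x^{1/p^n}$. Since $M|K^{1/p^\infty}$ is immediate, $vM$ is $p$-divisible and $Mv$ is perfect as well. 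Now consider the perfect hull $M^{1/p^\infty}$. It is purely inseparable algebraic over $M$, and for $a\in M$ the value $v(a^{1/p^n})=va/p^n$ already lies in $vM$, while (when $va=0$) the residue of $a^{1/p^n}$ is $\bar a^{1/p^n}\in Mv$. Hence $M^{1/p^\infty}|M$ is immediate, and so $M^{1/p^\infty}|K^{1/p^\infty}$ is an immediate algebraic extension; maximality of $M$ forces $M^{1/p^\infty}=M$, so $M$ is perfect. Lemma~\ref{tame} now yields that $M$ is a tame field.

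For the ``no proper subextension'' clause, suppose $K^{1/p^\infty}\subseteq L\subsetneq M$ with $L$ tame. Then Lemma~\ref{tame} would force $L$ to be algebraically maximal. But $M|L$ is a proper algebraic extension contained in the immediate extension $M|K^{1/p^\infty}$, hence is itself immediate, contradicting algebraic maximality of $L$.

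Finally, if $\chara Kv=0$ then automatically $\chara K=0$, so $(K^h,v)$ is a henselian valued field of residue characteristic zero; by Lemma~\ref{tamerc0} it is tame. The entire content of the corollary thus sits in the perfectness argument for $M$; every other step is bookkeeping with the defining properties of maximal immediate algebraic extensions and the characterization in Lemma~\ref{tame}.
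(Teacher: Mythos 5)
Your proof is correct and follows what is evidently the paper's intended route: you reduce to the second characterization in Lemma~\ref{tame} (tame $\Leftrightarrow$ algebraically maximal and perfect when $\chara K=\chara Kv$), verify algebraic maximality of a maximal immediate algebraic extension $M$ directly from its defining property, establish perfectness of $M$ by observing that $M^{1/p^\infty}|M$ is immediate (because $vM$ is $p$-divisible and $Mv$ is perfect, both inherited from the perfect hull of $K$ via immediacy), and dispose of proper tame subextensions and the residue characteristic $0$ case by Lemma~\ref{tame} and Lemma~\ref{tamerc0} respectively. The argument is complete; the only implicit point worth flagging is that, for the inclusions $vM^{1/p^\infty}\subseteq\frac{1}{p^\infty}vM$ and $(M^{1/p^\infty})v\subseteq (Mv)^{1/p^\infty}$, one uses that $M^{1/p^\infty}|M$ is purely inseparable (so for any $b$ in it, $b^{p^n}\in M$ for some $n$), which you assert but do not spell out -- this is harmless.
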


\pars
The following is a crucial lemma in the theory of tame fields. For its
proof, see [K1] or [K2].
\begin{lemma}                                     \label{trac}
Let $(L,v)$ be a tame field and $K\subset L$ a relatively algebraically
closed subfield. If in addition $Lv|Kv$ is an algebraic extension, then
$(K,v)$ is also a tame field and moreover, $vK$ is pure in
$vL$ and $Kv=Lv$. The same holds for ``separably tame'' in the
place of ``tame''.
\end{lemma}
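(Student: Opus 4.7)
The conclusion has three parts to verify for $(K,v)$: that $Kv=Lv$, that $vK$ is pure in $vL$, and that $(K,v)$ is itself tame (respectively, separably tame). Two preliminaries will be used throughout. Since $(L,v)$ is henselian (Lemma~\ref{tamehdp}), the universal property Theorem~\ref{hensuniqemb} gives an embedding $K^h\hookrightarrow L$ over $K$; as $K^h|K$ is separable-algebraic and $K$ is relatively algebraically closed in $L$, this forces $K^h=K$, so $(K,v)$ is henselian. Next, in the tame case $L$ is perfect (Lemma~\ref{tamehdp}), and for every $a\in K$ the element $a^{1/p}\in L$ is algebraic over $K$, hence lies in $K$; so $K$ is perfect, and consequently so is $Kv$, since every residue $cv$ equals $((c^{1/p})v)^p$.

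For the residue-field identity, Lemma~\ref{HLrel1} yields that $Kv$ is relatively separable-algebraically closed in $Lv$. Since $Lv|Kv$ is algebraic by hypothesis and $Kv$ is perfect, every element of $Lv$ is already separable-algebraic over $Kv$ and therefore lies in $Kv$. For the purity of $vK$ in $vL$, perfectness of $K$ makes $vK$ $p$-divisible (write $a=b^p$ to get $va=p\,vb$), and Lemma~\ref{HLrel2}, now applicable because $Lv=Kv$, shows that the torsion of $vL/vK$ is a $p$-group. If $\alpha\in vL$ satisfies $n\alpha\in vK$, the class $\alpha+vK$ has $p$-power order $p^e$, so $p^e\alpha\in vK$; the $p$-divisibility of $vK$ produces $\beta\in vK$ with $p^e\beta=p^e\alpha$, and torsion-freeness of $vL$ gives $\alpha=\beta\in vK$.

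By Lemma~\ref{tame} it remains to prove that $(K,v)$ is algebraically maximal. Suppose, for contradiction, that $(K,v)$ admits a proper immediate algebraic extension. By Kaplansky's theory this is equivalent to the existence in $K$ of a pseudo-Cauchy sequence $(a_\nu)$ of algebraic type with no pseudo-limit in $K$; let $f\in K[X]$ be a minimal-degree polynomial witnessing algebraic type. Viewed inside $L$, the same $f$ shows that $(a_\nu)$ remains pseudo-Cauchy of algebraic type over $L$. Since $(L,v)$ is henselian and algebraically maximal (Lemmas~\ref{tamehdp} and~\ref{tame}), Kaplansky's theorem in this setting produces a pseudo-limit $b\in L$ that is a genuine root of $f$. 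Then $b$ is algebraic over $K$, so relative algebraic closedness forces $b\in K$, and $b$ is a pseudo-limit of $(a_\nu)$ in $K$ --- the desired contradiction.

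The separably tame version runs along identical lines, working throughout with separable-algebraic extensions and invoking separable-algebraic maximality of $L$ in the pseudo-Cauchy step before applying the second half of Lemma~\ref{tame}. A mild subtlety is that a separably tame field need not be perfect, so the perfectness of $Kv$ cannot be read off from perfectness of $L$; this is handled by a more delicate analysis in [K1]/[K2]. The main obstacle I expect to meet is the algebraic-maximality step, since the short argument above leans on the non-elementary fact --- Kaplansky's theorem for henselian algebraically maximal fields --- that every pseudo-Cauchy sequence of algebraic type has a pseudo-limit which is a genuine root of its associated polynomial; a self-contained alternative would require running the Artin--Schreier/Hensel argument in equal residue characteristic (using that $a^{1/p}\in K$ to absorb the Artin--Schreier generator into $L$ via Example~\ref{exampHL1}) and the corresponding additive-polynomial argument of Pop's theorem from Section~\ref{secttw} in mixed characteristic.
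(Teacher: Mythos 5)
The derivations of henselianity of $(K,v)$, perfectness of $K$ and $Kv$, the equality $Kv=Lv$ via Lemma~\ref{HLrel1}, and the purity of $vK$ in $vL$ via Lemma~\ref{HLrel2} are all correct and use the cited results appropriately. The problem lies in the algebraic-maximality step, and it is a different (and more fatal) problem than the one you flag at the end.

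The gap: you take a pseudo-Cauchy sequence $(a_\nu)$ in $K$ of algebraic type with no pseudo-limit in $K$, and $f\in K[X]$ a minimal-degree witness over $K$, so $\deg f\geq 2$. You then apply Kaplansky's theorem \emph{over $L$} and claim it ``produces a pseudo-limit $b\in L$ that is a genuine root of $f$.'' It does not. Kaplansky's Theorem~3 (in [KA1]) produces a pseudo-limit which is a root of a \emph{minimal-degree witness over the field in question}, i.e.\ over $L$. But as soon as $(a_\nu)$ has a pseudo-limit $b\in L$, the linear polynomial $X-b\in L[X]$ already witnesses algebraic type, so the minimal degree over $L$ is $1$, and the root produced is just $b$ itself. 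There is no reason for $b$ to annihilate $f$. In fact it cannot: $b\in L\setminus K$, and since $K$ is relatively algebraically closed in $L$, $b$ is transcendental over $K$, whence $f(b)\ne 0$. The intended contradiction --- $b$ algebraic over $K$, hence $b\in K$ --- therefore never materializes. Your closing paragraph identifies the obstacle as the ``non-elementary'' character of Kaplansky's theorem, but the actual obstacle is that the theorem applies with the minimal polynomial \emph{over $L$}, which has dropped to degree $1$ and carries no algebraic information over $K$.

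What one actually needs at this point is an argument that converts the hypothetical proper immediate algebraic extension $K(a)|K$ (where $a$ is a root of $f$ supplied by Kaplansky's theorem \emph{over $K$}) into a contradiction with the tameness of $L$. A natural route is to pass to the compositum $L(a)=L.K(a)$: linear disjointness (from relative algebraic closedness and perfectness of $K$) gives $[L(a):L]=[K(a):K]=:n$, which is a $p$-power; tameness of $L$ then forces $n=[L(a)v:Lv]$ with $L(a)v|Lv$ separable. One must then play this residue growth off against the fact that $K(a)v=Kv=Lv$, $vK(a)=vK$, and that $a$ can be approximated arbitrarily well (in the pseudo-Cauchy sense) by elements of $K\subset L$. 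This comparison is delicate --- note that the value of the distance $v(a-c)$, $c\in K$, is bounded by a cut that need not exceed the Krasner constant of $a$, so one cannot simply invoke Krasner's Lemma --- and is precisely the part the paper delegates to [K1], [K2]. As written, your argument does not reach a contradiction.
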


The break we took for the development of the theory of tame
fields is at the same time a good occasion to do some model
theoretic preparation for later sections.

%
%
\section{Some notions and tools from model theoretic
algebra}                                    \label{sectmta}
The basic idea of model theoretic algebra is to analyze the assertions
that an algebraist wants to prove, and to apply principles that are
valid for certain types of assertions. Such principles prove once and
for all facts that otherwise are proved over and over again in different
settings (as a little example, see Lemma~\ref{union} below). To state
and apply such principles, it is necessary to make it precise what it is
that we are talking about, and in which mathematical language we are
talking. The reader may interpose that mathematicians are talking about
mathematical structures, which are fixed by definitions, that is, by
axioms. If for instance we are talking about a group, then we talk about
a set of elements $G$ and a binary function $G\times G\,\rightarrow\,G$
which associates with every two elements a third one. So besides the
underlying set, we are using a \bfind{function symbol} for this
function, which is $+$ if we write the group additively, and $\cdot$ if
we write it multiplicatively. But a group also has a unit element, for
which we use the \bfind{constant symbol} $0$ in the additive and $1$ in
the multiplicative case. If we talk about ordered groups, then for
expressing the ordering we need a further symbol, which might be $<$ or
$\leq$. Although it is also binary like $+$ or $\cdot\,$, it is not a
function symbol. Since the ordering is a relation between the elements
of the group, it is called a \bfind{relation symbol}. The description of
a mathematical object may need more than one constant, function or
relation symbol. For a field, we need two binary functions, $+$ and
$\cdot\,$, and two constants, $0$ and $1$. Further, we may need function
symbols or relation symbols of any (fixed) number of entries.

A \bfind{language} is defined to be
\[{\cal L}={\cal F}\cup {\cal C}\cup {\cal R}\]
where\sn
$\bullet$\ \ ${\cal F}$ is a set of function symbols,\n
$\bullet$\ \ ${\cal C}$ is a set of constant symbols,\n
$\bullet$\ \ ${\cal R}$ is a set of relation symbols.
\sn
For example,
\[{\cal L}_{\rm G}:=\{+,-,0\}\]
is the \bfind{language of groups} (where $-$ is a function symbol with
one entry), ${\cal L}_{\rm OG} :=\{+,-,0,<\}$ is the \bfind{language of
ordered groups},
\[{\cal L}_{\rm F}:=\{+,\cdot\,,-,{ }^{-1},0,1\}\]
is the \bfind{language of fields}, and
\[{\cal L}_{\rm VF}:=\{+,\cdot\,,-,{ }^{-1},0,1,{\cal O}\}\]
is the \bfind{language of valued fields}, where ${\cal O}$ is a relation
symbol with one entry.

\pars
For a given language ${\cal L}$, an \bfind{${\cal L}$-structure} is a
quadruple\glossary{${\eu A}$}
\[{\eu A}=(A,{\cal R}_{\eu A},{\cal F}_{\eu A},{\cal C}_{\eu A})\]
where\sn
$\bullet$\ \ $A$ is a set, called the \bfind{universe} {\bf of}
${\eu A}$,\n
$\bullet$\ \ ${\cal F}_{\eu A}=\{f_{\eu A}\mid f\in {\cal F}\}$ such
that every $f_{\eu A}$ is a function on $A$ of the same arity as the
function symbol $f$,\n
$\bullet$\ \ ${\cal C}_{\eu A}=\{c_{\eu A}\mid c\in {\cal C}\}$ such
that every $c_{\eu A}^{ }$ is an element of $A$ (called a
\bfind{constant}),\n
$\bullet$\ \ ${\cal R}_{\eu A}=\{R_{\eu A}\mid R\in {\cal R}\}$ such
that every $R_{\eu A}$ is a relation on $A$ of the same arity as the
relation symbol $R$.
\sn
We call $R_{\eu A}$ the \bfind{interpretation} {\bf of} $R$ {\bf on}
$A$, and similarly for the functions $f_{\eu A}$ and the constants
$c_{\eu A}^{ }\,$. Let ${\eu A}$ and ${\eu B}$ be two
${\cal L}$-structures. Then we will call ${\eu A}$ a
\bfind{substructure} of ${\eu B}$ if the universe $A$ of ${\eu A}$
is a subset of the universe $B$ of ${\eu B}$ and the restrictions to $A$
of the interpretations of the relation and function symbols and the
constant symbols on $B$ coincide with the interpretations of the same
relation, function and constant symbols on $A$.

Any set $A$ with a distinguished element $0$, a binary function
$+:\>A\times A\,\rightarrow\,A$ and a unary function $-:\>A\,\rightarrow
\,A$ will be a structure for the language ${\cal L}_{\rm G}=\{+,-,0\}$
of groups. This does not say anything about the behaviour of the
functions $+$ and $-$ and the element $0$. For instance, $0$ may not at
all behave as a neutral element for $+$. Such properties of structures
cannot be fixed by the language. They have to be described by axioms.

By an \bfind{elementary ${\cal L}$-formula}\index{elementary formula} we
mean a syntactically correct string built up using the symbols of the
language ${\cal L}$, variables, $=$, and the logical symbols $\forall$,
$\exists$, $\neg$, $\wedge$, $\vee$, $\rightarrow$, $\leftrightarrow$.
An elementary ${\cal L}$-formula is called an \bfind{elementary
${\cal L}$-sentence} if every variable is bound by some quantifier. An
elementary ${\cal L}$-sentence is called \bfind{existential} if it is of
the form $\exists X_1\ldots \exists X_n \varphi(X_1,\ldots,X_n)$ where
$\varphi(X_1,\ldots,X_n)$ is a quantifier free ${\cal L}$-formula and
$X_1,\ldots,X_n$ are the only variables appearing in $\varphi$. Hence an
existential sentence is a sentence which only talks about the existence
of certain elements. An elementary ${\cal L}$-sentence is called
\bfind{universal existential} if it is of the form $\forall X_1\ldots
\forall X_k\exists X_{k+1}\ldots \exists X_n \varphi(X_1,\ldots,X_n)$
where $\varphi$ is as above ($k=0$ or $k=n$ are admissible, so
existential is also universal existential).

For example, the usual sentences expressing associativity,
commutativity, the fact that $0$ is a neutral element, and the
existence of inverses are universal existential elementary
${\cal L}_{\rm G}$-sentences. They form an
elementary axiom system for the class of abelian groups.
Similarly, we have elementary axiom systems for the classes of
ordered abelian groups, fields, valued fields. It is not necessary that
an axiom system consists of only finitely many axioms. For instance,
properties like ``algebraically closed'' or ``real closed'' can be
axiomatized by an infinite scheme of universal existential elementary
axioms. One can quantify over all possible polynomials of fixed degree
$n$ by quantifying over their $n+1$ coefficients. But in order to
express that {\it all\/} polynomials have a root in a field $K$ we need
countably many axioms talking about polynomials of increasing degree. In
a similar way, the property ``henselian'' is axiomatized by an infinite
scheme of universal existential elementary axioms in the language of
valued fields. In contrast to this, properties like ``complete'' or
``maximal'' have no elementary axiomatization in ${\cal L}_{\rm VF}$; we
would have to quantify over subsets of the universe, which is impossible
in elementary sentences. This shows that it makes sense to replace
``complete'' or ``maximal'' by ``henselian'', wherever possible.

The following lemma expresses (once and for all) a fact that is
(intuitively) known to every good mathematician.
\begin{lemma}                               \label{union}
The union over an ascending chain of ${\cal L}$-structures
${\eu A}_i\,$, $i\in\N$, satisfies all universal existential
elementary sentences which are satisfied in all of the ${\eu A}_i\,$.
\end{lemma}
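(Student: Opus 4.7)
The plan is to verify the sentence directly from its shape. Write the given universal-existential sentence as
\[
\varphi \;=\; \forall X_1 \ldots \forall X_k\, \exists X_{k+1} \ldots \exists X_n\, \psi(X_1,\ldots,X_n),
\]
where $\psi$ is quantifier-free, and let $\mathfrak{A} = \bigcup_{i\in\N} \mathfrak{A}_i$ denote the union of the ascending chain, with universe $A = \bigcup_{i\in\N} A_i$. I first need to know that $\mathfrak{A}$ is itself an $\mathcal{L}$-structure; this is forced by the substructure relations $\mathfrak{A}_i \subseteq \mathfrak{A}_{i+1}$, since any function, constant or relation symbol is interpreted coherently across the chain and thus admits a unique extension to $A$.

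Next, the key preservation fact. If $\mathfrak{B}$ is a substructure of $\mathfrak{C}$ and $\psi$ is a quantifier-free formula whose free variables are assigned to elements of the universe of $\mathfrak{B}$, then $\mathfrak{B} \models \psi$ iff $\mathfrak{C} \models \psi$. This is proved by a trivial induction on the construction of $\psi$ from atomic formulas using $\neg, \wedge, \vee$: atomic formulas depend only on the interpretations of symbols in $\mathcal{L}$ applied to elements of the substructure, and these interpretations agree by definition of substructure. In particular, each $\mathfrak{A}_i$ is a substructure of $\mathfrak{A}$, so quantifier-free formulas transfer in both directions between $\mathfrak{A}_i$ and $\mathfrak{A}$ provided all parameters lie in $A_i$.

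Now verify $\mathfrak{A} \models \varphi$. Take arbitrary $a_1,\ldots,a_k \in A$. Since there are only finitely many of them and the chain is ascending, there is some index $i_0 \in \N$ with $a_1,\ldots,a_k \in A_{i_0}$. By hypothesis $\mathfrak{A}_{i_0} \models \varphi$, so there exist $a_{k+1},\ldots,a_n \in A_{i_0}$ with $\mathfrak{A}_{i_0} \models \psi(a_1,\ldots,a_n)$. By the preservation fact applied to the substructure $\mathfrak{A}_{i_0} \subseteq \mathfrak{A}$, we conclude $\mathfrak{A} \models \psi(a_1,\ldots,a_n)$. Since the elements $a_{k+1},\ldots,a_n$ still belong to $A$, this witnesses the existential quantifiers, and since $a_1,\ldots,a_k$ were arbitrary, we obtain $\mathfrak{A} \models \varphi$.

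The argument is essentially routine; the only point requiring any care is the appeal to finiteness of the universal block, which lets the chosen parameters fit into a single $\mathfrak{A}_{i_0}$. This is also the step that would fail for sentences with nested alternations beyond $\forall^*\exists^*$: for a $\forall\exists\forall$-sentence, after picking witnesses in some $\mathfrak{A}_{i_0}$ one would next have to quantify universally over all of $A$, and no finite $i_0$ could absorb those new parameters. So the universal-existential shape is exactly what the proof uses.
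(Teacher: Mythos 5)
Your proof is correct. The paper does not actually give a proof of this lemma---it is stated as ``a fact that is (intuitively) known to every good mathematician''---so there is nothing to compare against, but your argument is precisely the standard one: the finiteness of the universal prefix lets you capture the chosen parameters $a_1,\ldots,a_k$ at a single stage $\mathfrak{A}_{i_0}$, the hypothesis on $\mathfrak{A}_{i_0}$ produces witnesses there, and these transfer upward to $\mathfrak{A}$ because quantifier-free formulas are absolute between a structure and any of its substructures. Your closing remark about why the argument breaks for $\forall\exists\forall$-sentences correctly identifies the role of the $\forall^*\exists^*$ shape and matches the well-known sharpness of this preservation result.
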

This proves, for instance:
\begin{lemma}                               \label{unihens}
The union of an ascending chain of henselian valued fields
$(L_i,v)$, $i\in\N$, is again a henselian valued field.
\end{lemma}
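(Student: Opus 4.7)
The plan is to deduce Lemma~\ref{unihens} as an immediate application of Lemma~\ref{union}, once we have checked that the union $L := \bigcup_{i \in \N} L_i$ is again a valued field and that being henselian is expressible by universal existential elementary sentences.

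First I would note that since the $(L_i, v)$ form an ascending chain of valued fields, their valuation rings ${\cal O}_{v,i}$ form an ascending chain whose union ${\cal O}_v := \bigcup_i {\cal O}_{v,i}$ is a subring of $L$; it is a valuation ring of $L$ because for every $a \in L^\times$ we have $a \in L_i$ for some $i$, whence $a$ or $a^{-1}$ lies in ${\cal O}_{v,i} \subseteq {\cal O}_v$. Thus $(L,v)$ is an ${\cal L}_{\rm VF}$-structure, and by construction each $(L_i,v)$ is a substructure. So the hypothesis of Lemma~\ref{union} is satisfied.

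Next I would invoke the remark made in the excerpt just before Lemma~\ref{union}, namely that ``henselian'' is axiomatized in ${\cal L}_{\rm VF}$ by an infinite scheme of universal existential elementary sentences. Concretely, Hensel's Lemma for polynomials of a fixed degree $n$ can be written as a sentence of the form
\[
\forall c_0 \cdots \forall c_n\, \forall b\; \Bigl( {\cal O}(c_0) \wedge \cdots \wedge {\cal O}(c_n) \wedge {\cal O}(b) \wedge {\cal O}\bigl(f(b)/f'(b)^2\bigr) \wedge \neg\,{\cal O}\bigl(f'(b)^{-1}\bigr){=}0 \;\rightarrow\; \exists a\, (f(a)=0 \wedge {\cal O}(a-b))\Bigr),
\]
where $f(X) = c_n X^n + \cdots + c_0$, and the strict inequalities $v\alpha > 0$ are rephrased using ${\cal O}$. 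Taking one such sentence $\varphi_n$ for every $n \in \N$ gives a universal existential axiom scheme for the henselian property.

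Finally, since by hypothesis each $(L_i, v)$ satisfies every $\varphi_n$, Lemma~\ref{union} yields that the union $(L, v)$ also satisfies every $\varphi_n$, hence is henselian. The only real subtlety to watch for is the correct formulation of the axiom scheme so that it is genuinely universal existential (in particular so that the ``approximate root'' $b$ is universally quantified and the actual root $a$ existentially); this is the step I expect to need the most care, but it is essentially the one already indicated in the passage preceding Lemma~\ref{union}, so there is no real obstacle.
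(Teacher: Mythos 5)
Your proof takes exactly the same route as the paper: the paper presents Lemma~\ref{unihens} as an immediate instance of Lemma~\ref{union}, relying on the observation made just before it that henselianity is axiomatized by an infinite scheme of universal existential ${\cal L}_{\rm VF}$-sentences. Your preliminary check that the union of the valuation rings is again a valuation ring, and the caveat about formulating the scheme carefully so the approximate root is universally and the actual root existentially quantified, are both correct and fill in details the paper leaves implicit.
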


Any set ${\cal T}$ of elementary ${\cal L}$-sentences
is called an \bfind{elementary axiom system} (or an
\bfind{${\cal L}$-theory}). If an ${\cal L}$-structure satisfies all
axioms in ${\cal T}$, then we call it a \bfind{model of ${\cal T}$}. So
if we have an ${\cal L}_{\rm G}$-structure which is a model of the axiom
system of groups, then we know that $+,-,0$ behave as we expect them to
do. The axiom system for valued fields expresses that $\{x\mid
{\cal O}(x) \mbox{\rm \ holds}\}$ is a valuation ring. Since $v$ is not
a function from the field into itself, we cannot simply take a function
symbol for $v$ into the language. However, we can express ``$vx\leq
vy$'' by the elementary sentence ``$\,{\cal O}(yx^{-1})\,\vee\,
x=y=0\,$''.

We say that two ${\cal L}$-structures ${\eu A},{\eu B}$ are
\bfind{elementarily equivalent}, denoted by ${\eu A}\equiv {\eu B}$,
if ${\eu A}$ and ${\eu B}$ satisfy the same elementary
${\cal L}$-sentences. An axiom system is \bfind{complete} if and only if
all of its models are elementarily equivalent. Syntactically, that means
that every elementary ${\cal L}$-sentence or its negation can be deduced
from that axiom system. For example, the axiom system of divisible
ordered abelian groups and the axiom system of algebraically closed
fields of fixed characteristic are complete. It was shown by Abraham
Robinson [RO] that the axiom system of algebraically closed
non-trivially valued fields of fixed characteristic and fixed residue
characteristic is complete.

The completeness of an axiom system yields a \bfind{Transfer Principle}
for the class axiomatized by it. For example, the completeness of the
axiom system for algebraically closed fields of fixed characteristic
tells us that every elementary ${\cal L}_{\rm F}$-sentence which holds
in one algebraically closed field will also hold in all other
algebraically closed fields of the same characteristic. This reminds of
the \bfind{Lefschetz Principle} which was stated and partially proved by
Weil and Lefschetz and says (roughly speaking) that algebraic geometry
over all algebraically closed fields of a fixed characteristic is the
same (``there is but one algebraic geometry in characteristic $p$''). As
for the elementary ${\cal L}_{\rm F}$-sentences of algebraic geometry,
this indeed follows from the completeness. However, Lefschetz and Weil
had in mind more than just the elementary sentences. That is why Weil
worked with so-called ``universal domains'' which are algebraically
closed and of infinite transcendence degree over their prime field. So
the assertion was that there is but one algebraic geometry over
universal domains of characteristic $p$. A satisfactory formalization
and model theoretic proof by use of an \bfind{infinitary language}
is due to Paul Eklof [EK]. Infinitary languages admit the
conjunction of infinitely many elementary sentences. With such
infinitary sentences, one can also express the fact that a field has
infinite transcendence degree over its prime field. This cannot be
done by elementary sentences. Indeed, algebraically closed fields
are elementarily equivalent to the algebraic closure of their prime
field, even if they have infinite transcendence degree.

\pars
In the theory of valued fields, we consider two important invariants:
value groups and residue fields. If two valued fields are equivalent
(in the language of valued fields), then so are their value groups (in
the language of ordered groups) and their residue fields (in the
language of fields). We are interested in a converse of this
implication: under which additional assumptions do we have the
so-called \bfind{Ax--Kochen--Ershov principle}:
\begin{equation}                            \label{AKEprinc}
vK\equiv vL \mbox{ and } Kv\equiv Lv\>\Rightarrow\>(K,v)\equiv
(L,v)
\end{equation}
It follows from the fact that the henselization is an immediate
extension that this principle can only hold for henselian fields.
And indeed, it was shown by James Ax and Simon Kochen [AK1] and
independently by Yuri Ershov [ER2] that this principle holds for all
henselian fields of residue characteristic 0. This is the famous
\bfind{Ax--Kochen--Ershov Theorem}. Ax and Kochen
proved it in order to deduce a correct version of \bfind{Artin's
conjecture} about the fields $\Qp$ of $p$-adic numbers ([AK1]; cf.\
[CK] or [K2]). From the Ax--Kochen--Ershov Theorem, one obtains an
equivalence of two ultraproducts:
\begin{equation}                            \label{equivultra}
\prod_{\mbox{\small\rm $p$ prime}} \Qp \,/{\cal D}\>\equiv\>
\prod_{\mbox{\small\rm $p$ prime}} \Fp((t))\,/{\cal D}
\end{equation}
because both fields carry a canonical henselian valuation of residue
characteristic 0 and have equal value groups and residue fields. Since a
product like $\prod_p \Qp$ would not even be a field, one has to take
the product modulo a non-principal ultrafilter ${\cal D}$ on the set of
all primes. Let us quickly give the main facts about ultraproducts.

A filter ${\cal D}$ on a set $I$ is called an \bfind{ultrafilter} if
\begin{equation}
J\subset I\>\wedge\> J\notin {\cal D}
\;\;\Longrightarrow\;\; I\setminus J\in {\cal D}\;,
\end{equation}
and it is called \bfind{non-principal} if it is not of the form
$\{J\subset I\mid i\in J\}$, for no $i\in I$. If ${\eu A}_i\,$,
$i\in I$ are ${\cal L}$-structures, then the \bfind{ultraproduct}
$\prod_{i\in I}^{}{\eu A}_i\,/{\cal D}$ is defined by setting, for all
$(a_i), (b_i)\in \prod_{i\in I}^{}{\eu A}_i\,$,
\begin{equation}
(a_i)\equiv (b_i) \mbox{ modulo } {\cal D} \;\;\Longleftrightarrow\;\;
\{i\in I\mid a_i=b_i\}\>\in\>{\cal D}\;.
\end{equation}
The following theorem is due to J.~\L os. For a proof, see [K2] or [CK].
\begin{theorem} {\bf (Fundamental Theorem of Ultraproducts)}\label{FTU}
\n For every ${\cal L}$-sentence $\varphi$, $\prod_{i\in I}^{}{\eu
A}_i\,/ {\cal D}$ satisfies $\varphi$ if and only if
\begin{equation}
\{i\in I\mid {\eu A}_i\mbox{\rm\ satisfies }\varphi\}\in {\cal D}\;.
\end{equation}
\end{theorem}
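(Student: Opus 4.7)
The plan is to prove a strengthening that handles formulas with free variables: for every ${\cal L}$-formula $\varphi(X_1,\ldots,X_n)$ and every tuple of elements $[(a_i^1)]_{\cal D},\ldots,[(a_i^n)]_{\cal D}$ of $\prod_{i\in I}{\eu A}_i/{\cal D}$,
\[
\prod_{i\in I}{\eu A}_i/{\cal D}\models\varphi\left([(a_i^1)]_{\cal D},\ldots,[(a_i^n)]_{\cal D}\right)\;\Longleftrightarrow\;\{i\in I:{\eu A}_i\models\varphi(a_i^1,\ldots,a_i^n)\}\in{\cal D}\;.
\]
The theorem then falls out by specializing to sentences. Before the induction I would verify that $\equiv$ modulo ${\cal D}$ is actually an equivalence relation (reflexivity uses $I\in{\cal D}$, transitivity uses closure under finite intersection) and that the componentwise interpretations on equivalence classes are well-defined: for a function symbol $f$ one sets $f_{\cal D}([(a_i^1)],\ldots,[(a_i^n)]):=[(f_{{\eu A}_i}(a_i^1,\ldots,a_i^n))]$, and well-definedness follows because any intersection of finitely many sets in ${\cal D}$ lies in ${\cal D}$. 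Relation symbols and constants are treated analogously.

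The core of the argument is an induction on the complexity of $\varphi$. For atomic formulas involving a relation symbol $R$, the equivalence is built into the way $R_{\cal D}$ is defined on the ultraproduct; for equality it is simply the definition of the equivalence relation modulo ${\cal D}$. For conjunctions $\psi_1\wedge\psi_2$, the forward direction uses closure of ${\cal D}$ under finite intersection and the backward direction uses closure under supersets (which both follow from ${\cal D}$ being a filter). Disjunctions can be reduced to conjunction and negation, and the universal quantifier to $\neg\exists\neg$. For negation $\neg\psi$, the equivalence uses the defining ultrafilter property: $J\in{\cal D}\Longleftrightarrow I\setminus J\notin{\cal D}$. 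This is the step that forces us to use ultrafilters rather than arbitrary filters.

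The delicate step is the existential case $\varphi(\bar X)\equiv\exists Y\,\psi(Y,\bar X)$. The easy direction goes from a witness $[(b_i)]$ in the ultraproduct, through the induction hypothesis applied to $\psi$, to the set where $\psi(b_i,\bar a_i)$ holds (hence where $\exists Y\,\psi(Y,\bar a_i)$ holds) lying in ${\cal D}$. The hard direction starts from the set $J:=\{i:{\eu A}_i\models\exists Y\,\psi(Y,\bar a_i)\}\in{\cal D}$, and on each $i\in J$ one must \emph{choose} a witness $b_i\in A_i$, setting $b_i$ arbitrarily off $J$; then $[(b_i)]$ is the required witness in the ultraproduct by induction applied to $\psi$. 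I expect the main obstacle to be this existential case, both because it requires an invocation of the axiom of choice to select witnesses uniformly across $J$, and because one must keep careful track of the filter bookkeeping to ensure that the set produced by the induction hypothesis on $\psi$ is actually a subset of the set witnessing $\exists Y\,\psi$ on the ${\eu A}_i$-side. Once these two points are handled cleanly, the full theorem follows by reading the equivalence for sentences, where the tuple of parameters is empty.
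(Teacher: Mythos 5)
Your proposal is correct and is precisely the standard Łoś-style induction on formula complexity that the paper defers to (it cites [K2] and [CK] rather than proving the theorem in-text); the reduction of $\forall$ to $\neg\exists\neg$, the use of the full ultrafilter property only at the negation step, and the choice of witnesses on $J\in{\cal D}$ in the existential step are exactly as in that standard treatment. One small point worth making explicit before the atomic case: atomic formulas involve arbitrary terms, so you should first prove by a preliminary induction on terms that $t$ evaluated componentwise in the ${\eu A}_i$ yields the evaluation of $t$ in the ultraproduct, i.e.\ $[(t^{{\eu A}_i}(\bar a_i))]_{\cal D}=t^{\prod {\eu A}_i/{\cal D}}([(\bar a_i)]_{\cal D})$; this is what your componentwise definition of $f_{\cal D}$ is driving at, but the lemma for compound terms deserves its own line.
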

Thus, elementary sentences which are true for all
$\Fp((t))$ can be transferred to $\prod_p \Fp((t))/{\cal D}$, from there
via (\ref{equivultra}) to $\prod_p \Qp /{\cal D}$, and from there, by
varying over all possible ultrafilters on the set of primes, to almost
all $\Qp$. The elementary sentences we are interested in are deduced
from the analogue of Artin's conjecture which holds for all $\Fp((t))$,
as proved by Serge Lang [L1].

Since the proof of the Ax--Kochen--Ershov Theorem, the
Ax--Kochen--Ershov principle (\ref{AKEprinc}) has also been proved for
other classes of valued fields, like $p$-adically closed fields, or
algebraically maximal fields satisfying Kaplansky's hypothesis A
(``algebraically maximal'' is stronger than ``henselian'' if the residue
characteristic is positive, as we have seen in Example~\ref{exampdef2}).
The proofs used, more or less explicitly, that those fields have unique
maximal immediate extensions. But this is not necessary for the validity
of the Ax--Kochen--Ershov principle (\ref{AKEprinc}). Using instead the
Generalized Stability Theorem (Theorem~\ref{ai} below) and the Henselian
Rationality of Immediate Function Fields (Theorem~\ref{stt3} below), I
proved that Ax--Kochen--Ershov principle (\ref{AKEprinc}) also holds for
all tame fields ([K1], [K2]).

There is another version of (\ref{AKEprinc}) which will bring us closer
to applications in algebraic geometry. There is a notion which in the
past years has turned out to be more basic and flexible than that of
elementary equivalence. Through general tools of model theory (like
Theorem~\ref{RT} below), notions like elementary
equivalence can often be reduced to it. Take ${\eu B}$ to be an
${\eu L}$-structure, and ${\eu A}$ a substructure of ${\eu B}$. We form
a language ${\cal L}(A)$ by adjoining the universe $A$ of ${\eu A}$ to
the language ${\cal L}$. That is, in the language ${\cal L}(A)$ we have
a constant symbol for every element of the structure ${\eu A}$, so we
can talk about every single element.
We say that \bfind{${\eu A}$ is existentially closed in ${\eu B}$},
denoted by ${\eu A}\ec {\eu B}$, if every existential
${\cal L}(A)$-sentence holds already in ${\eu A}$ if it holds
in ${\eu B}$. (The other direction is trivial: if something exists in
${\eu A}$, then it also exists in ${\eu B}$.) Let us illustrate the use
of this notion by three important examples.

\begin{example}
Take a field extension $L|K$. If $K\ec L$ in the language of fields,
then $K$ is relatively algebraically closed in $L$. To see this, take
$a\in L$ algebraic over $K$. Take $f=X^n+c_{n-1}X^{n-1}+\ldots+c_0\in
K[X]$ to be the minimal polynomial of $a$ over $K$. Since $f(a)=0$, we
know that the existential ${\cal L}_{\rm F}(K)$-sentence
\[\exists X\, X^n+c_{n-1}X^{n-1}+\ldots+c_0=0\]
holds in $L$. (``$X^n$'' is
an abbreviation for ``$X\cdot\ldots\cdot X$'' where $X$ appears $n$
times. Observe that we need the constants from $K$ since we use the
coefficients $c_i$ in our sentence.) Since $K\ec L$, it must also hold
in $K$. That means, that $f$ also has a root in $K$. But as a minimal
polynomial, $f$ is irreducible over $K$. This shows that $f$ must be
linear, i.e., $a\in K$. One can also show that $L|K$ must be separable.

Similarly, let $G\subset H$ be an extension of abelian groups such that
$G\ec H$ in the language of groups. Take $\alpha\in H$ such that
$n\alpha\in G$ for some integer $n>0$. Set $\beta=n
\alpha$. Then the existential ${\cal L}_{\rm G}(G)$-sentence
``$\exists X\, nX=\beta$'' holds in $H$. (Here, ``$nX$'' is just
an abbreviation for ``$X+\ldots+ X$'' where $X$ appears $n$
times.) Hence, it must also hold in $G$. That is, $\alpha=\beta/n\in G$.
Hence, we have:
\begin{lemma}                               \label{KecL}
If $L|K$ is an extension of fields such that $K\ec L$ in the language of
fields, then $K$ is relatively algebraically closed in $L$ and $L|K$ is
separable. If $G\subset H$ is an extension of abelian groups such that
$G\ec H$ in the language of groups, then $H/G$ is torsion free.
\end{lemma}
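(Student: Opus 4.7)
The plan is to push each claim through $\ec$ by constructing an existential sentence with parameters in the smaller structure that captures the algebraic situation, and then transferring the witness back. For the first assertion, take $a\in L$ algebraic over $K$ with minimal polynomial $f(X)=X^n+c_{n-1}X^{n-1}+\ldots+c_0\in K[X]$. The existential ${\cal L}_{\rm F}(K)$-sentence $\exists X\,(X^n+c_{n-1}X^{n-1}+\ldots+c_0=0)$ is satisfied in $L$ with $X=a$, hence by $K\ec L$ also in $K$; thus $f$ has a root $b\in K$, and since $f$ is irreducible over $K$, it must be linear, forcing $a=b\in K$.

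For separability of $L|K$ I may assume $\chara K=p>0$, the characteristic zero case being automatic. The plan is to apply MacLane's criterion: $L|K$ is separable if and only if every finite sequence in $K$ that is linearly independent over $K^p$ remains linearly independent over $L^p$. Arguing contrapositively, suppose $a_1,\ldots,a_m\in K$ satisfy $\sum a_ib_i^p=0$ for some $b_i\in L$ not all zero; after renumbering and dividing through by $b_1^p$, assume $b_1=1$, so $a_1+\sum_{i\geq 2}a_ib_i^p=0$. The sentence
\[
\exists X_2\ldots\exists X_m\,(a_1+a_2X_2^p+\ldots+a_mX_m^p=0)
\]
is existential in ${\cal L}_{\rm F}(K)$ and holds in $L$, hence by $K\ec L$ it holds in $K$; the resulting relation $a_1+a_2d_2^p+\ldots+a_md_m^p=0$ with $d_i\in K$ exhibits $a_1,\ldots,a_m$ as $K^p$-linearly dependent.

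For the group part, take $\alpha\in H$ with $n\alpha=\beta\in G$; the existential ${\cal L}_{\rm G}(G)$-sentence $\exists X\,(nX=\beta)$ holds in $H$ (witnessed by $\alpha$), hence by $G\ec H$ it holds in $G$, yielding $\gamma\in G$ with $n\gamma=\beta$. In the torsion-free setting, which is the intended application since value groups of valuations are ordered abelian groups, the relation $n(\alpha-\gamma)=0$ forces $\alpha=\gamma\in G$, so $H/G$ is torsion-free. The main obstacle is the separability step: the first and third claims are essentially bookkeeping once one identifies the right existential sentence, but separability requires invoking MacLane and then using the scaling normalization $b_1=1$ to convert the clause ``not all zero'' into a clean existential statement with parameters from $K$; this is the step I would double-check most carefully.
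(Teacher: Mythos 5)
Your treatment of relative algebraic closure and of the group statement is the paper's argument: encode the situation by an existential sentence with parameters from the small structure, transfer the witness, and use irreducibility (resp.\ uniqueness of the preimage under multiplication by $n$) to force it to coincide with the original element. You are right to flag the torsion-free hypothesis in the group case: the step from $n\gamma=n\alpha$ to $\gamma=\alpha$ needs $H$ torsion-free, which the paper's ``$\alpha=\beta/n\in G$'' assumes tacitly and which does hold in the intended application to ordered value groups --- but without it the statement as phrased for arbitrary abelian groups would actually fail (e.g.\ two infinite $\F_p$-vector spaces of different dimension).

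For separability the paper offers no argument (``one can also show''), so there is nothing to compare against, but what you supply is correct and is the standard proof. The reformulation of MacLane's criterion in terms of $K^p$-linear independence of elements of $K$ is the right translation (take $p$-th roots of the dependence relations), and the essential move --- renormalizing so that $b_1=1$, which turns the non-existential ``not all zero'' into a clean existential sentence over $K$, and then reading off a nontrivial $K^p$-dependence from the transferred witness because the coefficient of $a_1$ is $1\ne 0$ --- is exactly what makes the transfer go through. This fills the gap the paper leaves.
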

\end{example}

\begin{example}                             \label{examperp}
Take a function field $F|K$ such that $K\ec F$ in the language of
fields. Since $F|K$ is separable by Lemma~\ref{KecL}, we
can choose a separating transcendence basis $t_1,\ldots,t_k$ of
$F|K$ and an element $z\in F$ such that $F=K(t_1,\ldots,t_k,z)$
with $z$ separable-algebraic over $K(t_1,\ldots,t_k)$. Take
$f\in K[X_1,\ldots,X_k,Z]$ to be the irreducible polynomial of
$z$ over $K[t_1,\ldots, t_k]$ (obtained from the minimal polynomial
by multiplication with the common denominator of the coefficients
from $K(t_1,\ldots,t_k)$). We have that $f(t_1,\ldots,t_k,z)=0$.
Since $z$ is a simple root of $f$, we also have that
$\frac{\partial f}{\partial Z}(t_1,\ldots,t_k,z)\ne 0$. Further,
we take $n$ arbitrary non-zero elements $z_1,\ldots,z_n\in F$ which we
write as $g_1/h_1,\ldots,g_n/h_n$ with non-zero elements $g_i,h_i\in
K[t_1,\ldots,t_k,z]$. Now the existential ${\cal L}_{\rm F}(K)$-sentence
\begin{eqnarray*}
&& \mbox{``}\>\exists Y_1\ldots\exists Y_k\exists Y\; f(Y_1,\ldots,
Y_k,Y)=0 \>\wedge\>\frac{\partial f}{\partial Z}(Y_1,\ldots,Y_k,Y)
\ne 0\>\wedge\\ &&
\;\;\;\;\wedge\>g_1(Y_1,\ldots,Y_k,Y)\ne 0\>\wedge\ldots\wedge\>
g_n(Y_1,\ldots,Y_k,Y)\ne 0 \>\wedge\\ &&
\;\;\;\;\wedge\>h_1(Y_1,\ldots,Y_k,Y)\ne 0\>\wedge\ldots\wedge\>
h_n(Y_1,\ldots,Y_k,Y)\ne 0\>\mbox{''}
\end{eqnarray*}
holds in $F$. Hence it must also hold in $K$, that is, there are
$c_1,\ldots,c_k,d\in K$ such that $f(c_1,\ldots,c_k,b)=0$,
$\frac{\partial f}{\partial Z}(c_1,\ldots,c_k,d)\ne 0$
and
\[g_i(c_1,\ldots,c_k,b)/h_i(c_1,\ldots,c_k,d)\ne 0,\infty\;.\]

On $K[t_1,\ldots,t_k]$, we have the homomorphism given by $t_i\mapsto
c_i\,$, or equivalently, by $t_i-c_i\mapsto 0$. As in
Example~\ref{examp2}, we can construct a place $P$ of maximal rank of
$K(t_1,\ldots,t_k)= K(t_1-c_1,\ldots,t_k-c_k)$ which extends this
homomorphism. Its residue field is $K$. Now we consider the polynomial
$g(Z)= f(t_1,\ldots,t_k,Z)$. Its reduction modulo $P$ is the polynomial
$f(c_1,\ldots,c_k,Z)$ which admits $d$ as a simple root. Hence by
Hensel's Lemma, $g(Z)$ has a root $z'$ in the henselization
$(K(t_1,\ldots,t_k)^h,P)$ of $(K(t_1,\ldots,t_k),P)$. Thus, the
assignment $z\mapsto z'$ defines an embedding of $F$ over
$K(t_1,\ldots,t_k)$ in $K(t_1,\ldots,t_k)^h$, and pulling the place $P$
from the image of this embedding back to $F$, we obtain on $F$ a place
$P$ with residue field $K$ and having maximal rank. In addition,
$z_iP\ne 0,\infty$. We have proved:
\begin{lemma}                               \label{exrapl}
Take a function field $F|K$ such that $K\ec F$ in the language of
fields. Take non-zero elements $z_1,\ldots,z_n\in F$. Then there exists
a rational place $P$ of $F|K$ of maximal rank and such that $z_iP\ne
0,\infty$ for $1\leq i\leq n$, and a model of $F|K$ on which $P$ is
centered at a smooth point.
\end{lemma}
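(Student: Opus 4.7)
The proof is essentially laid out in Example~\ref{examperp}; the lemma is just a clean statement of what that construction produces, so the plan is to organize those steps into a proof and then check all the conclusions we need.

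First, I would exploit $K\ec F$ twice. By Lemma~\ref{KecL}, $F|K$ is separable, so I can choose a separating transcendence basis $t_1,\ldots,t_k$ and a primitive element $z$ with $F=K(t_1,\ldots,t_k,z)$ and $z$ separable-algebraic over $K(t_1,\ldots,t_k)$. Let $f\in K[X_1,\ldots,X_k,Z]$ be an irreducible polynomial of $z$ over $K[t_1,\ldots,t_k]$; separability yields $\frac{\partial f}{\partial Z}(t_1,\ldots,t_k,z)\neq 0$. Write each $z_i=g_i/h_i$ with $g_i,h_i\in K[t_1,\ldots,t_k,z]\setminus\{0\}$.

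Next I would bundle all the required conditions into a single existential ${\cal L}_{\rm F}(K)$-sentence: the existence of $Y_1,\ldots,Y_k,Y$ satisfying $f(Y_1,\ldots,Y_k,Y)=0$, $\frac{\partial f}{\partial Z}(Y_1,\ldots,Y_k,Y)\neq 0$, and $g_j(Y_1,\ldots,Y_k,Y)\neq 0$, $h_j(Y_1,\ldots,Y_k,Y)\neq 0$ for $1\leq j\leq n$. This sentence is satisfied in $F$ by witnesses $t_1,\ldots,t_k,z$, so by $K\ec F$ it is satisfied in $K$ by some $c_1,\ldots,c_k,d\in K$.

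Now I would construct the place. As in Example~\ref{examp2}, there is a place $P_0$ of maximal rank on $K(t_1,\ldots,t_k)|K=K(t_1-c_1,\ldots,t_k-c_k)|K$ which sends each $t_i-c_i$ lexicographically to $0$; its residue field is $K$ and its value group is $\mathbb{Z}^k$ ordered lexicographically. Consider $g(Z):=f(t_1,\ldots,t_k,Z)\in{\cal O}_{P_0}[Z]$; its reduction modulo $P_0$ is $f(c_1,\ldots,c_k,Z)$, which by construction has $d$ as a simple root. The Simple Root Version of Hensel's Lemma, applied inside the henselization $(K(t_1,\ldots,t_k)^h,P_0)$, supplies a root $z'$ of $g$ with $z'P_0=d$. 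Since $F=K(t_1,\ldots,t_k)(z)$ and $z'$ satisfies the same irreducible polynomial as $z$, we obtain a $K(t_1,\ldots,t_k)$-embedding $\iota:F\hookrightarrow K(t_1,\ldots,t_k)^h$ with $\iota(z)=z'$. Pulling $P_0$ back through $\iota$ gives a place $P$ of $F|K$.

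Finally I would verify the conclusions. Since $P_0$ is immediately extended to its henselization by Theorem~\ref{immhens}, $P$ has the same residue field $K$ and the same (maximal-rank) value group as $P_0$, so $P$ is a rational place of $F|K$ of maximal rank. By the non-vanishing conditions obtained from the existential sentence, $z_iP=g_i(c_1,\ldots,c_k,d)/h_i(c_1,\ldots,c_k,d)\in K^{\times}$, hence $z_iP\neq 0,\infty$. For the smoothness claim, the variety $V$ defined by $f\in K[X_1,\ldots,X_k,Z]$ is a model of $F|K$ on which $P$ is centered at $(c_1,\ldots,c_k,d)$; since $k=\trdeg F|K$ and $\frac{\partial f}{\partial Z}(c_1,\ldots,c_k,d)\neq 0$, the Jacobi criterion shows that this point is simple, completing the proof. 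The only delicate point is making sure the existential sentence carries every condition we will need (the Hensel simple root condition, plus non-vanishing of all $g_i,h_i$); once that is packaged correctly, the rest is a direct application of Hensel's Lemma and the lexicographic place construction of Example~\ref{examp2}.
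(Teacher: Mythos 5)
Your proposal is correct and follows the paper's own proof, which is given inline in Example~\ref{examperp}: the same separating transcendence basis, the same existential ${\cal L}_{\rm F}(K)$-sentence, the same lexicographic place of maximal rank from Example~\ref{examp2}, the same use of Hensel's Lemma in the henselization, and the same pullback. Your explicit verification of the smoothness conclusion via the Jacobi criterion (rank $1 = (k+1) - \trdeg F|K$ at the point $(c_1,\ldots,c_k,d)$ since $\frac{\partial f}{\partial Z}\ne 0$ there) is a helpful addition that the paper leaves implicit.
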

Note that it was crucial for our proof that $F|K$ is finitely generated
(because elementary sentences can only talk about finitely many
elements). If a field extension $L|K$ is not finitely generated, then
there may not exist a place $P$ of $L$ such that $LP=K$, even if
$K\ec L$.
\end{example}

\begin{example}
A field $K$ is called a \bfind{large field} (cf.\ [POP]) if every smooth
curve over $K$ has infinitely many $K$-rational points, provided it has
at least one $K$-rational point. For the proof of the following theorem,
see [K2] or [K3].
\begin{theorem}                             \label{largef}
A field $K$ is large if and only if $K\ec K((t))$ (in the language of
fields).
\end{theorem}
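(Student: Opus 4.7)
The plan is to establish the equivalence in two steps, each relying on a different valuation-theoretic ingredient: the Implicit Function Theorem in the henselian field $(K((t)), v_t)$ for one direction, and largeness combined with a Bertini-type slicing for the other.

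For the direction $K \ec K((t)) \Rightarrow K$ large, I would take a smooth $K$-curve $C$ with a $K$-rational point $p$, and argue that $C$ has infinitely many $K$-rational points by induction on the number of such points already produced. The key observation is that since $p$ is smooth and $(K((t)), v_t)$ is henselian, the Implicit Function Theorem in the form we stated for henselian fields applies to the defining equations of $C$ around $p$, and produces a one-parameter family of $K((t))$-rational points of $C$ (parameterized by the $t$-adic neighborhood of $p$). In particular, given any finite collection $p_1,\ldots,p_n$ of $K$-rational points on $C$, one can find a $K((t))$-rational point of $C$ distinct from all of them by choosing a suitable perturbation. The statement ``there exists a point of $C$ distinct from $p_1,\ldots,p_n$'' is an existential $\mathcal{L}_{\rm F}(K)$-sentence, so $K \ec K((t))$ transfers it back to $K$, yielding a new $K$-rational point. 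Iterating produces infinitely many.

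For the direction $K$ large $\Rightarrow K \ec K((t))$, I would take an existential $\mathcal{L}_{\rm F}(K)$-sentence $\varphi$ satisfied in $K((t))$. After the standard syntactic reductions, $\varphi$ asserts the existence of a common zero $a = (a_1,\ldots,a_\ell) \in K((t))^\ell$ of finitely many polynomials $f_1,\ldots,f_n \in K[X_1,\ldots,X_\ell]$ at which some polynomial $g \in K[X_1,\ldots,X_\ell]$ does not vanish. After shifting and possibly passing to a dense open of the variety $V := V(f_1,\ldots,f_n)$, I may assume $a$ is a smooth point of $V$ lying in $\mathcal{O}_{v_t}^\ell = K[[t]]^\ell$, with reduction $\bar a \in K^\ell$. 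Now I would cut $V$ down to a curve by intersecting with generic $K$-hyperplanes through $\bar a$ chosen so that the resulting curve $C$ is smooth at $\bar a$ and still contains a $K((t))$-lift of $a$ (this is possible because smoothness is an open condition and the hyperplanes can be chosen generically over $K$). Then $C$ is a smooth $K$-curve with the $K$-rational point $\bar a$, so largeness gives infinitely many $K$-rational points on $C$. Since $g|_C \not\equiv 0$ (it is nonzero at the $K((t))$-point $a \in C(K((t)))$), only finitely many of these $K$-rational points lie on $\{g=0\}$, so some $K$-rational point of $C$ lies in $V$ with $g \ne 0$, witnessing $\varphi$ in $K$.

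The main obstacle is in the second direction: producing the smooth $K$-curve $C \subset V$ that simultaneously (i) is smooth at a $K$-rational point and (ii) carries the $K((t))$-point witnessing nonvanishing of $g$. This is a Bertini-type argument that must be set up carefully, using the smooth $K((t))$-point $a$ and its residue $\bar a$ to control the hyperplane sections; the delicate part is ensuring that the cut does not destroy smoothness at $\bar a$ while preserving the $K((t))$-point structure, which forces the genericity of the hyperplane choice to be taken over $K$ rather than over $K((t))$.
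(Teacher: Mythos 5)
Your first direction is fine and is essentially the standard argument: a smooth $K$-rational point on a curve plus henselianity of $K((t))$ gives, via the valuation-theoretic Implicit Function Theorem, infinitely many $K((t))$-rational points, and an existential sentence ``there is a point on $C$ different from $p_1,\ldots,p_n$'' transfers back by $K\ec K((t))$.

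The second direction has a genuine gap, and it sits precisely at the step you slide over with the words ``After shifting and possibly passing to a dense open of the variety $V$\dots I may assume $a$ is a smooth point of $V$.'' The problem is not whether $a\in V(K((t)))$ is smooth, but whether the \emph{center} $\bar a\in V(K)$ is. Nothing forces $\bar a$ to lie in the smooth locus, and ``passing to a dense open'' cannot help since the dense open would have to contain $\bar a$. Take $V:y^2=x^2(x+1)$ over a large $K$ and $a=(s^2-1,\,s^3-s)$ with $s=1+t$: here $a$ is a perfectly smooth $K((t))$-point of $V$, yet $\bar a=(0,0)$ is the node. Every curve inside $V$ passing through the node is singular there (a hyperplane section through a singular point of the ambient variety is singular there), so your Bertini step cannot produce a curve smooth at $\bar a$, and the definition of largeness — which only concerns \emph{smooth} curves — has nothing to say. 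The ``delicate part'' you flag (that the cut should not destroy smoothness at $\bar a$) presupposes that $V$ was already smooth at $\bar a$, which is exactly what is missing.

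What is actually needed here is a smooth model, and in the framework of this paper the relevant input is Theorem~\ref{MTdrlu}: the $t$-adic place restricted to $F=K(a_1,\ldots,a_\ell)\subseteq K((t))$ is a discrete rational place of $F|K$ (its value group is a subgroup of $\Z$ and its residue field sits between $K$ and $K((t))v_t=K$), and every discrete rational place is uniformizable. This produces a model of $F|K$ on which $P$ is centered at a smooth $K$-rational point, with $a_1,\ldots,a_\ell$ and $1/g(a)$ in the local ring; only then is the Bertini slicing at the smooth center, followed by largeness, legitimate. That uniformization theorem in turn rests on the structure theory of valued function fields (the completion of $(K(x),v_P^{ })$ being the power series field, Proposition~\ref{comp1}, etc.), so it does not ``fall out'' of a shift and a dense open. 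Concretely: your proof is missing a local-uniformization ingredient, and once that is supplied the Bertini step becomes routine.
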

\end{example}

The use of ``$\ec$'' gives us another version of the Ax--Kochen--Ershov
principle:
\begin{equation}                            \label{AKEec}
vK\ec vL \mbox{ and } Kv\ec Lv\>\Rightarrow\>(K,v)\ec (L,v)\;.
\end{equation}
This principle also holds for the classes of valued fields that we
mentioned above: henselian fields of residue characteristic 0,
$p$-adically closed fields, algebraically maximal fields satisfying
Kaplansky's hypothesis A, tame fields. A short proof for the case of
henselian fields of residue characteristic 0 is given in the appendix of
[KP]. This form of the Ax--Kochen--Ershov principle is applied in the
proof of Theorem~\ref{MKP} below.

%
%
\section{Saturation and embedding lemmas}
%
How can a principle like (\ref{AKEec}) be proved? In fact, nice model
theoretic results often just represent a good algebraic structure
theory. Indeed, using a very useful model theoretic tool, we can easily
transfer ``$\ec$'' to an algebraic fact. The tool is that of a
\bfind{$\kappa$-saturated model} (where $\kappa$ is a cardinal number).
Saturation is a property which is not elementary, quite similar to
``complete'' or ``maximal'', but still different (in fact, ``maximal''
and ``saturated'' are to some extent mutually exclusive). Before
defining ``$\kappa$-saturated'',
I want to illustrate its meaning by an example which plays a remarkable
role in the theory of ordered structures. We take fields, but the same
can be done for ordered abelian groups and other ordered structures.
\begin{example}
Take any ordinal number $\alpha$ and an ordered field $(K,<)$. For
$A,B\subset K$ we will write $A<B$ if every element of $A$ is smaller
than every element of $B$. Now $(K,<)$ is said to be an
\bfind{$\eta_\alpha$-field} if for every two subsets $A,B\subset K$ of
cardinality less than $\aleph_\alpha$ (= the $\alpha$-th cardinal
number) such that $A<B$, there is some $c\in K$ such that $A<\{c\}<B$,
i.e., $c$ lies between $A$ and $B$. Note that because of the restriction
of the cardinality of $A$ and $B$, this does not mean that $(K,<)$ is
cut-complete (in fact, the only cut-complete field is $\R$, while there
is an abundance of $\eta_\alpha$-fields).

Given $A,B\subset K$ such that $A<B$, we consider the collection of
elementary sentences (in the language of ordered fields with constants
from $K$) ``$a<X$'', $a\in A$, and ``$X<b$'', $b\in B$. It is clear that
if we take any finite subset of these, then there is some element in $K$
that we can insert for $X$ so that all of these finitely many
sentences hold. That is, our collection of sentences is \bfind{finitely
realizable} in $(K,<)$. Now if $(K,<)$ is $\kappa$-saturated, then this
tells us the following: if the cardinality of $A\cup B$ is smaller
than $\kappa$, then there is an element $c\in K$ which simultaneously
satisfies {\it all} of our above sentences (we say that $c$
\bfind{realizes} the above set of elementary sentences). But this means
that $A<\{c\}<B$. So we see that every $\aleph_\alpha$-saturated ordered
field is an $\eta_\alpha$-field.
\end{example}

Let us extract a definition from our example. An ${\cal L}$-structure
${\eu A}$ will be called \bfind{$\kappa$-saturated} if for every subset
$S$ of its universe $A$ of cardinality less than $\kappa$, every set of
elementary ${\cal L}(S)$-sentences is realizable in ${\eu A}$, provided
that it is finitely realizable in ${\eu A}$. To express the fact that
there are enough $\kappa$-saturated ${\cal L}$-structures, we need one
further notion. Given an ${\cal L}$-structure ${\eu B}$ with
substructure ${\eu A}$, we say that \bfind{${\eu B}$ is an elementary
extension of ${\eu A}$} and write ${\eu A}\prec {\eu B}$ if
{\it every} ${\cal L}(A)$-sentence holds in ${\eu A}$ if and only if it
holds in ${\eu B}$. (So in contrast to ``existentially closed'', here
we do not restrict the scope to existential sentences.) For example, an
algebraically closed field $K$ is existentially closed in
every extension field, and every algebraically closed extension field
of $K$ is an elementary extension of $K$. If $K\prec L$, then $K$ is
existentially closed in every intermediate field.

We are going to state the theorem which provides us with sufficiently
many $\kappa$-saturated structures. It is a consequence of one of the
basic theorems of model theory:
\begin{theorem}                             \label{CT}
\ {\bf (Compactness Theorem)} \
A set of elementary ${\cal L}$-sentences has a model if and only if each
of its finite subsets has a model.
\end{theorem}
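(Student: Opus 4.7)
The plan is to deduce the Compactness Theorem from the Fundamental Theorem of Ultraproducts (Theorem~\ref{FTU}) just stated. The ``only if'' direction is immediate: any model of $\Sigma$ satisfies every subset of $\Sigma$, in particular every finite one. All the content lies in the converse, and I will construct a model of $\Sigma$ as an ultraproduct of chosen models of the finite subsets of $\Sigma$.

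Assume that every finite subset of $\Sigma$ has a model. Let $I$ be the set of all finite subsets of $\Sigma$, and for each $i\in I$ choose an ${\cal L}$-structure ${\eu A}_i$ in which every sentence of $i$ holds. For each $\sigma\in\Sigma$ define
\[J_\sigma\>:=\>\{i\in I\mid \sigma\in i\}\;\subseteq\;I\;.\]
The family $\{J_\sigma\mid\sigma\in\Sigma\}$ has the finite intersection property: for any $\sigma_1,\ldots,\sigma_n\in\Sigma$, the set $\{\sigma_1,\ldots,\sigma_n\}\in I$ lies in $J_{\sigma_1}\cap\ldots\cap J_{\sigma_n}$. Consequently the filter generated by this family is proper, and by a standard Zorn's Lemma argument it extends to an ultrafilter ${\cal D}$ on $I$.

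Now form the ultraproduct ${\eu A}:=\prod_{i\in I}{\eu A}_i/{\cal D}$. For any $\sigma\in\Sigma$ and any $i\in J_\sigma$, the sentence $\sigma$ belongs to $i$, and so by our choice of ${\eu A}_i$ we have that ${\eu A}_i$ satisfies $\sigma$. Hence
\[\{i\in I\mid {\eu A}_i\mbox{ satisfies }\sigma\}\>\supseteq\>J_\sigma\>\in\>{\cal D}\;,\]
and Theorem~\ref{FTU} yields that ${\eu A}$ satisfies $\sigma$. Since $\sigma\in\Sigma$ was arbitrary, ${\eu A}$ is a model of $\Sigma$, as desired.

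The only non-trivial ingredient, other than Theorem~\ref{FTU} itself, is the existence of an ultrafilter extending the filter generated by the $J_\sigma$; I would take this as a standard application of Zorn's Lemma to the poset of proper filters on $I$ containing the given family. The clever bit --- such as it is --- is the indexing by \emph{finite subsets of $\Sigma$} rather than by $\Sigma$ itself, which is exactly what forces each $J_\sigma$ into the ultrafilter and thereby each $\sigma$ into the theory of ${\eu A}$.
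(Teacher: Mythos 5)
The paper states the Compactness Theorem without proof, referring to the standard references ([CK], [K2]), so there is no paper proof to compare against. Your proof is correct and complete: it is the standard ultraproduct proof. All the steps check out. Indexing by finite subsets of $\Sigma$ (rather than by $\Sigma$) is exactly what makes the family $\{J_\sigma\}$ have the finite intersection property, since $\{\sigma_1,\ldots,\sigma_n\}$ itself is a witness in $J_{\sigma_1}\cap\ldots\cap J_{\sigma_n}$; extending to an ultrafilter and invoking Theorem~\ref{FTU} then finishes the argument by upward closure of ${\cal D}$. This route is a particularly apt choice here, since the paper has just stated \L o\'s's theorem and explicitly uses ultraproducts in the Ax--Kochen--Ershov material, so you derive compactness from machinery the paper already has on the table rather than from, say, a Henkin construction, which would require developing proof-theoretic notions (consistency, maximal consistent extensions, witnesses) that the paper never introduces. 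The only unproved ingredient you appeal to is the ultrafilter lemma, and you correctly flag it as a Zorn's Lemma application.
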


For the proof of the next theorems, see [CK] or [K2].
\begin{theorem}                             \label{exsat}
For every infinite ${\cal L}$-structure ${\eu A}$ and every large enough
$\kappa$ there exists a $\kappa$-saturated elementary extension of
${\eu A}$.
\end{theorem}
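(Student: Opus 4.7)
The plan is to build a $\kappa$-saturated elementary extension of ${\eu A}$ by transfinite iteration, using the Compactness Theorem (Theorem~\ref{CT}) at successor stages and the Tarski--Vaught elementary chain principle at limits. The strategy parallels the way one forces the realization of a single type: at each successor step we realize, simultaneously, every finitely realizable type over every small parameter set currently available.

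\textbf{One-step extension.} For any infinite ${\cal L}$-structure ${\eu B}$ I would first construct an elementary extension ${\eu B}\prec{\eu B}^*$ in which every finitely realizable set $\Sigma(x)$ of ${\cal L}(S)$-formulas in a single variable $x$, with $S\subseteq B$ and $|S|<\kappa$, is realized. To do so, expand the language by a fresh constant symbol $c_\Sigma$ for each such $\Sigma$ and form the theory $T$ whose axioms are (i) the full elementary diagram of ${\eu B}$, consisting of every ${\cal L}(B)$-sentence true in ${\eu B}$, together with (ii) every sentence $\varphi(c_\Sigma)$ as $\Sigma$ ranges over admissible types and $\varphi$ over members of $\Sigma$. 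Any finite subset $T_0\subseteq T$ involves only finitely many constants $c_{\Sigma_i}$ and, for each, only finitely many members of $\Sigma_i$; finite realizability then yields witnesses $b_i\in B$ making ${\eu B}$ a model of $T_0$ after interpreting each $c_{\Sigma_i}$ by $b_i$. By Theorem~\ref{CT}, $T$ itself has a model, whose ${\cal L}(B)$-reduct is the desired ${\eu B}^*$.

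\textbf{Iteration and saturation.} Set ${\eu A}_0:={\eu A}$, let ${\eu A}_{\alpha+1}$ be the one-step extension of ${\eu A}_\alpha$, and at a limit ordinal $\lambda$ put ${\eu A}_\lambda:=\bigcup_{\alpha<\lambda}{\eu A}_\alpha$, preserving elementarity along the chain by Tarski--Vaught. Choose $\kappa$ to be a regular cardinal large enough that $|A_\alpha|<\kappa$ holds throughout $\alpha<\kappa$ (this is where the qualifier ``large enough'' in the statement is absorbed), and define ${\eu A}^*:={\eu A}_\kappa$. Given any $S\subseteq A^*$ with $|S|<\kappa$, regularity of $\kappa$ forces $S\subseteq A_\alpha$ for some $\alpha<\kappa$; any set of ${\cal L}(S)$-formulas finitely realizable in ${\eu A}^*$ is then finitely realizable in ${\eu A}_\alpha$ by elementarity, hence realized in ${\eu A}_{\alpha+1}\subseteq{\eu A}^*$ by construction. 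This shows ${\eu A}^*$ is $\kappa$-saturated.

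\textbf{Main obstacle.} The chief technical point is the cardinality bookkeeping at successor steps: the number of admissible $\Sigma$ over ${\eu A}_\alpha$ can be as large as $2^{|A_\alpha|+|{\cal L}|}$, so iterating naively can inflate $|A_\alpha|$ past $\kappa$ before stage $\kappa$ is reached. The hypothesis that $\kappa$ be ``large enough'' is precisely what buys the needed slack: one picks $\kappa$ regular with $\kappa>|A|+|{\cal L}|$ and with enough cardinal-arithmetic room (for instance $2^{<\kappa}=\kappa$, automatic for strongly inaccessible $\kappa$ and available for many successor cardinals under GCH) so that each successor step only moderately enlarges the cardinality and the union at stage $\kappa$ stays within the intended bound. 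An alternative route which bypasses this bookkeeping is to invoke a $\kappa^+$-good ultrafilter and pass to an ultrapower, but the compactness-and-chain construction above is the most direct given the tools already assembled in the paper.
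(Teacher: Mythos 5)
The paper does not give a proof of this theorem; it defers to [CK] and [K2]. Your elementary chain argument is essentially the standard one found there: a one-step extension via compactness applied to the elementary diagram plus fresh constants for the admissible types, unions along an elementary chain of length $\kappa$ with Tarski--Vaught at limits, and regularity of $\kappa$ to conclude that every small parameter set in the union already sits inside some stage. That skeleton is correct, and it is the intended proof.

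Your ``main obstacle'' paragraph, however, introduces a difficulty that is not there, and then reaches for set-theoretic hypotheses (GCH, inaccessible cardinals) that the theorem does not need. You ask that $|A_\alpha|<\kappa$ hold for all $\alpha<\kappa$, but this constraint is never used: the regularity argument you state in the very same paragraph works with no bound on the $|A_\alpha|$ at all. Given $S\subseteq A^*=\bigcup_{\alpha<\kappa}A_\alpha$ with $|S|<\kappa$, send each $s\in S$ to the least stage $\alpha_s$ at which it appears; the set $\{\alpha_s\mid s\in S\}$ has cardinality $<\kappa$, so by regularity its supremum $\beta$ is $<\kappa$, and $S\subseteq A_\beta$. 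The stages may blow up doubly exponentially without harm, since the theorem places no cardinality bound on the extension. Thus Theorem~\ref{exsat} is a plain ZFC statement and the detour through $2^{<\kappa}=\kappa$ should be deleted. What you do genuinely need is regularity of $\kappa$, which the paper's informal gloss of ``large enough'' (merely $\kappa>|A|+|{\cal L}|$) does not guarantee; but this is harmless, because for singular $\kappa$ one simply runs the same chain with $\kappa^+$ (regular, and still large enough) in place of $\kappa$, and a $\kappa^+$-saturated structure is a fortiori $\kappa$-saturated.
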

Here ``large enough'' means: larger than the cardinality of the language
(which, if infinite, will in most cases be the cardinality of the set of
constants appearing in the language), and larger than the cardinality of
the universe of ${\eu A}$. Now the reduction of ``$\ec$'' to an
algebraic statement is done as follows:
\begin{theorem}                            \label{ecemb}
Take an ${\cal L}$-structure ${\eu B}$ with substructure ${\eu A}$. Take
$\kappa$ larger than the cardinality of ${\cal L}$ and the cardinality
of the universe of ${\eu B}$. Further, choose a $\kappa$-saturated
elementary extension ${\eu A}^*$ of ${\eu A}$. Then ${\eu A}\ec
{\eu B}$ holds if and only if there is an embedding of
${\eu B}$ over ${\eu A}$ in ${\eu A}^*$.
\end{theorem}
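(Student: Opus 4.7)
The statement is a biconditional; I would handle the two directions separately, with the interesting work all happening in one of them.

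For the easy direction $(\Leftarrow)$, suppose $\iota\colon\mathfrak{B}\hookrightarrow\mathfrak{A}^*$ is an embedding over $\mathfrak{A}$, and let $\varphi$ be an existential $\mathcal{L}(A)$-sentence that holds in $\mathfrak{B}$. Existential sentences are preserved under embeddings (a witness in $\mathfrak{B}$ maps to a witness in $\iota(\mathfrak{B})\subseteq\mathfrak{A}^*$, and the parameters from $A$ are preserved since $\iota$ fixes $A$), so $\mathfrak{A}^*\models\varphi$. Since $\mathfrak{A}\prec\mathfrak{A}^*$, we conclude $\mathfrak{A}\models\varphi$. Thus $\mathfrak{A}\ec\mathfrak{B}$.

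For the substantive direction $(\Rightarrow)$, the plan is to build $\iota\colon B\to A^*$ by transfinite recursion along an enumeration $B\setminus A=\{b_\alpha:\alpha<\lambda\}$, where $\lambda\leq|B|<\kappa$, starting with $\iota|_A=\mathrm{id}_A$. I will carry along the invariant
\[
(\ast)\quad\text{for every existential $\mathcal{L}(A)$-formula $\varphi(\bar y)$ and every $\bar b$ from $S_\alpha$: }\mathfrak{B}\models\varphi(\bar b)\Longrightarrow\mathfrak{A}^*\models\varphi(\iota(\bar b)),
\]
where $S_\alpha:=A\cup\{b_\beta:\beta<\alpha\}$. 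At the base stage $(\ast)$ just says $\mathfrak{A}\ec\mathfrak{B}$ transferred through $\mathfrak{A}\prec\mathfrak{A}^*$, which is hypothesized. Since atomic and negated atomic formulas are existential, $(\ast)$ already guarantees that $\iota$ will end up being an embedding.

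At a successor stage I form the $1$-type
\[
\Sigma(x):=\{\varphi(x,\iota(\bar b))\mid \varphi(x,\bar y)\text{ existential }\mathcal{L}(A)\text{-formula},\ \bar b\text{ from }S_\alpha,\ \mathfrak{B}\models\varphi(b_\alpha,\bar b)\}
\]
over the parameter set $\iota(S_\alpha)\subseteq A^*$, whose cardinality is less than $\kappa$. A finite subset of $\Sigma$ can be conjoined (reusing existential quantifiers on disjoint variables) into a single existential $\mathcal{L}(A)$-formula $\psi(x,\iota(\bar b))$ such that $\mathfrak{B}\models\exists x\,\psi(x,\bar b)$; by the invariant $(\ast)$ the image $\mathfrak{A}^*\models\exists x\,\psi(x,\iota(\bar b))$, so $\Sigma$ is finitely realized in $\mathfrak{A}^*$. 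Now $\kappa$-saturation yields $c\in A^*$ realizing $\Sigma$, and I set $\iota(b_\alpha):=c$; the definition of $\Sigma$ directly gives $(\ast)$ at stage $\alpha+1$. At limit stages I simply take the union, noting that $(\ast)$ only references finitely many elements at a time and so is preserved.

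The main obstacle is formulating the induction invariant correctly. If one tried to maintain only that $\iota$ is a partial embedding (preserving quantifier-free formulas), the finite-realizability step at the successor stage would break down, because one needs to conclude an existential statement in $\mathfrak{A}^*$ from one in $\mathfrak{B}$, and only $\mathfrak{A}\ec\mathfrak{B}$ (extended by $\iota$) gives this. The right strengthening is to carry the existential-preservation property $(\ast)$ itself along the recursion; the cardinality bookkeeping ($|S_\alpha|<\kappa$ and $|\mathcal{L}|<\kappa$ so that $|\Sigma|$ and its parameter set stay below $\kappa$) is then routine given the hypotheses on $\kappa$.
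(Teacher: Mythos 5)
Your proof is correct. The paper itself only sketches the easy direction (existential sentences transfer through the embedding into ${\eu A}^*$ and then down to ${\eu A}$ via ${\eu A}\prec{\eu A}^*$), referring to [CK] or [K2] for the rest; your treatment of that direction matches the paper's sketch exactly. For the substantive direction, your transfinite forth construction is the standard argument and is carried out soundly: the invariant $(\ast)$ that you maintain -- existential $\mathcal{L}(A)$-formulas true of tuples from $S_\alpha$ in ${\eu B}$ remain true of the $\iota$-images in ${\eu A}^*$ -- is precisely the right strengthening of ``$\iota$ is a partial embedding,'' since witnesses to existential formulas need not lie inside $S_\alpha$ and so a quantifier-free invariant alone would not support the finite-realizability step. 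The base case reduces to ${\eu A}\ec{\eu B}$ composed with ${\eu A}\prec{\eu A}^*$, the successor step correctly conjoins finitely many existential conditions (renaming bound variables) into a single existential formula, and the cardinality bookkeeping ($|\iota(S_\alpha)|\le|A|+|\alpha|\le|B|<\kappa$ and $|\mathcal{L}|<\kappa$) is adequate to invoke $\kappa$-saturation. That $\iota$ is in the end an embedding follows since atomic and negated atomic formulas are existential, as you note.
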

If there is an embedding of ${\eu B}$ over ${\eu A}$ in ${\eu A}^*$,
then every existential sentence holding in ${\eu B}$ will carry over
to the image of ${\eu B}$ in ${\eu A}^*$, from where it goes up to
${\eu A}^*$. Since ${\eu A}\prec {\eu A}^*$, it then also holds in
${\eu A}$.

A nice additional feature of saturation is the following:
\begin{theorem}                             \label{ecembfg}
There is an embedding of ${\eu B}$ over ${\eu A}$ in ${\eu A}^*$ already
if for every finitely generated subextension ${\eu A}\subset {\eu B}'$
of ${\eu A}\subset {\eu B}$ there is an embedding of ${\eu B}'$ over
${\eu A}$ in ${\eu A}^*$.
\end{theorem}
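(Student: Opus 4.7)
The plan is to exhibit the desired embedding as the realization of a partial type in ${\eu A}^*$, by coding ${\eu B}$ over ${\eu A}$ as a set of formulas. Writing $B$ for the universe of ${\eu B}$, I would introduce a fresh variable $x_b$ for each $b\in B\setminus A$ and form
\[
\Sigma\>:=\>\{\varphi(x_{b_1},\ldots,x_{b_n})\mid\varphi\text{ is quantifier-free over }A\text{ and }{\eu B}\models\varphi(b_1,\ldots,b_n)\}.
\]
Any assignment $b\mapsto a_b^*\in A^*$ extending the identity on $A$ that realizes $\Sigma$ preserves all atomic formulas and their negations, hence is an ${\cal L}$-embedding of ${\eu B}$ into ${\eu A}^*$ over ${\eu A}$; conversely, every such embedding gives a realization of $\Sigma$. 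So it suffices to realize $\Sigma$ in ${\eu A}^*$.

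Next I would verify that $\Sigma$ is finitely realizable in ${\eu A}^*$, using the hypothesis. A finite subset $\Sigma_0\subset\Sigma$ mentions only finitely many of the new variables, say $x_{b_1},\ldots,x_{b_n}$. Let ${\eu B}'$ be the ${\cal L}$-substructure of ${\eu B}$ generated by $A\cup\{b_1,\ldots,b_n\}$; this is a finitely generated subextension of ${\eu A}\subset{\eu B}$. By hypothesis there exists an ${\cal L}$-embedding $\sigma\colon {\eu B}'\to{\eu A}^*$ fixing ${\eu A}$. Setting $x_{b_i}\mapsto\sigma(b_i)$ then realizes every formula in $\Sigma_0$, since $\sigma$ preserves all quantifier-free formulas with parameters from $A$.

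To finish, I would invoke the $\kappa$-saturation of ${\eu A}^*$. The type $\Sigma$ has $|B\setminus A|\le|B|<\kappa$ free variables, and its parameters lie in $A\subseteq A^*$ with $|A|\le|B|<\kappa$. The multi-variable version of saturation---obtained from the one-variable definition by well-ordering the variables as $(x_{b_\alpha})_{\alpha<\lambda}$ and, at stage $\alpha$, choosing a realization of the one-variable type cut out by the appropriate existential projection of $\Sigma$ over the parameters already interpreted---yields that every finitely realizable partial type in fewer than $\kappa$ variables over fewer than $\kappa$ parameters is realized in ${\eu A}^*$. Applying this to $\Sigma$ produces the assignment $b\mapsto a_b^*$ we sought.

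The only real subtlety lies in this last step: one must verify that the inductive one-variable types remain finitely realizable, which requires closing $\Sigma$ under conjunction and under existentially quantifying the not-yet-interpreted variables, and exploiting the already-established finite realizability of $\Sigma$ itself. This is a routine but essential piece of book-keeping; the rest of the argument is just the translation between ${\eu A}$-embeddings and realizations of the quantifier-free diagram, together with the observation that finite fragments of that diagram involve only finitely many elements of $B\setminus A$ and hence only a finitely generated subextension.
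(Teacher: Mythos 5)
Your proof is correct and is essentially the standard argument which the paper defers to [CK] or [K2]: identify embeddings of ${\eu B}$ over ${\eu A}$ into ${\eu A}^*$ with realizations of the quantifier-free diagram $\Sigma$ of ${\eu B}$ over $A$, observe that any finite fragment of $\Sigma$ lives inside a finitely generated subextension (which embeds by hypothesis), and then appeal to $\kappa$-saturation. The only place where your sketch should be sharpened is the inductive invariant in the reduction to one variable. What one maintains at stage $\alpha$ is not merely that the one-variable type $q_\alpha$ is finitely realizable, but that every sentence of the form $\exists\bar z\,\varphi(a^*_{<\alpha},\bar z)$, with $\varphi\in\Sigma$ and $a^*_\beta$ substituted for $x_{b_\beta}$ for $\beta<\alpha$ and the remaining variables existentially bound, actually holds in ${\eu A}^*$. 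With $\Sigma$ closed under finite conjunctions, this invariant makes the successor step a genuine one-variable type over fewer than $\kappa$ parameters (since $|A|+|\alpha|<\kappa$, as $\kappa>|B|\geq|A|$), and it is preserved automatically at limit stages because each formula $\varphi$ mentions only finitely many of the $x_{b_\beta}$ and hence is already fully accounted for at some earlier stage. Made precise in this way, your ``routine book-keeping'' step closes, and the argument is complete.
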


So if we have a field extension $L|K$ and want to prove that $K\ec L$,
we take a $\kappa$-saturated elementary extension $K^*$ of $K$, for
$\kappa$ larger than the cardinality of $L$, and seek to embed $L$ over
$K$ in $K^*$. By the last theorem, we only have to show that every
finitely generated subextension $F|K$ of $L|K$ embeds in $K^*$. But
a finitely generated extension is a function field (in view of
Lemma~\ref{KecL} we can exclude the case where $F|K$ is algebraic).

If $K$ is an algebraically closed field, then so is $K^*$ because it is
an elementary extension of $K$. The assumption that $K^*$ is
$\kappa$-saturated with $\kappa$ larger than the cardinality of $L$
implies that the transcendence degree of $K^*|K$ is at least as large
as that of $L|K$. So we see that $L$ embeds over $K$ in $K^*$ (even
without employing the last theorem). This proves that every
algebraically closed field is existentially closed in every extension
field.

\parm
Let's see how we can prove a principle like (\ref{AKEec}) with the above
tools. We take a $\kappa$-saturated elementary extension $(K,v)^*=
(K^*,v^*)$ of $(K,v)$ (with respect to the language of valued fields).
Then it is easy to prove that $v^*K^*$ is a $\kappa$-saturated
elementary extension of $vK$ (with respect to the language of ordered
groups) and that $K^*v^*$ is a $\kappa$-saturated elementary extension
of $Kv$ (with respect to the language of fields). Thus, we see from
Theorem~\ref{ecemb} that $vK\ec vL$ implies that $vL$ embeds over $vK$
in $v^*K^*$, and that $Kv\ec Lv$ implies that $Lv$ embeds over $Kv$ in
$K^*v^*$. So Theorem~\ref{ecemb} shows that we can prove (\ref{AKEec})
by an \bfind{embedding lemma} of the form: {\it If $vL$ embeds over $vK$
in $v^*K^*$ and $Lv$ embeds over $Kv$ in $K^*v^*$ and} (additional
assumptions) {\it then $(L,v)$ embeds over $K$ in $(K,v)^*$ (as a valued
field)}. See Example~\ref{mtGGST} and Example~\ref{mtHR} below for two
different cases and a sketch of the proof of (\ref{AKEec}) for tame
fields.

\parm
To conclude this section, let us come back to elementary extensions.
An ${\cal L}$-theory ${\cal T}$ is called \bfind{model complete} if for
every two models $\eu A$ and $\eu B$ of ${\cal T}$ such that $\eu A$ is
a substructure of $\eu B$ we have that ${\eu A}\prec {\eu B}$. This is
closely connected to the relation ${\eu A}\ec {\eu B}$ through the
following important criterion (cf.\ [K2] or [CK]):
\begin{theorem}                             \label{RT}
\ {\bf (Robinson's Test)} \n
Assume that for every two models $\eu A$ and $\eu B$ of ${\cal T}$ such
that $\eu A$ is a substructure of $\eu B$ we have that ${\eu A}\ec
{\eu B}$. Then ${\cal T}$ is model complete.
\end{theorem}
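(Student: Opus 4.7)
The plan is to prove Robinson's Test by a zig-zag elementary chain construction, taking two arbitrary models $\eu A \subseteq \eu B$ of $\mathcal T$ and showing $\eu A \prec \eu B$. The engine driving the argument is the following compactness lemma, which I would establish first as Step 1: whenever $\eu A$ is a substructure of $\eu B$ with $\eu A \ec \eu B$, there exist an elementary extension $\eu A^* \succ \eu A$ and an embedding of $\eu B$ into $\eu A^*$ fixing $\eu A$ pointwise. To see this, one considers the elementary diagram of $\eu A$ (all $\mathcal L(A)$-sentences true in $\eu A$) together with the atomic diagram of $\eu B$ (a set of atomic and negated atomic $\mathcal L(B)$-sentences). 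Any finite subset of the atomic diagram of $\eu B$ mentions finitely many elements $b_1,\ldots,b_n \in B\setminus A$; replacing these $b_i$ by fresh variables and prefixing existential quantifiers yields an existential $\mathcal L(A)$-sentence that holds in $\eu B$, hence in $\eu A$ by $\eu A \ec \eu B$. So the combined theory is finitely satisfiable, and Theorem~\ref{CT} produces the desired $\eu A^*$.

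Step 2 is the zig-zag construction. Set $\eu A_0 := \eu A$ and $\eu B_0 := \eu B$. The hypothesis of Robinson's Test applied to $\eu A_0 \subseteq \eu B_0$ gives $\eu A_0 \ec \eu B_0$, and Step 1 then yields $\eu A_0 \prec \eu A_1$ together with a copy of $\eu B_0$ sitting inside $\eu A_1$ over $\eu A_0$. Now $\eu B_0$ and $\eu A_1$ are both models of $\mathcal T$ (the latter because it is elementarily equivalent to a model of $\mathcal T$), so the hypothesis applied to $\eu B_0 \subseteq \eu A_1$ gives $\eu B_0 \ec \eu A_1$; Step 1 again produces $\eu B_0 \prec \eu B_1$ containing $\eu A_1$ over $\eu B_0$. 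Iterating produces interleaved chains
\begin{equation}
\eu A_0 \subseteq \eu B_0 \subseteq \eu A_1 \subseteq \eu B_1 \subseteq \eu A_2 \subseteq \eu B_2 \subseteq \ldots
\end{equation}
with $\eu A_i \prec \eu A_{i+1}$ and $\eu B_i \prec \eu B_{i+1}$ for all $i$.

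Step 3 takes the union $\eu C := \bigcup_i \eu A_i = \bigcup_i \eu B_i$. By the Tarski–Vaught elementary chain theorem, $\eu A = \eu A_0 \prec \eu C$ and $\eu B = \eu B_0 \prec \eu C$. Since $\eu A \subseteq \eu B \subseteq \eu C$ and both $\eu A$ and $\eu B$ are elementary in $\eu C$, an easy argument (any $\mathcal L(A)$-sentence $\varphi$ satisfies $\eu A \models \varphi \iff \eu C \models \varphi \iff \eu B \models \varphi$) yields $\eu A \prec \eu B$, which is what model completeness demands. The main technical obstacle is Step 1, i.e., making the diagrammatic compactness argument precise; once that lemma is in place, the chain construction and the elementary chain theorem deliver the conclusion essentially mechanically.
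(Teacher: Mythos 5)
The paper itself does not prove Robinson's Test; it only points the reader to [CK] and [K2]. Your proposal is a correct and complete rendering of the standard elementary-chain proof found in those sources. Your Step~1 is the elementary diagram lemma: compactness applied to the union of the elementary diagram of $\eu A$ and the atomic diagram of $\eu B$, with $\eu A\ec\eu B$ supplying finite satisfiability after existentially quantifying out the finitely many constants from $B\setminus A$. This is a slightly weaker cousin of the paper's Theorem~\ref{ecemb}: that theorem packages the same phenomenon but insists on a $\kappa$-saturated elementary extension (and hence invokes Theorem~\ref{exsat}), whereas you get by with an arbitrary elementary extension furnished directly by Theorem~\ref{CT}. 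Since your zig-zag only needs \emph{some} elementary extension at each stage, this is the more economical route, at the modest cost of building a new model at every step rather than working once and for all inside a single large saturated model. Two points should be made explicit when writing this up. First, the model produced by compactness is an elementary extension of $\eu A$ only after the usual identification of $\eu A$ with the set of interpretations of its diagram constants, and this identification must be carried coherently along the chain so that the inclusions $\eu A_0\subseteq\eu B_0\subseteq\eu A_1\subseteq\eu B_1\subseteq\cdots$ are genuine inclusions of structures with $\eu A_i\prec\eu A_{i+1}$ and $\eu B_i\prec\eu B_{i+1}$. Second, the Tarski--Vaught elementary chain theorem invoked in Step~3 is not among the results recorded in this section, so it needs its own citation. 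Neither of these is a real gap; the final deduction $\eu A\models\varphi\Leftrightarrow\eu C\models\varphi\Leftrightarrow\eu B\models\varphi$ for any $\mathcal{L}(A)$-sentence $\varphi$ correctly yields $\eu A\prec\eu B$, which is model completeness.
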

For example, this theorem together with the fact that every
algebraically closed field is existentially closed in every extension
field shows that the axiom system of algebraically closed fields is
model complete. Furthermore, with this theorem together with
Theorem~\ref{ecemb}, Theorem~\ref{prelBour}, Theorem~2 of [KA1] and
Theorem~\ref{allext}, it is not hard to prove the following theorem of
Abraham Robinson:
\begin{theorem}                             \label{mcvfAR}
The elementary axiom system of non-trivially valued algebraic\-ally
closed fields is model complete.
\end{theorem}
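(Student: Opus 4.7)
By Robinson's Test (Theorem~\ref{RT}), it suffices to prove that $(K,v) \ec (L,v)$ whenever $(K,v) \subseteq (L,v)$ are non-trivially valued algebraically closed fields. Fix such an inclusion, pick a cardinal $\kappa > |L|$ and, using Theorem~\ref{exsat}, a $\kappa$-saturated elementary extension $(K^*,v^*) \succ (K,v)$. By Theorems~\ref{ecemb} and~\ref{ecembfg}, it then suffices to embed every finitely generated valued subextension $(F,v)$ of $(L|K,v)$ into $(K^*,v^*)$ over $K$. Since $(K^*,v^*) \equiv (K,v)$, the field $(K^*,v^*)$ is again non-trivially valued algebraically closed, so $v^*K^*$ is a divisible ordered abelian group containing $vK$ and $K^*v^*$ is an algebraically closed field containing $Kv$; standard preservation shows that $v^*K^*$ is $\kappa$-saturated over $vK$ (as an ordered group) and that $K^*v^*$ is $\kappa$-saturated over $Kv$ (as a field).

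Fix such an $F$, and set $\rho := \rr (vF/vK)$, $\tau := \trdeg Fv|Kv$; both are finite by the Abhyankar inequality (\ref{Abhie}). Choose $x_1,\ldots,x_\rho \in F$ with values rationally independent over $vK$, and $y_1,\ldots,y_\tau$ in the valuation ring of $F$ with residues algebraically independent over $Kv$; by Theorem~\ref{prelBour} they are algebraically independent over $K$ and the valuation on $F_0 := K(x_1,\ldots,x_\rho,y_1,\ldots,y_\tau)$ is uniquely determined by its restriction to $K$ together with the data $vx_i$ and $y_jv$. Using completeness and quantifier-elimination of the theory of divisible ordered abelian groups (applied to $vK$) and of the theory of algebraically closed fields of fixed characteristic (applied to $Kv$), together with the $\kappa$-saturation of $v^*K^*$ and $K^*v^*$, we may realize in $v^*K^*$ a tuple $(v^*x_i^*)$ with the same quantifier-free type over $vK$ as $(vx_i)$, and in the valuation ring of $(K^*,v^*)$ a tuple $(y_j^*)$ whose residues $y_j^*v^*$ share the same quantifier-free type over $Kv$ as $(y_jv)$. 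A second application of Theorem~\ref{prelBour} then shows that $x_i \mapsto x_i^*$, $y_j \mapsto y_j^*$ extends to a valuation-preserving $K$-embedding $\iota\colon F_0 \hookrightarrow K^*$.

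It remains to extend $\iota$ from $F_0$ to $F$. By construction $vF/vF_0$ is torsion and $Fv|F_0v$ is algebraic. In the Abhyankar case $\rho+\tau = \trdeg F|K$, the extension $F|F_0$ is finite algebraic; since $(K^*,v^*)$ is algebraically closed and therefore henselian, one embeds $F$ as a field into the algebraic closure of $\iota(F_0)$ inside $K^*$, and Theorem~\ref{allext} (conjugacy of all extensions of a valuation) lets one choose the embedding so that the restriction of $v^*$ pulls back to $v|_F$. In general $F$ contains finitely many additional transcendental generators over $F_0$ whose values lie in the divisible hull of $vF_0$ and whose residues are algebraic over $F_0v$; after absorbing a suitable ramified and residue-algebraic part, these give immediate transcendental extensions of $F_0$, which embed into $(K^*,v^*)$ by Kaplansky's Theorem~2 of [KA1] together with the $\kappa$-saturation of $(K^*,v^*)$. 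The hypothesis A of [KA1] — divisibility of $vK$, perfectness and Artin--Schreier closedness of $Kv$ — is automatic here from the algebraic closedness of $K$. The main obstacle is precisely this last step: in its absence, Example~\ref{exampmie} exhibits non-isomorphic immediate extensions of a valued field, so without hypothesis A (and hence without algebraic closedness) the extension of $\iota$ across such generators would collapse.
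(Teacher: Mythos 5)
Your proposal follows exactly the route the paper indicates (Robinson's Test; a $\kappa$-saturated elementary extension $(K^*,v^*)$; Theorem~\ref{prelBour} to handle an Abhyankar subfield $F_0$; Theorem~\ref{allext} for the algebraic step; Kaplansky's Theorem~2 for the immediate transcendental step), so the approach is right. The first part — choosing $F_0$, realizing the quantifier-free types of $(vx_i)$ and $(y_jv)$ in $v^*K^*$ and $K^*v^*$, and then invoking the uniqueness clause of Theorem~\ref{prelBour} to get a valuation-preserving $K$-embedding of $F_0$ — is correct and carefully stated.

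The last paragraph, however, is imprecise in two places. First, the compositum you obtain by ``absorbing the ramified and residue-algebraic part'' is an immediate extension of the relative algebraic closure $\widetilde{F_0}$ of $F_0$ in $L$, not of $F_0$ itself; the extension of the embedding from $F_0$ to $\widetilde{F_0}$ is where Theorem~\ref{allext} and the algebraic closedness of $K^*$ are used, and it is only over $\widetilde{F_0}$ (which has divisible value group and algebraically closed residue field) that $\widetilde{F_0}.F$ becomes immediate. Second, Kaplansky's Theorem~2 handles a single pseudo-limit, hence a transcendence-degree-one step; when $\trdeg F|F_0>1$ one must iterate, and after each Kaplansky step one has to pass again to the algebraic closure inside $L$ (so that the base is once more algebraically maximal and every relevant pseudo-Cauchy sequence is of transcendental type) before the next Kaplansky step. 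This alternating algebraic/immediate-transcendental back-and-forth is what makes the argument go through; it is not a single application of Theorem~2. Also, what is really used is algebraic maximality of $\widetilde{F_0}$ rather than the full hypothesis A (which, as stated, you paraphrase slightly inaccurately: it requires $p$-divisibility of $vK$ and that $Kv$ admit no finite separable extension of degree divisible by $p$, not ``divisibility'' and ``Artin--Schreier closedness''); for algebraically closed fields both are automatic, so this does not affect the proof.
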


Observe that we do not need ``side conditions'' about the value groups
and the residue fields here (because they are divisible and
algebraically closed, respectively). But there is also an
Ax--Kochen--Ershov principle with $\prec$ in the place of $\ec$ that
again holds for the classes of valued fields which I mentioned above.

\pars
\begin{example}
Another simple but useful example for a fact proved by an embedding
lemma is the following:
\begin{theorem}                             \label{AAf}
If $(K,v)$ is henselian and $(L,v)$ is a separable extension of $(K,v)$
within its completion, then $(K,v)\ec (L,v)$.
\end{theorem}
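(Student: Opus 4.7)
The plan is to verify $(K,v)\ec(L,v)$ via the embedding criterion of Theorem~\ref{ecemb}. I take a $\kappa$-saturated elementary extension $(K^*,v^*)$ of $(K,v)$ with $\kappa>|L|$, and by Theorem~\ref{ecembfg} it suffices to embed every finitely generated subextension $(F|K,v)$ of $(L|K,v)$ into $(K^*,v^*)$ over $K$ as a valued field. Fix such an $F$. Since $L|K$ is separable and $F|K$ is finitely generated, I choose a separating transcendence basis $t_1,\ldots,t_k\in F$ and a primitive element $\theta\in F$ with $F=K(t_1,\ldots,t_k,\theta)$, where $\theta$ is separable algebraic over $K(t_1,\ldots,t_k)$; by hypothesis all these generators lie in $\hat K$.

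The strategy is to realize the joint type of $(t_1,\ldots,t_k,\theta)$ over $K$ inside $(K^*,v^*)$. Let $\Sigma(X_1,\ldots,X_k,Y)$ denote the set of elementary $\mathcal{L}_{\mathrm{VF}}(K)$-formulas of the shape $v(h(X_1,\ldots,X_k,Y))=v(h(t_1,\ldots,t_k,\theta))$ as $h$ ranges over $K[X_1,\ldots,X_k,Y]$, with $v=\infty$ interpreted as $h=0$. If $\Sigma$ is realized by a tuple $(t_1^*,\ldots,t_k^*,\theta^*)\in(K^*)^{k+1}$, then $t_i\mapsto t_i^*$ and $\theta\mapsto\theta^*$ is a valuation-preserving $K$-embedding of $F$ into $K^*$: algebraic independence of the $t_i^*$ and the vanishing of the defining polynomial of $\theta$ at $\theta^*$ are encoded by the $v=\infty$ formulas of $\Sigma$, and the matching of valuations on all rational expressions is built into the remaining formulas.

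By $\kappa$-saturation it is therefore enough to show that $\Sigma$ is finitely realizable in $K$; this is where the henselian hypothesis is used. Given finitely many $h_1,\ldots,h_N\in K[X_1,\ldots,X_k,Y]$, let $G\in K[X_1,\ldots,X_k,Y]$ be a generator of the (principal, since $K[\vec X,Y]$ is a UFD and the ideal has height one) vanishing ideal of $(t_1,\ldots,t_k,\theta)$ over $K$, so that the $h_i$ with $h_i(t_1,\ldots,t_k,\theta)=0$ are exactly those lying in $(G)$. Since the $t_i$ lie in $\hat K$, I can choose $\vec b=(b_1,\ldots,b_k)\in K^k$ arbitrarily close to $(t_1,\ldots,t_k)$. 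The polynomial $G(\vec b,Y)\in K[Y]$ is then coefficient-wise close to $G(t_1,\ldots,t_k,Y)$, which has $\theta$ as a simple root because $\theta$ is separable over $K(\vec t)$. A Taylor-expansion calculation shows that for $c\in K$ sufficiently close to $\theta$ one has $v(G(\vec b,c))>2v(G_Y(\vec b,c))$, where $G_Y=\partial G/\partial Y$. Hensel's Lemma inside the henselian field $K$ then sharpens $c$ to an exact root $c'\in K$ of $G(\vec b,Y)$ still close to $\theta$. For $\vec b$ and $c'$ chosen close enough to $(t_1,\ldots,t_k)$ and $\theta$, the non-archimedean triangle inequality yields $v(h_i(\vec b,c'))=v(h_i(t_1,\ldots,t_k,\theta))$ for every $i$, while those $h_i\in(G)$ automatically vanish at $(\vec b,c')$.

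The main obstacle is this finite-realizability step: one must simultaneously approximate $(t_1,\ldots,t_k,\theta)$ in $K^{k+1}$ and yet force the defining polynomial to vanish \emph{exactly} on the approximating tuple, and it is precisely Hensel's Lemma, applied in the henselian $K$, that lets an approximate root be sharpened to an exact one inside $K$. Once finite realizability is secured, $\kappa$-saturation produces a realization of $\Sigma$ in $K^*$, yielding the desired valued-field embedding $F\hookrightarrow K^*$, and Theorem~\ref{ecemb} delivers $(K,v)\ec(L,v)$.
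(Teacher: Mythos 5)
The paper itself does not reproduce a proof of Theorem~\ref{AAf}; it refers the reader to [K2] and merely situates the result as Ershov's ``field version of Artin Approximation.'' Your argument is precisely the expected one within the framework the paper develops: reduce to finitely generated subextensions via Theorems~\ref{ecemb} and \ref{ecembfg}, approximate a separating transcendence basis and a separable primitive element by elements of $K$ (which is possible since the generators lie in the completion $\hat K$), and then use the henselian property of $K$, in the Newton/Hensel--Rychlik form, to sharpen the approximate algebraic relation to an exact root in $K$. The observation that $vL=vK$ (so that the values $v(h(t_1,\ldots,t_k,\theta))$ can be named by constants from $K$) is used implicitly and is correct since the completion is an immediate extension, and the passage from finite realizability in $K$ to realizability in the saturated $K^*$ is sound. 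The argument is correct.

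One small imprecision worth flagging: you write that algebraic independence of the $t_i^*$ is encoded by the $v=\infty$ formulas of $\Sigma$. That is backwards --- the $v=\infty$ formulas force $G(\vec t^*,\theta^*)=0$, while algebraic independence of $t_1^*,\ldots,t_k^*$ is enforced by the $v\neq\infty$ formulas, which guarantee that $h(\vec t^*,\theta^*)\neq 0$ for every nonzero $h\in K[X_1,\ldots,X_k]$ (such $h$ lie outside $(G)$). Since $\Sigma$ contains both kinds of formulas, the misattribution does not break the proof, but the sentence as written is not accurate.
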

This fact can be seen as the (much simpler) ``field version of Artin
Approximation''. It was observed in the 1960s by Yuri Ershov; for a
proof, see [K2]. Together with Theorem~\ref{largef}, a modification
of Theorem~\ref{ecwtd} and the transitivity of $\ec$, this theorem
(applied to $(K(t),v)^h$) can be used to prove (cf.\ [K2], [K3]):
\begin{theorem}
If the field $K$ admits a henselian valuation, then $K\ec K((t))$,
i.e., $K$ is a large field.
\end{theorem}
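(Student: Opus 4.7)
We factor $K\ec K((t))$ through the intermediate henselization $(K(t),v_t)^h$ of the $t$-adic valuation:
\begin{equation*}
K \;\ec\; (K(t),v_t)^h \;\ec\; K((t)),
\end{equation*}
and then apply Theorem~\ref{largef} to conclude that $K$ is large.

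\textbf{Right $\ec$ (Theorem~\ref{AAf}).} Since by Theorem~\ref{immhens} the henselization $(K(t),v_t)^h$ is an immediate algebraic subextension of $K(t)$, its completion is still $K((t))$. The extension $K((t))|(K(t))^h$ is separable (in positive residue characteristic this is because $(K(t))^h$ is separable-algebraically closed in its completion). Theorem~\ref{AAf} applied to $(K(t),v_t)^h$ inside its completion therefore gives $(K(t),v_t)^h\ec K((t))$ as valued fields, hence as fields. This step uses no hypothesis on $K$.

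\textbf{Left $\ec$ (saturation plus the henselian valuation $w$ on $K$).} Choose $\kappa>|(K(t))^h|$ and let $(K^*,w^*)$ be a $\kappa$-saturated elementary extension of $(K,w)$ in ${\cal L}_{\rm VF}$; then $w^*$ is henselian (Hensel's Lemma is elementary) and the field reduct yields $K\prec K^*$ in ${\cal L}_{\rm F}$. The partial ${\cal L}_{\rm VF}(K)$-type $\{w^*(X)>w(c):c\in K^\times\}$ is finitely realized in $(K^*,w^*)$, hence by $\kappa$-saturation it has a realization $\tau\in K^*$. Since $w^*(\tau)$ lies strictly above every element of $wK$, it is rationally independent over $wK$, so by Theorem~\ref{prelBour} the element $\tau$ is transcendental over $K$ and the value group of $w^*|_{K(\tau)}$ is the lexicographic sum $wK\oplus\Z w^*(\tau)$ with $w^*(\tau)$ above $wK$. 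Let $v^*$ be the coarsening of $w^*$ on $K^*$ whose associated convex subgroup is the convex hull of $wK$ inside $w^*K^*$. Coarsenings of henselian valuations are themselves henselian (Lemma~\ref{hensuniq}: any extension of $v^*$ to $\widetilde{K^*}$ can be refined to an extension of $w^*$, which is unique), and by construction $v^*|_{K(\tau)}$ coincides with the $\tau$-adic valuation $v_\tau$. By the universal property of the henselization (Theorem~\ref{hensuniqemb}), $(K(\tau),v_\tau)^h$ embeds into $(K^*,v^*)$ over $K$; identifying $\tau\leftrightarrow t$, we obtain a field embedding $(K(t),v_t)^h\hookrightarrow K^*$ fixing $K$ pointwise. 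An existential ${\cal L}_{\rm F}(K)$-sentence true in $(K(t),v_t)^h$ is preserved upward into $K^*$, and by $K\prec K^*$ it then descends to $K$; hence $K\ec (K(t),v_t)^h$.

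\textbf{Main obstacle.} The critical step is the construction of the coarsening $v^*$ in $K^*$ and the two verifications (i) $v^*|_{K(\tau)}=v_\tau$ and (ii) $v^*$ is still henselian. Once saturation has placed $w^*(\tau)$ strictly above $wK$, (i) is immediate from the quotient description of coarsenings by convex subgroups, and (ii) is the standard fact about coarsenings. After that, the universal property of henselization and the formal transfer of existential sentences via $K\prec K^*$ finish the argument.
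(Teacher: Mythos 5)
Your proof is correct, and the skeleton matches the paper's sketch: factor $K\ec K((t))$ through the henselization $K(t)^h$, use Theorem~\ref{AAf} for $K(t)^h\ec K((t))$, and finish with Theorem~\ref{largef}. Where you diverge is the left inclusion $K\ec K(t)^h$. The paper points to ``a modification of Theorem~\ref{ecwtd},'' i.e.\ to Ax--Kochen--Ershov machinery for henselian \emph{defectless} fields, ultimately resting on the Generalized Stability Theorem; you instead construct the embedding of $K(t)^h$ into a saturated elementary extension of $(K,w)$ by hand, realizing the cut above $wK$ to get a transcendental $\tau$, coarsening $w^*$ at the convex hull of $wK$ to obtain a henselian valuation $v^*$ whose restriction to $K(\tau)$ is $v_\tau$, and then invoking the universal property of the henselization (Theorem~\ref{hensuniqemb}). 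This is a genuinely more elementary route: it uses only Theorem~\ref{prelBour}, the universal property, and the fact that coarsenings of henselian valuations are henselian, and it needs no defectlessness hypothesis on $(K,w)$. Two small points deserve tightening. First, your parenthetical justification for separability of $K((t))|K(t)^h$ is not quite right: being separable-algebraically closed in an extension does not make that extension separable (think of a purely inseparable extension). The correct reason is either the general fact that completions of valued fields are separable extensions, or, within the paper's own toolkit, that $(K(t)^h,v_t)$ is defectless (Generalized Stability Theorem~\ref{ai} plus Theorem~\ref{dlfhens}) and an immediate extension of a defectless field is separable (the Lemma in Section~\ref{sectnon}). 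Second, your justification that coarsenings of henselian valuations are henselian skips the step that a coarsening of the unique extension $w_0$ of $w^*$ to $\widetilde{K^*}$ is determined by its restriction to $K^*$; this holds because convex subgroups of the divisible hull $w_0\widetilde{K^*}$ correspond bijectively to convex subgroups of $w^*K^*$. Both facts are standard and true, so the argument stands; note also that, as everywhere in the literature, ``admits a henselian valuation'' must be read as ``admits a \emph{non-trivial} henselian valuation,'' and your type realization step is exactly where that non-triviality is used.
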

\end{example}

%
%
\section{The Generalized Grauert--Remmert Stability
Theorem}                                    \label{sectGST}
Let us return to our problem of inertial generation as considered at the
end of Section~\ref{sectig}. Our problem was to show that the finite
immediate extension (\ref{extens}) of henselian fields is trivial. If it
is not, then by Corollary~\ref{hidef} it has non-trivial defect (which
then is equal to its degree). So we would like to show that the field
$F_0(\eta)^h$ is a defectless field. The reason for this would have to
lie in the special way we have constructed this field.

At this point, let us invoke a deep and important theorem from the
theory of valued function fields ([K1], [K2]). For historical reasons, I
call it the \bfind{Generalized Grauert--Remmert Stability Theorem}
although I do not like the notion ``stable''. It is one of those words
in mathematics which is very often used in different contexts, but in
most cases does not reflect its meaning. I replace it by ``defectless''.

If $(F|K,v)$ is an extension of valued fields of finite transcendence
degree, then by inequality (\ref{wtdgeq}) of Corollary~\ref{fingentb},
$\trdeg F|K -\trdeg Fv|Kv -\rr (vF/vK)$ is a non-negative integer.
We call it the \bfind{transcendence defect} of $(F|K,v)$.
We say that $(F|K,v)$ is \bfind{without transcendence defect} if
the transcendence defect is $0$.

\begin{theorem}                           \label{ai}
Let $(F|K,v)$ be a valued function field without transcendence defect.
If $(K,v)$ is a defectless field, then also $(F,v)$ is a defectless
field.
\end{theorem}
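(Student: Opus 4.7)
The plan is to reduce the theorem to a single-variable stability assertion and iterate. By Theorem~\ref{dlfhens} I may assume $(K,v)$ is henselian. The vanishing of the transcendence defect combined with Theorem~\ref{prelBour} lets me choose a transcendence basis $x_1,\ldots,x_\rho,y_1,\ldots,y_\tau$ of $F|K$, where $\rho=\rr(vF/vK)$ and $\tau=\trdeg Fv|Kv$, such that the values $vx_1,\ldots,vx_\rho$ are rationally independent over $vK$ and the residues $y_1v,\ldots,y_\tau v$ are algebraically independent over $Kv$. Writing $F_0:=K(x_1,\ldots,x_\rho,y_1,\ldots,y_\tau)$, Theorem~\ref{prelBour} yields $vF_0=vK\oplus\bigoplus_i\Z vx_i$ and $F_0v=Kv(y_1v,\ldots,y_\tau v)$, and a transcendence count shows that $F|F_0$ is finite algebraic. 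By Lemma~\ref{dlfinext}, it then suffices to prove $(F_0,v)$ is defectless, equivalently by Theorem~\ref{dlfhens}, that $(F_0^h,v)$ is.

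Choosing any extension of $v$ to an algebraic closure of $F_0$, I would build $F_0^h$ inside this closure in $\rho+\tau$ steps by setting $K_0:=K$ and $K_i:=K_{i-1}(z_i)^h$, with $z_i$ running through $x_1,\ldots,x_\rho,y_1,\ldots,y_\tau$. At each step $z_i$ is, by construction and since henselizations are immediate by Theorem~\ref{immhens}, either value-transcendental over $K_{i-1}$ (meaning $vz_i$ has infinite order modulo $vK_{i-1}$) or residue-transcendental over $K_{i-1}$ (meaning $z_iv$ is transcendental over $K_{i-1}v$). The entire theorem thus reduces to the one-variable stability assertion: if $(L,v)$ is henselian and defectless and $z$ is either value-transcendental or residue-transcendental over $L$, then $(L(z)^h,v)$ is again henselian and defectless. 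Iterating this $\rho+\tau$ times produces defectlessness of $F_0^h$, and Lemma~\ref{dlfinext} then propagates it across the finite algebraic extension $F|F_0$ to $F$ itself.

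The difficulty is concentrated in this one-variable stability assertion in positive residue characteristic; in residue characteristic zero it is immediate from \textbf{(DF1)}. I would attack it by contradiction. A finite extension of $L(z)^h$ violating the fundamental equality must, by the Lemma of Ostrowski (Theorem~\ref{LofO}) combined with Corollary~\ref{hidef} after a suitable tower decomposition, contain a proper finite immediate subextension of degree a power of $p:=\chara Lv$ over some henselian intermediate field. Using Theorem~\ref{prelBour}---which tells us that the value of a polynomial in a value-transcendental element $z$ with coefficients from $L$ is the minimum of the values of its monomials, and an analogous statement for residues in the residue-transcendental case---one shows that a minimal polynomial witnessing such a defect can be approximated by polynomials with coefficients in $L$ alone, thereby transporting the would-be proper immediate finite extension of $L(z)^h$ down to a proper finite immediate extension of $L^h=L$. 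This contradicts the defectlessness of $L$. Making this descent argument precise, in particular controlling the simultaneous behaviour of value group and residue field under approximation, is the technical heart of the Generalized Grauert--Remmert Stability Theorem.
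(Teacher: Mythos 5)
Your reduction to the one-variable case is correct and matches the paper's strategy: both use Theorem~\ref{prelBour} to pick a transcendence basis split into value-transcendental and residue-transcendental elements, reduce via Lemma~\ref{dlfinext} to the rational function field $F_0$, and then proceed one variable at a time, so the whole theorem rests on showing that if $(L,v)$ is henselian defectless and $z$ is value- or residue-transcendental, then $(L(z)^h,v)$ is defectless.

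The gap is in the one-variable case, which you correctly identify as the heart of the matter but then sketch in a way that is both too vague to verify and genuinely different from what makes the proof work. First, from a finite extension of $L(z)^h$ with nontrivial defect you cannot directly extract ``a proper finite immediate subextension\ldots over some henselian intermediate field'' by Ostrowski plus Corollary~\ref{hidef} alone: the tame part of the extension (ramification and separable residue extension) is in general entangled with the defect, and the paper's essential device for untangling them---passing from $(L'|L,v)$ to $(L'.L^r|L^r,v)$, where $L^r$ is the absolute ramification field, which kills the tame contribution while preserving the defect---is entirely absent from your proposal. Second, and more seriously, the claim that one can ``approximate the minimal polynomial by polynomials with coefficients in $L$ alone, thereby transporting the would-be\ldots extension of $L(z)^h$ down to\ldots $L$'' is not an argument; a defect extension of $L(z)^h$ is a genuinely new phenomenon tied to $z$, and there is no reason why specializing $z$ or approximating coefficients should preserve it. The paper's actual mechanism is the opposite of a descent-by-approximation: after going up to $L^r$, the extension is a tower of Artin--Schreier extensions generated by roots of $X^p-X-c$ with $vc<0$; one then replaces $\vartheta$ by $\vartheta-b$ (so $c$ by $c-b^p+b$) repeatedly---``Artin--Schreier surgery''---to bring $c$ to a normal form with no $p$th-power monomials in $z$, from which triviality of the defect can be read off via Theorem~\ref{prelBour}. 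That normal-form argument, not approximation of coefficients, is what rules out the defect, and it is the missing technical content in your sketch.
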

\sn
This theorem has a long and interesting history. Hans Grauert and
Reinhold Remmert [GR] first proved it in a very restricted case, where
$(K,v)$ is a complete discrete valued field and $(F,v)$ is discrete too.
There are generalizations by Laurent Gruson [GRU], Michel Matignon, and
Jack Ohm [OH]. All of these generalizations are restricted to the case
$\trdeg F|K= \trdeg Fv|Kv$, the case of \bfind{constant reduction}. The
classical origin of it is the study of curves over number fields and the
idea to reduce them modulo a $p$-adic valuation. Certainly, the
reduction should again render a curve, this time over a finite field.
This is guaranteed by the condition $\trdeg F|K=\trdeg Fv|Kv$, where $F$
is the function field of the curve and $Fv$ will be the function field
of its reduction. Naturally, one seeks to relate the genus of $F|K$ to
that of $Fv|Kv\,$. Several authors proved \bfind{genus inequalities}. To
illustrate the use of the defect, we cite an inequality proved by Barry
Green, Michel Matignon and Florian Pop in [GMP1]. Let $F|K$ be a
function field of transcendence degree 1 and assume that $K$ coincides
with the constant field of $F|K$ (the relative algebraic closure of $K$
in $F$). Let $v_1,\ldots,v_s$ be distinct constant reductions of $F|K$
which have a common restriction to $K$. Then:
\begin{equation}
1-g_F\leq 1-s+\sum_{i=1}^{s} \delta_i e_i r_i (1-g_i)
\end{equation}
where $g_F$ is the genus of $F|K$ and $g_i$ the genus of $Fv_i|Kv_i\,$,
$r_i$ is the degree of the constant field of $Fv_i|Kv_i$ over $Kv_i\,$,
$\delta_i$ is the defect of $(F^{h(v_i)}|K^{h(v_i)},v_i)$ where
``$.^{h(v_i)}$'' denotes the henselization with respect to $v_i$,
and $e_i= (v_iF:v_iK)$ (which is always finite in the constant
reduction case by virtue of Corollary~\ref{fingentb}). It follows that
constant reductions $v_1,v_2$ with common restriction to $K$ and
$g_1=g_2= g_F\geq 1$ must be equal. In other words, for a fixed
valuation on $K$ there is at most one extension $v$ to $F$ which is a
\bfind{good reduction}, that is, (i) $g_F=g_{Fv}\,$, (ii) there exists
$f\in F$ such that $vf=0$ and $[F:K(f)]= [Fv:Kv(fv)]$, (iii) $Kv$ is the
constant field of $Fv|Kv\,$. An element $f$ as in (ii) is called a
\bfind{regular function}.

More generally, $f$ is said to have the \bfind{uniqueness property} if
$fv$ is transcendental over $Kv$ and the restriction of $v$ to $K(f)$
has a unique extension to $F$. In this case, $[F:K(f)]=\delta\, e\,
[Fv:Kv(fv)]$ where $\delta$ is the defect of $(F^h|K^h,v)$ and
$e=(vF:vK(f))=(vF:vK)$. If $K$ is algebraically closed, then $e=1$, and
it follows from the Stability Theorem that $\delta=1$; hence in this
case, every element with the uniqueness property is regular.

It was proved in [GMP2] that $F$ has an element with the uniqueness
property already if the restriction of $v$ to $K$ is henselian. The
proof uses Theorem~\ref{mcvfAR}
and ultraproducts of function fields. Elements with the
uniqueness property also exist if $vF$ is a subgroup of $\Q$ and $Kv$
is algebraic over a finite field. This follows from work in [GMP4] where
the uniqueness property is related to the \bfind{local Skolem property}
which gives a criterion for the existence of algebraic $v$-adic integral
solutions on geometrically integral varieties.

\pars
As an application to rigid analytic spaces, the Stability Theorem
is used to prove that the quotient field of the free Tate algebra
$T_n(K)$ is a defectless field, provided that $K$ is. This in turn is
used to deduce the \bfind{Grauert--Remmert Finiteness Theorem}, in a
generalized version due to Gruson; see [BGR].

\pars
Surprisingly, it was not before the model theory of valued fields
developed in positive characteristic that an interest in a generalized
version of the Stability Theorem arose. But a criterion like Robinson's
Test (Theorem~\ref{RT}) forces us to deal with arbitrary extensions of
arbitrarily large valued fields. For instance, it is virtually
impossible to restrict oneself to rank 1 in order to prove
model completeness or completeness of a class of valued fields. And
the extensions $(L|K,v)$ in question won't obey a restriction like
``$vL/vK$ is a torsion group''. Therefore, I had to prove the
above Generalized Stability Theorem. At that time, I had not heard of
the Grauert--Remmert Theorem, so I gave a purely valuation theoretic
proof ([K1], [K2]), not based on the original proofs of Grauert--Remmert
or Gruson like the other cited generalizations.

Later, I was amazed to see that the Generalized Stability Theorem is
also the suitable version for an application to the problem of local
uniformization. (If your valuation $v$ is trivial on the base field $K$
and you ask that $\trdeg F|K=\trdeg Fv|Kv$, then $vL/\{0\}$ is torsion,
so $vL=\{0\}$ and $v$ is also trivial on $F$; this is not quite the case
we are interested in.) So let's now describe this application. By our
assumption at the end of Section~\ref{sectig}, $P$ is an
Abhyankar place on $F$ and hence also on $F_0(\eta)$. That is,
$(F_0(\eta)|K,P)$ is a function field without transcendence defect. As
$P$ is trivial on $K$, also $v_P^{ }$ is trivial on $K$. But a trivially
valued field $(K,v)$ is always a defectless field since for every finite
extension $L|K$ we have that $[L:K]=[Lv:Kv]$. Hence by the Generalized
Stability Theorem, $(F_0(\eta),v_P^{ })$ is a defectless field. By
Theorem~\ref{dlfhens}, also $(F_0(\eta)^h, v_P^{ })$ is a defectless
field. Therefore, since $(F^h|F_0(\eta)^h,v_P^{ })$ is an immediate
extension, Corollary~\ref{hidef} shows that it must be trivial. We have
proved that $F^h=F_0(\eta)^h$. By construction, $F_0(\eta)^h$ was a
subfield of the absolute inertia field of $(F_0,P)$. Hence also $F$ is a
subfield of that absolute inertia field, showing that $(F,P)$ is
inertially generated. We have thus proved the first part of the
following theorem (I leave the rest of the proof to you as an
exercise; cf.\ [K6]):
\begin{theorem}                                 \label{hrwtd}
Assume that $P$ is an Abhyankar place of $F|K$ and that $FP|K$
is a separable extension. Then $(F|K,P)$ is inertially generated. If in
addition $FP=K$ or $FP|K$ is a rational function field, then $(F|K,P)$
is henselian generated. In all cases, if $v_P^{ }F=\Z v_P^{ }x_1\oplus
\ldots\oplus\Z v_P^{ }x_\rho$ and $y_1P,\ldots,y_\tau P$ is a separating
transcendence basis of $FP|K$, then $\{x_1,\ldots,x_\rho,y_1,\ldots,
y_\tau\}$ is a generating transcendence basis.
\end{theorem}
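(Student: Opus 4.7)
The plan is to formalize the construction sketched at the close of Section~\ref{sectig} and then close it off by an appeal to the Generalized Stability Theorem (Theorem~\ref{ai}).

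First I build the Abhyankar subfunction field. The Abhyankar hypothesis together with Corollary~\ref{fingentb} says that $v_P F$ is free abelian of rank $\rho$ and $FP|K$ is a finitely generated function field. I therefore pick $x_1,\ldots,x_\rho\in{\cal O}_P$ with $v_P F=\Z v_P x_1\oplus\cdots\oplus\Z v_P x_\rho$, and, using separability of $FP|K$, pick $y_1,\ldots,y_\tau\in{\cal O}_P$ so that $y_1P,\ldots,y_\tau P$ is a separating transcendence basis of $FP|K$. Theorem~\ref{prelBour} shows that $x_1,\ldots,x_\rho,y_1,\ldots,y_\tau$ are algebraically independent over $K$, and since their number equals $\trdeg F|K$ they form a transcendence basis of $F|K$; because $F|K$ is finitely generated, $F|F_0$ is finite algebraic, where $F_0:=K(x_1,\ldots,x_\rho,y_1,\ldots,y_\tau)$. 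This already gives the last assertion of the theorem.

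Next I lift the residue field using Hensel. Write $FP=F_0P(a)$ with $a$ separable-algebraic over $F_0P$, lift its minimal polynomial to a monic $f$ over the valuation ring of $(F_0,P)$, and use the Simple Root Version of Hensel's Lemma inside $F^h$ to produce $\eta\in F^h$ with $f(\eta)=0$ and $\eta P=a$. The Galois argument reproduced in the text preceding the theorem (using $\deg f=\deg fv_P$) shows that every inertia automorphism of $\widetilde{F}_0/F_0$ fixes $\eta$, so $\eta$ lies in the absolute inertia field of $F_0$. By construction $v_P F_0(\eta)=v_P F_0=v_P F$ and $F_0(\eta)P=FP$, so $(F|F_0(\eta),P)$ is immediate, and hence so is $(F^h|F_0(\eta)^h,P)$ by Theorem~\ref{immhens}.

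Now I apply the Generalized Stability Theorem. The extension $(F_0(\eta)|K,P)$ has transcendence degree $\rho+\tau$, residue field of transcendence degree $\tau$ over $K$, and value group of rational rank $\rho$, so it has no transcendence defect. As $K$ (being trivially valued) is a defectless field, Theorem~\ref{ai} gives that $F_0(\eta)$ is a defectless field, and then Theorem~\ref{dlfhens} gives the same for $F_0(\eta)^h$. The extension $(F^h|F_0(\eta)^h,P)$ is finite (since $F|F_0(\eta)$ is) and immediate, so Corollary~\ref{hidef} forces $F^h=F_0(\eta)^h$. Because $\eta$ lies in the absolute inertia field of $F_0$ and that field is henselian, $F_0(\eta)^h$ sits inside it; hence $F\subseteq F^h$ lies in the absolute inertia field of $F_0$, proving that $(F|K,P)$ is inertially generated.

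For the refined statement, in the cases $FP=K$ or $FP|K$ rational I can arrange $F_0P=FP$ at the outset (no auxiliary algebraic $a$ is needed: take $\tau=0$ in the first case, and choose $y_j$ so that $y_jP$ generates the rational function field in the second). Then $(F|F_0,P)$ itself is immediate, and the argument of the previous paragraph applied directly to $F_0$ yields $F^h=F_0^h$, placing $F$ inside the henselization of $F_0$. The principal obstacle in the whole proof is assembling the hypotheses of Theorem~\ref{ai} cleanly --- namely, verifying that $F_0(\eta)|K$ is transcendence-defect-free and that $F^h|F_0(\eta)^h$ is a finite immediate extension --- since once these are in place the Stability Theorem and Corollary~\ref{hidef} close the argument.
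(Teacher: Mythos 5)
Your proposal is correct and follows essentially the same route as the paper: choose an Abhyankar subfunction field $F_0$, lift a residue generator $a$ to an element $\eta$ of the absolute inertia field of $(F_0,P)$ via the Simple Root Version of Hensel's Lemma and the inertia-group argument, and then invoke Theorem~\ref{ai}, Theorem~\ref{dlfhens} and Corollary~\ref{hidef} to force $F^h = F_0(\eta)^h$. The paper develops exactly this chain in Sections~\ref{sectig} and~\ref{sectGST} and explicitly leaves the henselian-generated refinement and the identification of the generating transcendence basis as an exercise, which you have filled in the intended way. One small imprecision: you assert that $F^h|F_0(\eta)^h$ is finite ``since $F|F_0(\eta)$ is,'' but $\eta$ need not lie in $F$; the finiteness rather comes from $F^h = F.F_0^h$ (Theorem~\ref{hensfcomp}) being finite over $F_0^h$, hence over the intermediate field $F_0(\eta)^h$. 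This does not affect the validity of the argument.
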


\begin{example}                             \label{mtGGST}
Let's now describe the
model theoretic use of the Generalized Stability Theorem.
Take $(L|K,v)$ of finite transcendence degree and without transcendence
defect. Note that then for every function field $F|K$ contained in
$L|K$, also the extension $(F|K,v)$ is without transcendence defect.
Further, we assume that $(K,v)$ is a non-trivially valued henselian
defectless field and that $vK\ec vL$ and $Kv\ec Lv$. Making again
essential use of the Generalized Stability, one proves a generalization
of Theorem~\ref{hrwtd} to the case of non-trivially valued ground
fields. Using also Theorem~\ref{prelBour}, Lemma~\ref{KecL} and
Theorem~\ref{hensuniqemb}, it is easy to show that the embeddings
of $vL$ and $Lv$ (which induce embeddings of $vF$ and $Fv$)
lift to an embedding of $(F,v)$ in $(K,v)^*$. This proves:
\begin{theorem}                             \label{ecwtd}
Take a non-trivially valued henselian defectless field $(K,v)$ and an
extension $(L|K,v)$ of finite transcendence degree, without
transcendence defect. If $vK\ec vL$ and $Kv\ec Lv$, then $(K,v)\ec
(L,v)$.
\end{theorem}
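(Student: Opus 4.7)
The plan is to follow the strategy sketched at Example~\ref{mtGGST}: reduce $(K,v)\ec(L,v)$ to an embedding problem by saturation, and then build the embedding using a generating transcendence basis whose existence is guaranteed by a non-trivially-valued generalization of Theorem~\ref{hrwtd} (itself a consequence of the Generalized Stability Theorem, Theorem~\ref{ai}).

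By Theorem~\ref{exsat}, choose a $\kappa$-saturated elementary extension $(K^*,v^*)$ of $(K,v)$ with $\kappa$ strictly larger than $|L|$ and the cardinality of ${\cal L}_{\rm VF}$. By Theorem~\ref{ecemb} it suffices to produce an embedding of $(L,v)$ into $(K^*,v^*)$ over $K$, and by Theorem~\ref{ecembfg} it is enough to do this for every finitely generated valued subextension $(F|K,v)$ of $(L|K,v)$. The reducts of $(K^*,v^*)$ are a $\kappa$-saturated elementary extension $v^*K^*$ of $vK$ and a $\kappa$-saturated elementary extension $K^*v^*$ of $Kv$; so the hypotheses $vK\ec vL$ and $Kv\ec Lv$ yield, via Theorem~\ref{ecemb}, embeddings $\iota_v\colon vL\hookrightarrow v^*K^*$ over $vK$ and $\iota_r\colon Lv\hookrightarrow K^*v^*$ over $Kv$. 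By Lemma~\ref{KecL}, $vL/vK$ is torsion-free and $Lv|Kv$ is separable with $Kv$ relatively algebraically closed in $Lv$; the same then holds for $vF/vK$ and $Fv|Kv$.

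Since $(L|K,v)$ is without transcendence defect, so is the valued function field $(F|K,v)$. I would now invoke the generalization of Theorem~\ref{hrwtd} to non-trivially valued ground fields: since $Fv|Kv$ is separable and the transcendence defect vanishes, there is a generating transcendence basis $x_1,\ldots,x_\rho,y_1,\ldots,y_\tau$ of $F|K$ such that $vx_1,\ldots,vx_\rho$ are rationally independent over $vK$ and generate $vF/vK$ up to torsion, the residues $y_1v,\ldots,y_\tau v$ are a separating transcendence basis of $Fv|Kv$, and $F$ is contained in the absolute inertia field of the Abhyankar subfunction field $F_0:=K(x_1,\ldots,x_\rho,y_1,\ldots,y_\tau)$. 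The embedding is then constructed in two steps. By saturation, pick $x_i^*\in K^*$ with $v^*x_i^*=\iota_v(vx_i)$ and $y_j^*\in K^*$ with $y_j^*v^*=\iota_r(y_jv)$. The values $\iota_v(vx_i)$ are rationally independent over $vK$ in $v^*K^*$, and the residues $y_j^*v^*$ are algebraically independent over $Kv$, because $\iota_v$ and $\iota_r$ are embeddings over $vK$ and $Kv$ respectively. Hence Theorem~\ref{prelBour} shows that $x_i\mapsto x_i^*$, $y_j\mapsto y_j^*$ extends to a valuation-preserving $K$-embedding $\varphi_0\colon F_0\hookrightarrow K^*$. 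Since ``henselian'' is elementary, $(K^*,v^*)$ is henselian; so the henselization of $\varphi_0(F_0)$ inside $K^*$ exists inside $K^*$, and by the universal property of henselizations (Theorem~\ref{hensuniqemb}) $\varphi_0$ extends uniquely to $F_0^h\hookrightarrow K^*$. Because $F$ lies in the absolute inertia field of $(F_0,v)$, which is henselian, applying Hensel's Lemma in $K^*$ to the defining polynomials of the finitely many inertial generators of $F$ over $F_0$ (and uniqueness of the root in the algebraic closure of the image) extends $\varphi_0$ to the desired valued embedding $\varphi\colon F\hookrightarrow K^*$.

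The main obstacle is justifying the generalized version of Theorem~\ref{hrwtd} for non-trivially valued $(K,v)$: one reruns the construction at the end of Section~\ref{sectig}, obtaining $F_0(\eta)$ inside the absolute inertia field and an immediate algebraic extension $(F^h|F_0(\eta)^h,v_P)$, and then kills this extension using Theorem~\ref{ai} \emph{together with} the assumption that $(K,v)$ itself is henselian defectless — this is precisely where the hypothesis ``$(K,v)$ is a defectless field'' is used, since Theorem~\ref{ai} then forces $(F_0(\eta),v)$ and hence, by Theorem~\ref{dlfhens}, its henselization to be defectless, so Corollary~\ref{hidef} collapses the immediate extension. Once this is in hand, the embedding argument above goes through without further difficulty.
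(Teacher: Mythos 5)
Your proposal is correct and follows essentially the same route as the paper's sketch in Example~\ref{mtGGST}: reduce by saturation to an embedding problem for finitely generated subextensions, invoke the generalization of Theorem~\ref{hrwtd} to non-trivially valued ground fields (justified via Theorem~\ref{ai} together with the henselian defectless hypothesis on $(K,v)$), and lift the given embeddings of $vL$ and $Lv$ using Theorem~\ref{prelBour}, Theorem~\ref{hensuniqemb} and Hensel's Lemma. You correctly identify where each hypothesis enters, including the role of Lemma~\ref{KecL} in securing torsion-freeness of $vF/vK$ and separability of $Fv|Kv$ so that the Abhyankar subfunction field $F_0$ can be chosen with $vF=vF_0$ and $Fv$ generated over $F_0v$ by a separable element; the one slight imprecision --- saying the $vx_i$ generate $vF/vK$ ``up to torsion'' where one in fact needs $vF=vK\oplus\bigoplus_i\Z vx_i$ exactly, which torsion-freeness makes available --- does not affect the argument since you then correctly use the conclusion that $F$ lies in the absolute inertia field of $F_0$.
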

This theorem shows the advantage of the notion ``$\ec$'' since there is
no analogue for ``$\prec$''.
\end{example}

To conclude this section, we give a short sketch of a main part of
the proof of the Generalized Stability Theorem. This is certainly
interesting because very similar methods have been used by Shreeram
Abhyankar for the proof of his results in positive characteristic
(see, e.g., [A1], [A6], [A7]). We have to prove that a certain henselian
field $(L,v)$ is a defectless field. We take an arbitrary finite
extension $(L'|L,v)$ and have to show that it has trivial defect. We may
assume that this extension is separable since the case of purely
inseparable extensions can be considered separately and is much easier.
Looking at $(L'|L,v)$, we are completely lost since we have not the
slightest chance to develop a good structure theory. But we only have to
deal with the defect, and we remember that a defect only appears if
extensions beyond the absolute ramification field $L^r$ are involved. So
instead of $(L'|L,v)$ we consider the extension $(L'.L^r|L^r,v)$ which
has the same defect as $(L'|L,v)$, although it will in general not have
the same degree (the use of this fact reminds of Abhyankar's ``Going
Up'' and ``Coming Down''; cf.\ [A1]). Now we use the fact that by
Theorem~\ref{S} the
separable-algebraic closure of $L^r$ is a $p$-extension. It follows that
its subextension $L'.L^r|L^r$ is a tower of Artin--Schreier extensions
(cf.\ Lemma~\ref{RemASe}). Since the defect is multiplicative, to prove
that $(L'.L^r|L^r,v)$ has trivial defect it suffices to show that each
of these Artin--Schreier extensions has trivial defect. So we take such
an extension, generated by a root $\vartheta$ of an irreducible
polynomial $X^p-X-c$ over some field $L''$ in the tower. By what we
learned in Example~\ref{exampHL1}, $vc\leq 0$. If $vc=0$, the extension
(if it is not trivial) would correspond to a proper separable extension
of the residue field; but as we are working beyond the absolute
ramification field, our residue field is already separable-algebraically
closed. So we see that $vc<0$. If $b\in L''$, then also the element
$\vartheta-b$ generates the same extension. By the additivity of the
polynomial $X^p-X$, $\vartheta-b$ is a root of the Artin--Schreier
polynomial $X^p-X-(c-b^p+b)$. The idea now is to use this principle to
deduce a ``normal form'' for $c$ from which we can read off that the
extension has trivial defect. Still, we are quite lost if we do not make
some reductions beforehand. First, it is clear that one can proceed
by induction on the transcendence degree; so we can reduce to the case
of $\trdeg L|K=1$. Second, as $v$ may not be trivial on $K$, it may have
a very large rank. By general valuation theory, one has to reduce first
to finite rank and then to rank 1. This being done, one can show that
$c$ can be taken to be a polynomial $g\in K[x]$, where $x\in L''$ is
transcendental over $K$. Now the idea is the following: if $k=p
\cdot\ell$ and $g$ contains a non-zero summand $c_kx^k$, then we replace
it by $c_k^{1/p}x^\ell$. This is done by setting $b=c_k^{1/p}x^\ell$ in
the above computation. In this way one eliminates all $p$-th powers in
$g$, and the thus obtained normal form for $c$ will show that the
extension has trivial defect. This method (which I call
``Artin--Schreier surgery'') seems to have several applications; I used
it again to prove a quite different result (Theorem~\ref{stt3} below).
It can also be found in the paper [EPP].

Let us note that the Artin--Schreier polynomials appear in Abhyankar's
work in a somewhat disguised form. This is because the coefficients
have to lie in the local ring he is working in. For example, if
$vc<0$, we would rather prefer to have a polynomial having coefficients
in the valuation ring, defining the same extension as $X^p-X-c$.
Setting $X=cY$, we find that if $\vartheta$ is a root of $X^p-X-c$, then
$\vartheta/c$ is a root of $Y^p-c^{1-p}Y-c^{1-p}$ with $vc^{1-p}=
(1-p)vc>0$. Therefore, Abhyankar considers polynomials of the form $Z^p
-c_1Z-c_2\,$ (cf.\ e.g., [A1], page 515, [A6], Theorem (2.2), or [A7],
page 34). In an extension obtained from $L$ by adjoining a $(p-1)$th
root of $c_1$ (if $(L,v)$ is henselian, then such an extension is tame),
this polynomial can be transformed back into an Artin--Schreier
polynomial.

\parm
Having shown inertial generation for function fields with Abhyankar
places, let us return to our problem of local uniformization.

%
%
\section{Relative local uniformization}     \label{sectrlu}
Throughout, I have stressed the function field aspect of local
uniformization. I have shown that function fields with Abhyankar places
can be generated in a nice way. I have talked about the algebraic
elements satisfying the assumption of the Multidimensional Hensel's
Lemma. The logical consequence of all this is to try to build up our
function field step by step: first, choose a nice transcendence basis,
according to Theorem~\ref{hrwtd}, then find algebraic elements, one
after the other, each of them satisfying the assumptions of Hensel's
Lemma over the previously generated field. This is the origin of the
following definition.

Take a finitely generated extension $F|K$, not necessarily
transcendental, and a place $P$ on $F$, not necessarily trivial on
$K$. We write ${\cal O}_F$ for the valuation ring of $P$ on $F$, and
${\cal O}_K$ for the valuation ring of its restriction to $K$. Further,
take elements $\zeta_1,\ldots, \zeta_m \in {\cal O}_F\,$. We will say
that \bfind{$(F|K, P)$ is uniformizable with respect to
$\zeta_1,\ldots,\zeta_m$} if there are
\pars
$\bullet$ \ a transcendence basis $T=\{t_1,\ldots,t_s\}\subset
{\cal O}_F$ of $F|K$ (may be empty),\par
$\bullet$ \ elements $\eta_1,\ldots,\eta_n\in {\cal O}_F\,$,
with $\zeta_1,\ldots,\zeta_m$ among them,\par
$\bullet$ \ polynomials $f_i(X_1,\ldots,X_n)\in {\cal O}_K
[t_1,\ldots,t_s,X_1,\ldots,X_n]$, $1\leq i\leq n$,
\sn
such that $F=K(t_1,\ldots,t_s,\eta_1,\ldots,\eta_n)$,
and\pars
{\bf (U1)} \ for $i<j$, $X_j$ does not occur in $f_i\,$,\par
{\bf (U2)} \ $f_i(\eta_1,\ldots,\eta_n)=0$ for $1\leq i\leq n$,\par
{\bf (U3)} \ $(\det J_f (\eta_1,\ldots,\eta_n))P \,\ne\, 0\;$.
\mn
Assertion {\bf (U1)} implies that $J_f$ is triangular. Assertion
{\bf (U3)} says that $f_1,\ldots,f_n$ and $\eta_1,\ldots,\eta_n$ satisfy
the assumption (\ref{hmhl}) of the Multidimensional Hensel's Lemma. By
the triangularity, this implies that for each $i$, $f_i$ and $\eta_i$
satisfy the hypothesis of Hensel's Lemma over the ground field
$K(t_1,\ldots,t_s,\eta_1,\ldots,\eta_{i-1})$.

We say that \bfind{$(F|K,P)$ is uniformizable} if it is
uniformizable with respect to {\it every} choice of finitely many
elements in ${\cal O}_F\,$.

Now assume in addition that $P$ is trivial on $K$. Then ${\cal O}_K=K$,
and the $P$-residues of the coefficients are obtained by just replacing
$t_j$ by $t_jP$, for $1\leq j\leq n$. Hence if we view the polynomials
$f_i$ as polynomials in the variables $Z_1,\ldots,Z_s,X_1,\ldots,X_n\,$,
then assertion {\bf (U3)} means that the Jacobian matrix at the point
$(t_1P,\ldots,t_sP, \eta_1P,\ldots,\eta_nP)$ has maximal rank. This
assertion says that on the variety defined over $K$ by the $f_i$
(having generic point $(t_1,\ldots,t_s,\eta_1,\ldots,\eta_n)$ and
function field $F$), the place $P$ is centered at the smooth point
\[(t_1P,\ldots,t_sP,\eta_1P,\ldots,\eta_nP)\;.\]

By uniformizing with respect to the $\zeta$'s, we obtain the following
important information: if we have already a model $V$ of $F|K$ with
generic point $(z_1,\ldots,z_k)$, where $z_1,\ldots,z_k\in {\cal O}
_F\,$, then we can choose our new model ${\cal V}$ of $F|K$ in such a
way that the local ring of the center of $P$ on ${\cal V}$ contains the
local ring of the center $(z_1P,\ldots,z_kP)$ of $P$ on $V$. For this,
we only have to let $z_1,\ldots,z_k$ appear among the $\zeta$'s. In
fact, Zariski proved the following \bfind{Local Uniformization Theorem}
(cf.\ [Z]):
\begin{theorem}                             \label{ZLUT}
Suppose that $F|K$ is a function field of characteristic 0, $P$
is a place of $F|K$, and $\zeta_1,\ldots,\zeta_m$ are elements of
${\cal O}_F\,$. Then $(F|K,P)$ is uniformizable with respect to
$\zeta_1,\ldots,\zeta_m\,$.
\end{theorem}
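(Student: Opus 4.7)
The plan is to combine Theorem~\ref{hrwtd} (henselian generation for Abhyankar places) with the Multidimensional Hensel's Lemma, using characteristic~$0$ to eliminate all defect obstructions: by {\bf (DF1)} and Lemma~\ref{tamerc0} every henselian field in sight is a defectless tame field, and by Lemma~\ref{Kimaxur} the absolute inertia field coincides with the maximal algebraic extension preserving the value group. The proof naturally splits into the Abhyankar case, handled directly, and a reduction step for the general case, handled by induction on the transcendence defect $\delta=\trdeg F|K-\trdeg FP|K-\rr v_P F$.

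For the Abhyankar case ($\delta=0$), I would first enlarge $K$ to its relative algebraic closure in $FP$, which is finite because $F|K$ is finitely generated, so that $FP|K$ becomes a rational function field. Theorem~\ref{hrwtd} then supplies a transcendence basis $x_1,\ldots,x_\rho,y_1,\ldots,y_\tau\in{\cal O}_F$ of $F|K$ with $v_P x_i$ rationally independent generators of $v_P F$ and $y_jP$ forming a separating transcendence basis of $FP|K$, and with $F^h$ equal to the henselization of $F_0:=K(x_1,\ldots,x_\rho,y_1,\ldots,y_\tau)$. Write $F=F_0(\eta_1,\ldots,\eta_n)$ including all $\zeta_j$ among the $\eta_i$. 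Successively choose monic polynomials $f_i$ in ${\cal O}_K[x_\bullet,y_\bullet,X_1,\ldots,X_{i-1}][X_i]$ such that $\eta_iP$ is a simple root of the reduction $f_iv_P$ over the previously constructed residue field; such polynomials exist because in characteristic $0$ all relevant residue field extensions are separable, and by the Simple Root Version of Hensel's Lemma each $\eta_i$ is then the unique henselization-root matching the one already present in $F^h$. Conditions (U1) and (U2) hold by construction, while (U3) follows because $J_f$ is upper triangular with diagonal entries $\partial f_i/\partial X_i(\eta_1,\ldots,\eta_n)$ of nonzero $P$-residue at the chosen simple roots.

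For the general case, reduce to $\delta=0$ by induction on $\delta$. Concretely, one picks an element $z\in{\cal O}_F$ outside $F_0$ whose addition to the Abhyankar subfunction field strictly decreases the transcendence defect of the remaining extension, uniformizes the intermediate subfunction field $F_0(z)|K$ with respect to an enlarged list containing $\zeta_1,\ldots,\zeta_m$ and auxiliary elements, and then extends the uniformization data across the algebraic tail $F|F_0(z)$ using the Generalized Stability Theorem (Theorem~\ref{ai}) to rule out any defect appearing in this final algebraic step.

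The main obstacle is precisely this reduction from a general $P$ to the Abhyankar case: selecting $z$ so that the inductive hypothesis applies \emph{and} so that the resulting triangular form (U1) together with the simple-rootness condition (U3) are preserved at each stage is delicate, and it is here that Zariski's original argument invokes blow-up techniques lying outside the purely valuation-theoretic framework built up in the paper. A secondary but essential difficulty, present already in the Abhyankar case, is the bookkeeping needed to fit the prescribed $\zeta_j$'s into the sequence $\eta_1,\ldots,\eta_n$ so that the resulting $f_i$ depends only on $X_1,\ldots,X_i$ while its reduction still has $\eta_iP$ as a simple root; this forces a careful ordering of the $\eta_i$ compatible with both the tower of residue field extensions and the tower of subfields of $F^h$.
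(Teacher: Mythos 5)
This theorem is \emph{not} proved in the paper: it is Zariski's Local Uniformization Theorem (1940), which the text attributes to [Z] and cites without proof. The paper's own valuation-theoretic machinery yields uniformizability only in special cases without enlarging the function field — rank-one Abhyankar places with $FP=K$ or $FP|K$ rational (Theorem~\ref{Abh1}), discrete rational places (Theorem~\ref{MTdrlu}), extensions within completions — and for general places only after a finite extension ${\cal F}\supseteq F$ (Theorems~\ref{Abh2},~\ref{MT1}). Indeed, the remark after Theorem~\ref{lucontrol} explicitly says the characteristic-$0$ version obtained this way is \emph{weaker} than Zariski's theorem, and Open Problems~3--6 document that the gap is real even in characteristic~$0$. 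So there is no in-paper proof to compare your proposal with, and your proposal does not constitute one either.

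Beyond the obstacle you candidly acknowledge (the non-Abhyankar reduction needs Zariski's blow-ups), there are concrete errors that would sink the argument. Replacing $K$ by its relative algebraic closure in $FP$ changes the base field, which is not permitted by the statement; moreover that algebraic closure sits inside $FP$, not inside $F$, so the "enlarged" ground field is not a subfield of $F$ at all. Your induction on transcendence defect is set up backwards: the transcendence defect $\delta(F|K,P)$ is a fixed invariant of the whole extension, and adjoining $z\in{\cal O}_F$ to the Abhyankar subfunction field $F_0$ \emph{increases} the transcendence defect of the intermediate extension $(F_0(z)|K,P)$ from $0$ to $1$ — it does not decrease anything. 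For places of rank $>1$ the henselization of $F_0$ need not lie in the completion of $F_0$, so Corollary~\ref{hens1} does not apply; this is exactly Open Problem~3, unsolved in the paper even in characteristic~$0$. Finally, invoking the Generalized Stability Theorem to "rule out defect" is vacuous in characteristic $0$ (there is never any defect by Ostrowski, cf.\ (DF1)), and in any case defectlessness of $F_0(z)$ does not by itself produce the triangular system $(f_1,\ldots,f_n)$ with nonvanishing Jacobian residue that uniformizability requires. (Minor: by (U1) the Jacobian is lower, not upper, triangular.)
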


But the $\zeta$'s also play another role. Through their presence, the
above property becomes transitive (see [K6]):
\begin{theorem} \
{\bf (Transitivity of Relative Uniformization)} \
If $E|F$ is uniformizable with respect to $\zeta_1,\ldots,\zeta_m$ and
$F|K$ is uniformizable with respect to certain finitely many elements
derived from $E|F$ and $\zeta_1,\ldots,\zeta_m\,$, then $E|K$ is
uniformizable with respect to $\zeta_1,\ldots,\zeta_m\,$. In particular,
if $E|F$ and $F|K$ are uniformizable, then so is $E|K$.
\end{theorem}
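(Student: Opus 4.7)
The plan is to glue the two given uniformizations by taking, as the ``derived elements'' to be uniformized in $F|K$, precisely the $\mathcal{O}_F$-coefficients of the polynomials that witness the uniformization of $E|F$. I would first fix a uniformization of $(E|F,P)$ with respect to $\zeta_1,\ldots,\zeta_m$: a transcendence basis $\{s_1,\ldots,s_{s'}\}\subset\mathcal{O}_E$ of $E|F$, elements $\theta_1,\ldots,\theta_{n'}\in\mathcal{O}_E$ containing every $\zeta_i$, and polynomials $g_j\in\mathcal{O}_F[s_1,\ldots,s_{s'},Y_1,\ldots,Y_{n'}]$ satisfying (U1)--(U3). Let $c_1,\ldots,c_N\in\mathcal{O}_F$ be the finite list of all nonzero coefficients of the $g_j$'s, viewed as polynomials in the $s$'s and $Y$'s. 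These $c_k$'s are my derived elements, and I then apply the hypothesis to obtain a uniformization of $(F|K,P)$ with respect to $c_1,\ldots,c_N$: a transcendence basis $\{t_1,\ldots,t_s\}\subset\mathcal{O}_F$, elements $\eta_1,\ldots,\eta_n\in\mathcal{O}_F$ with every $c_k$ appearing as some $\eta_{i_k}$, and polynomials $f_i\in\mathcal{O}_K[t_1,\ldots,t_s,X_1,\ldots,X_n]$ satisfying (U1)--(U3).

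To assemble the combined uniformization of $(E|K,P)$ with respect to the $\zeta$'s I would take the transcendence basis $\{t_1,\ldots,t_s,s_1,\ldots,s_{s'}\}$ (algebraically independent by additivity of transcendence degree, since the $s$'s are independent over $F\supseteq K(t_1,\ldots,t_s)$), the generator list $\eta_1,\ldots,\eta_n,\theta_1,\ldots,\theta_{n'}$ (which already contains every $\zeta_i$ through the $\theta$'s), and as polynomial system the $f_i$ together with modified versions $\tilde g_j$ obtained from $g_j$ by replacing each coefficient $c_k$ by the variable $X_{i_k}$. Each $\tilde g_j$ then lies in $\mathcal{O}_K[t_1,\ldots,t_s,s_1,\ldots,s_{s'},X_1,\ldots,X_n,Y_1,\ldots,Y_{n'}]$ and satisfies $\tilde g_j(\eta,\theta)=g_j(\theta)=0$ by construction.

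The verifications are then routine. With the variables ordered $X_1,\ldots,X_n,Y_1,\ldots,Y_{n'}$, condition (U1) holds because the $f_i$ involve none of the $Y$'s, any $X_j$ appearing in $\tilde g_k$ has $j\le n<n+k$, and the $\tilde g_j$ inherit their triangularity in the $Y$'s from the $g_j$. For (U3), the Jacobian of the full system with respect to $(X,Y)$ is block lower triangular because $\partial f_i/\partial Y_j=0$, so its determinant factors as $\det J_f\cdot\det(\partial\tilde g_i/\partial Y_j)_{i,j}$; evaluating at $(\eta,\theta)$ and noting that substituting $\eta_{i_k}$ back for $X_{i_k}$ turns the $\tilde g$-block into the Jacobian $J_g$ at $\theta$, one gets $P$-residue equal to $(\det J_f(\eta))P\cdot(\det J_g(\theta))P\ne 0$. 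The ``in particular'' statement follows by specializing to an empty list of $\zeta$'s. The main obstacle, which is bookkeeping rather than mathematics, is the coefficient-substitution step itself: the entire purpose of the ``derived elements'' clause in the hypothesis is to force every $\mathcal{O}_F$-coefficient occurring in some $g_j$ to become one of the $\eta_i$'s, for only after this is done do the triangular form of (U1) and the block factorization of the combined Jacobian drop out automatically from the fact that the $f_i$ carry no $Y$-dependence.
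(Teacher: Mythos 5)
Your main argument is correct, and since the paper defers the proof of this theorem to [K6] (not reproduced here) I can only assess it on its own terms. Your construction is exactly the intended use of the ``derived elements'' clause: collect all nonzero $\mathcal{O}_F$-coefficients $c_1,\ldots,c_N$ of the $g_j$'s witnessing the uniformization of $E|F$, invoke the hypothesis to force each $c_k$ to occur as some $\eta_{i_k}$ in a uniformization of $F|K$, and rewrite each $g_j$ as $\tilde g_j$ by replacing $c_k\mapsto X_{i_k}$. Then $\tilde g_j$ has coefficients in the prime ring, hence in $\mathcal{O}_K$, and $\tilde g_j(\eta,\theta)=g_j(\theta)=0$. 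The triangularity (U1) is preserved because the substitution introduces only $X$'s, never $Y$'s, and the $f_i$ carry no $Y$'s; the block lower-triangular Jacobian factors as $\det J_f(\eta)\cdot\det J_g(\theta)$, whose $P$-residue is a product of two nonzero elements of the residue field $EP$. The union $\{t_1,\ldots,t_s,s_1,\ldots,s_{s'}\}$ is a transcendence basis of $E|K$, it lies in $\mathcal{O}_E$ (since $\mathcal{O}_F\subset\mathcal{O}_E$), and $E=K(t,s,\eta,\theta)$. All of this is sound.

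One small but genuine error: your last substantive sentence claims the ``in particular'' clause ``follows by specializing to an empty list of $\zeta$'s.'' It does not. By definition, ``$(E|K,P)$ is uniformizable'' means uniformizable with respect to \emph{every} finite list of elements of $\mathcal{O}_E$, not merely the empty one, and passing from uniformizability with respect to the empty list to uniformizability with respect to an arbitrary list is exactly the nontrivial content that the $\zeta$'s encode. The correct deduction is: take an arbitrary $\zeta_1,\ldots,\zeta_m\in\mathcal{O}_E$; uniformizability of $E|F$ gives a uniformization of $(E|F,P)$ with respect to these $\zeta$'s; extract its coefficients $c_1,\ldots,c_N$; uniformizability of $F|K$ gives a uniformization with respect to $c_1,\ldots,c_N$; then the main part of the theorem yields a uniformization of $(E|K,P)$ with respect to the given $\zeta$'s. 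Since the list was arbitrary, $(E|K,P)$ is uniformizable.
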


As we do not require that $P$ is trivial on $K$, we call the
property defined above \bfind{relative (local) uniformization}. Its
transitivity enables us to build up our function field step by step by
extensions which admit relative uniformization. I give examples of
uniformizable extensions; for the proofs, see [K5], [K6], [K7].
\mn
{\bf I)} \
We consider a function field $F|K$ and a place $P$ of $F$ such that
$v_P^{ }K$ is a convex subgroup of $v_P^{ }F$. The latter always holds
if $P$ is trivial on $K$ since then, $v_P^{ }K=\{0\}$. We take elements
$x_1,\ldots,x_{\rho}$ in $F$ such that $v_P^{ }x_1,\ldots, v_P^{ }
x_{\rho}$ form a maximal set of rationally independent elements in
$v_P^{ } F$ modulo $v_P^{ }K$. Further, we take elements $y_1,\ldots,
y_{\tau}$ in $F$ such that $y_1P,\ldots, y_{\tau} P$ form a
transcendence basis of $FP$ over $KP$.
\begin{proposition}                         \label{prep}
In the described situation, $(K(x_1,\ldots,x_{\rho},y_1,\ldots,
y_{\tau}) |K,P)$ is uniformizable. More precisely, the
transcendence basis $T$ can be chosen of the form $\{x'_1,\ldots,
x'_{\rho}, y_1,\ldots, y_{\tau}\}$, where $x'_1,\ldots, x'_{\rho}\in
{\cal O}_{K(x_1,\ldots,x_{\rho})}$ and for some $c\in {\cal O}_K\,$,
$c\ne 0$, (with $c=1$ if $P$ is trivial on $K$), the elements
$cx'_1,\ldots,cx'_{\rho}$ generate the same multiplicative subgroup of
$K(x_1,\ldots,x_{\rho})^\times$ as $x_1,\ldots,x_{\rho}\,$.
\end{proposition}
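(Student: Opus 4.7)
Set $\tilde{F}:=K(x_1,\ldots,x_{\rho},y_1,\ldots,y_{\tau})$. The plan is to start by applying Theorem~\ref{prelBour}, which gives that $\tilde{F}$ is a purely transcendental extension of $K$ of transcendence degree $\rho+\tau$, with $v_P\tilde{F}=v_P K\oplus\bigoplus_i\Z v_P x_i$ and $\tilde{F}P=KP(y_1P,\ldots,y_{\tau}P)$, and, crucially, the value of any polynomial in $K[x_1,\ldots,x_\rho,y_1,\ldots,y_\tau]$ equals the minimum of the values of its monomials. Thus, given any $\zeta_1,\ldots,\zeta_m\in {\cal O}_P\cap\tilde{F}$, the problem reduces to producing a transcendence basis $T=\{x'_1,\ldots,x'_\rho,y_1,\ldots,y_\tau\}$ of the required form and polynomials $f_j\in{\cal O}_K[T,X_j]$ satisfying {\bf (U1)}--{\bf (U3)}.

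The first step would be to normalize the $x_i$'s. Since $v_P K$ is convex in $v_P\tilde{F}$ and the $v_P x_i$'s are rationally independent over $v_P K$, each coset $v_P x_i+v_P K$ is a nonzero element of the ordered quotient $v_P\tilde{F}/v_P K$ and so has a definite sign. I would replace $x_i$ by $1/x_i$ when necessary---this preserves the field $K(x_1,\ldots,x_\rho)$ and only inverts one generator of the multiplicative subgroup it generates---so that $v_P x_i$ exceeds every element of $v_P K$; in particular $x_i\in{\cal O}_P$. The $y_k$'s already lie in ${\cal O}_P\setminus{\cal M}_P$ because $y_kP$ is nonzero (being algebraically independent over $KP$).

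The heart of the proof lies in adapting the basis to the given $\zeta_j$'s. Writing each $\zeta_j=p_j/q_j$ with $p_j,q_j\in{\cal O}_K[x_1,\ldots,x_\rho,y_1,\ldots,y_\tau]$, I would seek $x'_i$ of the shape $x'_i=c^{-1}\prod_j x_j^{n_{ij}}$ with $(n_{ij})\in GL_\rho(\Z)$ and $c\in{\cal O}_K\setminus\{0\}$ (with $c=1$ in the case $P|_K$ trivial), such that when each $\zeta_j$ is re-expressed as a Laurent polynomial in $x'_i,y_k$ and cleared to a polynomial representation $\zeta_j=P_j/Q_j$ with $P_j,Q_j\in{\cal O}_K[T]$, one has $v_P Q_j=0$. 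By the monomial-value formula from Theorem~\ref{prelBour}, the last condition forces $Q_j$ to contain a monomial involving only the $y_k$'s whose coefficient is a unit in ${\cal O}_K$. The unimodularity of $(n_{ij})$ then guarantees $K(x'_1,\ldots,x'_\rho)=K(x_1,\ldots,x_\rho)$ and $\langle cx'_1,\ldots,cx'_\rho\rangle=\langle x_1,\ldots,x_\rho\rangle$, and the normalization of $x_i$ together with a suitable choice of $(n_{ij})$ makes $x'_i\in{\cal O}_{K(x_1,\ldots,x_\rho)}$.

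Once $T$ has been chosen, setting $n:=m$, $\eta_j:=\zeta_j$, and $f_j(X_j):=Q_j X_j-P_j$ finishes the construction: {\bf (U1)} is automatic since each $f_j$ involves only $X_j$; {\bf (U2)} is $Q_j\zeta_j-P_j=0$ by construction; and {\bf (U3)} follows because $J_f$ is diagonal with entries $Q_j$, so $(\det J_f)P=\prod_j Q_jP\neq 0$, each factor being nonzero since $v_P Q_j=0$. The main obstacle is the simultaneous basis adaptation of the preceding paragraph: producing one unimodular matrix $(n_{ij})$ and one scalar $c$ that handles all $\zeta_j$ at once. This is a finitary lattice-theoretic problem on the exponent vectors of the minimum-value monomials of the $q_j$'s, a finite subset of $\Z_{\geq 0}^\rho$; in the trivial-on-$K$ case it reduces to finding a $\Z$-basis of $\Z^\rho$ adapted to a finite family of vectors in $\Z_{\geq 0}^\rho$, which can be engineered by iterated Smith-normal-form-style elementary operations, while in the non-trivial case the scalar $c$ is used to absorb the residual $v_P K$-part of those minimum values.
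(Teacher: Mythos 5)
Your skeleton is sensible and agrees with the paper in broad outline: you invoke Theorem~\ref{prelBour} correctly, normalize the $x_i$ so that they lie in ${\cal O}_P$, and once the new transcendence basis $T$ is in hand the construction $\eta_j:=\zeta_j$, $f_j(X_j):=Q_jX_j-P_j$ does yield {\bf (U1)}--{\bf (U3)}. The problem is the step you yourself flag as ``the main obstacle'': the simultaneous choice of the unimodular matrix $(n_{ij})$ and the scalar $c$. Your description of this step is wrong in a way that matters, and the tool you reach for (Smith-normal-form-style elementary operations) cannot do the job.

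You characterize the obstacle as ``finding a $\Z$-basis of $\Z^\rho$ adapted to a finite family of vectors in $\Z_{\geq 0}^\rho$.'' But the exponent vectors that actually have to be rendered coordinatewise non-negative in the new basis are not the exponent vectors of the minimum-value monomials of the $q_j$'s (those are indeed in $\Z_{\geq 0}^\rho$, and one would not need to change the basis for them at all). They are the \emph{differences} $\beta-\alpha_j$, where $x^{\alpha_j}$ is the minimum-value monomial of $q_j$ and $x^\beta$ ranges over the remaining monomials of $p_j$ and $q_j$: one divides $p_j$ and $q_j$ by $x^{\alpha_j}$ to force $v_P Q_j=0$, and the resulting Laurent monomials must become genuine monomials in the $x'_i$. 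These difference vectors are generally not coordinatewise non-negative; what Theorem~\ref{prelBour} guarantees is only that they have non-negative \emph{value} in the ordered group $v_P\tilde F$. So the problem is genuinely order-theoretic: given finitely many positive elements of a finitely generated ordered abelian group, produce a $\Z$-basis of positive elements in terms of which the given elements have non-negative integer coordinates. This is exactly Lemma~\ref{perron}, which the paper explicitly uses (Zariski proved it for subgroups of $\R$ via the Algorithm of Perron; the general case goes by induction on the finite rank). Smith normal form is blind to the ordering and will in general hand you a basis with the wrong signs, so ``iterated Smith-normal-form-style elementary operations'' is not an adequate substitute. Your remark about using $c$ to absorb the $v_PK$-components is the right idea for the non-trivial case, but the same order-theoretic input is still required.
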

\sn
The proof uses Theorem~\ref{prelBour}. It also uses the following lemma,
which was proved (but not explicitly stated) by Zariski in [Z] for
subgroups of $\R$, using the \bfind{Algorithm of Perron}. I leave it as
an easy exercise to you to prove the general case by induction on the
(finite!) rank of the ordered abelian group. An instant proof
of the lemma can be found in [EL] (Theorem 2.2), and I am sure
there are several more authors who reproved the lemma, not knowing about
Zariski's application.
\begin{lemma}                               \label{perron}
Let $G$ be a finitely generated ordered abelian group. Take any
positive elements $\alpha_1,\ldots,\alpha_\ell$ in $G$. Then there
exist positive elements $\gamma_1,\ldots,\gamma_{\rho}\in G$ such
that $G=\Z \gamma_1 \oplus\ldots\oplus\Z\gamma_{\rho}$ and every
$\alpha_i$ can be written as a sum $\sum_{j}^{} n_{ij}\gamma_j$ with
non-negative integers $n_{ij}\,$.
\end{lemma}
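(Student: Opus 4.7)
The plan is to induct on the rank $r$ of the ordered abelian group $G$, which is finite since $G$ is finitely generated. The cases $r=0$ (trivial) and $r=1$ (archimedean, so $G$ embeds order-preservingly into $\R$ by H\"older, and one argues by the Perron algorithm as in Zariski's original treatment of subgroups of $\R$) serve as the base. Assume $r\ge 2$, and let $C$ be the largest proper convex subgroup of $G$; then $C$ has rank $r-1$ and is finitely generated, while $G/C$ is a finitely generated, torsion-free (by convexity of $C$), ordered group of rank $1$ and rational rank $1$, hence $G/C\isom\Z$. Write $\delta$ for its positive generator, $\pi\colon G\to G/C$ for the projection, and $\pi(\alpha_i)=n_i\delta$ with $n_i\ge 0$ (non-negativity because $\alpha_i>0$).

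The key step is to choose a lift $\gamma_\rho\in G$ of $\delta$ such that $\alpha_i-n_i\gamma_\rho\ge 0$ for every $i$. Pick any lift $\gamma_\rho^{(0)}$, set $d_i:=\alpha_i-n_i\gamma_\rho^{(0)}\in C$, and use the fact that $C$, being a non-trivial ordered group, contains arbitrarily negative elements to choose $c\in C$ with $c\le -|d_i|$ for all (finitely many) indices $i$ with $n_i\ge 1$. Set $\gamma_\rho:=\gamma_\rho^{(0)}+c$. Then $\pi(\gamma_\rho)=\delta>0$ forces $\gamma_\rho>0$, and for each $i$ with $n_i\ge 1$ we have $n_ic\le c\le -|d_i|\le d_i$, so $n_i\gamma_\rho-\alpha_i=n_ic-d_i\le 0$; hence $\alpha_i':=\alpha_i-n_i\gamma_\rho$ is a non-negative element of $C$ for every $i$.

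Now apply the inductive hypothesis to $C$ with the $\alpha_i'$s that are strictly positive, yielding positive $\gamma_1,\ldots,\gamma_{\rho-1}\in C$ which form a $\Z$-basis of $C$ and express each $\alpha_i'=\sum_j n_{ij}\gamma_j$ with $n_{ij}\ge 0$. The exact sequence $0\to C\to G\to G/C\to 0$ splits via $\delta\mapsto\gamma_\rho$, so $\{\gamma_1,\ldots,\gamma_\rho\}$ is a $\Z$-basis of $G$ consisting of positive elements, and $\alpha_i=n_i\gamma_\rho+\sum_j n_{ij}\gamma_j$ has the desired form. The main obstacle is the choice of $\gamma_\rho$ in the key step: a naive lift typically leaves a negative $C$-part in the remainders $\alpha_i-n_i\gamma_\rho$, and the fix is to translate the lift deep into the negative cone of $C$, exploiting the unboundedness of $C$, while positivity of $\gamma_\rho$ itself follows for free from $\pi(\gamma_\rho)=\delta>0$.
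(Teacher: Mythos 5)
The overall strategy (induction on the rank) matches what the paper prescribes, and your base cases and the ``shift the lift deep into the negative cone of $C$'' idea are sound. But the induction step contains a genuine gap: you assert that $G/C$, where $C$ is the largest proper convex subgroup, has rational rank $1$ and hence is isomorphic to $\Z$. That is false in general. Since $C$ is the largest proper convex subgroup, $G/C$ is archimedean (rank $1$), finitely generated, and torsion-free, so it is order-isomorphic to a finitely generated subgroup of $\R$ --- but such a subgroup can have rational rank greater than $1$. For instance, take $G=(\Z+\sqrt{2}\,\Z)\times\Z$ with the lexicographic ordering; then $C=\{0\}\times\Z$ and $G/C\cong\Z+\sqrt{2}\,\Z$, which is free abelian of rank $2$. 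Your whole induction step --- writing $\pi(\alpha_i)=n_i\delta$, lifting the single generator $\delta$, splitting the exact sequence by $\delta\mapsto\gamma_\rho$ --- leans on $G/C$ being cyclic, so as written it only proves the lemma for those $G$ whose rank equals their rational rank.

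The fix stays inside your framework: apply the rank-$1$ base case (Perron) to $G/C$ with the images $\pi(\alpha_i)$, producing a positive $\Z$-basis $\bar\delta_1,\ldots,\bar\delta_s$ of $G/C$ and non-negative integers $n_{ij}$ with $\pi(\alpha_i)=\sum_j n_{ij}\bar\delta_j$. Lift each $\bar\delta_j$ to some $\gamma_j^{(0)}\in G$, set $d_i:=\alpha_i-\sum_j n_{ij}\gamma_j^{(0)}\in C$, pick a single $c\in C$ with $c\le-\lvert d_i\rvert$ for all $i$, and replace every lift by $\gamma_j:=\gamma_j^{(0)}+c$. Then $\pi(\gamma_j)=\bar\delta_j>0$ forces $\gamma_j>0$ by convexity of $C$, and for each $i$ with $\pi(\alpha_i)\ne 0$ one has $\sum_j n_{ij}\ge 1$, so $\alpha_i-\sum_j n_{ij}\gamma_j=d_i-\bigl(\sum_j n_{ij}\bigr)c\ge d_i-c\ge 0$; the $\alpha_i$ with $\pi(\alpha_i)=0$ lie in $C$ already. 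Now apply the inductive hypothesis to $C$ with all these non-negative remainders. This handles the general case, including your special case $s=1$ verbatim.
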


\mn
{\bf II)} \
The next result is based on Kaplansky's work [KA1]. First, we need:
\begin{lemma}                               \label{close}
Let $(K(z)|K,P)$ be an immediate transcendental extension. Assume
further that $(K,P)$ is separable-algebraically maximal or that
$(K(z),P)$ lies in the completion of $(K,P)$. Then for every polynomial
$f\in K[X]$, the value $v_P^{ }f(a)$ is fixed for all $a\in K$
sufficiently close to $z$. That is,
\begin{equation}                            \label{trat}
\begin{array}{c}
\forall f\in K[X]\;\exists\alpha\in v_P^{ }K\;\exists\beta\in
\{v_P^{ }(z-b)\mid b\in K\}\; \forall a\in K:\\[.3cm]
v_P^{ }(z-a) \geq\beta\,\Rightarrow\, v_P^{ }f(a)=\alpha\;.
\end{array}\;\;\;
\end{equation}
\end{lemma}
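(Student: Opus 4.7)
The plan is to set $\alpha:=v_P^{ }f(z)$ --- which lies in $v_P^{ }K$ because $f(z)\ne 0$ (as $z$ is transcendental over $K$) and $v_P^{ }K(z)=v_P^{ }K$ by immediacy --- and to produce $b_*\in K$ such that $\beta:=v_P^{ }(z-b_*)$ works. The main tool is the Taylor/Hasse-derivative expansion
\[
f(a)\>=\>f(z)\,+\,\sum_{k=1}^{\deg f}D^kf(z)\cdot(a-z)^k,
\]
whose coefficients $D^kf(z)\in K(z)$ have values $\gamma_k:=v_P^{ }D^kf(z)\in v_P^{ }K$. Choose $\beta_0\in v_P^{ }K$ large enough that $\gamma_k+k\beta_0>\alpha$ for every $k$ with $D^kf(z)\ne 0$; then $v_P^{ }(a-z)>\beta_0$ forces each summand on the right to have value exceeding $\alpha$, hence $v_P^{ }(f(a)-f(z))>\alpha$ and so $v_P^{ }f(a)=\alpha$. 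What remains is to exhibit $b_*\in K$ with $v_P^{ }(z-b_*)>\beta_0$, and this is where each of the two hypotheses comes in: each is designed to guarantee that $v_P^{ }(z-K)$ is cofinal in $v_P^{ }K$.

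In case (b), cofinality is immediate: density of $K$ in its completion $\widehat K\supseteq K(z)$ directly yields $b\in K$ of arbitrarily large $v_P^{ }(z-b)$, and the Hasse argument above concludes the proof.

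In case (a), I would establish the same cofinality by contradiction. Assume $v_P^{ }(z-K)$ is bounded above in $v_P^{ }K$, and pick a pseudo-Cauchy sequence $(b_\nu)$ in $K$ with $v_P^{ }(z-b_\nu)$ strictly increasing toward its supremum. By Kaplansky's dichotomy, $(b_\nu)$ is of algebraic or transcendental type over $K$. In the algebraic case, let $g\in K[X]$ be a minimal-degree witness; after a Frobenius reduction in characteristic $p>0$ (writing $g(X)=h(X^{p^e})$ with $h$ separable and passing to $(b_\nu^{p^e},z^{p^e},h)$ --- legitimate since $v_P^{ }(z^{p^e}-b_\nu^{p^e})=p^ev_P^{ }(z-b_\nu)$ still strictly increases and $z^{p^e}$ remains transcendental) we may take $g$ separable. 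Then a root $z_1\in\tilde{K}$ of $g$ that is a pseudo-limit of $(b_\nu)$ generates an immediate separable-algebraic extension $K(z_1)|K$, so by sep-alg maximality $z_1\in K$. A standard shift argument (using $v_P^{ }(z-z_1)\in v_P^{ }K$ and the immediacy of $K(z)|K$ to lift the residue of $(z-z_1)/c$ back into $K$) then produces $b\in K$ with $v_P^{ }(z-b)$ strictly exceeding the supremum --- contradiction. The transcendental-type case is ruled out by a parallel pseudo-Cauchy continuity argument, or equivalently handled directly via the eventual stabilization of $v_P^{ }g(b_\nu)$ for every polynomial $g$.

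The main obstacle is the Frobenius reduction in case (a): one has to check carefully that passing to $p^e$-th powers truly produces a pseudo-Cauchy sequence of algebraic type with a separable minimal polynomial, and that transcendence of the pseudo-limit survives. The bookkeeping for this step, together with the transcendental-type subcase, follows Kaplansky's analysis of pseudo-Cauchy sequences in [KA1] and is carried out in the paper's references [K2] and [K6].
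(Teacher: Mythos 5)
Your case (b) argument (density of $K$ in the completion plus the Hasse--Taylor expansion of $f$ around $z$) is correct, and it takes a direct route: the paper instead argues by contradiction, showing that failure of (\ref{trat}) when $K(z)$ lies in the completion would force $v_P^{ }f(z)=\infty$. Both work, and your version is arguably cleaner for that case.

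The trouble is in case (a), where you try to reduce to the same cofinality claim. The claim that separable-algebraic maximality of $(K,P)$ forces $v_P^{ }(z-K)$ to be cofinal in $v_P^{ }K$ is \emph{false}. Cofinality of $v_P^{ }(z-K)$ is literally equivalent to $z$ lying in the completion; but the two hypotheses of the lemma are genuinely independent, and there are separable-algebraically maximal fields with immediate transcendental extensions not contained in the completion. For a concrete example, take $K$ to be the algebraic closure of $\F_p((t))$ with some extension of the $t$-adic valuation. Being algebraically closed, $K$ is trivially separable-algebraically maximal, but it is not maximal: its maximal immediate extension is $\ovl{\F_p}((\Q))$, which strictly contains the completion $\hat K$ (since the value group $\Q$ is dense, $\hat K$ is not maximal). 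Any $z\in\ovl{\F_p}((\Q))\setminus\hat K$ generates an immediate transcendental extension of $K$ with $v_P^{ }(z-K)$ bounded above in $\Q$, so your ``$\beta_0$'' is never attained.

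Consequently, the contradiction you want to derive in case (a) is not there to be had, and this surfaces precisely in your treatment of the transcendental-type subcase. A pseudo-Cauchy sequence $(b_\nu)$ of transcendental type with bounded value set is not ``ruled out'' by separable-algebraic maximality --- it is exactly what occurs in the example above. When you say it is ``handled directly via the eventual stabilization of $v_P^{ }g(b_\nu)$ for every polynomial $g$'', you are at that point abandoning the Hasse-expansion-plus-cofinality strategy entirely and falling back on Kaplansky's characterization of transcendental type, which is the real content of the lemma in this case and is what the paper invokes. But that stabilization statement, correctly unwound, \emph{is} (\ref{trat}) (with $\alpha=v_P^{ }f(z)$ by Kaplansky's uniqueness of the immediate extension determined by a transcendental-type sequence and its pseudo-limit $z$), so you would be assuming the thing you set out to prove. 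In short: your case (a) argument genuinely needs the structure of Kaplansky's dichotomy --- failure of (\ref{trat}) forces an algebraic-type sequence, hence a proper immediate algebraic (and, after the Frobenius reduction you outline, separable-algebraic) extension, contradicting the maximality hypothesis --- and not the cofinality detour, which is a dead end. That is exactly the route the paper takes.
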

Kaplansky proves that if (\ref{trat}) does not hold, then there is a
proper immediate algebraic extension of $(K,P)$. If $(K(z),P)$ does not
lie in the completion of $(K,P)$, then this can be transformed into a
proper immediate separable-algebraic extension ([K1], [K2]; the proof
uses a variant of the Theorem on the Continuity of Roots). The existence
of such an extension is excluded if $(K,v)$ is separable-algebraically
maximal. If on the other hand we assume that $(K(z),P)$ lies in the
completion of $(K,P)$, then one can show that if $f$ does not satisfy
(\ref{trat}), then $v_P^{ }f(z)=\infty$. But this means that $f(z)=0$,
contradicting the assumption that $K(z)|K$ is transcendental. Note that
by Lemma~\ref{tame}, every separably tame and hence also every tame
field $(K,P)$ satisfies the assumption of Lemma~\ref{close}.

\pars
The following proposition is somewhat complementary to
Proposition~\ref{prep}.
\begin{proposition}                               \label{ist}
Let $(K(z)|K,P)$ be an immediate transcendental extension. If $z$
satisfies (\ref{trat}), then $(K(z)|K,P)$ is uniformizable.
\end{proposition}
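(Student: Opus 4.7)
The plan is to use (\ref{trat}) to obtain a rational approximation $a \in K$ of $z$, then to choose a transcendence basis element and construct a triangular Hensel-friendly polynomial chain encoding each $\zeta_i$. Write $\zeta_i = g_i(z)/h_i(z)$ with $g_i, h_i \in {\cal O}_K[X]$. Applying (\ref{trat}) to the finite family $\{g_i, h_i\}_i$ (and the relevant derivatives) produces a common bound $\beta \in v_P K$ such that $v_P g_i(a) = v_P g_i(z) =: \nu_i$ and $v_P h_i(a) = v_P h_i(z) =: \mu_i$ whenever $v_P(z-a) \geq \beta$. Since the extension is immediate and transcendental, the set $\{v_P(z-b) : b \in K\}$ is cofinal in $v_P K$, by the pseudo-Cauchy structure of such extensions. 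After replacing $z$ by $1/z$ if necessary, I pick $a \in {\cal O}_K$ with $v_P(z-a)$ so large that in every Taylor expansion of $h_i(X+a)$ and $g_i(X+a)$ the constant term dominates in valuation. Setting $t := z-a \in {\cal M}_P$, I have $K(t) = K(z)$, $v_P h_i(t+a) = \mu_i$, and $v_P g_i(t+a) = \nu_i$.

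In the easy case where every $\mu_i = 0$, I take $T = \{t\}$, $\eta_i = \zeta_i$, and define $f_i(t, X_i) = h_i(t+a) X_i - g_i(t+a) \in {\cal O}_K[t, X_i]$. Triangularity (U1) is immediate, $f_i(\zeta_i) = 0$ gives (U2), and $\partial f_i/\partial X_i = h_i(t+a)$ has zero $P$-valuation and nonzero residue, yielding (U3). In the general case some $\mu_i > 0$, and the naive polynomial fails (U3) because its derivative has positive valuation. The remedy is to switch transcendence basis: for the index $i_0$ with $\mu_{i_0}$ maximal, locate a near-root $\alpha$ of $h_{i_0}$ with $v_P(z-\alpha)$ large (in $K$ when possible, otherwise passing to an algebraic auxiliary $\eta$), pick $\pi \in {\cal O}_K$ with $v_P\pi = v_P(z-\alpha)$, and set $u := (z-\alpha)/\pi$. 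Then $v_P u = 0$, $K(u) = K(z)$, $z = \alpha + \pi u \in {\cal O}_K[u]$, and factoring $(X-\alpha)^k$ out of $h_{i_0}$ and $g_{i_0}$ (with $k$ the relevant multiplicity) produces a rewritten defining polynomial for $\zeta_{i_0}$ over $K(u)$ whose leading derivative is a unit of ${\cal O}_P$ and is therefore Hensel-friendly.

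The main obstacle will be handling multiple $\zeta_i$'s with distinct near-roots $\alpha_i$, together with the case where no near-root lies in $K$. The ultrametric inequality $v_P(\alpha - \alpha_j) \geq \min(v_P(z-\alpha), v_P(z-\alpha_j))$ forces distinct near-roots to cluster around a common center at strictly decreasing depths, so after absorbing the deepest cluster into $u$ the residual obstructions $\mu_j$ shrink. I plan to iterate this process, introducing one new auxiliary $\eta$ (a Hensel lift over the tower built so far) per cluster, and stitch the resulting polynomials into a single triangular chain $f_1,\ldots,f_n$ by invoking the Transitivity of Relative Uniformization. Throughout the argument, condition (\ref{trat}) is essential: it stabilizes polynomial valuations under approximation, and combined with immediacy of $(K(z)|K,P)$ it guarantees that the residues appearing on the Jacobian diagonal land in $KP = FP$, as required for (U3).
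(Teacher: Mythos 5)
Your starting point---use (\ref{trat}) to pick $a\in{\cal O}_K$ close to $z$ and encode each $\zeta_i$ by a linear $f_i$---is correct, but there are two genuine gaps. The claim that $\{v_P(z-b)\mid b\in K\}$ is cofinal in $v_P K$ is false: an immediate transcendental extension need not lie in the completion of $K$. This set has no largest element (that much follows from immediacy alone), but it can be bounded above in $v_P K$. What you actually need is that $v_P(z-a)$ can be made to exceed all the ``Newton slopes'' $(v_P b_{i,0}-v_P b_{i,k})/k$, $k\geq 1$, where $b_{i,k}$ denotes the coefficient of $X^k$ in $h_i(a+X)$, and similarly for the $g_i$. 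This does follow from (\ref{trat}), but the argument you omit is essential: apply (\ref{trat}) to all the coefficient polynomials $b_{i,k}$ (the Hasse derivatives of $h_i$), so that their values stabilize; since $h_i(z)=\sum_k b_{i,k}(z-a)^k$ is a fixed element while $v_P(z-a)$ runs through infinitely many values, the constant term $b_{i,0}=h_i(a)$ must eventually give the unique minimal summand, and this forces $v_P(z-a)$ above all those slopes. Without this Newton-polygon step, ``the constant term dominates'' is just an unjustified assertion.

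The more serious gap is your treatment of $\mu_{i_0}>0$. The substitution $u=(z-\alpha)/\pi$ does not change the element $h_{i_0}(z)$, so $\partial f_{i_0}/\partial X_{i_0}$ at the generic point is still $h_{i_0}(z)$, of value $\mu_{i_0}>0$, and (U3) still fails; moreover $g_{i_0},h_{i_0}$ are coprime, so no $(X-\alpha)^k$ divides both, and if the near-root $\alpha\notin K$ then $u\notin K(z)$ (as $K$ is relatively algebraically closed in the rational function field $K(z)$, no ``algebraic auxiliary'' is available inside $F$). In fact no case distinction on the $\mu_i$ is needed at all. Take $t:=(z-a)/d$ with $d\in{\cal O}_K$, $v_P d=v_P(z-a)$, so that $v_P t=0$ and $K(t)=K(z)$, and scale each relation by $h_i(a)$:
\[
f_i(T,X_i)\;:=\;\frac{h_i(a+dT)}{h_i(a)}\,X_i\;-\;\frac{g_i(a+dT)}{h_i(a)}\;.
\]
The coefficient of $T^k$ in the first fraction has value $k\,v_P d+v_P b_{i,k}-v_P h_i(a)\geq 0$ exactly once $v_P(z-a)$ exceeds the slopes above (with equality at $k=0$), and the second fraction lies in ${\cal O}_K[T]$ for the same reason together with $v_P g_i(z)\geq v_P h_i(z)$, which holds since $\zeta_i\in{\cal O}_P$. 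Then $\partial f_i/\partial X_i(t,\zeta_i)=h_i(z)/h_i(a)$ has value $0$, so (U3) holds regardless of the $\mu_i$, and the iterative ``clustering'' and the appeal to Transitivity are unnecessary.
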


\mn
{\bf III)} \ The proof of the following proposition is quite easy;
for the transcendental part, it uses Lemma~\ref{close} and
Proposition~\ref{ist}.
\begin{proposition}                               \label{comp1}
Every separable extension of a valued field within its
completion is uniformizable.
\end{proposition}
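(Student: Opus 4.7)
The plan is to decompose $(L|K,P)$ into a tower of simple extensions, uniformize each of them separately, and stitch the uniformizations together via the Transitivity of Relative Uniformization. Since $L|K$ is separable and (as the definition of uniformizability tacitly requires) finitely generated, first pick a separating transcendence basis $t_1,\ldots,t_s$ of $L|K$ and algebraic generators $a_1,\ldots,a_r$ so that $L=K(t_1,\ldots,t_s,a_1,\ldots,a_r)$ with each $a_j$ separable-algebraic over $K^{(j-1)}:=K(t_1,\ldots,t_s,a_1,\ldots,a_{j-1})$. Refining to the obvious simple extensions yields the tower
$$K\;\subset\;K(t_1)\;\subset\;\cdots\;\subset\;K(t_1,\ldots,t_s)=K^{(0)}\;\subset\;K^{(1)}\;\subset\;\cdots\;\subset\;K^{(r)}=L.$$
Because $L\subseteq\hat K$ and $\hat K|K$ is immediate, $(L|K,P)$ is itself immediate, and each intermediate field of the tower lies in the topological closure in $\hat K$ of its predecessor, which is a completion of that predecessor. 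Hence every step is a simple separable immediate extension of its base by an element of that base's completion.

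For each transcendental step $F\to F(z)$, Lemma~\ref{close} applied in the case ``$(F(z),P)$ lies in the completion of $(F,P)$'' tells us that $z$ satisfies condition~(\ref{trat}) over $F$; Proposition~\ref{ist} then yields that $(F(z)|F,P)$ is uniformizable.

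For each separable-algebraic step $F\to F(a)$, the element $a$ lies in the henselization $F^h$: in rank one this is because $\hat F$ is henselian and therefore contains $F^h$, while in general one invokes the universal property of the henselization together with the fact that separable-algebraic elements of the completion of $F$ generate henselian subextensions. The simple-root version of Hensel's Lemma then produces a polynomial $\tilde g\in{\cal O}_F[X]$ with $\tilde g(a)=0$ and $v_P\,\tilde g'(a)=0$, and the pair $(\eta_1,f_1):=(a,\tilde g)$ uniformizes $(F(a)|F,P)$. Should the residual simple-root property fail for the minimal polynomial of $a$ itself, one unfolds $a$ along the inductive Hensel tower used to construct $F^h$, introducing auxiliary generators $\eta_i\in F(a)$ and assembling their defining equations into a triangular system meeting conditions (U1)--(U3).

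Finally, one iterates the Transitivity of Relative Uniformization along the tower, carrying through it a prescribed finite set $\zeta_1,\ldots,\zeta_m\in{\cal O}_L$, to conclude that $(L|K,P)$ is uniformizable. The hard part will be the algebraic step: the minimal polynomial of $a$ over $F$ need not have $aP$ as a simple root of its reduction, so the bookkeeping to produce generators $\eta_i\in F(a)$ and polynomials in ${\cal O}_K[t_1,\ldots,t_s,X_1,\ldots,X_n]$ with triangular Jacobian of unit determinant is what requires real care; the transcendental steps, by contrast, are handled cleanly by Proposition~\ref{ist}.
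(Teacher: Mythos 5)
Your tower decomposition and your treatment of the transcendental steps via Lemma~\ref{close} and Proposition~\ref{ist} are correct and match the paper, but the algebraic steps contain a genuine gap, and the detour through the henselization is the wrong route. The assertion that, once $a\in F^h$, ``the simple-root version of Hensel's Lemma then produces a polynomial $\tilde g\in{\cal O}_F[X]$ with $\tilde g(a)=0$ and $v_P^{ }\tilde g'(a)=0$'' is false: any such $\tilde g$ is an ${\cal O}_F[X]$-multiple of the minimal polynomial $g$ of $a$ (suitably normalized in ${\cal O}_F[X]$), so $v_P^{ }\tilde g'(a)\geq v_P^{ }g'(a)$, and the latter is frequently positive --- for $a=\sqrt{17}$ in the $2$-adic completion of $\Q$, $g=X^2-17$ gives $v_2\,g'(a)=v_2(2\sqrt{17})=1$, so no pair $(a,\tilde g)$ satisfies (U3). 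Your fallback of ``unfolding $a$ along the inductive Hensel tower'' is never carried out, and if taken seriously it would amount to proving that finite subextensions of the henselization are uniformizable --- which is Open Problem~3 of the paper; the only available special case, Corollary~\ref{hens1} for rank~$1$, is itself \emph{derived from} Proposition~\ref{comp1} and so is off limits here.

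What the proposal never exploits is the one feature of the completion that makes the algebraic part ``quite easy'': $a$ can be approximated from $F$ to arbitrary precision. Set $\gamma:=v_P^{ }g'(a)\geq 0$; since $(F(a)|F,P)$ is immediate, $\gamma\in v_P^{ }F$. Choose $b\in F$ with $v_P^{ }(a-b)>2\gamma$; a Taylor expansion about $b$ shows $v_P^{ }g(b)>3\gamma$ and $v_P^{ }g'(b)=\gamma$. Then the new generator $a':=(a-b)/g'(b)\in{\cal O}_{F(a)}$ and the polynomial $h(X):=g'(b)^{-2}g(g'(b)X+b)\in{\cal O}_F[X]$ satisfy $F(a')=F(a)$, $h(a')=0$ and $(h'(a'))P\ne 0$, which is exactly the data required by (U1)--(U3). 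This Newton change of generator works in arbitrary rank, needs no $F^h$ at all, and is the concrete form of your own closing remark about ``introducing auxiliary generators $\eta_i\in F(a)$.''
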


The henselization of a valued field $(K,P)$ is always a
separable-algebraic extension. If $(K,P)$ has rank 1, then moreover, the
henselization lies in the completion of $(K,P)$ (since in this case the
completion is henselian). Therefore, Proposition~\ref{comp1} yields:
\begin{corollary}                               \label{hens1}
Assume that $(K,P)$ has rank 1. If $(L|K,P)$ is a finite subextension of
the henselization of $(K,P)$, then it is uniformizable.
\end{corollary}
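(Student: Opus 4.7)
The plan is to reduce the corollary to Proposition~\ref{comp1} by showing that, under the rank-1 hypothesis, every finite subextension of the henselization sits inside the completion.

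First I would recall two facts already available in the paper. By Theorem~\ref{immhens} (and the discussion preceding Corollary~\ref{hens1}), the henselization $K^h$ is a separable-algebraic extension of $K$; in particular so is any finite subextension $L|K$ of $K^h|K$. Second, for a rank-1 valued field the valuation topology is induced by an archimedean absolute value, and it is a classical fact (stated parenthetically in the excerpt just before the corollary) that in this situation the completion $(\hat{K},P)$ is itself henselian.

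Next I would invoke the universal property of the henselization, Theorem~\ref{hensuniqemb}: since $(\hat{K},P)$ is a henselian extension of $(K,P)$, there is a unique $K$-embedding of $(K^h,P)$ into $(\hat{K},P)$. Identifying $K^h$ with its image, we obtain the inclusion $L\subseteq K^h\subseteq\hat{K}$, i.e.\ $L$ is a subextension of $(K,P)$ inside its completion. As already noted, $L|K$ is separable-algebraic, so in particular $(L|K,P)$ is a separable extension of $(K,P)$ within its completion.

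At this point Proposition~\ref{comp1} applies directly and gives that $(L|K,P)$ is uniformizable, which is the assertion of the corollary. The only substantive step is the identification $K^h\subseteq\hat{K}$, and that is nothing more than combining the henselianity of the completion in the rank-1 case with the universal property of $K^h$; the rest is bookkeeping. I would expect the mildest potential friction to be in justifying ``completion is henselian in rank~1'' if one were to do so from scratch, but since this is explicitly invoked in the text just above the corollary, it can simply be cited.
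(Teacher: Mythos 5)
Your proof is correct and follows essentially the same route as the paper: in rank~1 the completion is henselian, so the universal property of $K^h$ places it inside the completion, whence $L|K$ is a separable extension within the completion and Proposition~\ref{comp1} applies. One small slip: Theorem~\ref{immhens} gives that $K^h|K$ is immediate, not that it is separable-algebraic --- the latter comes from the definition of $K^h$ as the absolute decomposition field inside $K\sep$ --- but this does not affect the argument.
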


\mn
{\bf IV)} \ The following proposition is of interest in view of the
inertial generation of function fields with Abhyankar places.
Its proof is again quite easy.
\begin{proposition}                               \label{inert1}
Let $(K,P)$ be a henselian field and $(L|K,P)$ a finite
extension within the absolute inertia field of $(K,P)$. Then
$(L|K,P)$ is uniformizable.
\end{proposition}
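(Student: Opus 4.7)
The plan is to exploit the tight control that the absolute inertia field gives over the residue extension: since $(L|K,P)$ lies inside $K^i$, Theorem \ref{T} says $vL=vK$, the extension $Lv|Kv$ is finite separable, and $[L:K]=[Lv:Kv]=:n$. Moreover, $L$ is henselian by Corollary \ref{algexth}. I would first produce a single ``good'' primitive element $\alpha\in\mathcal{O}_L$: choose a primitive element $\bar\alpha$ of the separable extension $Lv|Kv$, lift its (separable) minimal polynomial to a monic $f\in\mathcal{O}_K[X]$ of degree $n$, and apply the Simple Root Version of Hensel's Lemma in the henselian field $L$ to get $\alpha\in\mathcal{O}_L$ with $f(\alpha)=0$ and $\alpha P=\bar\alpha$. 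A degree count ($[K(\alpha):K]\leq n$ from $f$, and $[K(\alpha):K]\geq [Kv(\bar\alpha):Kv]=n$) then forces $L=K(\alpha)$.

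The second ingredient I would establish is that $\mathcal{O}_L=\mathcal{O}_K[\alpha]$. Given $\zeta\in\mathcal{O}_L$, write $\zeta=\sum_{i=0}^{n-1}c_i\alpha^i$ with $c_i\in K$. If some $c_i$ had negative value, pick $c_{i_0}$ of minimal value and divide through: then $\zeta/c_{i_0}$ has value $>0$, hence residue $0$, while its coefficients lie in $\mathcal{O}_K$ with the coefficient at $i_0$ equal to $1$. Reducing mod $P$ yields a nontrivial $Kv$-linear relation among $1,\bar\alpha,\dots,\bar\alpha^{n-1}$, contradicting $[Kv(\bar\alpha):Kv]=n$. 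Hence every $\zeta\in\mathcal{O}_L$ has the form $h(\alpha)$ with $h\in\mathcal{O}_K[X]$.

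With these tools in hand, the uniformization is immediate. Given $\zeta_1,\dots,\zeta_m\in\mathcal{O}_L$, write each as $\zeta_i=h_i(\alpha)$ with $h_i\in\mathcal{O}_K[X]$. Take $T=\emptyset$ (the extension is algebraic), and set
\[
\eta_1:=\alpha,\quad \eta_{1+i}:=\zeta_i\ (1\leq i\leq m),
\]
\[
f_1(X_1):=f(X_1),\quad f_{1+i}(X_1,X_{1+i}):=X_{1+i}-h_i(X_1).
\]
Then $L=K(\eta_1)=K(\eta_1,\dots,\eta_{1+m})$, the triangularity condition (U1) is visible, and (U2) holds by construction. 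The Jacobian is lower triangular with diagonal $(f'(\alpha),1,\dots,1)$, so
\[
(\det J_f(\eta_1,\dots,\eta_{1+m}))P\>=\>(f'(\alpha))P\>=\>(fP)'(\bar\alpha),
\]
which is nonzero because $\bar\alpha$ is a \emph{simple} root of the separable polynomial $fP$; this is (U3).

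The only delicate step is the identity $\mathcal{O}_L=\mathcal{O}_K[\alpha]$, which is the pointwise reason the construction works and the main place where the inertial hypothesis (giving both $vL=vK$ and the separability of $\bar\alpha$) enters. Once that is in place, the rest is essentially bookkeeping: the triangular shape ensures the Jacobian determinant collapses to $f'(\alpha)$, and the separability of the residue minimal polynomial of $\bar\alpha$ delivers (U3) for free.
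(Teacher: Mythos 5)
The paper does not include a proof of this proposition: it is marked ``quite easy'' and deferred to [K5], [K6], [K7], so there is no in-paper argument to compare against. Your proof is correct, and the steps are exactly the ones one expects the cited references to use. All the ingredients are in place: (i) Theorem~\ref{T} gives $vL=vK$, $[L:K]=[Lv:Kv]=:n$, and $Lv|Kv$ separable; (ii) $L$ is henselian by Corollary~\ref{algexth}, so the Simple Root Version of Hensel's Lemma lifts a primitive element $\bar\alpha$ of $Lv|Kv$ to a root $\alpha\in\mathcal{O}_L$ of the monic lift $f\in\mathcal{O}_K[X]$; (iii) the chain $[K(\alpha):K]\geq[K(\alpha)v:Kv]\geq[Kv(\bar\alpha):Kv]=n\geq[K(\alpha):K]$ (using the fundamental inequality for the unique extension of $P$ from the henselian $K$) forces $L=K(\alpha)$; (iv) the value-minimization argument establishes $\mathcal{O}_L=\mathcal{O}_K[\alpha]$, and this is precisely where both $vL=vK$ and $[Lv:Kv]=n$ are used --- your contradiction via the $Kv$-linear independence of $1,\bar\alpha,\ldots,\bar\alpha^{n-1}$ is clean; (v) the triangular system $f_1=f$, $f_{1+i}=X_{1+i}-h_i(X_1)$ satisfies (U1) and (U2) by inspection, and the Jacobian determinant collapses to $f'(\alpha)$, whose residue $(fP)'(\bar\alpha)$ is nonzero because $\bar\alpha$ is a simple root of the separable polynomial $fP$, giving (U3). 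Your closing remark correctly identifies $\mathcal{O}_L=\mathcal{O}_K[\alpha]$ as the pivot; it is the standard monogenicity of the valuation ring in an unramified extension, rephrased for possibly non-discrete valuations.
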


\parm
After these positive results, I have to talk about a serious problem.
Throughout my work in the model theory of valued fields, my experience
was that henselizations are very nice extensions and do not harm at all.
Because of their universal property (Theorem~\ref{hensuniqemb}), they
behave well in embedding lemmas. Unfortunately, for the problem of local
uniformization, this seems to be entirely different. While we have
relative uniformization if the henselization lies in the completion, we
get problems if this is not the case. And for a rank greater than 1, we
cannot expect in general that the henselization lies in the completion
(since the completion will in general not be henselian).
This leads to the following important open problem:
\mn
{\bf Open Problem 3:} \ Prove or disprove: every finite subextension in
the henselization of a valued field is uniformizable.
\mn
This is a special case of a slightly more general problem, however.
Again in view of inertial generation, we would like to know the
following.
\mn
{\bf Open Problem 4:} \ Assume that $(K,v)$ is a field which is
{\it not\/} henselian. Prove or disprove: every finite subextension
in the absolute ramification field of $(K,v)$ is uniformizable.
\mn
The obstruction is the following. Assume that $(L|K,v)$ is a finite
subextension in the absolute ramification field of $(K,v)$. Suppose
there is an intermediate field $L'$ such that $Lv=L'v$ and $[L':K]=
[L'v:Kv]$. The former yields that $L$ lies in the henselization of
$L'$. The latter yields that $(L'|K,v)$ admits relative local
uniformization, which can be proved in exactly the same way as
Proposition~\ref{inert1}, although $(K,v)$ need not be henselian. So
by the transitivity of relative uniformization, the problem would be
reduced to that of subextensions within the henselization. But such
intermediate fields $L'$ may not exist!

A closer look reveals that this problem is also the kernel of our
problem about subextensions within the henselization. Indeed, if we
have rank $>1$, then $P$ is the composition $P=Q\ovl{Q}$ of two
non-trivial places. Advanced ramification theory shows that if
$(L|K,P)$ is a subextension of the henselization of $(K,P)$, then
$(L|K,Q)$ is a subextension of the absolute inertia field of $(K,Q)$.
If we could split everything up by intermediate fields, then we could
reduce to extensions like the above $(L'|K,v)$, and extensions within
completions; this would solve our problem. But the necessary
intermediate fields may only exist after enlarging the extension $L|K$.
Nevertheless, in [K6] I prove a weak form of relative uniformization for
finite extensions within the absolute ramification field.

\parm
Let us see what we get and what we do not get from the above positive
results.

%
%
\section{Local uniformization for Abhyankar places}
Using the transitivity of relative uniformization, we can combine
Proposition~\ref{prep} with Corollary~\ref{hens1} to obtain:
\begin{theorem}                             \label{Abh1}
Take an Abhyankar place of $F|K$ such that $(F,P)$ has rank 1. If
$FP=K$ or $FP|K$ is a rational function field, then $(F|K,P)$
is uniformizable.
\end{theorem}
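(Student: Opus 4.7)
The plan is to present the result as a direct composition of three earlier tools: the henselian generation statement of Theorem~\ref{hrwtd}, the uniformization of the Abhyankar subfunction field provided by Proposition~\ref{prep}, and the henselization input of Corollary~\ref{hens1}, glued together by the Transitivity of Relative Uniformization.

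First I would invoke Theorem~\ref{hrwtd}. Since $P$ is an Abhyankar place of $F|K$ and the residue field extension $FP|K$ is separable (it is either trivial or purely transcendental by hypothesis), the theorem supplies elements $x_1,\ldots,x_\rho\in{\cal O}_P$ with $v_P^{ }F=\Z v_P^{ }x_1\oplus\ldots\oplus\Z v_P^{ }x_\rho$ and $y_1,\ldots,y_\tau\in{\cal O}_P$ with $y_1P,\ldots,y_\tau P$ a separating transcendence basis of $FP|K$, such that $T:=\{x_1,\ldots,x_\rho,y_1,\ldots,y_\tau\}$ is a transcendence basis of $F|K$ and $F$ is contained in the henselization of $(K(T),P)$. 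As $F|K$ is finitely generated, it follows that $F|K(T)$ is a finite subextension of this henselization.

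Next I would split the uniformization into two stages along $K\subset K(T)\subset F$. Proposition~\ref{prep} applied to $(K(T)|K,P)$ (where $P$ is trivial on $K$, so $v_P^{ }K=\{0\}$ is automatically convex in $v_P^{ }F$) shows that $(K(T)|K,P)$ is uniformizable. For the upper stage $F|K(T)$, I would apply Corollary~\ref{hens1}: here I must verify that $(K(T),P)$ has rank $1$. But $v_P^{ }K(T)\subseteq v_P^{ }F$, and $v_P^{ }F$ has rank $1$ by assumption; moreover, rank $1$ of $(F,P)$ forces $v_P^{ }F$ to be torsion-free, which together with $\rho+\tau=\trdeg F|K$ gives $\rho\geq 1$, so $v_P^{ }x_1\ne 0$ and $P$ is non-trivial on $K(T)$. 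Hence $(K(T),P)$ has rank exactly $1$, and Corollary~\ref{hens1} yields that $(F|K(T),P)$ is uniformizable.

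Finally, the Transitivity of Relative Uniformization combines the two stages into uniformizability of $(F|K,P)$ with respect to every choice of finitely many elements of ${\cal O}_F$. The only real content is concentrated in Theorem~\ref{hrwtd}, which is what ultimately rests on the Generalized Stability Theorem; the main obstacle in carrying out this plan is not in any of the three gluing steps but in making sure the hypotheses of Corollary~\ref{hens1} truly hold at the intermediate ground field $K(T)$, i.e.\ the verification that the rank drops to $1$ downstairs and the henselization upstairs is indeed where the remaining algebraic generators live---both of which, as sketched, follow from the rank hypothesis together with Theorem~\ref{hrwtd}.
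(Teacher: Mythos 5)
Your proposal is correct and follows exactly the route the paper takes: invoke Theorem~\ref{hrwtd} to get henselian generation with a good transcendence basis $T$, uniformize $(K(T)|K,P)$ by Proposition~\ref{prep}, uniformize the finite subextension $(F|K(T),P)$ of the henselization by Corollary~\ref{hens1} after checking that $(K(T),P)$ has rank $1$, and glue with Transitivity of Relative Uniformization. (One small slip in wording: ordered abelian groups are always torsion-free, so rank $1$ does not "force" that; what it gives is that $v_P^{ }F\neq\{0\}$, hence $\rho=\rr v_P^{ }F\geq 1$ directly, and then any non-trivial subgroup of a rank-$1$ group is again rank $1$ --- your conclusion is right.)
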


This works since $(F|K,P)$ is henselian generated. But as soon as $FP|K$
is not a rational function field, $(F|K,P)$ will not be henselian
generated. Then we may run into the problems described in the last
section. For the time being, our only chance is to accept to extend
$F$. Then we can prove ([K6], [K7]):
\begin{theorem}                         \label{Abh2}
Assume that $P$ is an Abhyankar place of $F|K$ and take
elements $\zeta_1,\ldots,
\zeta_m \in {\cal O}_F\,$. Then there is a finite purely inseparable
extension ${\cal K}|K$, a finite separable extension ${\cal F}|F.
{\cal K}$, and an extension of $P$ from $F$ to ${\cal F}$ such that
$({\cal F}|{\cal K},P)$ is uniformizable with respect to
$\zeta_1,\ldots,\zeta_m\,$. In addition, we have:
\n
1) \ If $(F,P)$ has rank 1, then ${\cal F}$ can be obtained from
$F.{\cal K}$ by a Galois extension.
\n
2) \ If $(F,P)$ has rank $r>1$, then ${\cal F}$ can be obtained from
$F.{\cal K}$ by a sequence of at most $r-1$ Galois extensions if $FP|K$
is algebraic, or at most $r$ Galois extensions otherwise.
\n
3) \ Alternatively, ${\cal F}$ can always be chosen to lie in the
henselization of $(F.{\cal K},P)$.
\n
4) \ If $FP|K$ is separable, then in all cases we can choose
${\cal K}=K$.
\end{theorem}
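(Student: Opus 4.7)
The plan is to combine the four positive uniformization results of Section~\ref{sectrlu} by way of the Transitivity of Relative Uniformization, using Theorem~\ref{hrwtd} to locate $F$ inside the absolute inertia field of a well-chosen Abhyankar subfunction field, and enlarging $F$ to a finite separable extension ${\cal F}$ in order to circumvent the still-open Problems~3 and~4. First I would reduce to the case $FP|K$ separable. If $FP|K$ is not separable, pick a finite purely inseparable extension ${\cal K}|K$ large enough that $FP.{\cal K}v \,|\, {\cal K}v$ is separable; since $FP|K$ is finitely generated by Corollary~\ref{fingentb}, such a ${\cal K}$ exists with $[{\cal K}:K]<\infty$, and by Corollary~\ref{cae} the place $P$ extends uniquely to ${\cal K}$. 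This gives assertion~(4); from now on I write $K$ for ${\cal K}$ and assume $FP|K$ separable.

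Next I would pick an Abhyankar subfunction field $F_0 = K(x_1,\ldots,x_\rho,y_1,\ldots,y_\tau)$ as in (\ref{Asff}), arranged so that $v_P F = \bigoplus_i \Z v_P x_i$ and $y_1P,\ldots,y_\tau P$ form a separating transcendence basis of $FP|K$. By Proposition~\ref{prep}, $(F_0|K,P)$ is uniformizable with respect to any prescribed finite set of elements of ${\cal O}_{F_0}$, which I take to include $\zeta_1,\ldots,\zeta_m$ together with whatever auxiliary elements arise in the construction above $F_0$. By Theorem~\ref{hrwtd}, $F$ lies inside the absolute inertia field $F_0^i$ of $(F_0,P)$, so $F|F_0$ is a finite separable tame subextension of $F_0^i$. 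By the Transitivity of Relative Uniformization it then suffices to exhibit a finite separable extension ${\cal F}$ of $F$ such that $({\cal F}|F_0,P)$ is uniformizable with respect to $\zeta_1,\ldots,\zeta_m$.

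The construction of ${\cal F}$ is the technical heart of the argument. The naive choice ${\cal F}=F$ would, by Proposition~\ref{inert1}, succeed if $F_0$ were henselian; but $F_0$ is not, and passing through the henselization $F_0^h$ runs straight into Open Problem~3. In rank~$1$ the henselization of any valued field lies in its completion, so Corollary~\ref{hens1} and Proposition~\ref{comp1} are both available; there I would take ${\cal F}$ to be the Galois closure of $F$ over $F_0$, enlarged if needed to guarantee the presence of an intermediate field $L'$ of the type discussed after Open Problem~4, namely one with $Lv = L'v$ and $[L':F_0] = [L'v:F_0 v]$. Splitting the tower ${\cal F}|F_0$ as ${\cal F}|L'|F_0$ with ${\cal F} \subseteq (L')^h$ (which lies in the completion of $L'$), Proposition~\ref{inert1}'s argument applies over the non-henselian base $F_0$ to uniformize $L'|F_0$, while Proposition~\ref{comp1} handles ${\cal F}|L'$; Transitivity then gives uniformizability of $({\cal F}|F_0,P)$ and, combined with Proposition~\ref{prep}, assertion~(1).

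For rank $r>1$ I would write $P = Q\overline{Q}$ with $Q$ of rank~$1$ and $\overline{Q}$ of rank $r-1$, and proceed by induction on $r$: the rank~$1$ step applied to $Q$ contributes one Galois extension, and the inductive hypothesis applied to the function field $FQ\,|\,KQ$ with place $\overline{Q}$ contributes at most $r-1$ further Galois extensions, or $r$ of them if $FP|K$ is transcendental (absorbing one extra extension at the bottom), yielding the bounds in~(2). Alternative~(3) then falls out because every element adjoined at each step can be arranged to lie in the henselization of the field generated at the previous step. The main obstacle is exactly the rank~$1$ case: one must check that the Galois closure of $F$ over $F_0$ really does produce the intermediate fields $L'$ needed to substitute for the henselian hypothesis in Proposition~\ref{inert1}. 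Once this is established, the transitivity of relative uniformization, the induction on rank, and the book-keeping of the Galois tower are routine.
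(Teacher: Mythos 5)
The high-level scaffolding of your proposal is sound and clearly modelled on the discussion that the paper surrounds this theorem with: reduce to $FP|K$ separable via a finite purely inseparable ${\cal K}|K$, choose an Abhyankar subfunction field $F_0$ as in (\ref{Asff}) so that Theorem~\ref{hrwtd} places $F$ inside $F_0^i$, uniformize $F_0|K$ via Proposition~\ref{prep}, and then reduce to uniformizing a finite extension of $F_0$ inside $F_0^i$ by transitivity. However, the technical heart — which you yourself flag as "the main obstacle" — is where the argument as written fails. Your choice of ${\cal F}$ as the Galois closure of $F$ over $F_0$ is problematic: since $F_0$ is not henselian, $F_0^i$ is not normal over $F_0$, so the $F_0$-conjugates of $F$ need not lie in $F_0^i$, and hence the Galois closure may well satisfy $v_P{\cal F}\supsetneq v_P F_0$. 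But your splitting ${\cal F}\supseteq L'\supseteq F_0$ with ${\cal F}\subseteq (L')^h$ and $[L':F_0]=[L'P:F_0P]$ forces (via the immediacy of the henselization and the fundamental inequality) $v_P{\cal F}=v_P L'=v_P F_0$. So the two requirements you impose on $L'$ are mutually incompatible once the Galois closure ramifies, and "enlarged if needed" only makes the value group grow further. There is also a second, smaller inaccuracy: even when $L'\subseteq F_0^i$ is Galois over $F_0$, the Proposition~\ref{inert1}-type argument over a non-henselian base needs $[L':F_0]=[L'P:F_0P]$, which fails when $v$ has several extensions from $F_0$ to $L'$; you need the further split $L'\supseteq Z\supseteq F_0$ through the decomposition field, handling $(Z|F_0,P)$ by Proposition~\ref{comp1}.

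The fix is to abandon the Galois closure of $F$ and instead build ${\cal F}$ directly inside $F_0^i$. Take a primitive element $\alpha$ of $FP|F_0P$ with separable minimal polynomial $h$ over $F_0P$, lift $h$ to a monic $f\in{\cal O}_{F_0}[Y]$, and let $L'$ be the splitting field of $f$ over $F_0$. Because $h$ is separable, $f$ is separable, and the Simple Root Version of Hensel's Lemma applied in the henselian field $F_0^i$ shows that all roots of $f$ lie in $F_0^i$; thus $L'\subseteq F_0^i$, $L'|F_0$ is Galois, and $v_P L'=v_P F_0$. Moreover $F^h=F_0(\beta)^h\subseteq (L')^h$ (where $\beta\in L'$ is the root of $f$ with $\beta P=\alpha$), so $F\subseteq (L')^h$ and hence ${\cal F}:=L'.F\subseteq (L')^h$; and ${\cal F}|F$ is Galois because $L'|F_0$ is. Now the chain $F_0\subseteq Z\subseteq L'\subseteq {\cal F}$, with $Z$ the decomposition field of $(L'|F_0,P)$, is uniformizable piecewise in rank~1 exactly by the propositions you invoke. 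This is what your posited $L'$ must be; the Galois closure of $F$ over $F_0$ is generally a different, possibly ramified, field, and substituting it in is the genuine gap in your write-up. The higher-rank induction and the parts (2)–(4) bookkeeping you describe are plausible in outline but inherit the same imprecision about which Galois extensions are adjoined at each stage, and part (3) cannot simply "fall out" of the Galois construction since — as the paper explicitly warns — "Galois extension" and "lie in the henselization" are mutually incompatible choices.
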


Unfortunately, ``Galois extension'' and ``lie in the henselization''
are mutually incompatible. Indeed, the normal hull of a subextension of
the henselization will in general not again lie in the henselization.

\pars
We could prove local uniformization for all Abhyankar places of $F|K$
with separable $FP|K$, if we would know a positive answer to the
following problem:
\mn
{\bf Open Problem 5:} \ Take any Abhyankar place of $F|K$. Is it
possible to choose the generating transcendence basis $T$ for the
inertial generation of $(F|K,P)$ in such a way that the extension
$(F|K(T),P)$ is uniformizable?
\mn

Here is an even more daring idea. Since we have seen that we have
problems with the henselization, why don't we try to avoid it?
\mn
{\bf Open Problem 6:} \ Take any Abhyankar place of $F|K$. Is it
possible to choose the generating transcendence basis $T$ for the
inertial generation of $(F|K,P)$ in such a way that the extension
of $P$ from $K(T)$ to $F$ is unique?
\mn
In that case, general valuation theory shows that the extension $F|K(T)$
is linearly disjoint from the extension $K(T)^h|K(T)$. If $T$ has that
property, we would say that \bfind{$T$ has the uniqueness property
for $F|K$} (this is a generalization of the definition given in
Section~\ref{sectGST}). But I warn you: this problem seems to be very
hard. It is already non-trivial to prove the existence of elements
with the uniqueness property in case of transcendence degree 1.
For arbitrary transcendence degree, the necessary algebraic geometry
is not in sight. But perhaps there is some connection with local
uniformization or resolution of singularities?

\pars
A weak form of local uniformization, without extending the function
field, can be proved for all Abhyankar places in the case of perfect
ground fields ([K6]). Also T.~Urabe [U], building on Abhyankar's
original approach, has a comparable result for the special case of
places of maximal rank.

%
%
\section{Non-Abhyankar places and the Henselian Rationality of immediate
function fields}                            \label{sectnon}
What can we do if the place $P$ of $F|K$ is {\it not} an Abhyankar
place? Still, the place may be nice. Assume for instance that $v_P^{ }F$
is finitely generated and $FP=K$. Then we can choose $x_1,\ldots,x_\rho$
such that $v_P^{ }=\Z v_P^{ }x_1\oplus\ldots\oplus\Z v_P^{ } x_\rho$,
and set $F_0:=K(x_1,\ldots,x_\rho)$. Consequently, $(F|F_0,v_P^{ })$ is
an immediate extension. If $P$ is not an Abhyankar place, then this
extension is not algebraic. But we have already stated two tools to
treat transcendental immediate extensions, namely Proposition~\ref{ist}
and Proposition~\ref{comp1}.

Let us first apply Proposition~\ref{comp1}. If $(F,v_P^{ })$ is a
separable extension within the completion of $(F_0,v_P^{ })$, then by this
proposition, the transitivity and Proposition~\ref{prep}, we find that
$(F|K,P)$ is uniformizable. Do we need the assumption that the extension
be separable? The answer is: this is automatically true. Indeed, by the
Generalized Stability Theorem~\ref{ai}, $(F_0,v_P^{ })$ is a defectless
field; thus, our assertion follows from the following lemma:
\begin{lemma}
If $(L,v)$ is a defectless field and $(L'|L,v)$ is an immediate
extension, then $L'|L$ is separable.
\end{lemma}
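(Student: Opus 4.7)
My plan is to isolate the obstruction to separability---the relative purely inseparable closure of $L$ in $L'$---and use the defectless hypothesis to collapse it. I may assume $\chara L=p>0$ throughout, since in characteristic zero every algebraic element is separable and every extension is separable.

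The first step I would carry out is to define $L_0:=\{a\in L'\mid a^{p^n}\in L\text{ for some }n\geq 0\}$ and invoke the standard field-theoretic fact (a characteristic-$p$ exercise) that $L_0$ is a subfield of $L'$ with $L_0|L$ purely inseparable, and---crucially---that $L'|L_0$ is separable. The ``separable'' half follows because any algebraic element $y\in L'$ inseparable over $L_0$ would have $y^{p^k}\in L_0$ for some $k\geq 1$, hence $y^{p^{k+m}}\in L$ for some $m\geq 0$, forcing $y\in L_0$---a contradiction.

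The heart of the argument is then to show $L_0=L$. Given $a\in L_0$, the extension $L(a)|L$ is finite and purely inseparable. Since $L\subseteq L(a)\subseteq L'$ and $(L'|L,v)$ is immediate, we obtain $vL=vL(a)=vL'$ and $Lv=L(a)v=L'v$, so $(L(a)|L,v)$ is itself immediate. By Corollary~\ref{cae}, a purely inseparable extension admits a unique extension of the valuation, so the number ${\rm g}$ of extensions of $v|_L$ to $L(a)$ equals $1$. The defectless hypothesis---equality in the fundamental inequality~(\ref{fundineq})---then reads $[L(a):L]={\rm e}\cdot{\rm f}$ with ${\rm e}=(vL(a):vL)$ and ${\rm f}=[L(a)v:Lv]$; immediateness forces ${\rm e}={\rm f}=1$, hence $[L(a):L]=1$ and $a\in L$.

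With $L_0=L$ established, $L'|L=L'|L_0$ is separable and we are done. The only step that is not a one-line consequence of results already recorded in the paper is the purely field-theoretic verification that $L'|L_0$ is separable; this is a standard char-$p$ lemma and is where I expect whatever mild delicacy arises. The valuation-theoretic substance of the lemma---the triviality of purely inseparable immediate extensions over a defectless base---is an immediate consequence of Corollary~\ref{cae}, the defectless hypothesis, and the definition of immediate extension.
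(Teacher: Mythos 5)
Your valuation-theoretic core (steps 3--4, showing that every element of $L'$ purely inseparable over $L$ already lies in $L$, so that $L_0=L$) is correct and is in fact a special case of what the paper does. The problem is the reduction: establishing $L_0=L$ does not prove that $L'|L$ is separable, and the argument you give for ``$L'|L_0$ is separable'' is incorrect.

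The flaw is in the sentence ``any algebraic element $y\in L'$ inseparable over $L_0$ would have $y^{p^k}\in L_0$ for some $k\geq 1$.'' This conflates \emph{inseparable} with \emph{purely inseparable}. If $y$ is inseparable over $L_0$, its minimal polynomial is of the form $g(X^{p^e})$ with $g$ separable; this gives that $y^{p^e}$ is \emph{separable} over $L_0$, not that it lies in $L_0$. Consequently the purely inseparable closure $L_0$ of $L$ in $L'$ need not have the property that $L'|L_0$ is separable. To see that the implication ``$L_0=L \Rightarrow L'|L$ separable'' fails in general, take $K=\F_p(t,u)$, let $\theta$ satisfy $\theta^p-\theta=u$ (so $M:=K(\theta)$ is separable of degree $p$ over $K$), and let $\alpha$ satisfy $\alpha^p = t\theta$. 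Then $L':=K(\alpha)=M(\alpha)$ has degree $p^2$ over $K$, the element $\alpha$ is inseparable over $K$, yet one checks directly that the only $\gamma\in L'$ with $\gamma^{p^n}\in K$ are the elements of $K$ itself. So $L'\cap K^{1/p^\infty}=K$ while $L'|K$ is not separable: the intersection with the perfect hull is trivial, but linear disjointness from it is not.

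The correct characterization is: an algebraic extension $L'|L$ is separable if and only if $L'$ is linearly disjoint over $L$ from \emph{every} finite purely inseparable extension $E|L$ (equivalently, from $L^{1/p}$, or from the perfect hull), where $E$ is \emph{not} required to lie inside $L'$. This is what the paper proves. It takes an arbitrary finite purely inseparable $E|L$, forms the compositum $L'.E$, and shows $[L'.E:L']=[E:L]$ by sandwiching: uniqueness of the valuation extension to $E$ and $L'.E$ (by Corollary~\ref{cae}), defectlessness of $L$ giving $[E:L]=(vE:vL)[Ev:Lv]$, immediateness of $L'|L$ giving $(v(L'.E):vL')\geq(vE:vL)$ and $[(L'.E)v:L'v]\geq[Ev:Lv]$, and the fundamental inequality for $L'.E|L'$. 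Your argument, restricted to $E\subseteq L'$, cannot reach the $E$ that actually witness inseparability. To repair your proposal you would need to run your degree computation on an external compositum exactly as the paper does, at which point it coincides with the paper's proof.
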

\begin{proof}
We have to show that $L'|L$ is linearly disjoint from every purely
inseparable finite extension $E|L$. As the extension of the valuation
from $L$ to $E$ is unique by Corollary~\ref{cae}, this implies that
$[E:L]=(vE:vL) [Ev:Lv]$. Now we consider the compositum $L'.E$ (with the
unique extension of the valuation from $L'$ to the purely inseparable
extension $L'.E$). Since $vE\subseteq v(L'.E)$, $vL'=vL\,$, $Ev
\subseteq (L'.E)v$ and $L'v=Lv$, we have that $(v(L'.E): vL')\geq
(vE:vL)$ and $[(L'.E)v:L'v]\geq [Ev:Lv]$. Hence,
\begin{eqnarray*}
[L'.E:L'] & \geq & (v(L'.E):vL')[(L'.E)v:L'v]\\
 & \geq & (vE:vL) [Ev:Lv] \>=\> [E:L]\>\geq\>[L'.E:L']\;.
\end{eqnarray*}
Thus, equality holds everywhere, showing that $L'|L$ is linearly
disjoint from $E|L\,$.
\end{proof}

In view of this result, Proposition~\ref{comp1} and the transitivity
prove:
\begin{theorem}
The assertions of Theorem~\ref{Abh1} and Theorem~\ref{Abh2} remain
true if $(F,P)$ is replaced by a finitely generated extension
within its completion.
\end{theorem}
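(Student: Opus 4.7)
The plan is to reduce to the Abhyankar case already treated in Theorems~\ref{Abh1} and~\ref{Abh2} by cutting $F$ into two pieces: an Abhyankar subfunction field $F_0|K$ and an immediate transcendental extension $F|F_0$ lying in the completion of $F_0$. Transitivity of relative uniformization will then glue the two pieces.

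First I would build the Abhyankar core. Choose elements $x_1,\ldots,x_\rho\in {\cal O}_P$ whose $P$-values form a $\Z$-basis of $v_P^{ } F$, and elements $y_1,\ldots,y_\tau\in {\cal O}_P$ whose $P$-residues form a (separating) transcendence basis of $FP|K$, exactly as in the definition of an Abhyankar subfunction field~(\ref{Asff}). Set $F_0:=K(x_1,\ldots,x_\rho,y_1,\ldots,y_\tau)$. Then $(F_0|K,P)$ is an Abhyankar subfunction field, $v_P^{ }F_0=v_P^{ }F$, and $F_0P$ contains a transcendence basis of $FP|K$, so $(F|F_0,P)$ is immediate. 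Under the hypothesis that $F$ is finitely generated and lies in the completion of $F_0$ (the analogue of the hypothesis of Theorem~\ref{Abh1}, resp.~\ref{Abh2}, under the completion-enlargement), we therefore have $F\subseteq\widehat{F_0}$.

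Next I would check that $F|F_0$ is separable. Since $P$ is trivial on $K$ (or at least the argument of the preceding lemma works as stated because $K$ is trivially valued and hence defectless), and $(F_0|K,P)$ is without transcendence defect by construction, the Generalized Stability Theorem~\ref{ai} gives that $(F_0,P)$ is a defectless field. The lemma immediately preceding the theorem then shows that the immediate extension $F|F_0$ is separable. Consequently $(F|F_0,P)$ falls under the hypothesis of Proposition~\ref{comp1} (a separable extension within the completion), and hence $(F|F_0,P)$ is uniformizable. Meanwhile $(F_0|K,P)$ satisfies the hypotheses of Theorem~\ref{Abh1} (resp.~\ref{Abh2}) verbatim, because $F_0$ is itself an Abhyankar function field with the same value group and the same (rational or algebraic) residue field extension as $F$. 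Applying the chosen Abhyankar theorem to $F_0|K$ then yields the desired uniformization (possibly after passing to a finite purely inseparable extension ${\cal K}|K$ and a finite extension ${\cal F}_0|F_0.{\cal K}$ as in~\ref{Abh2}). Finally, transitivity of relative uniformization combines the two pieces into a uniformization of $(F|K,P)$, with the $\zeta_i$ carried through by including them among the data of the first step.

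The only real subtlety — and therefore the step I expect to need most care — is in the Theorem~\ref{Abh2} version: once we pass to ${\cal K}|K$ and to ${\cal F}_0|F_0.{\cal K}$ in order to uniformize $F_0$, we must still present the top piece as a finitely generated separable extension of ${\cal F}_0$ that lies in the completion of ${\cal F}_0$ with respect to (some extension of) $P$. This is harmless because finite extensions of a valued field are contained in the completion of the original whenever they are contained in the completion at any extension of the valuation, and because the compositum $F.{\cal F}_0$ is finitely generated over ${\cal F}_0$ and immediate over it (the value group and residue field were already absorbed into $F_0$, and then into ${\cal F}_0$); one then re-applies the lemma and Proposition~\ref{comp1} to the top layer before invoking transitivity. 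No new tools beyond those already cited by the author are needed.
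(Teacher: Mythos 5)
Your proposal follows the paper's own route: form the Abhyankar core $F_0$, observe that $(F|F_0,P)$ is an immediate extension lying in the completion, deduce that $(F_0,P)$ is a defectless field via the Generalized Stability Theorem~\ref{ai} (as $K$ is trivially valued and $F_0|K$ is without transcendence defect), use the lemma immediately preceding the theorem to get separability, uniformize the top layer via Proposition~\ref{comp1} and the bottom layer via Proposition~\ref{prep} (equivalently, Theorems~\ref{Abh1}/\ref{Abh2}), and glue by the transitivity of relative uniformization. The one misstep is the phrase ``$F_0P$ contains a transcendence basis of $FP|K$, so $(F|F_0,P)$ is immediate'' --- containing a transcendence basis is not enough, since a nontrivial algebraic residue extension could remain; immediacy in fact comes from the hypothesis $F\subseteq\widehat{F_0}$, which forces $FP=F_0P$ (and thereby shows that $FP|K$ is necessarily rational in this setting).
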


If $P$ is a discrete rational place of $F|K$, then we only have to
choose $t\in F$ such that $v_P^{ }t$ is the smallest positive element in
$v_P^{ }F \isom\Z$; this will imply that $v_P^{ }F=\Z v_P^{ }t$. The
immediate extension $(F,P)$ of $(K(t),P)$ will automatically lie in the
completion of $(K(t),P)$ (since the completion is of the form $K((t))$
and hence maximal, and one can easily show that the completion is unique
up to isomorphism). So we obtain:
\begin{theorem}                             \label{MTdrlu}
Every discrete rational place is uniformizable.
\end{theorem}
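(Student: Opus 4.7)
The plan is to combine Propositions \ref{comp1} and \ref{prep} via the Transitivity of Relative Uniformization. First, I would pick $t\in F$ with $v_P^{ }t$ the smallest positive element of $v_P^{ }F\isom\Z$. Then $v_P^{ }K(t)=\Z v_P^{ }t=v_P^{ }F$ and $K(t)P=K=FP$, so $(F|K(t),P)$ is an immediate extension. Proposition \ref{prep}, applied with $\rho=1$, $\tau=0$, $x_1=t$, immediately yields that $(K(t)|K,P)$ is uniformizable (the transcendence basis being just $\{t\}$ itself).

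Second, I would identify the completion of $(K(t),P)$ with $(K((t)),v_t)$, which is unique up to isomorphism over $(K(t),P)$ since the rank is $1$, and which is maximal, hence defectless by (DF3). The heart of the argument is to show that $(F,P)$ embeds over $(K(t),P)$ into this completion. Since $F|K(t)$ is finitely generated, one can realize it inside $K((t))$ one generator at a time: the separable-algebraic immediate part lands inside the henselization $K(t)^h\subseteq K((t))$ (any henselian extension of $K(t)$ embeds there by Theorem \ref{hensuniqemb}), and transcendental immediate generators can be taken to be pseudo-Cauchy limits that converge in the complete field $K((t))$. Because $K((t))$ is a defectless field, the lemma immediately preceding \ref{MTdrlu} forces the resulting subextension $(F|K(t),P)$ to be separable. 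Proposition \ref{comp1} then applies and gives that $(F|K(t),P)$ is uniformizable.

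Third, the Transitivity of Relative Uniformization glues the two uniformizability results together to yield that $(F|K,P)$ is uniformizable, which is the desired conclusion.

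The main obstacle is the embedding step: Example \ref{exampmie} shows that in positive residue characteristic, maximal immediate extensions of $(K(t),P)$ need not be unique, so there is no general principle that an immediate extension of $K(t)$ must sit inside $K((t))$. One must exploit the finite generation of $F|K(t)$ together with the separability required by uniformizability itself (the corollary following Theorem \ref{MT5}) to build the embedding concretely, rather than appealing to any abstract universal property of the completion.
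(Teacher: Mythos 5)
Your overall route matches the paper's: pick $t$ with $v_P^{ }t$ generating $v_P^{ }F\isom\Z$, observe that $(F|K(t),P)$ is immediate, handle $(K(t)|K,P)$ via Proposition~\ref{prep}, handle $(F|K(t),P)$ via Proposition~\ref{comp1}, and glue by transitivity. However, you misidentify the "main obstacle." The embedding of $(F,P)$ into the completion of $(K(t),P)$ is not a subtle point requiring a generator-by-generator construction; it is automatic, and precisely because the value group is $\Z$. For any immediate extension $(L|K,v)$ of a valued field, given $a\in L$ the set $\{v(a-b):b\in K\}$ is cofinal in $vK$ (if it had a maximum $v(a-b_0)$, pick $c\in K$ with $vc=v(a-b_0)$ and $d\in K$ with $dv=((a-b_0)/c)v$, then $v(a-b_0-cd)>v(a-b_0)$, a contradiction). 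When $vK=\Z$ this cofinality means $a$ lies in the completion of $K$. So every immediate extension of a discretely valued field sits inside its completion, with no hypotheses about characteristic, henselianity, or separability. Your invocation of Example~\ref{exampmie} as a warning sign is therefore a red herring: the non-uniqueness there occurs over a field with value group $\frac{1}{p^{\infty}}\Z$ (or a segment thereof), and discreteness is exactly the hypothesis that forecloses it. The paper's parenthetical hint about $K((t))$ being the unique maximal immediate extension is just an alternative way of saying the same thing.

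Two smaller points. First, your piecemeal embedding argument is not only unnecessary but, as stated, has a gap: you would need to justify that a pseudo-Cauchy limit in $K((t))$ of a sequence of transcendental type over the already-constructed subfield is itself transcendental over that subfield, which requires Kaplansky-style bookkeeping that you do not supply. Second, when you invoke the lemma ``if $(L,v)$ is a defectless field and $(L'|L,v)$ is immediate then $L'|L$ is separable,'' the defectless field you need is $(K(t),P)$ (the base of the immediate extension), not $(K((t)),v_t)$. The defectlessness of $K(t)$ is what the paper's discussion in Section~\ref{sectnon} supplies, via the Generalized Stability Theorem~\ref{ai} applied to the extension $(K(t)|K,P)$ which has no transcendence defect (with $K$ trivially valued, hence defectless). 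Your conclusion is nonetheless correct, since $(K(t),P)$ is indeed defectless.
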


\pars
Now let us turn to the general case. Not every immediate extension lies
in the completion, not even in rank 1.
\begin{example}
Take the valuation $v$ on the rational function field $K(x_1,x_2)$ such
that $vK(x_1,x_2)=\Z+r\Z$ with $r\in\R\setminus\Q$. We can view
$K(x_1,x_2)$ as a subfield of the power series field $K((\Z+r\Z))$,
with $x_1=t$ and $x_2=t^r$. In $\Z+r\Z$, we can choose a monotonically
increasing sequence $r_i$ converging to 0 from below. Then we take the
element $z= \sum_{i=1}^{\infty} t^{r_i}\in K((\Z+r\Z))$. It does not lie
in the completion of $(K(x_1,x_2),v)$. Nevertheless, the extension
$(K(x_1,x_2,z)|K(x_1,x_2),v)$ is immediate. I leave it to
you to show that $z$ is transcendental over $K(x_1,x_2)$.
\end{example}

So let us look at the case of an immediate extension $(F_0(z)|F_0,P)$
which does not lie in the completion of $(F_0,P)$. Then to apply
Proposition~\ref{ist}, we need to know that $z$ satisfies (\ref{trat}).
By Lemma~\ref{close}, this would hold if $(F_0,P)$ were
separable-algebraically maximal. But as the rational function field
$F_0$ is not henselian unless $P$ is trivial, it will admit its
henselization as a proper immediate separable-algebraic extension. The
only way out at this point is to pass to the immediate extension
$(F_0^h(z)|F_0^h,P)$. There, we can apply Proposition~\ref{ist} since by
the Generalized Stability Theorem, $(F_0^h,P)$ is a defectless field and
thus is algebraically maximal. But now we have extended our function
field $F$. (In the end, we will only need a finite subextension since
the statement of local uniformization only talks about finitely many
elements.) We will return to this aspect below.

Beforehand, let us think about two further problems. First, if our
extension $F|F_0$ is of transcendence degree $>1$, how do we carry on by
induction? As the Generalized Stability Theorem does not apply any more
in this situation, we do not know whether $(F_0(z),P)$ is a defectless
field (and in fact, in general it will not be). Then we have to enlarge
$F_0(z)$ even more to achieve the next induction step; in positive
characteristic, the henselization will be too small for this purpose.

Second, if for instance $F|F_0(z)$ is a proper algebraic extension, then
does it have relative uniformization? The only answers we have apply to
the case where $(F,P)$ lies in the henselization of $(F_0(z),P)\>$ (and
even in this case our discussion has shown a bunch of problems). If
$(F,P)$ does not lie in the henselization, then we know nothing. Observe
that since the extension $(F|F_0(z),P)$ is finite and immediate, $(F,P)$
does not lie in the henselization if and only if $(F^h|F_0(z)^h,P)$ has
non-trivial defect (by Theorem~\ref{immhens} and Corollary~\ref{hidef}).

So the question arises: how can we avoid the defect in the case of
immediate extensions? The answer is a theorem that I proved in [K1]
(cf.\ [K2]). As for the Generalized Stability Theorem, the proof
uses ramification theory and the deduction of normal forms for
Artin-Schreier extensions. It also uses significantly a theory
of immediate extensions which builds on Kaplansky's paper [KA1].
\begin{theorem}                \label{stt3}
{\bf (Henselian Rationality of Immediate Function Fields)} \
Let $(K,P)$ be a tame field and $(F|K,P)$ an immediate function field
of trans\-cendence degree 1. Then
\begin{equation}
\mbox{there is $x\in F$ such that }\; (F^h,P)\,=\,(K(x)^h,P)\;,
\end{equation}
that is, $(F|K,P)$ is henselian generated. The same holds over a
separably tame field $(K,P)$ if in addition $F|K$ is separable.
\end{theorem}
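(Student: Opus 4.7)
The plan is to find a transcendental element $x \in F$ such that the finite immediate extension $(F^h \mid K(x)^h, P)$ is trivial; this is equivalent to the conclusion of the theorem. If $\chara Kv = 0$, the conclusion is immediate from the Lemma of Ostrowski (Theorem~\ref{LofO}): any finite immediate extension has trivial defect in residue characteristic $0$, so any transcendental $z \in F$ already works. Henceforth the real case is $p := \chara Kv > 0$.

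Since $(K,P)$ is tame, Lemma~\ref{tame} shows that $(K,P)$ is algebraically maximal, so the relative algebraic closure of $K$ in $F$ is $K$ itself, and every element of $F \setminus K$ is transcendental over $K$. Pick any such $z$. Then $F \mid K(z)$ is finite algebraic, so by Theorem~\ref{immhens} the extension $(F^h \mid K(z)^h, P)$ is finite and immediate, and by Corollary~\ref{hidef} its degree equals its defect; by the Lemma of Ostrowski this common value is a power of $p$.

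The heart of the argument is an inductive reduction that replaces $z$ by a better transcendental $x \in F$ strictly lowering $[F^h : K(x)^h]$. The first tool is Kaplansky's theory of pseudo-Cauchy sequences: every $y \in F \setminus K$ is a pseudo-limit of a pseudo-Cauchy sequence in $K$ without a limit in $K$, and because $K$ is algebraically maximal, no such sequence can be of algebraic type (else it would produce a proper immediate algebraic extension of $K$). Hence all relevant pseudo-Cauchy sequences are of transcendental type; the resulting immediate transcendental extensions $K(y) \mid K$ are then rigidly determined up to valuation-preserving $K$-isomorphism by the sequence, which is what lets us compare different candidate generators. Second, assuming $F^h \neq K(z)^h$, I would apply the ramification theory of Section~\ref{sectram} over the henselian field $K(z)^h$: since $(K,P)$ is tame we have $K^r = \tilde K$, and any non-trivial immediate $p$-power extension of $K(z)^h$ must contain an Artin--Schreier step (Theorem~\ref{S} and Lemma~\ref{RemASe}) generated by a root $\vartheta$ of some $X^p - X - c$ with $c \in K(z)^h$. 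Now apply the Artin--Schreier surgery sketched in Section~\ref{sectGST}: using the $p$-divisibility of $vK$ and the perfectness of $Kv$ (Lemma~\ref{tame}), one systematically replaces $c$ by $c - (b^p - b)$ for suitably chosen $b$ to kill $p$-th-power monomials, producing a normal form from which one reads off a new transcendental $x \in F$ with $\vartheta \in K(x)^h$, thereby decreasing the defect.

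The main obstacle will be performing the surgery inside $F$ itself rather than in some enlarged immediate extension: one must check at each step that the replacement element lies in $F$, using that $\trdeg F|K = 1$ together with the pseudo-Cauchy rigidity above to force any admissible modification to remain in the transcendence-degree-one world. A secondary obstacle is the termination of the induction, handled by the multiplicativity of the defect and the strict degree drop at each Artin--Schreier step. In the separably tame case with $F|K$ separable, the same argument goes through: only separable (in fact Artin--Schreier) extensions appear in the reduction, and separable-algebraic maximality (which holds by Lemma~\ref{tame}) is enough both to exclude algebraic-type pseudo-Cauchy sequences and to run the ramification-theoretic step, yielding $x \in F$ with $F^h = K(x)^h$.
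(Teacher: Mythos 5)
You correctly identify the ingredients the paper attributes to the proof in [K1]/[K2] --- Kaplansky's theory of immediate transcendental extensions, the ramification-theoretic reduction to Artin--Schreier steps, and Artin--Schreier surgery to obtain normal forms --- and your treatment of residue characteristic $0$ and of the initial reduction to a finite immediate $p$-power extension $F^h|K(z)^h$ is sound. The inductive structure (replace $z$ by $x$ strictly lowering $[F^h:K(x)^h]$) is also the natural one, and you honestly flag the two obstructions.

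However, the crucial step is missing, and there are two concrete problems in the sketch. First, since $(F^h|K(z)^h,P)$ is immediate, hence purely wild, the Artin--Schreier steps supplied by Theorem~\ref{S} and Lemma~\ref{RemASe} live over the absolute ramification field of $K(z)^h$, not over $K(z)^h$ itself; your $c$ is a priori only in that ramification field, so you cannot immediately work with an equation $X^p-X-c$ over $K(z)^h$. (The paper's remark about ``Going Up'' and ``Coming Down'' in the sketch of the Stability Theorem proof alludes to exactly this descent problem.) You also need $z$ chosen so that $F|K(z)$ is separable; an arbitrary $z\in F\setminus K$ does not guarantee this, though it can be arranged because $K$ is perfect. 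Second and more seriously, the assertion that the surgery lets one ``read off a new transcendental $x\in F$ with $\vartheta\in K(x)^h$, thereby decreasing the defect'' is the entire content of the theorem and is not demonstrated: the surgery elements $b$ and the root $\vartheta$ live in $K(z)^h$ or above, not in $F$, and extracting a generator $x\in F$ from this data requires a concrete mechanism which the appeal to pseudo-Cauchy ``rigidity'' does not supply --- Kaplansky's uniqueness theorem determines $(K(y),P)$ up to isomorphism from its pseudo-Cauchy sequence, but it does not by itself locate a good generator inside $F$. You call this ``the main obstacle,'' which is honest, but it is not a secondary check to be discharged at the end; it is the theorem.
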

Since the assertion says that $F^h$ is equal to the henselization of a
rational function field, we also call $F$ \bfind{henselian rational}.
For valued fields of residue characteristic 0, the assertion is a direct
consequence of the fact that every such field is defectless. Indeed,
take any $x\in F\setminus K$. Then $K(x)|K$ cannot be algebraic since
otherwise, $(K(x)|K,P)$ would be a proper immediate algebraic extension
of the tame field $(K,P)$, a contradiction to Lemma~\ref{tame}. Hence,
$F|K(x)$ is algebraic and immediate. Therefore, $(F^h|K(x)^h,P)$
is algebraic and immediate too. But since it cannot have a non-trivial
defect, it must be trivial. This proves that $(F,P)\subset (K(x)^h,P)$.
In contrast to this, in the case of positive residue characteristic only
a very carefully chosen $x\in F\setminus K$ will do the job.

\pars
To illustrate the use of Theorem~\ref{stt3} in the model theory of
valued fields, we give an example which is ``complementary'' to
Example~\ref{mtGGST}, treating the case of immediate extensions:
\begin{example}                             \label{mtHR}
Suppose that $(K,v)$ is a tame field and that $(L|K,v)$ is an immediate
extension. Then the conditions $vK\ec vL$ and $Kv\ec Lv$ are trivially
satisfied. So do we have that $(K,v)\ec (L,v)$? Using the theory of tame
fields, in particular the crucial Lemma~\ref{trac}, one reduces the proof
to the case of transcendence degree 1. So we have to prove an embedding
lemma for immediate function fields $(F|K,v)$ of transcendence degree 1
over tame fields. If we take any $x\in F\setminus K$ then it will
satisfy (\ref{trat}) because $(K,v)$ is tame and thus also
algebraically maximal. Then with the help of Theorem~2 of [KA1]
and the saturation of $(K,v)^*$, we can find
an embedding of $(K(x),v)$ in $(K,v)^*$. But how do we carry on?
We know that $(F|K(x),v)$ is immediate, but this does not mean that
$(F,v)$ lies in the henselization of $(K(x),v)$. But if it does, we can
just use the universal property of the henselization
(Theorem~\ref{hensuniqemb}). Indeed, being a tame field, $(K,v)$ is
henselian. Since $(K,v)^*$ is an elementary extension of $(K,v)$ (in the
language of valued fields), it is also henselian. Hence if $(K(x),v)$
embeds in $(K,v)^*$, then this embedding can be extended to an embedding
of $(K(x)^h,v)$ in $(K,v)^*$. This induces the desired embedding of $F$.

If $F$ does not lie in the henselization of $(K(x),v)$, then
$(F^h|K(x)^h,v)$ has non-trivial defect, and we have no
clue how the embedding of $K(x)^h$ could be extended to an embedding of
$F^h$. Again, our enemy is the defect, and we have to avoid it.
Now this can be done by Theorem~\ref{stt3}. It tells us that there is
some $x\in F$ such that $(F,v)$ lies in the henselization of $(K(x),v)$.
So we have proved:
\begin{theorem}                             \label{ecimm}
Suppose that $(K,v)$ is a tame field and that $(L|K,v)$ is an immediate
extension. Then $(K,v)\ec (L,v)$.
\end{theorem}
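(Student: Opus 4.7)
The plan is to formalize the sketch given in Example~\ref{mtHR} using the saturation-based embedding framework developed above. Fix a cardinal $\kappa$ strictly larger than both $|\mathcal{L}_{\rm VF}|$ and $|L|$, and invoke Theorem~\ref{exsat} to choose a $\kappa$-saturated elementary extension $(K,v)^*$ of $(K,v)$. By Theorem~\ref{ecemb}, proving $(K,v)\ec(L,v)$ reduces to exhibiting a valuation-preserving $K$-embedding of $(L,v)$ into $(K,v)^*$, and by Theorem~\ref{ecembfg} it is enough to handle finitely generated subextensions. So the whole task is to show that every immediate function field $(F|K,v)$ over the tame field $(K,v)$ admits a $K$-embedding into $(K,v)^*$.

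I would then reduce to the case $\trdeg F|K = 1$ by induction on the transcendence degree, using Lemma~\ref{trac} to ensure that the intermediate fields which arise remain tame. Concretely, if $\trdeg F|K>1$, pick $x_1\in F$ transcendental over $K$, apply the one-dimensional case to some well-chosen tame field containing $K(x_1)$ (for instance a maximal immediate algebraic extension, which is tame by Lemma~\ref{tame} and Corollary~\ref{cortame}), and use Lemma~\ref{trac} to verify that the tameness hypothesis survives in the residual immediate extension of smaller transcendence degree. Transitivity of $\ec$, together with the fact that sufficiently large saturated elementary extensions of $(K,v)^*$ are again henselian, tame and $\kappa$-saturated, lets the embeddings obtained at each level be composed.

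For the base case $\trdeg F|K=1$, pick $x\in F\setminus K$. Since $(K,v)$ is tame it is algebraically maximal by Lemma~\ref{tame}, so Lemma~\ref{close} guarantees that $x$ satisfies the approximation condition (\ref{trat}). Combined with the $\kappa$-saturation of $(K,v)^*$ and Theorem~2 of [KA1], this yields a valuation-preserving $K$-embedding $(K(x),v)\hookrightarrow(K,v)^*$ obtained by realizing the pseudo-Cauchy type of $x$ inside the saturated target. The remaining task is to extend this embedding along the immediate algebraic extension $F|K(x)$, and this is precisely where Theorem~\ref{stt3} (Henselian Rationality of Immediate Function Fields) enters: choose the transcendental element $x$ from the outset so that $F^h=K(x)^h$. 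Since $(K,v)^*$ is henselian as an elementary extension of the henselian field $(K,v)$, the universal property of henselizations (Theorem~\ref{hensuniqemb}) extends the embedding of $K(x)$ uniquely to $K(x)^h$, and its restriction to $F\subseteq K(x)^h$ is the desired embedding. The separably tame variant is handled identically, using the corresponding clauses of Lemma~\ref{close} and Theorem~\ref{stt3} and the separability hypothesis on $F|K$.

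The main obstacle is the potential presence of nontrivial defect in $(F^h|K(x)^h,v)$ for a generic transcendental $x$, which would block any extension of the embedding past $K(x)^h$; no universal property of henselizations could then salvage the argument. Circumventing this is exactly the content of the deep Henselian Rationality Theorem~\ref{stt3}, and it is this ingredient which forces the tameness hypothesis on $(K,v)$. Once Theorem~\ref{stt3} is in hand, the induction on transcendence degree, while requiring some care in tracking the preservation of tameness through Lemma~\ref{trac}, is a routine model-theoretic amplification of the one-dimensional case.
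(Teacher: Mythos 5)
Your proposal is correct and follows essentially the same route as the paper's sketch in Example~\ref{mtHR}: reduce to finitely generated subextensions via Theorem~\ref{ecembfg}, reduce to transcendence degree~1 via Lemma~\ref{trac}, then in the base case invoke Theorem~\ref{stt3} to pick $x$ with $F^h=K(x)^h$, embed $(K(x),v)$ into $(K,v)^*$ using Lemma~\ref{close}, Theorem~2 of [KA1] and saturation, and extend through the henselization by Theorem~\ref{hensuniqemb}. The only cosmetic point is that the phrase about ``sufficiently large saturated elementary extensions of $(K,v)^*$'' is unnecessary: since $\ec$ is transitive one simply proves $K\ec K_1$ and $K_1\ec F.K_1$ with fresh saturated targets at each stage rather than composing embeddings into a single fixed $K^*$.
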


Given an extension of tame fields of finite transcendence degree, then
by use of Lemma~\ref{trac}, one can separate it into an extension
without transcendence defect and an immediate extension. Both can be
treated separately by Theorem~\ref{ecwtd} and Theorem~\ref{ecimm}. As
$\ec$ is transitive, this proves (cf.\ [K1], [K2]):
\begin{theorem}                             \label{AKEtame}
The Ax--Kochen--Ershov principle (\ref{AKEec}) holds for every
extension $(L|K,v)$ of tame fields.
\end{theorem}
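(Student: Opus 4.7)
The plan is to construct, inside $L$, an intermediate subfield $K_2$ which is itself a tame field and such that $(L|K_2,v)$ is immediate. Then Theorem~\ref{ecimm} gives $(K_2,v)\ec(L,v)$ immediately, while a separate argument using Theorem~\ref{ecwtd} together with the saturation/embedding criteria of Theorems~\ref{ecemb} and~\ref{ecembfg} gives $(K,v)\ec(K_2,v)$; transitivity of $\ec$ then closes the argument.

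I would construct $K_2$ as follows. Pick a maximal family $(x_i)_{i\in I}$ in $L$ whose values $vx_i$ are rationally independent modulo $vK$, and a maximal family $(y_j)_{j\in J}$ whose residues $y_jv$ are algebraically independent over $Kv$. Set $L_0:=K(x_i,y_j\mid i\in I,j\in J)$. By Theorem~\ref{prelBour}, $vL_0=vK\oplus\bigoplus_i\Z vx_i$ and $L_0v=Kv(y_jv\mid j\in J)$, and maximality forces $vL/vL_0$ to be torsion and $Lv|L_0v$ to be algebraic. Let $K_2$ be the relative algebraic closure of $L_0$ in $L$; these properties transfer, so $vL/vK_2$ is torsion and $Lv|K_2v$ is algebraic. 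Applying Lemma~\ref{trac} to the tame field $(L,v)$ and its relatively algebraically closed subfield $K_2$ yields that $(K_2,v)$ is tame, $vK_2$ is pure in $vL$, and $K_2v=Lv$; purity combined with torsion of $vL/vK_2$ forces $vL=vK_2$, so $(L|K_2,v)$ is immediate. Theorem~\ref{ecimm} now gives $(K_2,v)\ec(L,v)$.

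For $(K,v)\ec(K_2,v)$, assume $v$ nontrivial on $K$ (the trivial case is immediate). By Theorems~\ref{ecemb} and~\ref{ecembfg} it suffices to embed every finitely generated $M=K(m_1,\dots,m_r)\subseteq K_2$ into a sufficiently saturated elementary extension $(K,v)^*$ over $K$. Since each $m_k\in K_2$ is algebraic over $L_0$, finite subsets $I_0\subseteq I$ and $J_0\subseteq J$ suffice to hold the coefficients of all the relevant minimal polynomials: put $K_0:=K(x_i,y_j\mid i\in I_0,j\in J_0)$, so that $M':=K_0(m_1,\dots,m_r)$ is finite algebraic over $K_0$. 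Another application of Theorem~\ref{prelBour}, together with the invariance of rational rank of the value group and transcendence degree of the residue field under algebraic extensions, shows that $M'|K$ has finite transcendence degree and transcendence defect zero. The hypotheses $vK\ec vL$ and $Kv\ec Lv$ descend to the intermediate substructures $vK\subseteq vM'\subseteq vL$ and $Kv\subseteq M'v\subseteq Lv$ to give $vK\ec vM'$ and $Kv\ec M'v$ (an existential sentence over $vK$ holding in $vM'$ also holds in $vL$, hence in $vK$). Since $K$ is tame and therefore henselian defectless (Lemma~\ref{tamehdp}), Theorem~\ref{ecwtd} applies and gives $(K,v)\ec(M',v)$, so $M$ embeds into $(K,v)^*$ over $K$.

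Combining the two halves by transitivity of $\ec$ proves the theorem. The main obstacle is that $K_2|K$ may well have infinite transcendence degree, so Theorem~\ref{ecwtd} cannot be invoked in one shot on $K_2|K$; the remedy is the saturation/embedding machinery of Theorems~\ref{ecemb} and~\ref{ecembfg}, which funnels the problem back to finitely generated transcendence-defect-zero subextensions $M'|K$, where Theorem~\ref{ecwtd} does apply.
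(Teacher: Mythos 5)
Your proof is correct and takes essentially the same route as the paper: decompose $L|K$ via Lemma~\ref{trac} into a transcendence-defect-zero subextension $K_2|K$ treated by Theorem~\ref{ecwtd} (after funnelling down to finitely generated pieces via Theorems~\ref{ecemb} and~\ref{ecembfg}) and an immediate extension $L|K_2$ treated by Theorem~\ref{ecimm}, then compose by transitivity of $\ec$. Note only that tame fields are non-trivially valued by definition, so the trivial-valuation case you set aside does not in fact arise.
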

\end{example}

\pars
Let us return to our problem of local uniformization.
So far, we have worked with the assumption that we can find a
subfunction field $F_0$ in $F$ such that the restriction of $P$ to $F_0$
is an Abhyankar place and $(F|F_0,P)$ is immediate. But it is not always
possible to achieve the latter. For example, take $F$ to be the rational
function field $K(x_1,x_2)$ and $P$ such that $FP=K$ and $v_P^{ }F$ is a
not finitely generated subgroup of $\Q$; we will construct such a place
$P$ in the next section. But for any $F_0$ on which $P$ is an Abhyankar
place, $v_P^{ }F_0$ is finitely generated, so we will always have that
$v_P^{ }F\ne v_P^{ }F_0\,$.

In this situation, passing to henselizations may help again. Given an
arbitrary place $P$ of $F|K$, we choose an Abhyankar subfunction field
as in (\ref{Asff}). We have that $v_P^{ }F/v_P^{ }F_0$ is a torsion
group and that $FP|F_0P$ is algebraic. Take $F_1$ to be the relative
algebraic closure of $F_0$ in $F^h$. If $\chara FP=\chara K=0$, then by
Lemma~\ref{HLrel1} and Lemma~\ref{HLrel2}, $(F^h|F_1,P)$ is an immediate
extension; so we succeeded again in reducing to the case of immediate
extensions. But if $\chara FP=\chara K=p>0$, then we only know that
$v_P^{ }F^h/v_P^{ }F_1$ is a $p$-group and that $F^hP|F_1P$ is purely
inseparable. So in this case, passing to henselizations is not enough.
But we obtain an immediate extension if we replace $F^h$ and $F_1$ by
their perfect hulls. In fact, to make all of our tools work, we have to
take even bigger extensions. Namely, we have to pass to smallest
algebraic extensions which are tame or at least separably tame fields.
But these extensions still have nice properties; we will talk about them
in Section~\ref{sectblu}.

With this approach, one can deduce the following theorems from
the Generalized Stability Theorem, the Henselian Rationality of Immediate
Function Fields, the results described in Section~\ref{sectrlu}, and the
transitivity of relative uniformization:
\begin{theorem}                             \label{MT1}
Let $F|K$ be a function field of arbitrary characteristic and $P$
a place of $F|K$. Take any elements $\zeta_1,\ldots,\zeta_m\in
{\cal O}_P\,$. Then there exist a finite extension ${\cal F}$ of $F$,
an extension of $P$ to ${\cal F}$, and a finite purely inseparable
extension ${\cal K}$ of $K$ within ${\cal F}$ such that $({\cal F}
|{\cal K},P)$ is uniformizable with respect to $\zeta_1,\ldots,
\zeta_m\,$.
\end{theorem}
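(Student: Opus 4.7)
The plan is to split $(F|K,P)$ into an Abhyankar piece and an immediate piece, uniformize each separately, and glue them with the Transitivity of Relative Uniformization. First I would pick elements $x_1,\ldots,x_\rho,y_1,\ldots,y_\tau\in{\cal O}_P$ as in (\ref{Asff}) and set $F_0:=K(x_1,\ldots,x_\rho,y_1,\ldots,y_\tau)$; the restriction of $P$ to $F_0$ is an Abhyankar place. Applying Theorem~\ref{Abh2} to $(F_0|K,P)$ produces a finite purely inseparable extension ${\cal K}_0|K$, a finite extension ${\cal F}_0|F_0.{\cal K}_0$, and an extension of $P$, such that $({\cal F}_0|{\cal K}_0,P)$ is uniformizable with respect to any finite list of elements we specify; this list will ultimately consist of $\zeta_1,\ldots,\zeta_m$ together with auxiliary elements that arise from the steps below. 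After absorbing ${\cal F}_0$ into the ambient field, it remains to uniformize the extension $(F.{\cal F}_0|{\cal F}_0,P)$.

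By construction, $v_PF/v_P{\cal F}_0$ is torsion and $FP|{\cal F}_0P$ is algebraic, so the extension has transcendence defect zero but is not immediate in general. To repair this, I would pass from ${\cal F}_0$ to an algebraic extension ${\cal F}_0^{\rm st}$ chosen to be separably tame: in residue characteristic zero Lemmas~\ref{HLrel1} and~\ref{HLrel2} already make the relative algebraic closure of ${\cal F}_0$ in $F^h$ carry an immediate residual extension, while in positive residue characteristic one must enlarge this closure further (and, if $FP|{\cal F}_0P$ is inseparable, pay for it with a finite purely inseparable enlargement of ${\cal K}_0$, absorbed into the final ${\cal K}$). The Generalized Stability Theorem~\ref{ai}, applied with trivially valued ground field ${\cal K}_0$, guarantees that ${\cal F}_0$ and each finite algebraic extension of it inside ${\cal F}_0^{\rm st}$ is a defectless field, so ${\cal F}_0^{\rm st}$ is indeed separably tame; moreover $F.{\cal F}_0^{\rm st}$ now sits as an immediate extension of ${\cal F}_0^{\rm st}$.

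Now I would peel off this immediate extension one transcendental at a time, inducting on its transcendence degree. Over the current separably tame intermediate field $K'$, Theorem~\ref{stt3} (Henselian Rationality of Immediate Function Fields) furnishes an element $x$ such that $K'(z)^h = K'(x)^h$ for the chosen transcendental $z$, and Lemma~\ref{close} shows that $x$ satisfies the continuity condition~(\ref{trat}), since $K'$ is separable-algebraically maximal. Proposition~\ref{ist} then uniformizes $(K'(x)|K',P)$ with respect to any finite list we specify, and Proposition~\ref{inert1} together with Corollary~\ref{hens1} uniformize the finite climb inside the henselization from $K'(x)^h$ into the next intermediate field. The Generalized Stability Theorem reapplies at each step to keep the running ground field defectless and separably tame, so the induction goes through to transcendence degree zero. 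Finally, the Transitivity of Relative Uniformization assembles the tower into a single uniformization of $({\cal F}|{\cal K},P)$ with respect to $\zeta_1,\ldots,\zeta_m$, where ${\cal F}$ is the finite extension of $F$ generated by all the $x$'s and algebraic elements used along the way, and ${\cal K}$ collects all purely inseparable enlargements of $K$ made above.

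The main obstacle is not any single uniformization lemma but the compatibility bookkeeping. Each of Theorem~\ref{Abh2}, Proposition~\ref{ist}, Proposition~\ref{inert1}, and Corollary~\ref{hens1} delivers only \emph{relative} uniformization with respect to a pre-specified finite list, and transitivity requires that at the outermost layer we already know the auxiliary elements every subsequent stage will demand. I would therefore organize the proof in reverse order: first fix the full tower of intermediate fields and the generating element at each immediate step, then read off the finite lists of auxiliary elements that must be threaded back through Steps~2 and~1, and only then invoke the uniformization lemmas in their natural order. The separably tame field ${\cal F}_0^{\rm st}$ is typically infinite algebraic over $K$, but because uniformization with respect to finitely many $\zeta_i$ is expressible by finitely many polynomial relations, only a finite portion of ${\cal F}_0^{\rm st}$ ever appears, which is what makes ${\cal F}|F$ finite in the end.
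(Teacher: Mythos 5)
Your overall architecture is the one the paper outlines for Theorem~\ref{MT1}: split off an Abhyankar subfunction field as in~(\ref{Asff}), uniformize it (via Theorem~\ref{Abh2}), reduce the remaining piece to an immediate extension of a separably tame field, peel that off with Theorem~\ref{stt3} and Proposition~\ref{ist}, and assemble with transitivity. But two steps in your induction are not justified as written.

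First, you claim that ``the Generalized Stability Theorem reapplies at each step to keep the running ground field defectless and separably tame.'' It does not. Theorem~\ref{ai} is a statement about valued function field extensions \emph{without transcendence defect}. Once you have passed to the immediate part, every transcendental you adjoin over $K'$ contributes $+1$ to the transcendence defect over the trivially valued ground field, so the hypothesis of Theorem~\ref{ai} is violated, and there is no reason the intermediate field after one immediate step is defectless, let alone separably tame. Indeed Example~\ref{exampASd1} shows that the henselization of a rational function field can admit a proper immediate Artin--Schreier extension. What the paper actually prescribes (end of Section~\ref{sectnon}) is that after each immediate step one must again pass to a (separably) tame \emph{algebraic} extension of the new intermediate field --- concretely, a maximal purely wild extension as in Lemma~\ref{Wtf} --- absorbing the necessary finite piece into ${\cal F}$. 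Your final paragraph about finiteness would still go through, but the mechanism keeping the base separably tame is this repeated algebraic enlargement, not a reapplication of stability.

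Second, the ``finite climb inside the henselization'' is exactly where the rank of $P$ bites, and your argument silently assumes rank~$1$. After Theorem~\ref{stt3} gives you $F'^h = K'(x)^h$, the algebraic part of the step is a finite subextension of the henselization of $K'(x)$. Proposition~\ref{inert1} does not apply, since $K'(x)$ is a rational function field and hence not henselian; and Corollary~\ref{hens1} explicitly requires rank~$1$ so that the henselization lies in the completion. For rank~$>1$ this is precisely Open Problem~3, which the paper describes as serious and only circumvented in [K6] by ``a weak form of relative uniformization for finite extensions within the absolute ramification field,'' together with an induction on the rank (as mentioned at the end of Section~\ref{sectrtb}). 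Your proposal needs to invoke that weak form, or carry out the rank induction, for the argument to close in general.
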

\begin{theorem}                             \label{MT1add}
The extension ${\cal F}|F$ can always be chosen to be normal.
\end{theorem}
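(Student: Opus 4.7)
The plan is to obtain $\mathcal{F}$ as the normal hull over $F$ of the extension produced by Theorem~\ref{MT1}, after absorbing the necessary purely inseparable contributions into a single finite ground-field extension $\mathcal{K}$.

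I would first apply Theorem~\ref{MT1} to obtain a finite extension $\mathcal{F}_0|F$, a finite purely inseparable extension $\mathcal{K}_0|K$, and an extension of $P$ to $\mathcal{F}_0$ such that $(\mathcal{F}_0|\mathcal{K}_0,P)$ is uniformizable with respect to $\zeta_1,\ldots,\zeta_m$. Fix an extension of $P$ to the algebraic closure $\widetilde F$, let $\sigma_1=\mathrm{id},\sigma_2,\ldots,\sigma_r$ be the $F$-embeddings of $\mathcal{F}_0$ into $\widetilde F$, and define $\mathcal{F}:=\sigma_1\mathcal{F}_0\cdots\sigma_r\mathcal{F}_0$ and $\mathcal{K}:=\sigma_1\mathcal{K}_0\cdots\sigma_r\mathcal{K}_0$. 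Then $\mathcal{F}|F$ is normal by construction, and since $\mathcal{K}_0|K$ is purely inseparable all its conjugates already lie in the unique perfect hull of $K$, so $\mathcal{K}|K$ is still a finite purely inseparable extension. I restrict the chosen extension of $P$ on $\widetilde F$ down to $\mathcal{F}$.

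To prove $(\mathcal{F}|\mathcal{K},P)$ is uniformizable with respect to $\zeta_1,\ldots,\zeta_m$, I would apply the Transitivity of Relative Uniformization along the tower
\[
F.\mathcal{K}\;\subseteq\;\sigma_1\mathcal{F}_0.\mathcal{K}\;\subseteq\;\sigma_1\mathcal{F}_0.\sigma_2\mathcal{F}_0.\mathcal{K}\;\subseteq\;\cdots\;\subseteq\;\mathcal{F}.
\]
The first step is uniformizable because $\sigma_1=\mathrm{id}$ and this is exactly the conclusion of Theorem~\ref{MT1} (with $\mathcal{K}_0$ replaced by the larger purely inseparable $\mathcal{K}$, which does not affect uniformizability). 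For each subsequent step $M_{j+1}:=M_j\cdot\sigma_{j+1}\mathcal{F}_0$ over $M_j$, I would reapply Theorem~\ref{MT1} to the finite algebraic (hence trivially transcendental) function field $M_{j+1}|M_j$, choosing the $\zeta$'s so as to include the original $\zeta_1,\ldots,\zeta_m$ together with generators that record the uniformizations at the previous levels, so that transitivity can be invoked.

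The main obstacle is that each reapplication of Theorem~\ref{MT1} may enlarge the field beyond the target normal hull, forcing a fresh normal closure and threatening an infinite regress. To control this, I would argue that all successive extensions and their normal hulls remain inside a fixed finite Galois subextension $N|F.\mathcal{K}$ of the absolute ramification field of the henselization of $F.\mathcal{K}$: since the structural ingredients behind Theorem~\ref{MT1} (Propositions~\ref{prep}, \ref{inert1}, \ref{hens1}, \ref{comp1} and the Henselian Rationality Theorem~\ref{stt3}) place the enlargements inside such a ramification field, and ramification fields are Galois over the base, taking normal hulls does not escape $N$. The finite degree of $N$ over $F.\mathcal{K}$ then provides an \emph{a priori} bound that forces the normal-hull/reapplication loop to stabilize, after which $\mathcal{F}$ is both normal over $F$ and uniformizable with respect to $\zeta_1,\ldots,\zeta_m$ over a finite purely inseparable $\mathcal{K}|K$, as required.
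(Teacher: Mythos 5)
Your overall strategy — replace $\mathcal{F}_0$ by the compositum of its $F$-conjugates and then argue uniformizability by transitivity along the resulting tower — is a reasonable first instinct, and the observation that $\mathcal{K}|K$ stays purely inseparable because all conjugates of a purely inseparable extension lie in the perfect hull is correct. But the argument breaks down at the step you yourself flag as the main obstacle, and the proposed fix does not work.

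The key claim in your repair step is that ``all successive extensions and their normal hulls remain inside a fixed finite Galois subextension $N|F.\mathcal{K}$ of the absolute ramification field of the henselization,'' on the grounds that the structural ingredients behind Theorem~\ref{MT1} place the enlargements inside a ramification field. This is not what those ingredients do. As explained at the end of Section~\ref{sectnon} and in Section~\ref{sectblu}, the proof of Theorem~\ref{MT1} passes to a maximal purely wild extension $W$ of the henselization in order to reach a tame (or separably tame) field, so that Lemma~\ref{trac} and the Henselian Rationality Theorem~\ref{stt3} become applicable. By Theorem~\ref{rfmte}, $W$ is a \emph{complement} to the absolute ramification field: $W\cap K^r=K$ and $W.K^r=\tilde K$. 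So the auxiliary extensions used to build $\mathcal{F}_0$ lie essentially outside $K^r$ rather than inside it, and there is no a priori finite Galois cage $N$ that the iterated constructions stay within. Without such a bound, your reapplication loop has no termination argument; one fresh invocation of Theorem~\ref{MT1} per stage can take you out of the target compositum, forcing a new normal hull, and so on, which is precisely the regress you were trying to cut off.

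There are two further, more local, problems. First, Theorem~\ref{MT1} is stated for a function field $F|K$ with $P$ a place of $F|K$, i.e.\ trivial on $K$ and with $\trdeg F|K\geq 1$; the steps $M_{j+1}|M_j$ in your tower are finite algebraic and $P$ is non-trivial on $M_j$, so Theorem~\ref{MT1} does not apply to them as stated. One would instead have to invoke the relative uniformization results of Section~\ref{sectrlu} (Propositions~\ref{prep}--\ref{inert1}) directly, after locating $M_{j+1}|M_j$ inside the inertia field, the henselization, or a completion; but it is exactly this location that is not guaranteed for an arbitrary conjugate $\sigma_{j+1}\mathcal{F}_0$. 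Second, the restriction to $\sigma_j\mathcal{F}_0$ of the fixed extension of $P$ to $\tilde F$ need not be the conjugate $P\circ\sigma_j^{-1}$ of the uniformizable place you started from on $\mathcal{F}_0$; the two are only conjugate over $F$, and one has to track whether the uniformizing data transports along that conjugacy. This is fixable but needs to be said, since uniformizability is a property of the pair consisting of the field extension together with a specific place on it.

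What is missing, in short, is a structural reason why the normal hull of $\mathcal{F}_0$ can be built without leaving control. The route via ``Galois closure of an arbitrary output of Theorem~\ref{MT1}, plus reapplication'' does not supply it; a proof has to go back into the construction of $\mathcal{F}_0$ itself and arrange the tower (Abhyankar subfunction field, tame closure, henselian rationality) so that the uniformizability can be certified for a normal, and not merely some, finite extension.
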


See [K6] for the proof. These theorems also follow from the results of
Johan de Jong ([dJ]; cf.\ also [AO], [OO]). So this should be an
interesting question:
\mn
{\bf Open Problem 7:} \ Is it possible to recognize counterparts of the
Generalized Stability Theorem and the Henselian Rationality of Immediate
Function Fields in the theory of semi-stable reduction, or in any other
part of de Jong's proof of desingularization by alteration?
\mn
The advantage of proving the above theorems by the described
valuation theoretical approach is that we get additional information
about the extension ${\cal F}|F$. We have already seen in this and the
previous section that in certain cases we do not need an extension,
i.e., we have local uniformization already for $(F|K,P)$. In other
cases, we can obtain ${\cal F}$ from $F$ by one or a tower of Galois
extensions. We will see in Section~\ref{sectblu} that in general, we can
choose ${\cal F}|F.{\cal K}$ to be a separable (but not Galois)
extension with additional information about the related extensions of
value group and residue field.

\parm
A little bit of horror makes an excursion even more interesting. So
let's watch out for bad places.

%
%
\section{Bad places}                        \label{sectbad}
In this section we will show that there are places of function fields
$F|K$ whose value group or residue field are not finitely generated.
By combining the methods you can construct examples where both is the
case. The following two examples can already be found in [ZS], Chapter
VI, \S 15. But our approach (using Hensel's Lemma) is somewhat easier
and more conceptual.

\begin{example}
We construct a place on the rational function field $K(x_1,x_2)|K$
whose value group $G\subset \Q$ is not finitely generated, assuming
that the order of every element in $G/\Z$ is prime to $\chara K$. To
this end, we just find a suitable embedding of $K(x_1,x_2)$ in $K((G))$.
We do this by setting $S:=\{n\in\N\mid 1/n\in G\}$ and
\begin{equation}
x_1\>:=\>t \;\;\;\mbox{ and }\;\;\; x_2:=\sum_{n\in S}^{}t^{-1/n}\;.
\end{equation}
Further, take the valuation $v$ on $K(x_1,x_2)$ to be the restriction of
the canonical valuation $v$ of $K((G))$. We wish to show that $1/S
\subseteq vK(x_1,x_2)$, so that $G\subset vK(x_1,x_2)$. Since
$(K(x_1,x_2),v) \subset (K((G)),v)$, it follows that $G=vK(x_1,x_2)$.
If $x_2$ were algebraic over $K(x_1)$, we would know by
Corollary~\ref{fingentb} that $vK(x_1,x_2)$ is finitely generated. Hence
if it is not, then $x_2$ must be transcendental over $K(x_1)$, so that
$K(x_1,x_2)$ is indeed the rational function field over $K$ in two
variables.

Suppose that $\chara K=0$; then we can get $G=\Q$. Also in positive
characteristic one can define the valuation in such a way that the value
group becomes $\Q$; since then we have to deal with inseparability, our
construction has to be refined slightly, which we will not do here.

Now let us prove our assertion.
We take $(L,v)$ to be the henselization of $(K(x_1,x_2),v)$.
We are going to show that $t^{1/n}\in L$ for all $n\in S$. Suppose we
have shown this for all $n<k$, where $k\in S$ (we can assume that
$k>1$). Then also $s_k:=\displaystyle\sum_{n\in S, n<k} t^{-1/n}\in L$.
We write
\begin{equation}
x_2-s_k\>=\>\sum_{n\in S, n\geq k} t^{-1/n} \>=\> t^{-1/k}(1+c)
\end{equation}
where $c\in K((G))$ with $vc>0$. Hence, $1+c$ is a 1-unit. We have that
$(1+c)^k=t(x_2-s_k)^k \in L$. On the other hand, $(1+c)^kv=((1+c)v)^k
=1^k=1$, which shows that $(1+c)^k$ is again a 1-unit. Since $k\in S$
we know that $\chara LP=\chara K$ does not divide $k$. Hence by
Lemma~\ref{1-unit}, $1+c\in L$. This proves that
$t^{1/k}=(1+c)(x_2-s_k)^{-1}\in L$.

We have now proved that $t^{1/k}\in L$ for all $k\in S$. Hence, $1/k=
vt^{1/k}\in vL$ for all $k\in S$. But since the henselization is an
immediate extension, we know that $vL=vK(x_1,x_2)$, so we have proved
that $1/S\subset vK(x_1,x_2)$.
\end{example}

\begin{example}
We take a field $K$ for which the separable-algebraic closure $K\sep$ is
an infinite extension (i.e., $K$ is neither separable-algebraically
closed nor real closed). We construct a place of the rational function
field $K(x_1,x_2)|K$ whose residue field is not finitely generated. We
choose a sequence $a_n\,$, $n\in\N$ of elements which are
separable-algebraic over $K$ of degree at least $n$. We define an
embedding of $K(x_1,x_2)$ in $K\sep ((t))$ by setting
\begin{equation}
x_1\>:=\>t \;\;\;\mbox{ and }\;\;\; x_2:=\sum_{n\in \N} a_n t^n\;.
\end{equation}
Further, we take the valuation $v$ on $K(x_1,x_2)$ to be the restriction
of the valuation of $K\sep ((t))$. We wish to show that $a_n\in
K(x_1,x_2)v$ for all $n\in\N$, so that $K(x_1,x_2)v|K$ cannot be
finitely generated. If $x_2$ were algebraic over $K(x_1)$, we would know
by Corollary~\ref{fingentb} that $K(x_1,x_2)v|K$ is finitely generated.
So if it is not, then $x_2$ must be transcendental over $K(x_1)$, so
that $K(x_1,x_2)$ is indeed the rational function field over $K$ in two
variables. By a modification of the construction, one can also generate
infinite inseparable extensions of $K$. If $K$ is countable, one can
generate every algebraic extension of $K$ as a residue field of
$K(x_1,x_2)$.

We take again $(L,v)$ to be the henselization of $(K(x_1,x_2),v)$.
We are going to show that $a_n\in L$ for all $n\in\N$. Suppose we
have shown this for all $n<k$, where $k\in\N$.
Then also $s_k:=\sum_{n=1}^{k-1} a_n t^n\in L$. We write
\begin{equation}
\frac{x_2-s_k}{t^k}\>=\>\frac{1}{t^k}\sum_{n=k}^{\infty} a_k t^k\>=\>
a_k (1+c)
\end{equation}
where $c\in K\sep ((t))$ with $vc>0$. Take $f\in K[X]$ to be the minimal
polynomial of $a_k$ over $K$ and note that $f=fv$. Since $a_k\in K\sep$,
we know that $a_k$ is a simple root of $f$. On the other hand,
$a_k=a_k(1+c)v\in Lv$. Hence by Hensel's Lemma (Simple Root Version)
there is a root $a$ of $f$ in $L$ such that $av=a_k$. As we may assume
that the place associated with $v$ is the identity on $K$, this will
give us that $a=a_k\,$; so $a_k\in L$.

We have now proved that $a_n\in L$ for all $n\in\N$. Hence,
$a_n\in Lv=K(x_1,x_2)v$ for all $n\in\N$.
\end{example}

%
%
\section{The role of the transcendence basis and the
dimension}                                  \label{sectrtb}
In our approach described in Section~\ref{sectnon}, we have obtained
the subfunction field $F_0$ on which the restriction of $P$ is an
Abhyankar place by choosing the elements $x_1,\ldots,x_\rho,
y_1,\ldots,y_\tau\,$. But then we have made no effort to improve our
choice. With this ``stiff'' approach (which in fact gives additional
information), one can prove Theorem~\ref{MT1}, but it can be shown that
in general one cannot get local uniformization without an extension of
the function field. I want to show why not. The following example is
particularly interesting since it is also a key example in the model
theory of valued fields of positive characteristic (cf.\
Section~\ref{sectfpt}).
\begin{example}                             \label{exampnhr}
We denote by $\F_p$ the field with $p$ elements. We consider the
following function field of transcendence degree 3 over $\F_p\,$:
\begin{equation}
F\>=\>\F_p(x_1,x_2,y,z)\;\;\;\mbox{ with }\; z^p-z\>=\>x_1-x_2y^p\;.
\end{equation}
Since $x_1\in \F_p(x_2,y,z)$, $F$ is a rational function field. However,
in [K1] (cf.\ also [K2], [K4]) we have shown that there is a rational
place $P$ of $F|\F_p$ such that
$v_P^{ }F=\Z v_P^{ } x_1\times\Z v_P^{ }x_2\,$ (ordered
lexicographically) and that the valued function field $(F|\F_p(x_1,x_2),
P)$ is not henselian generated. It follows that $F$ cannot lie in the
henselization of $(\F_p(t_1,t_2,t_3),P)$ if $t_1,t_2\in F$ are algebraic
over $\F_p(x_1,x_2)$ (and hence lie in $\F_p(x_1,x_2)$ since this is
relatively algebraically closed in $F$). Therefore, Theorem~\ref{MT5}
shows that $(F|\F_p,P)$ admits no local uniformization with $t_1,t_2$
algebraic over $K(x_1,x_2)$. A function field having such a local
uniformization must have degree at least $p$ over $F$. And indeed,
degree $p$ suffices, as
\[(F(x_2^{1/p})|\F_p(x_1,x_2^{1/p}),P)\]
is a rational function field. (It is an interesting fact that there is
also a Galois extension of $\F_p(x_1,x_2)$ of degree $p$ such that the
function field becomes henselian generated.)
\end{example}

The proof that $(F|\F_p(x_1,x_2), P)$ is not henselian generated is
based on showing that $(\F_p(x_1,x_2), P)$ is not existentially closed
in $(F,P)$. I have not found an algebraic proof.
\mn
{\bf Open Problem 8:} \ Develop a method to prove algebraically that a
given valued function field $(F|K,v)$ ($v$ not necessarily trivial on
$K$) is {\it not} henselian generated or inertially generated.

A variant of the example (cf.\ [K7]) shows: {\it There are immediate
transcendental extensions of valued fields which are not uniformizable.}
The example teaches us that relative uniformization will not always hold
without an extension of the function field. Hence, in general we will
have to optimize our choice of the transcendence basis for $F_0$ or even
for $F$ in order to obtain local uniformization for $(F|K,P)$. Given a
transcendence basis $T$ of $F$, it is easy to measure how far $F$ is
from lying in the absolute inertia field $K(T)^i$ of $(K(T),P)$: we just
have to take the degree
\begin{equation}
{\rm ig}(F,T):=[F.K(T)^i:K(T)^i]\;.
\end{equation}
This raises the problem:
\mn
{\bf Open Problem 9:} \ Develop a method to change $T$ in such a way
that ${\rm ig}(F,T)$ decreases.
\mn
In our above example, this is very easy since $F|K$ is actually a
rational function field. But one can modify the example in such a way
that $F|K$ is not rational. Instead of doing this for the above example,
let us look at a slightly simpler example, which will also show that
already a valued rational function field can have an immediate extension
of degree $p$ with defect $p$.

\begin{example}
We take an arbitrary field $K$ of characteristic $p>0$ and work in the
power series field $K((\frac{1}{p^{\infty}}\Z))$ with its canonical
valuation $v$. Recall that $\frac{1}{p^{\infty}}\Z$ is the
$p$-divisible hull $\{m/p^n\mid m\in\Z,\,n\in\N\}$ of $\Z$. We have
that $K((t))\subset K((\frac{1}{p^{\infty}}\Z))$. For every $i\in\N$,
we set $\nu_i:=\sum_{j=1}^{i} j$, and we define:
\begin{equation}
z\>:=\>\sum_{i=1}^{\infty} t^{p^{\nu_i}-p^{-\nu_i}}
\>\in\> K\left(\left(\frac{1}{p^{\infty}}\Z\right)\right)\;.
\end{equation}
We show that $(K((\frac{1}{p^{\infty}}\Z)),v)$ is an
immediate extension of $(K(t,z),v)$. Since both fields have residue
field $K$, we only have to show that $\frac{1}{p^{\infty}}\Z
\subseteq vK(t,z)$. For $k\in\N$, we compute:
\begin{eqnarray*}
z^{p^{\nu_k}}\,-\,\sum_{i=1}^{k} t^{p^{\nu_k+\nu_i}-p^{\nu_k-\nu_i}}
& = & \sum_{i=1}^{\infty}\left(t^{p^{\nu_i}-p^{-\nu_i}}
\right)^{p^{\nu_k}} \,-\,\sum_{i=1}^{k}\left(t^{p^{\nu_i}-
p^{-\nu_i}}\right)^{p^{\nu_k}} \\
& = & \sum_{i=k+1}^{\infty} t^{p^{\nu_k+\nu_i}-p^{\nu_k-\nu_i}}\>=\>
t^{p^{\nu_k+\nu_{k+1}}-p^{\nu_k-\nu_{k+1}}} + \ldots
\end{eqnarray*}
So
\begin{equation}
vt^{p^{\nu_k+\nu_{k+1}}}\left(z^{p^{\nu_k}}\,-\,\sum_{i=1}^{k}
t^{p^{\nu_k+\nu_i}-p^{\nu_k-\nu_i}}\right)^{-1}\>=\>p^{\nu_k-\nu_{k+1}}
\>=\> \frac{1}{p^{k+1}}\;.
\end{equation}
As the element on the left hand side is in $K(t,z)$, this shows
that $p^{-k}\Z\subset vK(t,z)$ for every $k\in\N$. Consequently,
$\frac{1}{p^{\infty}}\Z \subseteq vK(t,z)$, as desired. This also proves
that $z$ is transcendental over $K(t)$ since otherwise, $(vK(t,z):\Z)$
would be finite.

From Section~\ref{sectdef} we know that $\vartheta=\sum_{i\in\N}
t^{-1/p^i} \in K((\Q))$ is a root of the Artin--Schreier polynomial
$X^p-X-1/t$. We see that this power series already lies in the subfield
$K((\frac{1}{p^{\infty}}\Z))$ of $K((\Q))$. Hence, $(K(t,z,\vartheta)|
K(t,z),v)$ is a subextension of $(K((\frac{1}{p^{\infty}}\Z))|K(t,z),
v)$ and thus, it is immediate too. In order to show that it has
non-trivial defect, we have to show that it has a unique extension
of the valuation, or equivalently, that it is linearly disjoint from
the henselization of $(K(t,z),v)$. Since it is a Galois extension of
prime degree, it suffices to show that it does not lie in this
henselization.

\pars
We take the subfield $K(t^{1/p^k}\mid k\in\N)$ of
$K((\frac{1}{p^{\infty}}\Z))$. By definition, $z$ lies in the completion
of $(K(t^{1/p^k}\mid k\in\N),v)\>$ (since the values of the summands
form a sequence which is cofinal in the value group
$\frac{1}{p^{\infty}}\Z$). Since this value group is archimedean, that
is, $v$ has rank 1, we know that the henselization of $(K(t,z),v)$
lies in the completion of $(K(t,z),v)$, which by what we have just shown
lies in the completion of $(K(t^{1/p^k}\mid k\in\N),v)$. On the
other hand, we have seen in Section~\ref{sectdef} that $\vartheta$
does not lie in the completion of $(K(t^{1/p^k}\mid k\in\N),v)$. Hence,
it does not lie in the henselization of $(K(t,z),v)$.

\pars
We have now shown that the function field $K(t,z,\vartheta)|K$ admits
a place with a value group which is not finitely generated and such that
the extension $(K(t,z,\vartheta)| K(t,z),v)$ is immediate of degree $p$
and defect $p$. Now you will point out that our function field
$K(t,z,\vartheta)$ is again rational: since $1/t= \vartheta^p-
\vartheta$, we have that $K(t,z,\vartheta)=K(z,\vartheta)$. So let's
change something. We take a polynomial $f(t)\in K[t]$ and note that
$vf(t)\geq 0$. Now we replace $\vartheta$ by a root $\vartheta_f$ of the
polynomial $X^p-X-(\frac{1}{t}+f(t))$. It can be shown that the new
extension $(K(t,z,\vartheta_f)|K(t,z),v)$ will again be immediate of
degree $p$ and defect $p$. In fact, this is obvious if we choose $f$
without constant term. In that case, $vf(t)>0$ and we know from
Example~\ref{exampHL1} that the polynomial $X^p-X-f(t)$ has a root
in $K(t,z)^h$. By the additivity of $X^p-X$ it follows that the two
polynomials $X^p-X-\frac{1}{t}$ and $X^p-X-(\frac{1}{t}+f(t))$
define the same extension of degree $p$ and defect $p$ over $K(t,z)^h$.
Consequently, also $(K(t,z,\vartheta_f)|K(t,z),v)$ must be immediate of
degree $p$ and defect $p$.

Now we have that
\begin{equation}
\vartheta_f^p-\vartheta_f\>=\>\frac{1}{t}+f(t)\>=\>
\frac{1+tf(t)}{t}\;.
\end{equation}
So the minimal polynomial of $t$ over $K(z,\vartheta_f)$ will be
\begin{equation}
Xf(X)\,-\,(\vartheta_f^p-\vartheta_f)X\,+\,1\;.
\end{equation}
The transition from the representation $F=K(t,z,\vartheta_f)$ with
$\vartheta_f$ algebraic over $K(t,z)$ to the representation
$F=K(z,\vartheta_f,t)$ with $t$ algebraic over $K(z,\vartheta_f)$
may be called \bfind{Artin--Schreier inversion}. With a suitable choice
of $f$, the function field $F=K(t,z,\vartheta_f) |K$ will not be
rational. However, whatever choice of $f$ I computed, I found that after
a little Artin--Schreier surgery on $X^p-X-(\frac{1}{t}+f(t))$ (which
replaces $f$ by a better polynomial), Artin--Schreier inversion will
yield a tame extension $(K(z,\vartheta_f,t)|K(z,\vartheta_f),v)$. So at
least we got rid of the defect, probably even of the ramification.
After all, this is what we expect since we know from Abhyankar's work
(cf.\ [A10]) that $(F|K,P)$ always admits local uniformization for
$\trdeg F|K$ up to 3 (with the possible exception of characteristic 2,
3, 5).
\end{example}

Getting rid of defect and ramification by Artin--Schreier inversion
seems to be the algebraic kernel of local uniformization and, in
particular, of Abhyankar's proofs. However, the following questions
should be answered without a restriction of the dimension (i.e.,
the transcendence degree of $F|K$):
\mn
{\bf Open Problem 10:} \ Prove (or disprove) that by Artin--Schreier
inversion in connection with Artin--Schreier surgery one can always get
rid of the defect. How about ramification?
\mn
The only case where I know that the answer is positive is the case of
Theorem~\ref{stt3}, the Henselian Rationality of Immediate Function
Fields. There, it is the crucial part of the proof. There seems to be no
reason why the answer to the above problem should depend on the
transcendence degree of $F|K$ or on the particular value of the positive
characteristic. Actually, what I am saying is not quite true since the
proof of Theorem~\ref{stt3} so far only works under a strong assumption
about the base field, and a generalization may more easily be achieved
if the restriction of $P$ to that base field is an Abhyankar place. That
might indicate that there is more hope for function fields of
transcendence defect at most 1 than for those with higher transcendence
defect. In dimension 2 ($\trdeg F|K=2$), every place $P$ of $F|K$ will
have transcendence defect at most 1 since there is always a subfunction
field of transcendence degree at least 1 on which the restriction of $P$
is an Abhyankar place. This seems to separate the case of dimension
$\leq 2$ from the case of dimension $\geq 3$. But as Abhyankar was able
to tackle dimension 3 (where the transcendence defect may well be
2), there seems to be no reason why all this shouldn't work for even
higher dimensions.

\pars
By looking at these crucial Artin--Schreier extensions, we have
considered the kernel of the problem. But are we really sure that
we can always reduce to this kernel (for instance, by passing to
ramification fields as described in Section~\ref{sectGST})?
\mn
{\bf Open Problem 11:} \ Is it always (in all dimensions) possible to
pull down local uniformization through tame extensions? That is, if
$({\cal F}|K,P)$ is uniformizable where $({\cal F}^h|F^h, P)$ is a tame
extension, will then also $(F|K,P)$ be uniformizable? Which additional
assumptions do we possibly need? What answers can be extracted from
Abhyankar's work? Is there some generalization of his ``Going Up'' and
``Coming Down'' techniques to all dimensions?
\mn
Again, there is no hint why the dimension should have an influence on
this problem. Possibly it can be found in Abhyankar's work.

If we look at our examples, we see that bad places and defect extensions
in the generation of a function field already appear in dimension 2, so
from this point of view, the dividing line seems to lie between
dimension 1 and dimension 2. Our consideration concerning the
transcendence defect seems to suggest a dividing line between dimension
2 and dimension 3. Also Example~\ref{exampnhr} goes in this direction,
although it is not clear what it actually means for local uniformization
and whether there possibly is an analogue of transcendence degree 2.

There is, however, another point which we have not yet mentioned. If our
place $P$ has rank $>1$, can we then always proceed by induction on the
rank? If we are ready to extend our function field $F$, then the answer
is: yes ([K5], [K6], [K7]). But what if we want to prove local
uniformization without extension of $F$ and we have $P=Q\ovl{Q}$
such that $FQ|K$ is not finitely generated, hence not a function field?
In this situation, $Q$ consumes already transcendence degree 2, and if
we assume that also $\ovl{Q}$ is non-trivial, then we have $\trdeg F|K
\geq 3$. Analogously, if we find that critical things happen in
dimension 3 but not in dimension 2, these things might only develop
their destructive influence in connection with composition of places,
which would lift the critical dimension up to 4. (But to be true, I do
not believe that this could happen. I believe, if we have local
uniformization in dimension 3, then ultimately there will be a proof
which works for dimension 3 in the same way as for all higher
dimensions.)

%
%
\section{The space of all places of $F|K$}       \label{sectSFK}
The set $S(F|K)$ of all places of $F|K$ (where equivalent places are
identified) is called the \bfind{Zariski--Riemann manifold} or just the
\bfind{Zariski space} of $F|K$. See [V] for the definition of the
Zariski topology on $S(F|K)$, its compactness and other properties of
this space. Here, we will consider yet another property.
We have seen in Section~\ref{sectbad} that the Zariski space even of
very simple function fields can contain bad places. On the other hand,
we have seen that there are good places (e.g., Abhyankar places) for
which local uniformization is easier than in the general case. But
do good places or Abhyankar places exist in every Zariski space?

An ad hoc method to prove that this is true is to construct places of
maximal rank. Take a transcendence basis $t_1,\ldots,t_k$ of $F|K$ and
set $K_{k+1}:=K$ and $K_i:=K(t_i,\ldots,t_k)$ for $1\leq i<k$. Take
$P_1$ to be any place of $F|K_2$ such that $t_1P_1=0$. By
Corollary~\ref{fingentb}, $FP_1|K_2$ is finite; hence, $FP_1|K_3$ is a
function field. So we can choose a place $P_2$ of $FP_1|K_3$ such that
$t_2P_2=0$. Again, $FP_1P_2|K_3$ is finite. By induction, we construct
places (in fact, prime divisors) $P_1,\ldots,P_k\,$. Their composition
$P=P_1\ldots P_k$ is a place of $F|K$ of rank $k=\trdeg F|K$. Hence,
$P$ is a place of maximal rank and thus an Abhyankar place of $F|K$.

We wish to give a much more sophisticated method which shows that good
places not only exist, but even are ``very representative'' in every
Zariski space. The general result reads as follows (cf.\ [K3];
the special case of $\chara K=0$ was proved in [KP]):
\begin{theorem}                                   \label{MKP}
Let $F|K$ be a function field in $k$ variables. Let $Q$ be a place of
$F|K$ and $a_1,\ldots,a_m,b_1,\ldots,b_n\in F$. Then there exists a
place $P$ of $F|K$ such that:
\sn
1) \ $v_P^{ }F$ is a finitely generated group and extends the subgroup
of $v_Q^{ }F$ generated by $v_Q^{ }b_1,\ldots,v_Q^{ }b_n\,$,
\sn
2) \ $FP$ is finitely generated over $K$ and extends
$K(a_1Q,\ldots,a_mQ)$,
\sn
3) \ the following holds:
\[\begin{array}{rclcl}
    a_iP & = & a_iQ &\mbox{ for } & 1\leq i \leq m\;,\\
    v_P^{ }b_j & = & v_Q^{ }b_j &\mbox{ for } & 1\leq j\leq n\;.
  \end{array}\]
Moreover, $P$ can be chosen such that $v_P^{ }F$ is a subgroup of the
$p$-divisible hull $\frac{1}{p^\infty}v_Q^{ }F$ of $v_Q^{ }F$ if $\chara
K=p>0$, and $v_P^{ }F\subseteq v_Q^{ }F$ otherwise, and that $FP$ is a
subfield of the perfect hull of $FQ$. Alternatively, if
$r,d\in\N\cup\{0\}$ satisfy
\begin{equation}
\dim Q\leq d\leq k-\rr Q\;,\qquad \rr Q\leq r\leq k-d\;,
\end{equation}
then $P$ may be chosen such that $\dim P = d$ and $\rr P = r$.
(If $r=k-d$, then $P$ is an Abhyankar place of $F|K$.)
\end{theorem}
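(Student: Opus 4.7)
The strategy is to construct $P$ explicitly by first selecting an Abhyankar skeleton adapted to the prescribed data, then absorbing the remaining constraints into a finitely generated enlargement, and finally extending the place through a chain of divisorial compositions. After relabeling, let $v_Qb_1,\dots,v_Qb_\sigma$ be a maximal $\Q$-linearly independent subsystem of $\{v_Qb_j\}_{j\le n}$ and $a_1Q,\dots,a_\tau Q$ a transcendence basis over $K$ of $K(a_1Q,\dots,a_mQ)$. By Theorem~\ref{prelBour} the $\sigma+\tau$ elements $b_1,\dots,b_\sigma,a_1,\dots,a_\tau$ are algebraically independent over $K$, and on $F_0:=K(b_1,\dots,b_\sigma,a_1,\dots,a_\tau)$ the restriction of $Q$ is the unique Abhyankar place with value group $\bigoplus_i\Z v_Qb_i$ and residue field $K(a_1Q,\dots,a_\tau Q)$.

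Next I absorb the remaining data. For each $j>\sigma$ there is $m_j>0$ with $m_jv_Qb_j=\sum_i n_{ij}v_Qb_i$, giving a $Q$-unit $u_j:=b_j^{m_j}\prod_i b_i^{-n_{ij}}\in F$; for each $i>\tau$ the residue $a_iQ$ is algebraic over $F_0Q$. Adjoining the $u_j$, the $a_i$ and their minimal polynomial data produces a finitely generated subfield $F_1\subseteq F$ containing all the $a_i,b_j$, with $v_QF_1$ a finitely generated group extending $\langle v_Qb_j\rangle$ and $F_1Q$ finitely generated over $K$ extending $K(a_iQ)$ (Corollary~\ref{fingentb} handles the Abhyankar part, and the remaining contributions are finite torsion in the value group and algebraic in the residue). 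Now choose a transcendence basis $t_1,\dots,t_\ell\in F$ of $F|F_1$, where $\ell=k-\trdeg F_1|K$, and extend $Q|_{F_1}$ step by step by adjoining each $t_s$ via the divisorial place $t_s\mapsto 0$ as in Example~\ref{examp2}, which by Theorem~\ref{prelBour} adds a single copy of $\Z$ to the value group and leaves the residue field unchanged; then extend through the final algebraic extension $F\,|\,F_1(t_1,\dots,t_\ell)$ using Theorem~\ref{allext}. The resulting place $P$ restricts to $Q|_{F_1}$, so conditions 1)--3) hold automatically, and by Theorem~\ref{S} the algebraic extension introduces only $p$-power torsion and purely inseparable residue growth beyond the absolute ramification field, placing $v_PF\subseteq\frac{1}{p^\infty}v_QF$ and $FP$ inside the perfect hull of $FQ$.

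For the moreover part I tune the final stage: compose with $r-\sigma\ge 0$ additional divisorial places on selected $t_s$ to raise $\rr P$ to $r$, and for $d-\tau\ge 0$ of the remaining $t_s$ arrange $t_sP$ to be transcendental over $F_1Q$ to raise $\dim P$ to $d$. The constraints $\rr Q\le r\le k-d$ and $\dim Q\le d\le k-\rr Q$, combined with $\sigma\le\rr Q$ and $\tau\le\dim Q$, give $(r-\sigma)+(d-\tau)\le\ell$, so both increments fit within the available transcendence degree; when $r=k-d$, all of $\ell$ is absorbed and equality in the Abhyankar inequality forces $P$ to be an Abhyankar place of $F|K$. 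The hard step will be controlling the final algebraic extension $F\,|\,F_1(t_1,\dots,t_\ell)$: a priori it can have non-trivial defect and enlarge the value group or residue field unexpectedly. The fix is supplied by the ramification theory of Section~\ref{sectram}, specifically Theorem~\ref{S}, which confines any such enlargement to $p$-power torsion and inseparability, matching precisely the $p$-divisible hull and perfect hull permitted by the conclusion; the uniqueness clause of Theorem~\ref{prelBour} then ensures that, once the finitely many matching constraints are captured inside $F_1$, every extension of $Q|_{F_1}$ realizes them automatically.
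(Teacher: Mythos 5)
There is a genuine gap in your absorption step, and it is precisely the difficulty the theorem exists to overcome. You claim that adjoining the $Q$-units $u_j$ ($j>\sigma$) and the elements $a_i$ ($i>\tau$) to $F_0$ produces a \emph{finite} extension $F_1|F_0$, and you conclude from Corollary~\ref{fingentb} that $v_Q^{ }F_1$ and $F_1Q$ are finitely generated. But the facts that $v_Q^{ }u_j=0$ and that $a_iQ$ is algebraic over $F_0Q$ do \emph{not} imply that $u_j$ or $a_i$ are algebraic over $F_0$; immediate (and nearly immediate) transcendental extensions exist in abundance. Concretely, take $F=K(x_1,x_2)$ with one of the bad places $Q$ of Section~\ref{sectbad} where $v_Q^{ }F=\Q$ and $FQ=K$, and take $b_1=x_1$, $b_2=x_2$: then $\sigma=1$, $F_0=K(x_1)$, but $u_2=x_2^q x_1^{-p}$ is transcendental over $F_0$, so your $F_1$ is all of $F$ and $v_Q^{ }F_1=\Q$ is not finitely generated. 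Your construction then yields $P=Q$, which violates conclusion 1). The whole point of the theorem is to replace such a $Q$ by a genuinely different place $P$, and any strategy that forces $P|_{F_1}=Q|_{F_1}$ for a subfield $F_1\subseteq F$ large enough to contain all the $a_i,b_j$ inherits $Q$'s badness and cannot succeed.

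The paper's proof avoids this by \emph{not} working inside $F$. It passes to a maximal purely wild extension $(L,Q)$ of the henselization of $(F,Q)$ (a tame field by Lemma~\ref{Wtf}), takes $L'$ to be the relative algebraic closure of $F_0$ in $L$ (again tame, by Lemma~\ref{trac}), and then uses the Ax--Kochen--Ershov principle for tame fields (Theorem~\ref{AKEtame}), i.e.\ $(L',Q)\ec(L,Q)$, to produce elements $a'_i,b'_j$ in a \emph{finite algebraic} extension $F_1$ of $F_0$ inside $L'$ with $v_Q^{ }a'_i=v_Q^{ }a_i$ and $b'_jQ=b_jQ$; Theorem~\ref{rfmte} then controls $v_Q^{ }F_1/v_Q^{ }F_0$ (a $p$-group) and $F_1Q|F_0Q$ (purely inseparable). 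It then builds a fresh Abhyankar function field $(F_2,P)\supset F_1$ of the desired dimension and rational rank and embeds $F$ over $K$ into the \emph{completion} of $(F_2,P)$ via the Implicit Function Theorem, pulling $P$ back; the embedding is tuned so that each $a_i$, $b_j$ lands close enough to $a'_i$, $b'_j$ that residues and values match. This completion-embedding step, which you do not have, is what guarantees that $v_P^{ }F$ and $FP$ are controlled by $F_2$ rather than by $Q$'s arbitrary behaviour on $F$. A secondary but related problem with your argument: the divisorial places $t_s\mapsto 0$ add $\ell$ fresh copies of $\Z$ to $v_P^{ }F$, so $\rr v_P^{ }F$ will generically exceed $\rr v_Q^{ }F=\rr\frac{1}{p^\infty}v_Q^{ }F$; Theorem~\ref{S} cannot repair this because the enlargement occurs already at the transcendental stage, before any ramification theory applies.
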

The theorem tells us that for every place $Q$ of $F|K$ and
every choice of finitely many elements in $F$ there is an Abhyankar
place $P$ which agrees with $Q$ on these elements.

For the proof of the above theorem, see [K3]. The main
ideas are the following. First, we choose an Abhyankar subfunction field
$F_0$ of $(F|K,Q)$ as in (\ref{Asff}), where we replace $P$ by $Q$.
Using the model theory of tame fields, we ``pull the situation down''
into a finite extension $F_1$ of $F_0\,$. This is done as follows.

By our choice of $F_0\,$,
$v_Q^{ }F/v_Q^{ }F_0$ is a torsion group and $FQ|F_0Q$ is algebraic.
Now we take $(L,Q)$ to be a maximal purely wild extension of the
henselization of $(F,Q)$; then by Lemma~\ref{Wtf}, $(L,Q)$ is a tame
field. Further, we take $L'$ to be the relative algebraic closure of
$F_0$ in $L$. Then by Lemma~\ref{trac}, $(L',Q)$ is a tame field and
$(L|L',Q)$ is an immediate extension. Hence, there are elements
$a''_1,\ldots,a''_m$ and $b''_1,\ldots,b''_n$ in $L'$ whose values
or residues coincide with those of $a_1,\ldots,a_m$ and $b_1,\ldots,
b_n\,$. Now we choose generators $t_1,\ldots,t_k,z$ for the
function field $F.L'|L'$, like for $F|K$ in
Example~\ref{examperp}. The elements $a_i,b_i$ are rational functions in
these generators. Similarly as in Example~\ref{examperp}, we want to
pull down these generators to $L'$, preserving the values and residues
of $a_i,b_i\,$. So the existential ${\cal L}_{\rm VF} (L')$-sentence we
employ will now contain also the information that $v_Q^{ }a_i=v_Q^{
}a''_i$ and $b_iQ=b''_iQ$ (note that $a''_i,b''_i$ are constants from
$L'$). Since $(L',v_Q^{ })\ec (L,v_Q^{ })$ by Theorem~\ref{AKEtame}
(because $v_Q^{ }K\ec v_Q^{ }L \mbox{ and } Kv_Q^{ } \ec Lv_Q^{ }$
trivially hold), we know that the existential sentence also holds in
$(L',v_Q^{ })$. This gives us the elements $c_1,\ldots,c_k,d\in L'$ and
thus also the new elements $a'_i,b'_i$ as rational functions in these
new elements, satisfying that $v_Q^{ }a'_i=v_Q^{ }a''_i=v_Q^{ }a_i$ and
$b'_iQ=b''_iQ=b_iQ$.

The elements $c_1,\ldots,c_k,d$ generate a finite extension $(F_1,Q)$ of
$(F_0,Q)$ inside of $(L',Q)$. This extension will be responsible for the
extension of the value group and the residue field. But as it lies in
$L$, we can employ Theorem~\ref{rfmte} to show that $v_Q^{ }F_1/
v_Q^{ }F_0$ is a $p$-group and $F_1Q|F_0Q$ is purely inseparable,
which yields the corresponding assertion in Theorem~\ref{MKP}.

Adjoining enough
transcendental elements and extending $Q$ in a suitable way, we build up
a function field $(F_2,P)$ having dimension $d$ and rational rank $r$
and such that $P$ is an Abhyankar place of $F_2|K$. Finally, using the
Implicit Function Theorem, we embed $F$ over $K$
in the completion of $(F_2,P)$ and pull back $P$ through this embedding.
We construct the embedding in such a way that the image of every $a_i$
and every $b_j$ is very close to $a'_i$ and $b'_j\,$, respectively. This
implies that $a_iP= a'_iP=a'_iQ=a_iQ$ and $v_P^{ }b_j=v_P^{ }b'_j=
v_Q^{ }b'_j=v_Q^{ }b_j\>$ (recall that by construction, $P$ and $Q$
coincide on the field $F_1$ which contains the elements
$a'_1,\ldots,a'_m,b'_1,\ldots,b'_n$).

%

\parm
In [K3], I prove several modifications of Theorem~\ref{MKP}, which
have various applications. Let me give an example.
\begin{example}
A modification of Theorem~\ref{MKP} (cf.\ [K3]) shows that one can
replace $Q$ by a discrete place $P$ such that $FP$ is a subfield of the
perfect hull of $FQ$ (again, one can preserve finitely many residues,
but not values anymore). Hence if $K$ is perfect and $Q$ is a rational
place, then also $P$ will be rational. As $(F,P)$ is discrete, $F$
embeds over $K$ in $K((t))$. If we assume that $K$ is large, then by
Theorem~\ref{largef}, $K\ec K((t))$. Since every existential elementary
sentence holding in $F$ will also hold in the bigger field $K((t))$, it
follows that $K\ec F$. We have proved:
\begin{theorem}                             \label{largefec}
Assume that $K$ is a large field. Assume further that $K$ is perfect
and that $F|K$ admits a rational place $Q$. Then $K$ is existentially
closed in $F$ (in the language of fields).
\end{theorem}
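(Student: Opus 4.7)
My plan is to follow closely the strategy sketched in the example: apply the modification of Theorem~\ref{MKP} announced in [K3] to replace the given rational place $Q$ by a \emph{discrete} rational place $P$ on $F|K$, then embed $(F,P)$ into $K((t))$ and transfer existential information back to $K$ using the largeness assumption.

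First I would invoke the stated modification of Theorem~\ref{MKP}: fixing any existential ${\cal L}_{\rm F}(K)$-sentence $\varphi$ that holds in $F$, say $\varphi = \exists X_1\ldots\exists X_n\,\psi(X_1,\ldots,X_n,a_1,\ldots,a_m)$ with $a_1,\ldots,a_m\in K$, one obtains a discrete place $P$ of $F|K$ such that $FP$ is a subfield of the perfect hull of $FQ$ and such that $a_iP=a_iQ=a_i$ for $1\leq i\leq m$. Since $Q$ is rational we have $FQ=K$, and since $K$ is perfect its perfect hull equals $K$, so $FP\subseteq K$; combined with the canonical inclusion $K\subseteq FP$, this forces $FP=K$, i.e.\ $P$ is itself a rational place.

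Next, because $(F,P)$ is a discrete valued field with residue field $K$, general valuation theory supplies a uniformizer $t\in F$ (with $v_P^{ }t$ generating $v_P^{ }F\cong\Z$) and a canonical embedding of $F$ into its completion, which is precisely $K((t))$, fixing $K$ elementwise. Under this embedding the sentence $\varphi$ remains true in $K((t))$. Now Theorem~\ref{largef} says that $K$ is large precisely when $K\ec K((t))$ in the language of fields; hence $\varphi$ already holds in $K$. Since $\varphi$ was an arbitrary existential ${\cal L}_{\rm F}(K)$-sentence holding in $F$, this proves $K\ec F$.

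The one substantive step is the appeal to the promised modification of Theorem~\ref{MKP}, which is the only place where real work is hidden (and for which the text refers to [K3]). Beyond that the argument is transparent: $P$ must be rational because the perfect hull of $K$ is $K$; the embedding into $K((t))$ is standard for complete discrete valued fields; and the final descent is exactly the content of Theorem~\ref{largef}. I therefore expect no additional obstacle once the cited modification of Theorem~\ref{MKP} is in hand.
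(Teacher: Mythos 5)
Your proof is correct and follows essentially the same route as the paper's own argument in the example preceding the theorem: invoke the discrete-place modification of Theorem~\ref{MKP} to replace $Q$ by a rational discrete place $P$ (rationality follows because $K$ is perfect), embed $F$ over $K$ into the completion $K((t))$, and then apply Theorem~\ref{largef} to descend existential sentences back to $K$. The only superfluous touch is insisting that $a_iP=a_iQ$ for the constants $a_i\in K$ appearing in the chosen sentence --- this holds automatically for any place of $F|K$, so the residue-preservation feature of the modified Theorem~\ref{MKP} is not actually needed here.
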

\end{example}

Reviewing the results on local uniformization that we have stated so
far, we see that the best results can be obtained for zero-dimensional
discrete or zero-dimensional Abhyankar places (and if $K$ is assumed to
be algebraically closed, then ``zero-dimensional'' is the same as
``rational''). But the above theorem only renders places $P$ with $\dim
P \geq\dim Q$, so starting from a place which is not zero-dimensional,
we will again get a place which is not zero-dimensional. This can be
overcome by a modification like the one in the last example.
%
%
For the formulation of the results we shall use the \bfind{Zariski patch
topology} for which the basic open sets are the sets of the form
\begin{equation}
\{P\in S(F|K)\mid a_1 P\ne 0\,,\ldots,\,a_k P\ne 0\,;\,
b_1 P=0\,,\ldots,\,b_\ell P=0\}
\end{equation}
with $k,\ell\in\N\cup\{0\}$ and $a_1,\ldots,a_k,b_1,\ldots, b_\ell\in
F\setminus \{0\}$. It is finer than the Zariski topology. But some
proofs showing that the Zariski topology is compact actually show first
that the Zariski patch topology is compact (cf.\ [SP]). Also, the
compactness of the Zariski patch topology and the Zariski topology can
easily be derived from the Compactness Theorem of model theory
(Theorem~\ref{CT}); see [K2], [K3].
\begin{theorem}                             \label{densesp}
The following places lie dense in $S(F|K)$ with respect to the
Zariski patch topology:
\sn
a) \ the zero-dimensional rank 1 Abhyankar places,
\n
b) \ the zero-dimensional places of maximal rank,
\n
c) \ the zero-dimensional discrete places,
\n
d) \ the prime divisors.
\end{theorem}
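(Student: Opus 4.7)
The plan is to verify density in the Zariski patch topology by producing, in every non-empty basic open set
\[
U\;=\;\{\,P\in S(F|K)\mid v_Pa_i=0,\;v_Pb_j>0\;\;(1\le i\le m,\;1\le j\le n)\,\},
\]
a place of each of the four specified types. The main engine is Theorem~\ref{MKP}: applied to any $Q\in U$ with $a_1,\ldots,a_m,b_1,\ldots,b_n$ as value--preservation data, it produces a place $P$ with $v_Pa_i=v_Qa_i=0$ and $v_Pb_j=v_Qb_j>0$, so $P\in U$, and with $\dim P,\rr P$ prescribed within the range allowed by the ``alternatively'' clause.

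For (a) and (b), I first reduce to the case $\dim Q=0$. Given $Q\in U$, compose $Q$ with a zero-dimensional Abhyankar place $\bar Q$ of the residue function field $FQ|K$ chosen so that $v_{\bar Q}(a_iQ)=0$ for every $i$; such $\bar Q$ exists because only finitely many nonzero elements of $FQ$ need be avoided. The composite $Q^*=Q\bar Q$ still lies in $U$ (the $a_i$ remain $Q^*$-units, and each $v_{Q^*}b_j$ dominates the convex subgroup $v_{\bar Q}FQ$ of $v_{Q^*}F$), and has $\dim Q^*=0$, $\rr Q^*\le k:=\trdeg F|K$ by~(\ref{Abhie}). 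Applying Theorem~\ref{MKP} to $Q^*$ with $(d,r)=(0,k)$ now produces an Abhyankar $P\in U$ with $\dim P=0$, $\rr P=k$. To prescribe the rank of $v_PF$, I reach inside the construction of Theorem~\ref{MKP}: the auxiliary transcendentals whose values determine $v_PF$ may be assigned any rationally-independent values in any ordered overgroup, so I pick them rationally independent in $\R$ for (a) (making $v_PF$ a rational-rank-$k$ subgroup of $\R$, hence of rank $1$), and as the standard basis of $\Z^k$ with the lexicographic order for (b) (making $v_PF\cong\Z^k$ of rank $k$, maximal by~(\ref{Abhie})).

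For (d) I apply Theorem~\ref{MKP} with $(d,r)=(k-1,1)$; by Corollary~\ref{fingentb} the resulting Abhyankar place has value group finitely generated of rational rank $1$, hence $\Z$, so $P$ is a discrete prime divisor. The hypothesis $\dim Q\le k-1$ is automatic for nontrivial $Q$. The hypothesis $\rr Q\le 1$ is enforced by a preliminary reduction: when feasible, quotient $v_QF$ by a convex subgroup of maximal rational rank among those not containing any $v_Qb_j$; otherwise invoke the sign-preserving variant of Theorem~\ref{MKP} from [K3], which preserves $v_Qb_j>0$ rather than the exact values and so permits $v_PF$ to be shrunk to a rational-rank-$1$ subgroup of $v_QF$. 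For (c) I use the modification of Theorem~\ref{MKP} cited in the example immediately preceding the statement: it produces a discrete place $P$ with $FP$ contained in the perfect hull of $FQ$ and preserving any prescribed finite set of residues. Taking both the $a_i$ and the $b_j$ as residue--preservation inputs gives $a_iP=a_iQ\ne 0$ and $b_jP=b_jQ=0$, so $P\in U$. Combining with the reduction to $\dim Q^*=0$ from (a), (b) forces $\dim P=0$.

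The chief obstacle is that Theorem~\ref{MKP} in the stated form delivers only the rational rank and the dimension of $P$, not the rank of $v_PF$; securing the rank-$1$ condition in (a), the maximality in (b), and the exact $\Z$-value-group in (d) requires examining the construction of $P$ in [K3] and choosing the auxiliary data with the desired ordered-group structure. The preliminary composition and convex-subgroup reductions are routine but must be tracked to confirm compatibility with the witnesses $a_i,b_j$.
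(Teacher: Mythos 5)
Your engine---feed the elements defining a basic open set of the patch topology into Theorem~\ref{MKP} and steer $\dim P$ and $\rr P$ via its ``Alternatively'' clause---is the right one and is half of what the paper points at, but the paper's hint also names Lemma~\ref{exrapl}, which your proof never uses, and that omission is exactly what produces your main gap. As you observe, Theorem~\ref{MKP} as stated prescribes $\dim P$ and $\rr P$ but says nothing about the rank of $v_P^{ }F$; your remedy---``reach inside'' the construction in [K3] and assign the auxiliary transcendentals rationally independent real values (for rank $1$) or a lexicographic $\Z^k$ order (for maximal rank)---is a plausible guess about that construction, but it is an appeal to the internals of a theorem whose proof is not in this paper, so it is not a proof here. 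Lemma~\ref{exrapl} (equivalently, the construction in Example~\ref{examperp}) is the on-the-page tool that supplies rank information: it manufactures a rational place of prescribed rank on a function field while keeping finitely many prescribed elements invertible, and composing such a place of $FP_1|K$ onto the $P_1$ delivered by Theorem~\ref{MKP} (whose $FP_1|K$ is finitely generated) is the mechanism the paper is alluding to.

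A second gap, which you understate, sits in (d). To invoke the ``Alternatively'' clause with $(d,r)=(k-1,1)$ you need $\rr Q\le 1$, and your coarsening repair does not always apply: if $v_Q^{ }F=\Z^k$ with the lexicographic order and some $v_Q^{ }b_j$ is the first standard basis vector, then the only convex subgroup missing all the $v_Q^{ }b_j$ is $\{0\}$, so the rational rank does not drop; and the ``sign-preserving variant of Theorem~\ref{MKP}'' you then call on is nowhere in the paper. A repair that stays within the paper: establish (c) first---your argument there is fine, since the needed modification of Theorem~\ref{MKP} is at least described in the example just before the statement---then take a zero-dimensional discrete $Q'\in U$ furnished by (c) and apply Theorem~\ref{MKP} to $Q'$ with $(d,r)=(k-1,1)$; now $\dim Q'=0$ and $\rr Q'=1$, so the constraints hold, $P$ is an Abhyankar place, and Corollary~\ref{fingentb} forces $v_P^{ }F\cong\Z$, giving a prime divisor in $U$. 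Finally, the preliminary composition step needs a line of justification rather than the bare assertion that ``only finitely many nonzero elements of $FQ$ need be avoided'': take a finitely generated subfield $E$ of $FQ$ containing the $a_iQ$ together with a transcendence basis of $FQ|K$, choose an affine model of $E|K$ on which the $a_iQ$ and their inverses are regular, take a closed point of that model, and extend the resulting zero-dimensional place of $E|K$ to $FQ$; the extension stays zero-dimensional because $FQ|E$ is algebraic.
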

This can be proved by a combination of Theorem~\ref{MKP} and
Lemma~\ref{exrapl} (cf.\ [K3]). For the proof, one does not need the
model theory of tame fields; an application of Theorem~\ref{mcvfAR} will
suffice. From Theorem~\ref{densesp} together with Theorem~\ref{MTdrlu}
or Theorem~\ref{Abh1}, we obtain:
\begin{corollary}                           \label{corlu}
If $K$ is algebraically closed, then the uniformizable places $P$ lie
dense in $S(F|K)$ with respect to the Zariski patch topology.
\end{corollary}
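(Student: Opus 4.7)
The plan is a direct combination of Theorem~\ref{densesp} with the existing local uniformization results (Theorem~\ref{MTdrlu} or Theorem~\ref{Abh1}), exploiting the fact that over an algebraically closed base field every zero-dimensional place is automatically rational.

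First, I would fix an arbitrary nonempty basic open set
\[
U=\{P\in S(F|K)\mid a_1P\ne 0,\ldots,a_kP\ne 0;\,b_1P=0,\ldots,b_\ell P=0\}
\]
of the Zariski patch topology, and show that $U$ contains a uniformizable place. By Theorem~\ref{densesp}(a) (or equivalently (c)), there is a zero-dimensional rank~1 Abhyankar place $P\in U$ (respectively, a zero-dimensional discrete place in $U$); these exist because the quoted class is dense. The key observation is then that $\dim P=0$ means $FP|K$ is algebraic, and since $K=\tilde K$ by hypothesis, this forces $FP=K$. In particular $P$ is a rational place of $F|K$.

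Next I would invoke the uniformization result appropriate to the chosen place. For a zero-dimensional rank~1 Abhyankar place with $FP=K$, Theorem~\ref{Abh1} applies directly and yields that $(F|K,P)$ is uniformizable. Alternatively, starting from a zero-dimensional discrete place in $U$ one obtains a discrete rational place and applies Theorem~\ref{MTdrlu}. Either route produces a uniformizable $P\in U$.

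Since $U$ was an arbitrary basic open set in the Zariski patch topology, the uniformizable places meet every such $U$, which is the definition of density. There is essentially no obstacle here: all the heavy lifting has already been done in Theorem~\ref{densesp} (whose proof relies on Theorem~\ref{MKP} and Lemma~\ref{exrapl}) and in the uniformization theorems for Abhyankar or discrete rational places. The only point worth being slightly careful about is the implication $\dim P=0 \Rightarrow FP=K$, which genuinely uses the hypothesis that $K$ is algebraically closed; without this hypothesis one only gets $FP|K$ finite algebraic, and Theorem~\ref{Abh1} would not be immediately applicable in the stated form.
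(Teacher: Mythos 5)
Your proof is correct and follows exactly the route the paper indicates (Theorem~\ref{densesp} combined with Theorem~\ref{MTdrlu} or Theorem~\ref{Abh1}), with the observation that zero-dimensionality plus $K=\tilde{K}$ forces $FP=K$ being the same small bridge the paper relies on implicitly.
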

This result immediately generates the following questions:
\mn
{\bf Open Problem 12:} \ If we have proved local uniformization for a
set of places which lies dense in $S(F|K)$ with respect to the Zariski
patch topology, can we patch the local solutions together to obtain the
global resolution of singularities? If this doesn't work, how about
finer topologies? What other properties of $S(F|K)$ can be deduced from
dense subsets?
\mn
Certainly, it doesn't follow directly from Corollary~\ref{corlu} that
all places in $S(F|K)$ are uniformizable. But it would follow
if the next open problem had a positive answer. Observe that local
uniformization is an open property, that is, if $P$ is uniformizable,
then there is an open neighborhood of $P$ in which every place admits
(the same) local uniformization.
\mn
{\bf Open Problem 13:} \ Can we define something like a ``radius'' of
these ``local uniformization neighborhoods'' and show that there is a
lower bound for this radius?
\mn
After all, being a finitely generated field extension, a function field
only contains ``finite algebraic information''. On the other
hand, it should be clear from the examples of bad places that there are
infinitely many ways of being bad... So it would be nice if we could
forget about bad places. However, the badness expresses itself already
on a transcendence basis, and the lower bound for the radius might only
depend on the algebraic extension above that transcendence basis.

\parm
Now let's have a look at the main open problem of the model
theory of valued fields. It has been around since the work of Ax and
Kochen, and several excellent model theorists have tried their luck on
it, in vain.

%
%
\section{$\Fp((t))$}                        \label{sectfpt}
May I introduce to you my dearest friend and scariest enemy: $\Fp((t))$.
Recall that it appeared on the right hand side of (\ref{equivultra}). On
the left hand side, there were the fields $\Qp$ of $p$-adic numbers. In
a second paper [AK2], Ax and Kochen gave a nice (``recursive'') complete
axiom system for $\Qp$ with its $p$-adic valuation. This generated the
problem to give a nice complete axiom system also for $\Fp((t))$ with
its $t$-adic valuation. One can always give an axiom system by writing
down {\it all\/} sentences which hold in a structure. But ``writing
down'' is very optimistic: there are infinitely many such sentences, and
we may not even have a procedure to generate them in some algorithmic
way. In contrast to this, a finite axiom system causes no problem.
Also schemes like we use for ``algebraically closed'' or ``henselian''
aren't problematic since increasingly large finite subsets of them
can be produced by an algorithm.
This is what ``recursive'' means. Now if we have a complete recursive
axiom system then there is also an algorithm to decide whether a given
elementary sentence holds in every model of that axiom system. This is
what one means when asking the famous question:
\mn
{\bf Open Problem 14:} \ Is the elementary theory of $(\Fp((t)),v_t)$
decidable? In other words, does $(\Fp((t)),v_t)$ admit a
complete recursive axiomatization?
\mn

The complete recursive axiomatization for $(\Qp,v_p)$ is not hard to
state. It is essentially the following:\n
1) \ $(K,v)$ is a valued field,\n
2) \ $(K,v)$ is henselian,\n
3) \ $\chara K=0$,\n
4) \ $Kv=\Fp$,\n
5) \ $vK$ is an ordered abelian group which is elementarily equivalent
to $\Z$.
\n
We can write $Kv=\Fp$ since every field which is elementarily equivalent
to $\Fp$ is already equal to $\Fp$ (because $\Fp$ has finitely many
elements, and their number can thus be expressed by an elementary
sentence). In contrast to this, we cannot write $vK=\Z$; since $\Z$
has infinitely many elements, Theorem~\ref{exsat} implies that there are
many other ordered abelian groups which are elementarily equivalent to
$\Z$. These are called \bfind{$\Z$-groups}. An ordered abelian group is
a $\Z$-group if and only if $\Z$ is a convex subgroup of $G$ and $G/\Z$
is divisible.

Observe that $(\Fp((t),v_t)$ looks very much like $(\Qp,v_p)$. Indeed,
the only axiom that does not hold is axiom 3). So if we replace it by
$3'$): ``$\chara K=p$'', will we get a complete axiom system (which by
our above remarks would be recursive)? We have stated in
Section~\ref{sectdef} that every henselian discretely valued field of
characteristic 0 is a defectless field. From this it follows that in the
presence of the other axioms (including 3)!), axiom 2) implies that
$(K,v)$ is defectless. If we change 3) to ``$\chara K=p$'', this is not
any longer true, and we have to replace axiom 2) by $2'$): ``$(K,v)$ is
henselian and defectless''.

For a long time, many model theorists believed that the
axiom system 1), $2'$), $3'$), 4), 5) could be complete. But based on an
observation by Lou van den Dries, I was able to show in [K1] that this
is not the case (cf.\ [K2]). It is precisely Example~\ref{exampnhr}
which proves the incompleteness. The point is that if $K$ has positive
characteristic, then we have non-linear additive polynomials which we
can use to express additional elementary properties. In characteristic
0, the only additive polynomials are of the form $cx$, so there is
nothing interesting about them. But in positive characteristic, the
image $f(K)$ of an additive polynomial is a subgroup of the
additive group of $K$. If $f_1,\ldots,f_n$ are additive polynomials,
then one can consider the subgroup $f_1(K)+\ldots+f_n(K)$. For certain
choices of the $f$'s, these subgroups have nice elementary properties
if $K$ is elementarily equivalent to $\Fp((t))$. This implies that for
certain choices, one can even show that $K=f_1(K)+\ldots+ f_n(K)$. To
some extent, this has the same flavour as Hensel's Lemma, but the
incompleteness result shows that all this doesn't follow from Hensel's
Lemma, or to be more precise, doesn't even follow from the axioms 1),
$2'$), $3'$), 4), 5). See [K4] for details.

The subgroups of the form $f_1(K)+\ldots+f_n(K)$ are definable by an
elementary sentence using constants from $K$ (as coefficients
of the polynomials $f_i$). This fact leads to the following questions:
\mn
{\bf Open Problem 15:} \ Does the axiom system 1), $2'$), $3'$), 4), 5)
become complete if we add the elementary properties of the subgroups
of the form $f_1(K)+\ldots+f_n(K)$? What other subgroups of $\Fp((t))$
are elementarily definable, and what are their elementary properties?
\mn

As we have seen already that additive polynomials play a crucial role
for local uniformization in positive characteristic, it makes sense
to ask:
\mn
{\bf Open Problem 16:} \ What is the relation between the elementary
properties of $\Fp((t))$ expressible by use of additive polynomials
and algebraic geometry in positive characteristic?
\mn
On the valuation theoretical side, I can say that work in progress
indicates that these elementary properties have a crucial meaning for
the structure theory of valued function fields. For example, it seems
that the Henselian Rationality of Immediate Function Fields can be
generalized to the case of base fields $(K,v)$ which are not tame but
have these properties. By the way, these properties don't play a role
in the model theory of tame fields because all tame fields are perfect
(like all other fields of positive characteristic for which we know
that Ax--Kochen--Ershov principles hold). In contrast to this,
$\Fp((t))$ is not perfect since $t$ has no $p$-th root in $\Fp((t))$.

In comparison to the model theory of tame fields, that of $\Fp((t))$
is much more complex since some tools available for tame fields
will not work anymore. As an example, we do not have an analogue of
the crucial Lemma~\ref{trac} which we used to separate extensions of
tame fields into extensions without transcendence defect and immediate
extensions. Thus, we cannot do this (in general) in the case of
fields which are elementarily equivalent to $\Fp((t))$. We are also not
able to ``slice'' immediate extension into extensions of transcendence
degree 1. But then we would have to develop an analogue of Kaplansky's
theory of immediate extensions for the case of higher transcendence
degree, or even worse, simultaneously for all mixed extensions
without a possibility of separation. However, this is more or less
the generalization of the theory of approximate roots that is recently
discussed.

%
%
\section{Local uniformization vs.\ Ax--Kochen--Ershov}
My first encounter with Zariski's Local Uniformization Theorem
was when I studied the model theoretic proof of the $p$-adic
Nullstellensatz by Moshe Jarden and Peter Roquette [JR]. At one
point, they consider the following situation. They have an extension
$L|K$ of $p$-adically closed fields, and a function field $F|K$
inside of $L|K$. Inside of $F$, they have a certain subring $B$
containing $K$ and an element $g\in B$ which is not a unit in $B$. Now
they wish to show that there is a rational place $P$ of $F|K$ such that
$gP=0$. They take a maximal ideal ${\cal M}$ in $B$ such that $g\in
{\cal M}$. By the existence theorem for places (see [V],
Proposition~1.2), there is a
place $Q$ of $F|K$ such that $B\subseteq {\cal O}_Q$ and ${\cal M}_Q\cap
B={\cal M}$. It follows that $gQ=0$. Since $K\subset B$ we know that
$K\subseteq FQ$. But $Q$ may not be the required place since it may not
be rational. If $FQ|K$ is a function field, then one can proceed as
follows. By the special choice of the ring $B$ one knows that $FQ$ is
contained in some $p$-adically closed extension field $L'$. By the
work of Ax--Kochen and Ershov, one knows that $K\prec L'$. It
follows that $K\ec FQ$, hence by Lemma~\ref{exrapl} there is a rational
place $\ovl{Q}$ of $FQ|K$. So the place $Q\ovl{Q}$ is a rational place
of $F|K$ which satisfies $gQ\ovl{Q}=0\ovl{Q}=0$, as desired.

But it may well happen that $FQ|K$ is not finitely generated.
Then we can't apply Lemma~\ref{exrapl}. In this situation, Jarden and
Roquette use Zariski's Local Uniformization (Theorem~\ref{ZLUT})
to show that there exists a place $P$ of $F|K$ such that $gP=0$ and that
$FP|K$ is finitely generated. More generally, they show:
\begin{lemma}                               \label{JR}
Take a function field $F|K$ of characteristic 0, a place $Q$ of $F|K$
and elements $y_1,\ldots,y_n\in {\cal O}_Q\,$. Then there exists a place
$P$ of $F|K$ such that $y_iP=y_iQ$, $1\leq i\leq n$, $FP\subseteq FQ$
and $FP|K$ is finitely generated.
\end{lemma}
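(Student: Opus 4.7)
The plan is to invoke Zariski's Local Uniformization Theorem~\ref{ZLUT} in order to pass to a model on which $Q$ is centered at a smooth point, and then to realize the desired place $P$ as a valuation of maximal rank dominating the regular local ring at that center. Smoothness is what forces the residue field of $P$ to coincide with the (finitely generated) residue field of the center, while domination is what guarantees that $P$ and $Q$ agree on the $y_i$.

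Concretely, I would first apply Theorem~\ref{ZLUT} to $(F|K,Q)$ with respect to the set $\{y_1,\ldots,y_n\}\subset{\cal O}_Q$. This produces a transcendence basis $t_1,\ldots,t_s\in{\cal O}_Q$ of $F|K$, elements $\eta_1,\ldots,\eta_m\in{\cal O}_Q$ among which the $y_i$ occur, and polynomials $f_i$ satisfying (U1)--(U3); the coordinate ring $A:=K[t_1,\ldots,t_s,\eta_1,\ldots,\eta_m]$ is then a model of $F|K$ on which $Q$ is centered at the simple point $a:=(t_1Q,\ldots,t_sQ,\eta_1Q,\ldots,\eta_mQ)$. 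Let $\mathfrak{m}$ be the maximal ideal of $A$ corresponding to $a$ and set $R:=A_{\mathfrak{m}}$. Because $a$ is simple, $R$ is a regular local Noetherian ring whose residue field is
\[
K'\>:=\>K(t_1Q,\ldots,t_sQ,\eta_1Q,\ldots,\eta_mQ)\;\subseteq\; FQ,
\]
and $K'|K$ is manifestly finitely generated.

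The heart of the argument is then to construct a valuation ring ${\cal O}_P$ of $F$ that dominates $R$ and has residue field exactly equal to $K'$. Let $h:=\dim R$ and pick a regular system of parameters $u_1,\ldots,u_h$ of $\mathfrak{m}R$. In characteristic $0$, Cohen's structure theorem gives an identification $\widehat{R}\cong K'[[u_1,\ldots,u_h]]$ of the $\mathfrak{m}$-adic completion of $R$ with a formal power series ring over $K'$. On $K'[[u_1,\ldots,u_h]]$ one has the lexicographic monomial valuation with value group $\Z^h$ ordered lexicographically; restricting it to the subfield $F=\mathrm{Frac}(R)\hookrightarrow \mathrm{Frac}(\widehat{R})$ produces a valuation $v$ on $F$ whose valuation ring ${\cal O}_P$ dominates $R$ and whose residue field is still $K'$ (every element of $K'$ is the residue of an element of $R$). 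Equivalently, one can compose $h$ prime divisors, killing the $u_i$ one at a time; because each successive quotient $R/(u_1,\ldots,u_i)$ is again a regular local ring with residue field $K'$, the composite place has the required residue field and rank~$h$.

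Let $P$ be the place associated to ${\cal O}_P$. Since ${\cal O}_P\supseteq R\supseteq A\supseteq K$, it is a place of $F|K$; its residue field is $K'$, which lies in $FQ$ and is finitely generated over $K$; and for every $y_i$, which lies in $A\subseteq R$, we have $y_iP=y_i\bmod\mathfrak{m}=y_i(a)=y_iQ$. The main obstacle is the third step, the existence of a valuation of maximal rank on $R$ with the prescribed residue field; this is classical, but it genuinely uses the regularity of $R$ and therefore the smoothness delivered by Theorem~\ref{ZLUT}. In particular, the hypothesis $\chara K=0$ enters exactly through the invocation of Zariski's theorem (and ensures that Cohen's theorem applies in the equicharacteristic form used above).
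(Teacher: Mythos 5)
Your proposal is correct and follows essentially the same route as the paper: apply Zariski's Local Uniformization Theorem to reduce to the case of a regular local ring at the center of $Q$ on a suitable model, and then construct a place dominating that regular local ring whose residue field is exactly the (finitely generated) residue field $K'$ of the center. The only difference is that the paper cites the lemma from [A2] on places dominating regular local rings with residue field $R/M$ as a black box, whereas you supply a proof of it via Cohen's structure theorem and the lexicographic monomial valuation (equivalently, composition of prime divisors).
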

The proof works as follows. After adding elements if necessary, we can
assume that $F=K(y_1,\ldots,y_n)$. By Zariski's Local Uniformization
Theorem, after adding further elements we can assume that
$a=(y_1Q,\ldots,y_nQ)$ is a simple point of the $K$-variety whose
generic point is $(y_1,\ldots,y_n)$. Hence, the local ring ${\cal O}_a$
is regular. Now Jarden and Roquette employ the following lemma from
[A2]:
\begin{lemma}
Suppose that $R$ is a regular local ring with maximal ideal $M$ and
quotient field $F$. Then there exists a place $P$ dominating $R$ such
that $FP=R/M$.
\end{lemma}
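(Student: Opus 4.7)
The plan is to proceed by induction on the Krull dimension $n=\dim R$, constructing the place as a composition $P=P_1\bar{Q}$ in the spirit of Example~\ref{examp2}, where $P_1$ is a discrete place of $F$ coming from a height-one prime of $R$ and $\bar{Q}$ is produced by applying the inductive hypothesis to a smaller regular local ring.

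The base case $n=0$ is trivial: $R$ is then a field, $M=0$, $F=R=R/M$, and the trivial place of $F$ works. For the inductive step, suppose $n\geq 1$ and choose a regular system of parameters $x_1,\ldots,x_n$ of $M$. Since $R/(x_1)$ is a regular local ring of dimension $n-1$, it is a domain, so $(x_1)$ is a height-one prime of $R$. The localization $R_{(x_1)}$ is then a regular local ring of dimension one, hence a DVR; let $P_1$ be the corresponding place of $F$. Its valuation ring is $R_{(x_1)}$, and its residue field is $F_1:=\mbox{Frac}(R/(x_1))$. The ring $R_1:=R/(x_1)$ is a regular local ring of dimension $n-1$ with maximal ideal $M_1:=M/(x_1)$ and quotient field $F_1$; by the inductive hypothesis there is a place $\bar{Q}$ of $F_1$ dominating $R_1$ with $F_1\bar{Q}=R_1/M_1=R/M$.

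Now set $P:=P_1\bar{Q}$ as in Example~\ref{examp2}. For $a\in R$ the residue $aP_1$ is the image of $a$ in $R/(x_1)\subseteq \mathcal{O}_{\bar{Q}}$, so $aP=(aP_1)\bar{Q}$ is well defined, which gives $R\subseteq \mathcal{O}_P$. If moreover $a\in M$, then either $a\in(x_1)$, whence $aP_1=0$ and $aP=0$, or $aP_1\in M_1\subseteq \mathcal{M}_{\bar{Q}}$, whence again $aP=0$; thus $M\subseteq \mathcal{M}_P$, so $P$ dominates $R$. Finally, the residue field of $P$ is by construction the residue field of $\bar{Q}$, namely $R/M$. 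This closes the induction.

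The main work is really the inductive setup: one needs that a quotient of a regular local ring by a single element of a regular system of parameters is again regular (used both to see that $(x_1)$ is prime and to feed the smaller ring into the induction), and that a height-one localization of a regular local ring is a DVR. Both are standard commutative algebra; once they are in place, the verification that the composed place dominates $R$ and has the prescribed residue field is just unwinding the definition of place composition given in Section~\ref{sectram}.
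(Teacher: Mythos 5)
Your proof is correct and complete. The paper does not actually prove this lemma --- it cites Abhyankar [A2] and adds only the parenthetical remark that $P$ is the place associated to the order valuation deduced from $(R,M)$. Your argument is therefore a genuinely different, self-contained construction: you build $P$ inductively as a composite $P_1\bar{Q}$, where $P_1$ is the prime divisor coming from the DVR $R_{(x_1)}$ for a regular parameter $x_1$, and $\bar{Q}$ is supplied by the inductive hypothesis applied to the smaller regular local ring $R/(x_1)$. This yields a place of maximal rank $n=\dim R$. The $M$-adic order valuation, by contrast, is a single rank-one discrete valuation (it is a valuation because $\mathrm{gr}_M R$ is a polynomial ring over $R/M$, hence a domain); but for $n\geq 2$ its residue field is a rational function field of transcendence degree $n-1$ over $R/M$, so the bare order valuation does not have residue field $R/M$, and one must still compose with further places to reach it --- morally the same iteration as yours, just with a different choice of uniformizer at each stage. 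Your version has the advantage that every step rests on elementary facts about regular local rings (a quotient by a regular parameter is again regular, a one-dimensional regular local ring is a DVR), and the composition of places is exactly the operation Example~\ref{examp2} has already set up. Two small remarks: the case split when checking $M\subseteq\mathcal{M}_P$ is superfluous, since for any $a\in M$ the image $aP_1$ lies in $M_1\subseteq\mathcal{M}_{\bar{Q}}$ regardless; and to finish the domination claim you should add explicitly that $\mathcal{M}_P\cap R$ is a proper ideal of $R$ containing $M$, hence equal to $M$ by maximality.
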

\noindent
(In fact, $P$ is the place associated with the order valuation deduced
from $(R,M)$.)
\begin{corollary}
Suppose that $a$ is a simple point of $V$. Then there exists a place $P$
of $F|K$ such that $a=(y_1P,\ldots,y_nP)$ and $FP=K(a)$.
\end{corollary}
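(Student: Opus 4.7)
My plan is to apply the preceding lemma of Abhyankar directly to the local ring of $V$ at the point $a$. First I would set up the algebraic objects. Since $V$ is the $K$-variety with generic point $(y_1,\ldots,y_n)$ and function field $F=K(y_1,\ldots,y_n)$, for the point $a=(a_1,\ldots,a_n)$ of $V$ the local ring ${\cal O}_a\subset F$ consists of those rational functions in the $y_i$ which can be written as $f/g$ with $g(a)\neq 0$. Its unique maximal ideal $M_a$ is formed by those functions for which $f(a)=0$, and the quotient field of ${\cal O}_a$ is visibly $F$ itself, while ${\cal O}_a/M_a = K(a_1,\ldots,a_n)=K(a)$.

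The key input from algebraic geometry is the equivalence between the Jacobi-matrix definition of ``simple point'' used throughout the paper and the condition that ${\cal O}_a$ be a regular local ring. Given this equivalence, the lemma cited from [A2] applies verbatim: there exists a place $P$ of $F$ dominating ${\cal O}_a$ such that $FP={\cal O}_a/M_a = K(a)$.

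It remains to verify the first conclusion $a=(y_1P,\ldots,y_nP)$. Because $P$ dominates ${\cal O}_a$, we have ${\cal O}_a\subseteq {\cal O}_P$ and $M_a\subseteq {\cal M}_P$. Each $y_i$ lies in ${\cal O}_a$, and $y_i-a_i$ vanishes at $a$ hence lies in $M_a\subseteq {\cal M}_P$. Therefore $y_iP=a_i$ for $1\leq i\leq n$, which is exactly $a=(y_1P,\ldots,y_nP)$, completing the proof.

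The one substantive step---and thus the main ``obstacle''---is the appeal to the equivalence ``$a$ simple $\Longleftrightarrow$ ${\cal O}_a$ regular''. The paper only provides the Jacobi-criterion version of simplicity, so a completely self-contained proof would have to invoke (or briefly recall) this standard translation, which is essentially an algebraic incarnation of the Implicit Function Theorem discussed earlier. Everything else is a mechanical unpacking of ``dominates'' and ``residue field''.
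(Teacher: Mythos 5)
Your proposal is correct and follows essentially the same route as the paper: identify $\mathcal{O}_a/\mathcal{M}_a$ with $K(a)$, invoke the implication ``simple point $\Rightarrow$ $\mathcal{O}_a$ regular'' (which the paper also states without proof just before this corollary), apply the [A2] lemma to get a place $P$ dominating $\mathcal{O}_a$ with $FP=K(a)$, and then read off $y_iP=a_i$ from domination. Your slightly more explicit verification via $y_i-a_i\in M_a\subseteq \mathcal{M}_P$ is just a spelled-out version of the paper's ``$P$ extends the residue map''; both say the same thing.
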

\begin{proof}
The residue field ${\cal O}_a/{\cal M}_a$ of ${\cal O}_a$ is isomorphic
to $K(a)$. We identify both fields, so that the residue map ${\cal O}_a
\rightarrow {\cal O}_a/{\cal M}_a=K(a)$ maps every $y_i$ to $a_i\,$. By
applying the foregoing lemma to $R={\cal O}_a\,$, we obtain a place $P$
of $F|K$ dominating ${\cal O}_a$ and such that $FP={\cal O}_a/{\cal M}_a
=K(a)$. Since $P$ dominates ${\cal O}_a\,$, it extends the residue map,
whence $a=(y_1P,\ldots,y_nP)$.
\end{proof}

This proves Lemma~\ref{JR}: by the definition of $a$ we get
that $(y_1P,\ldots,y_nP)=(y_1Q,\ldots,y_nQ)$ and $FP=K(y_1P,\ldots,y_nP)
=K(y_1Q,\ldots,y_nQ)\subset FQ$, showing also that $FP$ is a finitely
generated extension of $K$.

Note that if $a\in K^n$ then the place $P$ obtained from the
foregoing corollary is rational. Hence we have
(you may compare this with Lemma~\ref{exrapl}):
\begin{corollary}
Suppose that $a$ is a simple $K$-rational point of $V$. Then there
exists a rational place $P$ of $F|K$ such that $a=(y_1P,\ldots,y_nP)$.
\end{corollary}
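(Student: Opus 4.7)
The plan is to deduce this immediately from the preceding corollary by observing that the hypothesis ``$a$ is $K$-rational'' forces the residue field produced there to collapse to $K$. First I would invoke the preceding corollary with the same data $V$, $F = K(y_1,\ldots,y_n)$, and the simple point $a = (a_1,\ldots,a_n)$; this yields a place $P$ of $F|K$ with $y_iP = a_i$ for all $i$ and $FP = K(a)$.

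Next I would use the additional hypothesis that $a$ is $K$-rational, meaning $a_1,\ldots,a_n \in K$. Then $K(a) = K(a_1,\ldots,a_n) = K$, so $FP = K$. By the definition of a rational place of $F|K$ given earlier in the paper (a place $P$ of $F|K$ is rational precisely when $FP = K$), this is exactly what is required.

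I expect no serious obstacle here: the statement is a direct specialization of the preceding corollary, and the only content is the trivial observation that $K$-rationality of $a$ collapses the residue field extension. In particular, there is no need to reprove the existence of $P$ dominating the regular local ring $\mathcal{O}_a$, nor to reinvoke local uniformization; both are already packaged inside the previous corollary. The entire proof is therefore a one-line specialization.
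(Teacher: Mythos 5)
Your proof is correct and is exactly the paper's argument: the paper introduces this corollary with the remark ``Note that if $a\in K^n$ then the place $P$ obtained from the foregoing corollary is rational,'' which is precisely your one-line specialization using $FP = K(a) = K$.
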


\parm
In my Masters Thesis, I showed how to avoid the use of Zariski's Local
Uniformization Theorem by constructing a place of maximal rank (which
by Corollary~\ref{fingentb} always has a finitely generated residue
field). This trick was then used by Alexander Prestel and Peter Roquette
in their book [PR] for the proof of the $p$-adic Nullstellensatz. It
also provided the first idea for the paper [PK] in which we proved a
version of Theorem~\ref{MKP} for function fields of characteristic 0 by
using the Ax--Kochen--Ershov Theorem. This version has interesting
applications to real algebra and real algebraic geometry ([PK], [P]).
Surprisingly, a paper by Ludwig Br\"ocker and Heinz-Werner Sch\"ulting
[BS] derives about the same results and applications, using resolution
of singularities in characteristic 0 (Hironaka) in the place of the
Ax--Kochen--Ershov Theorem. See also the survey paper [SCH]. The first
question I have in this connection is:
\mn
{\bf Open Problem 17:} \ To which extent is it (easily) possible to
replace the use of resolution of singularities by local uniformization
in real algebraic geometry?
\pars
Seeing that the Ax--Kochen--Ershov Theorem and Zariski's Local
Uniformization Theorem can be used to deduce the same results,
Roquette asked:
\mn
{\bf Open Problem 18:} \ What is the relation between Ax--Kochen--Ershov
Theorem and Zariski's Local Uniformization Theorem? Can one prove one
from the other?
\mn
If that were true, then there would be some hope that a progress in
positive characteristic made on one side could be transferred to the
other side. For instance, as already mentioned, local uniformization or
resolution of singularities in positive characteristic could possibly
help to solve problems in the model theory of valued fields of positive
characteristic.

In view of the details I have told you about, my own preliminary answer
is that the relation between local uniformization and the model theory
of valued fields lies in the facts and theorems from the structure
theory of valued function fields which play a crucial role in both
problems. Therefore, new insights in this structure theory will also
be of importance for both problems.

On the other hand, there are ingredients on either side which do not
appear on the other. For example, Lemma~\ref{perron} does not seem to
play any role on the model theoretic side. Further, the henselization
causes a lot of serious problems for the local uniformization of places
of rank $>1$, whereas it is the best friend of model theorists. The need
to optimize the choice of the transcendence basis (cf.\
Section~\ref{sectrtb}) to avoid these problems and to avoid the defect
has (so far) no analogue on the model theoretic side; however, this may
change with a deeper insight in the theory of $\Fp((t))$. Conversely,
model theory is forced to deal with extensions $(L|K,v)$ of valued
fields where $v$ is non-trivial on $K$. To some extent, we did the same
when we considered relative uniformization. But in contrast to that
situation, the valuations on $K$ in the model theoretic case may be of
arbitrary rank and arbitrarily nasty.

\pars
Our discussion would not be complete if we would not mention the
following nice result, due to Jan Denef [DEN]. It says that
``the existential theory of $\Fp((t))$ is decidable'':
\begin{theorem}
If resolution of singularities holds in positive characteristic,
then there is an algorithm to decide whether a given existential
elementary sentence in the language of valued fields holds in
$(\Fp((t)),v_t)$.
\end{theorem}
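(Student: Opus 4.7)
The plan is to reduce the decision problem for an existential sentence $\psi$ over $(\Fp((t)),v_t)$ to a finite search modulo a bounded power of $t$, by using resolution of singularities to pass to smooth varieties and then using the Multidimensional Hensel's Lemma to lift approximate solutions to genuine ones. First, I would put $\psi$ in normal form. Syntactically, $\psi$ is equivalent to a finite disjunction of existential sentences, each consisting of a conjunction of polynomial equations $f_i(\bar X)=0$, inequations $g_j(\bar X)\ne 0$, and valuation conditions ${\cal O}(h_l(\bar X)/h'_l(\bar X))$ and their strict versions, with polynomial coefficients in $\Fp[t]$ (the finitely many constants appearing in $\psi$ can be cleared into $\Fp[t]$). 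By introducing auxiliary variables---replacing a variable $X$ of anticipated negative value by $t^{-n}X$, encoding a valuation inequality $vh\geq vh'$ as $\exists Y\,(h=h'Y)$, and handling strict inequalities similarly---each disjunct can be rewritten so that all existentially quantified variables are sought in $\Fp[[t]]$ and the sentence reads: does an affine scheme $X$ of finite type over $\Fp[t]$ admit a point in $X(\Fp[[t]])$ on which prescribed polynomials $g_j$ do not vanish?

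Next, I would invoke resolution of singularities in characteristic $p$ (the standing hypothesis) for the generic fibre $X_\eta := X\otimes_{\Fp[t]}\Fp((t))$, obtaining a smooth variety $\tilde X_\eta$ together with a proper birational morphism $\pi:\tilde X_\eta\to X_\eta$. Because $(\Fp((t)),v_t)$ is henselian of rank $1$ (Corollary~\ref{maxhens}), the valuative criterion of properness gives that $\pi$ is surjective on $\Fp[[t]]$-points, so (after interpreting the $g_j\ne 0$ conditions on $\tilde X_\eta$) the problem becomes: does the smooth variety $\tilde X_\eta$ have an $\Fp[[t]]$-point in a prescribed Zariski-open subset?

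The third step exploits smoothness. Cover $\tilde X_\eta$ by smooth affine charts on which it is cut out by polynomials $\tilde f_1,\ldots,\tilde f_n$ with some $n\times n$ submatrix of the Jacobian $\tilde J_{\tilde f}$ invertible at every point. The Multidimensional Hensel's Lemma in $(\Fp((t)),v_t)$ then guarantees that any approximate point $\bar b\in \Fp[[t]]^\ell$ satisfying
\[
v\tilde f_i(\bar b)\,>\,2\,v\det\tilde J_{\tilde f}(\bar b)\quad\text{for }1\leq i\leq n
\]
lifts uniquely to an honest zero of $\tilde f$ in $\Fp[[t]]^\ell$; conversely any genuine point supplies such a $\bar b$. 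Hence the existence question can be decided by searching, over finitely many charts, for an approximate solution in the finite ring $(\Fp[t]/t^N)^\ell$ for an effectively computable $N$ determined by the degrees and coefficients of the $\tilde f_i$ and by the denominators in the chart transitions. This set-theoretic search is plainly algorithmic, and the answer for $\psi$ is ``yes'' iff at least one disjunct produces such an approximate point.

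The main obstacle, and the place where the hypothesis must be used carefully, is Step 2: extracting enough effectivity from the bare statement ``resolution of singularities holds'' to compute the chart data needed in Step 3. A clean way out is the usual semi-decidability interleaving: smoothness of a given scheme is effectively checkable over $\Fp(t)$, so one can enumerate all finite sequences of admissible blow-ups of $X_\eta$ (parameterized by ideal sheaves of bounded complexity) and halt as soon as one produces a smooth scheme; the hypothesis guarantees this search terminates. Then Hensel's Lemma is applied chart by chart. In fact, only a \emph{local} form of resolution (in the spirit of the local uniformization results developed in the excerpt, e.g.\ Theorem~\ref{MT1}) is needed here, since by compactness of the Zariski--Riemann space of $X_\eta$ finitely many uniformizing charts suffice; this makes the approach particularly natural from the valuation-theoretic viewpoint, and it is precisely the step where any future progress on local uniformization in positive characteristic would feed back directly into the decidability question for $\Fp((t))$.
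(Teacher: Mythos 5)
The paper contains no proof of this theorem: it is stated and attributed to Jan Denef via personal communication (reference [DEN]), so there is no argument in the paper to compare your sketch against. Evaluating it on its own terms:

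Your three-stage plan---reduce to deciding whether an affine scheme over $\Fp[t]$ has a suitable $\Fp[[t]]$-point, desingularize the generic fibre, and Hensel-lift an approximate solution found by a finite search modulo $t^N$---does match the structure of the argument that was later published by Denef and Schoutens. But the crux of the matter is left unargued. You assert that the truncation level $N$ is ``effectively computable $\ldots$ determined by the degrees and coefficients of the $\tilde f_i$ and by the denominators in the chart transitions.'' That is exactly the hard step. Smoothness of $\tilde X_\eta$ over $\Fp((t))$ only forces $\det\tilde J_{\tilde f}(\bar b)\neq 0$ at any genuine zero $\bar b$; it does \emph{not} bound $v\det\tilde J_{\tilde f}(\bar b)$, so the Hensel threshold $v\tilde f_i(\bar b)>2v\det\tilde J_{\tilde f}(\bar b)$ has no a priori computable upper bound for the search. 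Already $x^2-t^{2n}=0$ has solutions whose Jacobian value $n$ grows with the input, and for systems in several variables nothing as simple as ``degree in $t$'' controls it. What the argument actually needs is an effective Greenberg-type statement: if the scheme has an $\Fp[[t]]$-point at all, then it has one at which $v\det\tilde J$ is bounded by a constant one can compute from the data. Proving this is where the resolution hypothesis must be used again---to control the exceptional divisors, or equivalently the complexity of arcs meeting the singular locus---and not merely once to produce smooth charts. Your sketch never explains where this bound comes from, and without it the search has no termination criterion when the answer is ``no.''

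There is also a secondary gap at the lift-back step. The valuative criterion of properness compares $\Fp((t))$-points with $\Fp[[t]]$-points of a proper model \emph{over $\Fp[[t]]$}; $\tilde X_\eta$ is a scheme over $\Fp((t))$, so ``$\Fp[[t]]$-points of $\tilde X_\eta$'' is not yet defined, and you must first spread $\pi$ out to a proper $\Fp[[t]]$-model. More substantively, to lift an $\Fp((t))$-rational point $a_\eta$ of $X_\eta$ through $\pi$ you need $\pi^{-1}(a_\eta)$ to have an $\Fp((t))$-rational point; a general proper birational morphism guarantees only a nonempty proper fibre. This does hold if $\pi$ is a composite of blow-ups along smooth centres (the fibres are then iterated projective bundles, which have rational points over any field), but that hypothesis should be stated and used. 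Finally, $\Fp((t))$ is imperfect; if ``resolution of singularities in positive characteristic'' is assumed only for varieties over perfect or algebraically closed fields, an additional reduction is required before it can be applied to $X_\eta$.
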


%
%
\section{Back to local uniformization in positive
characteristic}                            \label{sectblu}
In the last section, we have seen that for certain applications it
matters to know what the residue fields of our places are. In
particular, when changing a bad place to a good place, we might wish to
keep the residue field within a certain class of fields. For example,
this could be the class of all fields in which the base field $K$ is
existentially closed. (Note that if $K$ is perfect and existentially
closed in $L$, then it will also be existentially closed in any purely
inseparable algebraic extension of $L$.) If we take into the bargain
an extension of the function field in order to obtain local
uniformization, but require that this extension should be normal or even
Galois, then we may not be able anymore to control the corresponding
extension of the residue field. So it makes sense to ask for the
minimal possible change of the residue field.

The key to this question is the fact that there are minimal algebraic
extensions of $(F,P)$ which are tame (or separably tame) fields; we just
have to pass to the henselization of $(F,P)$ and then choose a field $W$
according to Theorem~\ref{rfmte}, cf.\ Lemma~\ref{Wtf}. (For the case of
``tame'', see also Corollary~\ref{cortame}.) Working inside of such an
extension, we can nicely apply the theory of tame and separably tame
fields. In particular, we can use Lemma~\ref{trac} and the transitivity
of relative uniformization to reduce to extensions of the types
discussed in {\bf I)}--{\bf IV)} in Section~\ref{sectrlu}.

On the other hand, if $(L,P)$ is such an extension of $(F,P)$, then by
Theorem~\ref{rfmte}, $v_P^{ }L/v_P^{ }F$ will be a $p$-torsion group if
$\chara K=p>0$, and $LP|FP$ will be purely inseparable. If $\chara K=0$,
then $(L,P)$ is just the henselization of $(F,P)$. As our extension
${\cal F}$ remains inside of $L$, we can show (cf.\ [K7]):
\begin{theorem}                             \label{lucontrol}
In addition to the assertion of Theorem~\ref{MT1}, the finite extension
${\cal F}|F$ can be chosen to be separable and to satisfy:
\sn
a) \ if $\chara K=p>0$, then the finite group $v_P^{ }{\cal F}/v_P^{ }F$
is a $p$-torsion group, and the finite extension ${\cal F}P|FP$ is
purely inseparable,\n
b) \ if $\chara K=0$, then ${\cal F}$ can be chosen to lie in the
henselization of $F$.
\end{theorem}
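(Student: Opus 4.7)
The plan is to strengthen Theorem~\ref{MT1} by carrying out its proof inside a carefully chosen algebraic extension $(L,P)$ of $(F,P)$, so that $\mathcal{F} \subseteq L$ automatically; then the additional assertions a) and b) follow from properties of $L$ granted by Theorem~\ref{rfmte} and Lemma~\ref{Wtf}.

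\textbf{Construction of $L$.} If $\chara K = 0$, Lemma~\ref{tamerc0} tells us that $(F^h,P)$ is already a tame field, and we set $L := F^h$. If $\chara K = p > 0$, apply Theorem~\ref{rfmte} to $(F^h,P)$ to obtain a maximal purely wild extension $W$ of $(F^h,P)$; by Lemma~\ref{Wtf}, $(W,P)$ is a tame field and its maximal separable subextension, call it $L$, is a separably tame field containing $F^h$. Since $F^h|F$ is immediate (Theorem~\ref{immhens}) and $W|F^h$ has the properties listed in Theorem~\ref{rfmte}, the extension $(L|F,P)$ satisfies
\[
v_P L / v_P F \>\subseteq\> \tfrac{1}{p^\infty} v_P F \,/\, v_P F
\quad\text{($p$-torsion)}, \qquad
LP|FP \text{ purely inseparable.}
\]
In characteristic $0$, $L = F^h$ is just the henselization, which is what assertion b) demands.

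\textbf{Running the proof of Theorem~\ref{MT1} inside $L$.} I would revisit each construction step in Theorem~\ref{MT1} and show that the algebraic elements adjoined to produce $\mathcal{F}$ can in fact be taken inside $L$. The transcendental ingredients (choices of $x_1,\ldots,x_\rho$ and $y_1,\ldots,y_\tau$ in \eqref{Asff}, and the subsequent $x_i'$ of Proposition~\ref{prep}) live already in $F$, so they cause no difficulty. The algebraic ingredients appearing in the proof of Theorem~\ref{MT1} come from three sources: (i) extensions inside the absolute inertia field of an Abhyankar subfunction field (covered by Proposition~\ref{inert1}), which sit inside the separably tame $L$; (ii) subextensions of the henselization (Corollary~\ref{hens1} / Proposition~\ref{comp1}), which sit inside $F^h \subseteq L$; (iii) the finite purely inseparable extension $\mathcal{K}|K$ and the separable-algebraic steps added to control immediate transcendental extensions through Proposition~\ref{ist}, which by the Henselian Rationality theorem (Theorem~\ref{stt3}) can be realized inside the algebraically maximal tame hull contained in $L$. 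By transitivity of relative uniformization I then assemble a single uniformizable $(\mathcal{F}|\mathcal{K},P)$ with $\mathcal{F} \subseteq L$.

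\textbf{Separability and conclusion.} By construction $\mathcal{F} \subseteq L$, and $L|F$ is separable (in characteristic $0$ automatically; in characteristic $p$ by choice of $L$ as the maximal separable subextension of $W|F$). Hence $\mathcal{F}|F$ is separable. The properties a) and b) are then inherited from the corresponding properties of $L|F$ established above: any finite subextension of $L|F$ has $p$-torsion value-group quotient and purely inseparable residue-field extension if $\chara K = p > 0$, while $L = F^h$ gives b) in characteristic $0$.

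\textbf{Main obstacle.} The technical heart is step (iii) above: ensuring that the immediate transcendental pieces and the eventual algebraic elements produced via Theorem~\ref{stt3} can really be placed inside the separably tame hull $L$ rather than only in some larger tame extension that might introduce new inseparability into $\mathcal{F}P|FP$ beyond what $LP|FP$ already contributes. This is delicate because, unlike the henselization, passing to a separably tame hull is not canonical, and one must check that all the Artin--Schreier-surgery and Hensel-lifting steps used in the proofs of Propositions~\ref{ist} and Theorem~\ref{stt3} can be carried out without leaving $L$. Once this is verified, the rest of the argument is a bookkeeping repetition of the proof of Theorem~\ref{MT1}, with each intermediate field kept inside $L$.
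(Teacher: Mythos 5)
Your proposal reproduces the paper's own approach: construct the minimal separably tame (resp.\ tame in characteristic $0$, i.e.\ the henselization) extension $L$ of $(F,P)$ via Theorem~\ref{rfmte} and Lemma~\ref{Wtf}, note that $v_P L/v_P F$ is $p$-torsion and $LP|FP$ purely inseparable, and run the construction of Theorem~\ref{MT1} inside $L$ so that the properties of $L|F$ descend to $\mathcal{F}|F$. The main obstacle you flag — verifying that the Artin--Schreier and Hensel-lifting steps stay inside the separably tame hull — is exactly the non-trivial content the paper defers to [K7], so your account matches the paper's argument at the level of detail the paper presents.
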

\n
(Clearly, in the case of $\chara K=0$, our result is weaker than
Zariski's Local Uniformization Theorem. However, it provides some more
information since we can uniformize a finite extension of $F$ within
its henselization while keeping fixed the once chosen transcendence
basis of the subfunction field $F_0|K$ on which $P$ is an
Abhyankar place.)

\pars
The following corollary illustrates the advantage of controlling the
residue field extension. In fact, Theorem~\ref{largefec} can also be
proved by use of this corollary.
\begin{corollary}
Assume that $K$ is perfect and that $P$ is a rational place of $F|K$.
Take $\zeta_1,\ldots,\zeta_m \in {\cal O}_F\,$. Then there is a finite
extension ${\cal F}|F$ and an extension of $P$ to ${\cal F}$ such that
$({\cal F}|K,P)$ is uniformizable with respect to $\zeta_1,\ldots,
\zeta_m$ and $P$ is still a rational place of ${\cal F}|K$.
\end{corollary}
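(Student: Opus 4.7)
The plan is to apply Theorem~\ref{lucontrol} directly and then observe that the perfectness of $K$ eliminates both the finite purely inseparable base-field extension and any finite purely inseparable residue field extension, which are the only sources of enlargement past the information already provided by that theorem. Feeding the elements $\zeta_1,\ldots,\zeta_m$ into Theorem~\ref{lucontrol} yields a finite purely inseparable extension ${\cal K}|K$, a finite separable extension ${\cal F}|F.{\cal K}$, and an extension of $P$ to ${\cal F}$, such that $({\cal F}|{\cal K},P)$ is uniformizable with respect to $\zeta_1,\ldots,\zeta_m\,$, and such that in addition either (a) if $\chara K=p>0$, the finite extension ${\cal F}P|FP$ is purely inseparable, or (b) if $\chara K=0$, the field ${\cal F}$ lies in the henselization $(F^h,P)$.

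Since $K$ is assumed to be perfect, the only finite purely inseparable extension of $K$ is $K$ itself; hence ${\cal K}=K$ and $({\cal F}|K,P)$ is uniformizable with respect to $\zeta_1,\ldots,\zeta_m\,$, which establishes the first half of the corollary. For the rationality of $P$ on ${\cal F}|K$, we have to show that ${\cal F}P=K$. In case (b), Theorem~\ref{immhens} tells us that the henselization $(F^h,P)$ is an immediate extension of $(F,P)$, so ${\cal F}P\subseteq F^hP=FP=K$ and equality follows. In case (a), ${\cal F}P|K$ is purely inseparable by the additional clause of Theorem~\ref{lucontrol} and the hypothesis $FP=K$; perfectness of $K$ again forces ${\cal F}P=K$. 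Thus $P$ remains rational on ${\cal F}|K$ in both characteristics.

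There is essentially no remaining obstacle: all the serious valuation-theoretic work (the use of the Generalized Stability Theorem, the theory of tame extensions, the reduction to the Abhyankar subfunction field, and the control of the residue extension within a purely inseparable envelope) is encapsulated inside Theorem~\ref{lucontrol}. The role of the perfectness hypothesis here is precisely to collapse the two possible sources of nontrivial purely inseparable enlargement, namely ${\cal K}|K$ and ${\cal F}P|FP$, both of which become trivial over a perfect base. The only care needed is to check that one is in fact permitted to invoke Theorem~\ref{lucontrol} with $(F|K,P)$ having a rational place, but the statement of that theorem imposes no restriction beyond $F|K$ being a function field and $P$ being a place with $\zeta_1,\ldots,\zeta_m\in {\cal O}_P$, so the hypotheses are immediate.
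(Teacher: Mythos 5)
Your proof is correct and is essentially the intended deduction: the paper offers no separate proof of this corollary, stating it immediately after Theorem~\ref{lucontrol} precisely because it follows by the observations you make. You correctly identify that perfectness of $K$ collapses ${\cal K}$ to $K$ (giving uniformizability of $({\cal F}|K,P)$) and collapses the purely inseparable residue extension ${\cal F}P|FP=K$ in case (a) (or uses $F^hP=FP=K$ via Theorem~\ref{immhens} in case (b)) to preserve rationality. The only slight imprecision is that Theorem~\ref{lucontrol} states that ${\cal F}|F$ (not ${\cal F}|F.{\cal K}$) can be chosen separable, but this is immaterial since ${\cal K}=K$ in the situation at hand.
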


\bn
\bn
\bn
{\bf References}
\newenvironment{reference}%
{\begin{list}{}{\setlength{\labelwidth}{5em}\setlength{\labelsep}{0em}%
\setlength{\leftmargin}{5em}\setlength{\itemsep}{-1pt}%
\setlength{\baselineskip}{3pt}}}%
{\end{list}}
\newcommand{\lit}[1]{\item[{#1}\hfill]}
\begin{reference}
\lit{[A1]} {Abhyankar, S.$\,$: {\it Local uniformization on algebraic
surfaces over ground fields of characteristic $p\ne 0$}, Annals of
Math.\ {\bf 63} (1956), 491--526. Corrections: Annals of Math.\ {\bf 78}
(1963), 202--203}
\lit{[A2]} {Abhyankar, S.$\,$: {\it On the valuations centered in a
local domain}, Amer.\ J.\ Math.\ {\bf 78} (1956), 321--348}
\lit{[A3]} {Abhyankar, S.$\,$: {\it Simultaneous resolution for
algebraic surfaces}, Amer.\ J.\ Math.\ {\bf 78} (1956), 761--790}
\lit{[A4]} {Abhyankar, S.$\,$: {\it On the field of definition of a
nonsingular birational transform of an algebraic surface}, Annals of
Math.\ {\bf 65} (1957), 268--281}
\lit{[A5]} {Abhyankar, S.$\,$: {\it Ramification theoretic methods in
algebraic geometry}, Princeton University Press (1959)}
\lit{[A6]} {Abhyankar, S.$\,$: {\it Uniformization in $p$-cyclic
extensions of algebraic surfaces over ground fields of characteristic
$p$}, Math.\ Annalen {\bf 153} (1964), 81--96}
\lit{[A7]} {Abhyankar, S.$\,$: {\it Reduction to multiplicity less than
$p$ in a $p$-cyclic extensions of a two dimensional regular local ring
($p=\,$characteristic of the residue field)}, Math.\ Annalen {\bf 154}
(1964), 28--55}
\lit{[A8]} {Abhyankar, S.$\,$: {\it Uniformization of Jungian local
domains}, Math.\ Annalen {\bf 159} (1965), 1--43. Correction: Math.\
Annalen {\bf 160} (1965), 319--320}
\lit{[A9]} {Abhyankar, S.$\,$: {\it Uniformization in $p$-cyclic
extensions of a two dimensional regular local domain of residue field
characteristic $p$}, Festschrift zur Ged\"achtnisfeier f\"ur Karl
Weierstra{\ss} 1815--1965, Wissenschaftliche Abhandlungen des Landes
Nordrhein-Westfalen {\bf 33} (1966), 243--317 (Westdeutscher Verlag,
K\"oln und Opladen)}
\lit{[A10]} {Abhyankar, S.$\,$: {\it Resolution of singularities of
embedded algebraic surfaces}, Academic Press, New York (1966);
2nd enlarged edition: Sprin\-ger, New York (1998)}
\lit{[A11]} {Abhyankar, S.$\,$: {\it Nonsplitting of valuations
in extensions of two dimensional regular local domains},
Math.\ Annalen {\bf 153} (1967), 87--144}
\lit{[AO]} {Abramovich, D.\ -- Oort, F.$\,$: {\it Alterations and
resolution of singularities}, this volume}
\lit{[AK1]} {Ax, J.\ -- Kochen, S.$\,$: {\it Diophantine problems
over local fields I}, Amer.\ Journ.\ Math.\ {\bf 87} (1965), 605--630.}
\lit{[AK2]} {Ax, J.\ -- Kochen, S.$\,$: {\it Diophantine problems
over local fields II}, Amer.\ Journ.\ Math.\ {\bf 87} (1965), 631--648.}
\lit{[AX]} {Ax, J.$\,$: {\it A metamathematical approach to some
problems in number theory}, Proc.\ Symp.\ Pure Math.\ {\bf 20},
Amer.\ Math.\ Soc.\ (1971), 161--190}
\lit{[B]} {Bourbaki, N.$\,$: {\it Commutative algebra}, Paris (1972)}
\lit{[BGR]} {Bosch, S. -- G\"untzer, U. -- Remmert, R.$\,$:
{\it Non-Archi\-me\-dean Analysis}, Berlin (1984)}
\lit{[BR]} {Brown, S.\ S.$\,$: {\it Bounds on transfer principles for
algebraically closed and complete valued fields},
Memoirs Amer.\ Math.\ Soc.\ {\bf 15} No.\ 204 (1978)}
\lit{[BS]} {Br\"ocker, L.\ -- Sch\"ulting, H.~W.$\,$:
{\it Valuations of function fields from the geometrical point of view},
J.\ reine angew.\ Math.\ {\bf 365} (1986), 12--32}
\lit{[CH1]} {Cherlin, G.$\,$: {\it Model theoretic algebra},
Lecture Notes Math.\ {\bf 521} (1976)}
\lit{[CH2]} {Cherlin, G.$\,$: {\it Model theoretic algebra},
J.\ Symb.\ Logic {\bf 41} (1976), 537--545}
\lit{[CK]} {Chang, C.\ C.\ -- Keisler, H.\ J.$\,$: {\it Model
Theory}, Amsterdam -- London (1973)}
\lit{[dJ]} {de Jong, A.~J.$\,$: {\it Smoothness, semi-stability and
alterations}, Publications Mathematiques I.H.E.S.\ {\bf 83} (1996),
51--93}
\lit{[DEL]} {Delon, F.$\,$: {\it Quelques propri\'et\'es des corps
valu\'es en th\'eo\-ries des mod\`e\-les}, Th\`ese Paris~VII (1981)}
\lit{[DEN]} {Denef, J.$\,$: personal communication, Oberwolfach,
October 1998}
\lit{[DEU]} {Deuring, M.$\,$: {\it Reduktion algebraischer
Funktionenk\"orper nach Primdivisoren des Konstantenk\"orpers},
Math.\ Z.\ {\bf 47} (1942), 643--654}
\lit{[EK]} {Eklof, P.$\,$: {\it Lefschetz's Principle and local
functors}, Proc.\ Amer.\ Math.\ Soc.\ {\bf 37} (1973), 333--339}
\lit{[EL]} {Elliott, G.~A.$\,$: {\it On totally ordered groups, and
$K_0$}, in: Ring Theory Waterloo 1978, eds.\ D.~Handelman and
J.~Lawrence, Lecture Notes Math.\ {\bf 734}, 1--49}
\lit{[EN]} {Endler, O.$\,$: {\it Valuation theory}, Berlin (1972)}
\lit{[EPP]} {Epp, H.$\,$: {\it Eliminating wild ramification},
Inventiones Math.\ {\bf 19} (1973), 235--249}
\lit{[ER1]} {Ershov, Yu.\ L.$\,$: {\it On the elementary theory of
maximal normed fields}, Dokl.\ Akad.\ Nauk SSSR {\bf 165} (1965),
21--23 \ [English translation in: Sov.\ Math.\ Dokl.\ {\bf 6} (1965),
1390--1393]}
\lit{[ER2]} {Ershov, Yu.\ L.$\,$: {\it On elementary theories of local
fields}, Algebra i Logika {\bf 4}:2 (1965), 5--30}
\lit{[ER3]} {Ershov, Yu.\ L.$\,$: {\it On the elementary theory of
maximal valued fields I} (in Russian), Algebra i Logika {\bf 4}:3
(1965), 31--70}
\lit{[ER4]} {Ershov, Yu.\ L.$\,$: {\it On the elementary theory of
maximal valued fields II} (in Russian), Algebra i Logika {\bf 5}:1
(1966), 5--40}
\lit{[ER5]} {Ershov, Yu.\ L.$\,$: {\it On the elementary theory of
maximal valued fields III} (in Russian), Algebra i Logika {\bf 6}:3
(1967), 31--38}
\lit{[GR]} {Grauert, H.\ -- Remmert, R.$\,$: {\it \"Uber die
Methode der diskret bewerteten Ringe in der nicht archimedischen
Analysis}, Inventiones Math.\ {\bf 2}\linebreak (1966), 87--133}
\lit{[GRA]} {Gravett, K.~A.~H.$\,$: {\it Note on a result of Krull},
Cambridge Philos.\ Soc.\ Proc.\ {\bf 52} (1956), 379}
\lit{[GRE1]} {Green, B.$\,$: {\it Recent results in the theory of
constant reductions}, S\'emi\-naire de Th\'eorie des Nombres, Bordeaux
{\bf 3} (1991), 275--310}
\lit{[GRE2]} {Green, B.$\,$: {\it The Relative Genus Inequality for
Curves over Valuation Rings}, J.\ Alg.\ {\bf 181} (1996), 836--856}
\lit{[GMP1]} {Green, B.\ -- Matignon, M.\ --- Pop, F.$\,$:
{\it On valued function fields I}, manuscripta math. {\bf 65} (1989),
357--376}
\lit{[GMP2]} {Green, B.\ -- Matignon, M.\ --- Pop, F.$\,$:
{\it On valued function fields II}, J.\ reine angew.\ Math.\ {\bf
412} (1990), 128--149}
\lit{[GMP3]} {Green, B.\ -- Matignon, M.\ --- Pop, F.$\,$:
{\it On valued function fields III}, J.\ reine angew.\ Math.\ {\bf
432} (1992), 117--133}
\lit{[GMP4]} {Green, B.\ -- Matignon, M.\ --- Pop, F.$\,$:
{\it On the Local Skolem Property}, J.\ reine angew.\ Math.\ {\bf 458}
(1995), 183--199}
\lit{[GRB]} {Greenberg, M.~J.$\,$: {\it Rational points in henselian
discrete valuation rings}, Publ.\ Math.\ I.H.E.S.\ {\bf 31} (1967),
59--64}
\lit{[GRL]} {Greenleaf, M.$\,$: {\it Irreducible subvarieties and
rational points}, Amer.\ J.\ Math.\ {\bf 87} (1965), 25--31}
\lit{[GRU]} {Gruson, L.$\,$: {\it Fibr\'es vectoriels sur un  
polydisque ultra\-m\'e\-trique}, Ann.\ Sci.\ Ec.\ Super., IV.\ Ser.,
{\bf 177} (1968), 45--89}
\lit{[HO]} {Hodges, W.$\,$: {\it Model Theory}, Encyclopedia of
mathematics and its applications {\bf 42}, Cambridge University Press
(1993)}
\lit{[JA]} {Jacobson, N.$\,$: {\it Lectures in abstract algebra, III.\
Theory of fields and Galois theory}, Springer Graduate Texts in Math.,
New York (1964)}
\lit{[JR]} {Jarden, M.\ -- Roquette, P.$\,$: {\it The Nullstellensatz
over $\wp$--adically closed fields}, J.\ Math.\ Soc.\ Japan {\bf 32}
(1980), 425--460}
\lit{[KA1]} {Kaplansky, I.$\,$: {\it Maximal fields with valuations I},
Duke Math.\ Journ.\ {\bf 9} (1942), 303--321}
\lit{[KA2]} {Kaplansky, I.$\,$: {\it Maximal fields with valuations
II}, Duke Math.\ Journ.\ {\bf 12} (1945), 243--248}
\lit{[K1]} {Kuhlmann, F.-V.: {\it Henselian function fields and tame
fields}, extended version of Ph.D.\ thesis, Heidelberg (1990)}
\lit{[K2]} {Kuhlmann, F.-V.: {\it Valuation theory of fields, abelian
groups and modules}, preprint, to appear in the ``Algebra, Logic and
Applications'' series (Gordon and Breach), eds.\ A.~Mac\-intyre and
R.~G\"obel}
\lit{[K3]} {Kuhlmann, F.--V.$\,$: {\it Places of algebraic function
fields in arbitrary characteristic}, submitted}
\lit{[K4]} {Kuhlmann, F.--V.$\,$: {\it Elementary properties of power
series fields over finite fields}, submitted; prepublication in:
Structures Alg\'ebriques Ordonn\'ees, S\'eminaire Paris VII (1997), and
in: The Fields Institute Pre\-print Series, Toronto (1998)}
\lit{[K5]} {Kuhlmann, F.--V.$\,$: {\it On local uniformization in
arbitrary characteristic}, The Fields Institute Preprint Series,
Toronto (1997)}
\lit{[K6]} {Kuhlmann, F.--V.$\,$: {\it On local uniformization in
arbitrary characteristic I}, submitted}
\lit{[K7]} {Kuhlmann, F.--V.$\,$: {\it On local uniformization in
arbitrary characteristic II}, submitted}
\lit{[KK]} {Kuhlmann, F.-V.\ -- Kuhlmann, S.: {\it Valuation theory of
exponential Hardy fields}, submitted}
\lit{[KP]} {Kuhlmann, F.--V.\ -- Prestel, A.$\,$: {\it On places of
algebraic function fields}, J.\ reine angew.\ Math.\ {\bf 353}
(1984), 181--195}
\lit{[KPR]} {Kuhlmann, F.-V.\ -- Pank, M.\ -- Roquette, P.$\,$:
{\it Immediate and purely wild extensions of valued fields},
manuscripta math.\ {\bf 55} (1986), 39--67}
\lit{[KR]} {Krull, W.$\,$: {\it Allgemeine Bewertungstheorie},
J.\ reine angew.\ Math.\ {\bf 167} (1931), 160--196}
\lit{[L1]} {Lang, S.$\,$: {\it On quasi algebraic closure},
Ann.\ of Math.\ {\bf 55} (1952), 373--390}
\lit{[L2]} {Lang, S.$\,$: {\it Algebra}, New York (1965)}
\lit{[OH]} {Ohm, J.$\,$: {\it The henselian defect for valued
function fields}, Proc.\ Amer.\ Math.\ Soc.\ {\bf 107} (1989)}
\lit{[OO]} {Oort, F.$\,$: {\it Stable pointed curves, alterations and
applications}, Lecture notes of the Working Week on Resolution of
Singularities Tirol 1997}
\lit{[OR]} {Ore, O.$\,$: {\it On a special class of polynomials},
Trans.\ Amer.\ Math.\ Soc.\ {\bf 35} (1933), 559--584}
\lit{[OS]} {Ostrowski, A.$\,$: {\it Untersuchungen zur arithmetischen
Theorie der K\"or\-per}, Math.\ Z.\ {\bf 39} (1935), 269--404}
\lit{[POP]} {Pop, F.$\,$: {\it Embedding problems over large fields},
Ann.\ of Math.\ {\bf 144} (1996), 1--34}
\lit{[P]} {Prestel, A.$\,$: {\it Model theory of fields: an
application to positive semidefinite polynomials}, Soc.\ Math.\ de
France, m\'emoire no.\ {\bf 16} (1984), 53--64}
\lit{[PR]} {Prestel, A.\ -- Roquette, P.$\,$: {\it Formally $p$-adic
fields}, Lecture Notes Math.\ {\bf 1050},
Berlin--Heidelberg--New York--Tokyo (1984)}
\lit{[PZ]} {Prestel, A.\ -- Ziegler, M.$\,$: {\it Model theoretic
methods in the theory of topological fields},
J.\ reine angew.\ Math.\ {\bf 299/300} (1978), 318--341}
\lit{[R1]} {Ribenboim, P.$\,$: {\it Th\'eorie des valuations}, Les
Presses de l'Uni\-versit\'e de Montr\'eal (1964)}
\lit{[R2]} {Ribenboim, P.$\,$: {\it Equivalent forms of Hensel's
lemma}, Expo.\ Math.\ {\bf 3} (1985), 3--24}
\lit{[RO]} {Robinson, A.$\,$: {\it Complete Theories}, Amsterdam
(1956)}
\lit{[RQ1]} {Roquette, P.$\,$: {\it Zur Theorie der Konstantenreduktion
algebraischer Mannigfaltigkeiten}, J.\ reine angew.\ Math.\ {\bf 200}
(1958), 1--44}
\lit{[RQ2]} {Roquette, P.$\,$: {\it A criterion for rational places
over local fields}, J.\ reine angew.\ Math.\ {\bf 292}
(1977), 90--108}
\lit{[RQ3]} {Roquette, P.$\,$: {\it On the Riemann $p$-space of a
field: The $p$-adic analogue of Weierstrass' approximation theorem and
related problems}, Abh.\linebreak
Math.\ Sem.\ Univ.\ Hamburg {\bf 47} (1978), 236--259}
\lit{[RQ4]} {Roquette, P.$\,$: {\it $p$--adische und saturierte
K\"orper. Neue Variationen zu einem alten Thema von Hasse}, Mitteilungen
Math. Gesellschaft Hamburg {\bf 11 \rm Heft 1} (1982), 25--45}
\lit{[RQ5]} {Roquette, P.$\,$: {\it Some tendencies in contemporary
algebra}, in: Perspectives in Mathematics, Anniversary of Oberwolfach
1984, Basel (1984), 393--422}
\lit{[SA]} {Sacks, G.\ E.$\,$: {\it Saturated Model Theory}, Reading,
Massa\-chu\-setts (1972)}
\lit{[SCH]} {Scheiderer, C.$\,$: {\it Real algebra and its
applications to geometry in the last 10 years: some major developments
and results}, in: Real algebraic geometry, Proc.\ Conf.\ Rennes 1991,
eds.\ M.~Coste and M.~F.~Roy, Lecture Notes in Math.\ {\bf 1524} (1992),
1--36 and 75--96}
\lit{[SP]} {Spivakovsky, M.$\,$: {\it Resolution of singularities I:
local uniformization}, manu\-script (1996)}
\lit{[U]} {Urabe, T.$\,$: {\it Resolution of Singularities of Germs in
Characteristic Positive Associated with Valuation Rings of Iterated
Divisor Type}, preprint, 75 pages}
\lit{[V]} {Vaquie, M.$\,$: {\it Valuations}, this volume}
\lit{[WA1]} {Warner, S.$\,$: {\it Nonuniqueness of immediate maximal
extensions of a valuation}, Math.\ Scand.\ {\bf 56} (1985),
191--202}
\lit{[WA2]} {Warner, S.$\,$: {\it Topological fields}, Mathematics
studies {\bf 157}, North Holland, Amsterdam (1989)}
\lit{[WH1]} {Whaples, G.$\,$: {\it Additive polynomials}, Duke
Math.\ Journ.\ {\bf 21} (1954), 55--65}
\lit{[WH2]} {Whaples, G.$\,$: {\it Galois cohomology of additive
polynomials and n-th po\-wer mappings of fields}, Duke Math.\
Journ.\ {\bf 24} (1957), 143--150}
\lit{[Z]} {Zariski, O.$\,$: {\it Local uniformization on
algebraic varieties}, Ann.\ Math.\ {\bf 41} (1940), 852--896}
\lit{[ZS]} {Zariski, O.\ -- Samuel, P.$\,$: {\it Commutative
Algebra}, Vol.\ II, New York--Heidel\-berg--Berlin (1960)}
\end{reference}
\adresse
\end{document}